\newtheorem{thm}{Theorem}
\newtheorem{cor}{Corollary}
\newtheorem{prop}{Proposition}
\newtheorem{lma}{Lemma}
\newcommand{\frakA}{\mathfrak{A}}
\newcommand{\frakC}{\mathfrak{C}}
\newcommand{\frakS}{\mathfrak{S}}
\newcommand{\frakW}{\mathfrak{W}}
\newcommand{\frakV}{\mathfrak{V}}
\newcommand{\calB}{\mathcal{B}}
\newcommand{\calC}{\mathcal{C}}
\newcommand{\calD}{\mathcal{D}}
\newcommand{\calG}{\mathcal{G}}
\newcommand{\calP}{\mathcal{P}}
\newcommand{\calU}{\mathcal{U}}
\newcommand{\calV}{\mathcal{V}}
\newcommand{\calW}{\mathcal{W}}
\newcommand{\Hom}{\mbox{\rm Hom}}
\title[Bicategories of Fractions Revisited]{Bicategories of fractions revisited:  towards small homs and canonical 2-cells}
\author{Dorette Pronk, Laura Scull}
\keywords{bicategories of fractions, categories of fractions, localizations, orbifolds, small homs}
\thanks{The first author's research is funded by an NSERC Discovery Grant.}
\begin{document}

\maketitle
\begin{abstract}
 This paper adresses two issues in dealing with bicategories of fractions.  The first is to introduce a set of conditions on a class of arrows in a bicategory which is  weaker than the one given in \cite{Pr-comp} but still allows a bicalculus of fractions.  These conditions allow us to invert a smaller  collection of arrows so that in some cases we may obtain a bicategory of fractions with small hom-categories.   We adapt the construction of the bicategory of fractions to work with the weaker conditions.   The second issue is the difficulty in dealing with 2-cells, which are defined by equivalence classes.  We discuss conditions under which there are canonical representatives for 2-cells, and  how pasting of 2-cells can be simplified in the presence of certain pseudo pullbacks.  We also discuss how both of these improvements apply in the category of orbispaces.  
\end{abstract}


\section{Introduction}  The purpose of this paper is to study some aspects of the structure of bicategories of fractions in more detail.   We focus specifically on two goals.  The first is to develop a weaker version of the calculus of fractions conditions of \cite{Pr-comp} that is still strong enough to allow us to create a bicategory of fractions where arrows are given by spans rather than zig-zags. (We will show that the conditions in \cite{Pr-comp} are not necessary in order to use fractions, although they make the construction slightly easier.) The second goal is to develop conditions under which we have canonical representatives for 2-cells, thus clarifying the structure of the category and its composition operations.  Although this second goal could be considered independently from the first, we will in fact give our proofs in  the context of the weaker conditions; since these imply the conditions of \cite{Pr-comp},  our 2-cell results will apply in both contexts.   For both of these goals, we will discuss how it applies to the example of orbispaces, defined as the   bicategory of fractions of proper \'etale groupoids of suitable topological spaces with respect to the class of essential equivalences as in described in \cite{MP,WIT}.   

For our first goal, we introduce a set of conditions on a class of arrows in a bicategory which is  weaker than the one given in \cite{Pr-comp} but still allows us to form the localization as  a bicalculus of fractions. 
One  potential issue with  localizations which are constructed as categories, or bicategories, of fractions is that the hom-sets, or  hom-categories,  may not be small,    as there is no guarantee in general that the fractions with a given domain and codomain form a set.   To ensure that we do get a set in the classical bicategory of fractions construction,  we need  the class of arrows  $\mathfrak{W}$ to be inverted be small over each object; i.e., for any given object $C$ 
there is only a {\em set} of arrows in $\mathfrak{W}$ with codomain $C$. 
We  may try to find a locally small subclass of the arrows to be inverted which generates the larger class in the sense that  it induces an equivalent category (or bicategory) of fractions.   
This subclass may not satisfy all of the conditions for forming a (bi)category of fractions, so we consider whether any of the conditions can  be weakened.  When an arrow can be factored as a composite of arrows that are to be inverted,  this arrow will receive an inverse in any localization that adds inverses for the arrows in the factorization.  This observation leads us to consider the second condition of \cite{Pr-comp}, the requirement that the  class of arrows to be inverted is closed under composition, as an axiom that could potentially be weakened. We cannot completely omit it: some version of this axiom is needed to be able to define horizontal composition in the bicategory of fractions.
However,  we can replace it by the following condition:\\

[{\bf WB2}] For each pair of composable arrows $\xymatrix@1{B\ar[r]^v&C\ar[r]^w&D}$ in $\frakW$, there is an arrow $\xymatrix@1{A\ar[r]^u&B}$ such that $\xymatrix@1{A\ar[r]^{wvu}&D}$ is in $\frakW$.\\
   
When a class of arrows satisfies this condition together with the other conditions for a bicalculus of fractions given in \cite{Pr-comp},   it generates (through composition and closure under 2-isomorphisms) a larger class of arrows that satisfies all the bicalculus of fractions conditions.  
 In this paper we will carefully consider all the conditions for the bicalculus of fractions and give more optimal versions of these conditions, 
and then provide an adjusted construction of the bicategory of fractions.  This construction is still given with arrows that are single spans rather than zig-zags.  
This also provides us with a slightly weaker set of conditions for the classical construction of the category of fractions as given by Gabriel and Zisman in \cite{GZ}, spelled   out in Corollary~\ref{classicalGZ}.

Our motivating example for this is the bicategory of orbispaces \cite{Sa56, MP,WIT}. A priori, the hom-categories in this category are not small  unless one requires all spaces to be second countable topological manifolds. We can work with a larger class of spaces, however, by observing that the class of essential equivalences has a subclass of essential covering maps that is small over each object, and satisfies the weakened conditions for a bicategory of fractions.

Related results and conditions have been presented in \cite{Roberts-TAC}. Roberts shows that for the case where $\frakW$ is a singleton pretopology satisfying the {\it WISC} condition that each object have a {\em set} of covers that is weakly initial among all covers, the bicategory of fractions will be locally essentially small: each hom-category is equivalent to a small one. By weakening the conditions to obtain a right calculus of fractions we are able to restrict ourselves to only invert the sets of covers when constructing the localization and obtain a locally small bicategory of fractions.

A different construction, of so called {\em faithful fractions}, was introduced in \cite{AV}. The result of this construction has small hom-categories as well. Different additional conditions need to be met to use this construction.

Another issue when working with a  (bi)category of fractions is that the homs are defined by equivalence classes.   For categories, arrows are given by equivalence classes;   for bicategories the same is true for 2-cells. This makes the hom-categories in the bicategory of fractions a priori very large and somewhat mysterious and hard to work with. Horizontal composition of 2-cells for instance is rather cumbersome to describe and calculate.   Our second goal in this paper is to address this issue by providing conditions under which there are canonical representatives for 2-cells and under which the horizontal composition operation is significantly simplified.  A partial simplification of the presentation of 2-cells was provided in the appendix of \cite{Matteo1} under additional hypotheses, but this was not integrated with the operations of horizontal and vertical composition.  In our motivating example of orbispaces, essential equivalences have several nice cancellation properties that allow for a simplification of the 2-cell structure and allow us to use canonical representatives for 2-cells when this is convenient. These cancellation properties were identified as being (representably) fully faithful (ff) and co-ff in \cite{AV} and \cite{Roberts-fractions, Roberts-admitting} and used there to obtain related results about 2-cells in their representations of specific cases of 2-localizations.  

In this work, we prove two types of results about the 2-cell structure: about  the choice of representatives for 2-cells, and about  conditions that allow us to simplify the pasting of 2-cells.   Each representative diagram for a 2-cell in the bicategory of fractions, as in diagram (\ref{first2-cell}) in Section~\ref{newBF2}, is given by two 2-cells in the original bicategory. The `left-hand' 2-cell $\alpha$ is invertible,  and we think of this as the cell that  allows the `right-hand' 2-cell $\beta$ to be defined. 
We focus on the role of the left-hand 2-cell. 
Tommasini indirectly addresses the question of when a 2-cell can be represented by a diagram with a given left-hand 2-cell in \cite{Matteo1}.   In general this is not always possible, and moreover,  two diagrams with the same left-hand 2-cell but different right-hand 2-cells may still represent the same 2-cell in the bicategory of fractions, so the universal homomorphism mapping a bicategory to its bicategory of fractions is in general neither 2-faithful nor 2-full.   However, if the arrows to be inverted satisfy suitable subsets of the  fully faithful or co fully faithful conditions, the situation simplifies and for each pair of spans we may choose any left-hand 2-cell and we show that  each 2-cell in the bicategory of fractions can then be uniquely represented by a diagram involving the given left-hand 2-cell. 

Additionally, for the case when the bicategory has certain pseudo pullbacks, we develop  results to simplify the horizontal composition of 2-cells in the bicategory of fractions.  Overall, our goal is to make the role of 2-cells in the bicategory of fractions more transparent.  In our motivating example of orbispaces these conditions are satisfied;  this will be explored further in \cite{next_paper}.  

Note that in  \cite{AV} the authors use the ff and co-ff cancellation properties of essential equivalences between internal categories in a regular category to describe the localization with respect to essential equivalences as a faithful bicategory of fractions. One of its notable properties is that 2-cells in the fractions bicategory correspond to suitable 2-cells in the original bicategory without needing to take equivalence classes. Similar results are obtained by Roberts for the bicategory of fractions of a pretopology consisting of ff and co-ff arrows. He also gives a canonical presentation for the 2-cells that corresponds to taking the strict pullback as the left-hand cell in the 2-cell diagrams of the bicategory of fractions. The work in this paper sheds further light on why this can be done: if the arrows in $\frakW$ are ff and co-ff one may choose any class of left-hand 2-cells to obtain canonical representations of 2-cells and avoid the need for equivalence classes.

This paper is structured as follows. In Section~\ref{S:background}, we introduce the new, weakened, conditions on a class $\frakW$ to give rise to a bicalculus of fractions,  and develop some theory on liftings of 2-cells related to the fourth condition on $\frakW$, and on relating squares required by the third condition. 
 In Section~\ref{newBF2} we give the new bicategory of fractions construction $\calB(\frakW^{-1})$,  a generalization of the one given in 
\cite{Pr-comp}, with horizontal composition of arrows and 2-cells adjusted to account for the weaker assumption. 
In Section~\ref{comparisons} we investigate the connection between our new construction and the original construction of \cite{Pr-comp}, and show that if $\mathfrak{W}$ 
satisfies the weaker conditions of Section~\ref{S:background}, then the class of arrows obtained by taking the closure of  $\mathfrak{W}$ under composition and 2-isomorphism satisfies the original conditions from \cite{Pr-comp} and gives a bi-equivalent bicategory of fractions.  Additionally, we introduce the notion of weakly initial subclasses of arrows, designed to allow us to pass to an even smaller subclass of arrows to obtain a subclass of a given class of arrows that is small over each object.  
Sections~\ref{simplifications} and \ref{pullbacks} develop our results about simplifying 2-cells.  In Section~\ref{simplifications} we introduce  conditions that allow us to simplify the form of the 2-cells in the bicategory of fractions and obtain canonical representatives for the equivalence classes, and 
in Section~\ref{pullbacks} we investigate the case when the original bicategory has certain pseudo pullbacks and show how this can be used to simplify horizontal composition of 2-cells in the bicategory of fractions.
In Section~\ref{conclusions} we indicate how this work applies to orbispaces, to be further explored in \cite{next_paper}.  
The last sections are appendices containing technical proofs. The first one gives the  associativity 2-cells for composition. The second appendix proves associativity coherence. The third appendix proves that horizontal and vertical composition are well-defined on equivalence classes of 2-cell diagrams. And the fourth appendix gives the proof for a result about the horizontal composition of 2-cell diagrams when the left-hand 2-cells are pseudo pullbacks.

{\bf Acknowledgements} The authors would like to thank Matteo Tommasini for contributing Lemmas \ref{Matteo1} and  \ref{Matteo2} as a way to strengthen the result in Theorem \ref{main5}, Michael Johnson for his helpful conversations and suggestions related to this work, Martin Szyld for helpful conversations in regard to the universal property of the bicategory of fractions, David Roberts for pointing us to related work by him and others, and the referee for an extremely careful reading of an earlier version of this manuscript, leading us to tighten some of the statements and the proofs.


\section{Weaker Conditions for a Bicalculus of Fractions}\label{S:background}

In the first part of this section we introduce the new conditions on a class of arrows in a bicategory that will  give rise to a bicalculus of fractions. These are a weakening of the conditions {\bf BF1--BF5} given in \cite{Pr-comp}. In the second part of this section we develop general results about the structure of the  2-cells in a bicategory with a class of arrows satisfying our new conditions. 

\subsection{The New Conditions} \label{WB-conditions} We list our new conditions on a class of arrows.  
In Section~\ref{newBF2} we will show that these are sufficient for the existence of the bicategory of fractions,  although the specific construction of this bicategory needs to be changed.    
\begin{itemize}
\item {\bf [WB1]} All identities are in $\frakW$.
\item  {\bf [WB2]} For each pair of composable arrows $\xymatrix@1{B\ar[r]^v&C\ar[r]^w& D}$ 
in $\frakW$, there is an arrow $\xymatrix@1{A\ar[r]^u&B}$ such that $\xymatrix@1{A\ar[r]^{wvu}&D}$ is in $\frakW$.
\item  {\bf [WB3]} For every pair $w\colon A\,\to\,B$, $f\colon C\,\to\,B$ with $w\,\in\,\frakW$, 
there exist maps $h, v$,   where $v \in \frakW$, and  an invertible 2-cell $\alpha$ as in the following diagram.
$$
\xymatrix{
D \ar[r]^h \ar[d]_{v} \ar@{}[dr]|{\stackrel{\alpha}{\Longleftarrow}}&A\ar[d]^w
\\
C \ar[r]_f & B
}
$$  
\item  {\bf [WB4]} For any 2-cell $$\alpha\colon w\,\circ\,f\,\Rightarrow\,w\,\circ\,g$$ with $w\,\in\, \frakW$, there exists an arrow  $u\,\in\, \frakW$ and a 2-cell
$$\beta\colon f\,\circ\,u\,\Rightarrow\,g\,\circ\,u$$ such that 
$\alpha\,\circ\,u\,=\,w\,\circ\,\beta$. Furthermore, the collection of such pairs  $(u,\beta)$ has the following property:  
when $(u_1,\beta_1)$ and $(u_2,\beta_2)$ are two such pairs, 
there exist arrows $s,t$, such that $u_1\circ s$ and $u_2\circ t$ are in $\frakW$,  and there is
an invertible 2-cell $\varepsilon\colon u_1\,\circ\,s\,\Rightarrow\,u_2\,\circ\,t$ such that the following diagram commutes:
$$
\xymatrix{
f\circ u_1\circ s\ar[rr]^{\beta_1\circ s} \ar[d]_{f\circ\varepsilon}&&g\circ u_1\circ s\ar[d]^{g\circ\varepsilon}\\
f\circ u_2\circ t\ar[rr]_{\beta_2\circ t}&&g\circ u_2\circ t.
}
$$
\item  {\bf [WB5]} When $w\in \frakW$ and there is an invertible 2-cell $\alpha\colon v\Rightarrow w$, then $v\in \frakW$.
\end{itemize}

\begin{rmks} \label{r:bvswb}
\begin{enumerate}
\item The original condition {\bf  BF1} stated that all equivalences were in the class $\frakW$.  It is well-known that it is sufficient to replace this with the given {\bf [WB1]}; see for instance, \cite{Matteo1}.
\item Condition {\bf [WB2]} is a significantly weaker version of the original condition {\bf BF2}, which required that 
$\frakW$ be closed under composition. 

\item Conditions {\bf [WB3]} and {\bf [WB5]} are the same as the old conditions {\bf BF3} and {\bf BF5} respectively.
\item
When $\alpha$ and $\beta$ are 2-cells as in condition {\bf [WB4]}, 
we will refer to $\beta$ as a {\em lifting of $\alpha$ with respect to $w$}.
In \cite{Pr-comp}, condition {\bf BF4} additionally required that if $\alpha$ is invertible, it has a lifting $\beta$ that is invertible.   We will show in Proposition~\ref{isolifting}  that  this assumption is not needed, as  it can be derived from the other assumptions.  
\end{enumerate}
\end{rmks}

\subsection{Properties of Liftings of 2-Cells}\label{D:add_2_cells} 
In this section we  prove that our condition {\bf [WB4]}, together with the conditions {\bf [WB1]--[WB3]} and {\bf [WB5]}, imply the original condition {\bf BF4}.  To do this, we develop some properties of the 2-cell liftings that {\bf [WB4]} requires,  and show that they can be chosen to respect composition.

We assume throughout this section that $\frakW$ is a class of arrows satisfying conditions {\bf [WB1]}-{\bf [WB5]}.  
We begin by showing that for fixed $w \in \frakW$, the collection of the liftings of cells given by {\bf [WB4]} inherits the vertical composition structure in the sense that the vertical composition of two liftings gives a lifting for the vertical composition of the original cells.

\begin{lma}\label{lifting1}
Let $\frakW$ satisfy {\bf [WB1]}--{\bf [WB5]}.
Suppose that we have arrows
$$
\xymatrix{
B\ar@<1ex>[r]^f \ar[r]|g \ar@<-1ex>[r]_h & C\ar[r]^w&D
}
$$
with $w\in \frakW$, and let $\alpha_1\colon wf\Rightarrow wg$ and $\alpha_2\colon wg\Rightarrow wh$
be 2-cells. 
Then there exists an arrow $u\colon A\to B$  in $\frakW$ 
with 2-cells $\beta_1 \colon fu\Rightarrow gu$ and $\beta_2 \colon gu\Rightarrow hu$
such that $w\beta_1=\alpha_1 u$ and $w\beta_2=\alpha_2 u$.
It follows that $w(\beta_2\cdot\beta_1)=(\alpha_2\cdot\alpha_1)u$.
\end{lma}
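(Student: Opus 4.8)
\emph{Proof proposal.} The plan is to lift $\alpha_1$ and $\alpha_2$ separately using {\bf [WB4]}, and then to merge the two resulting liftings over a common refinement built with {\bf [WB3]} and forced back into $\frakW$ by {\bf [WB2]}. This is exactly the point where {\bf [WB2]} has to substitute for closure of $\frakW$ under composition.

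First I would apply {\bf [WB4]} to $\alpha_1$, obtaining $u_1\colon A_1\to B$ in $\frakW$ together with $\beta_1'\colon fu_1\Rightarrow gu_1$ satisfying $w\beta_1'=\alpha_1 u_1$; and likewise apply {\bf [WB4]} to $\alpha_2$, obtaining $u_2\colon A_2\to B$ in $\frakW$ together with $\beta_2'\colon gu_2\Rightarrow hu_2$ satisfying $w\beta_2'=\alpha_2 u_2$. Next I would feed the pair $u_1\colon A_1\to B$, $u_2\colon A_2\to B$ (with $u_1\in\frakW$) into {\bf [WB3]}, producing an object $P$, arrows $p_1\colon P\to A_1$ and $p_2\colon P\to A_2$ with $p_2\in\frakW$, and an invertible 2-cell which, after replacing it by its inverse if necessary, we may write as $\gamma\colon u_1p_1\Rightarrow u_2p_2$. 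Since $u_2\circ p_2$ is a composite of two arrows of $\frakW$, condition {\bf [WB2]} then yields an arrow $q\colon A\to P$ with $u:=u_2p_2q\in\frakW$; this $u$ will be the arrow claimed by the lemma. By construction $u$ factors through $u_2$ via $p_2q$, and it factors through $u_1$ up to the invertible 2-cell $\gamma q$ via $p_1q$.

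With $u$ in hand, I would set $\beta_2:=\beta_2'\circ(p_2q)\colon gu\Rightarrow hu$, and define $\beta_1\colon fu\Rightarrow gu$ to be the vertical composite $(g\circ\gamma q)\cdot(\beta_1'\circ p_1q)\cdot(f\circ(\gamma q)^{-1})$, with the necessary associativity constraints inserted silently (by coherence they are harmless, in the style of the appendices). The identity $w\beta_2=\alpha_2u$ is immediate from $w\beta_2'=\alpha_2u_2$ by right-whiskering with $p_2q$ and functoriality of whiskering. For $w\beta_1=\alpha_1u$ I would left-whisker the three-fold composite defining $\beta_1$ by $w$, substitute $w\beta_1'=\alpha_1u_1$ into the middle factor, and then recognize the resulting expression $((wg)\circ\gamma q)\cdot(\alpha_1\circ u_1p_1q)\cdot((wf)\circ\gamma q)^{-1}$ as $\alpha_1\circ(u_2p_2q)=\alpha_1u$, via the interchange law applied to $\alpha_1\colon wf\Rightarrow wg$ and $\gamma q\colon u_1p_1q\Rightarrow u_2p_2q$. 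Finally, since left-whiskering by $w$ and right-whiskering by $u$ are both functors on the relevant hom-categories, $w(\beta_2\cdot\beta_1)=(w\beta_2)\cdot(w\beta_1)=(\alpha_2u)\cdot(\alpha_1u)=(\alpha_2\cdot\alpha_1)u$, which is the last assertion.

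The only real obstacle is bookkeeping: because the ambient bicategory is not strict, the whiskered 2-cells occurring in the definition of $\beta_1$ only become composable after inserting associativity constraints, and one has to check that the interchange-law identification used for $w\beta_1=\alpha_1u$ still goes through in their presence. This is routine and is exactly the kind of coherence argument deferred to the appendices; conceptually the proof is simply ``lift each cell by {\bf [WB4]}, then pull the two liftings back to a common refinement lying in $\frakW$, using {\bf [WB3]} to form the refinement and {\bf [WB2]} to keep it inside $\frakW$.''
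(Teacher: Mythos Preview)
Your proof is correct and follows essentially the same approach as the paper's: lift $\alpha_1$ and $\alpha_2$ separately via {\bf [WB4]}, form a common refinement via {\bf [WB3]}, and use {\bf [WB2]} to force the composite back into $\frakW$. The only cosmetic difference is a symmetric choice: the paper takes $u=u_1sv$ (so $\beta_1$ is just $\gamma_1sv$ and it is $\beta_2$ that must be conjugated by the comparison cell $\zeta$), whereas you take $u=u_2p_2q$ and conjugate $\beta_1$ instead. Your interchange-law verification of $w\beta_1=\alpha_1u$ is exactly what is needed and matches the paper's pasting calculation for $w\beta_2=\alpha_2u$.
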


\begin{proof}
We begin by choosing two arbitrary arrows and cells as in condition {\bf [WB4]}:
let $u_1\colon A_1\to B$ and $u_2\colon A_2\to B$ be two arrows in $\frakW$ 
with 2-cells $\gamma_1\colon fu_1\Rightarrow gu_1$ and $\gamma_2\colon gu_2\Rightarrow hu_2$
such that $w\gamma_1=\alpha_1u_1$ and $w\gamma_2=\alpha_2u_2$.

Since $u_1$ and $u_2$ are in $\frakW$,  condition {\bf [WB3]} gives us a square 
$$
\xymatrix{
A_3\ar[r]^{s}\ar[d]_{t} \ar@{}[dr]|{\stackrel{\stackrel{\sim}{\Longleftarrow}}{\zeta}} & A_1\ar[d]^{u_1}
\\
A_2\ar[r]_{u_2} & B,
}
$$
with $t\in \frakW$ and $\zeta$ an invertible 2-cell. By Condition {\bf [WB2]}, there is an arrow $v\colon A\to A_3$ such that the composition $u_2tv$ is in $\frakW$, and hence by Condition {\bf [WB5]} and the invertibility of $\zeta$,  $u_1sv\in\frakW$ also.

We claim that the following arrow and 2-cells satisfy the conditions of this lemma:
$u=u_1sv$, $\beta_1=\gamma_1 sv$ and $\beta_2=((h\zeta^{-1})\cdot (\gamma_2 t)\cdot (g\zeta))\circ v$,
as in the diagram,
$$
\xymatrix{&&A\ar[d]^v
\\
 && A_3\ar[dll]_{s}\ar[d]^t\ar[drr]^s
\\
A_1\ar[dr]_{u_1} & \zeta\stackrel{\sim}{\Rightarrow} & A_2\ar[dl]_{u_2}\ar[dr]^{u_2} 
    & \zeta^{-1}\stackrel{\sim}{\Rightarrow} & A_1\ar[dl]^{u_1}
\\
&B\ar[dr]_g&\stackrel{\gamma_2}{\Rightarrow} & B\ar[dl]^h
\\
&&C\rlap{\quad.}
}$$

To prove this claim, first note that since $\gamma_1$ was chosen to satisfy {\bf [WB4]},  $w\beta_1=w\gamma_1 sv=\alpha_1 u_1sv=\alpha_1 u$.
Now using the fact that $\gamma_2$ was also chosen so that  $ w \gamma_2 = \alpha_2 u_2$, we calculate $w\beta_2$ in the following diagrams:
$$
\xymatrix@C=1.5em{
&&A\ar[d]^v &&&&&A\ar[d]^v
\\
 && A_3\ar[dll]_{s}\ar[d]^t\ar[drr]^s && & && A_3\ar[dll]_{s}\ar[d]^t\ar[drr]^s&&
\\
A_1\ar[dr]_{u_1} & \zeta\stackrel{\sim}{\Rightarrow} & A_2\ar[dl]_{u_2}\ar[dr]^{u_2} 
    & \zeta^{-1}\stackrel{\sim}{\Rightarrow} & A_1\ar[dl]^{u_1} & A_1\ar[drr]_{u_1} & \zeta\stackrel{\sim}{\Rightarrow} 
    & A_2\ar[d]^{u_2} & \zeta^{-1}\stackrel{\sim}{\Rightarrow} & A_1 \ar[dll]^{u_1}
\\
&B\ar[dr]_g&\stackrel{\gamma_2}{\Rightarrow} & B\ar[dl]^h & \ar@{}[r]|=&  && B\ar[dl]_g\ar[dr]^h
\\
&&C\ar[d]^w &&&&C\ar[dr]_w & \stackrel{\alpha_2}{\Rightarrow} & C\ar[dl]^w
\\
&&D && & && D
}$$
and this is clearly equal to $\alpha_2 u_1sv=\alpha_2u$, as required. 
\end{proof}

We now use this lemma to  prove that whenever the 2-cell $\alpha\colon wf\Rightarrow wg$ is invertible, there is at least one choice of a pair $(u,\beta)$ for   {\bf [WB4]} such that $\beta$ is also invertible. 

\begin{prop}\label{isolifting}
Let $\frakW$ satisfy the conditions {\bf [WB1]}--{\bf[WB5]}.
If $w\in \frakW$ and $\alpha\colon wf\Rightarrow wg$ is an invertible 2-cell, then there is an arrow $u\in \frakW$ with 
an invertible 2-cell $\beta\colon fu\Rightarrow gu$ such that $w\beta=\alpha u$.
\end{prop}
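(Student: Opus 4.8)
The plan is to build the invertible lifting by lifting both $\alpha$ and its inverse $\alpha^{-1}$, combining the two liftings over a common domain via Lemma~\ref{lifting1}, and then using the second (uniqueness-up-to-common-refinement) clause of {\bf [WB4]} to force the two resulting liftings of identity 2-cells to actually \emph{be} identities, after precomposing with suitable arrows that stay in $\frakW$.

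Concretely, I would first apply Lemma~\ref{lifting1} to the pair of 2-cells $\alpha\colon wf\Rightarrow wg$ and $\alpha^{-1}\colon wg\Rightarrow wf$ (taking the third arrow $h$ of the lemma to be $f$). This produces an arrow $u\colon A\to B$ in $\frakW$ together with 2-cells $\beta\colon fu\Rightarrow gu$ and $\beta'\colon gu\Rightarrow fu$ with $w\beta=\alpha u$ and $w\beta'=\alpha^{-1}u$, and hence $w(\beta'\cdot\beta)=(\alpha^{-1}\cdot\alpha)u=\mathrm{id}_{wfu}$ and similarly $w(\beta\cdot\beta')=(\alpha\cdot\alpha^{-1})u=\mathrm{id}_{wgu}$. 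Thus $\beta'\cdot\beta\colon fu\Rightarrow fu$ is a lifting of $\mathrm{id}_{wf}$ with respect to $w$, and $\beta\cdot\beta'\colon gu\Rightarrow gu$ is a lifting of $\mathrm{id}_{wg}$. If I can arrange both of these to become identity 2-cells, then $\beta$ and $\beta'$ are mutually inverse and $(u,\beta)$ is the desired pair.

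To kill $\beta'\cdot\beta$, note that $(u,\beta'\cdot\beta)$ and $(u,\mathrm{id}_{fu})$ are both pairs of the type produced by {\bf [WB4]} for the 2-cell $\mathrm{id}_{wf}$ (here the two 1-cells in {\bf [WB4]} coincide, both being $f$). The comparison clause of {\bf [WB4]} then supplies arrows $s,t$ with $us,ut\in\frakW$ and an invertible 2-cell $\varepsilon\colon us\Rightarrow ut$ for which the relevant square commutes; that square reads $(f\varepsilon)\cdot((\beta'\cdot\beta)\circ s)=(\mathrm{id}_{fu}\circ t)\cdot(f\varepsilon)=f\varepsilon$, so cancelling the invertible $f\varepsilon$ gives $(\beta'\circ s)\cdot(\beta\circ s)=\mathrm{id}_{fus}$. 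Now replace $u$ by $us$, $\beta$ by $\beta\circ s$, $\beta'$ by $\beta'\circ s$; whiskering by $s$ preserves $w\beta=\alpha u$ and the fact that $\beta\cdot\beta'$ lifts $\mathrm{id}_{wg}$, while now $\beta'\cdot\beta=\mathrm{id}_{fu}$. Applying the comparison clause once more to $(u,\beta\cdot\beta')$ and $(u,\mathrm{id}_{gu})$ yields an arrow $s'$ with $us'\in\frakW$ and $(\beta\circ s')\cdot(\beta'\circ s')=\mathrm{id}_{gus'}$; after this second replacement we still have $\beta'\cdot\beta=\mathrm{id}$ (whiskering by $s'$ preserves the previous identity) and now also $\beta\cdot\beta'=\mathrm{id}$, so $\beta$ is invertible with inverse $\beta'$, $u\in\frakW$, and $w\beta=\alpha u$.

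The only thing that needs care is the bookkeeping across the two applications of the comparison clause: one must check that whiskering a lifting by a further arrow keeps it a lifting of the same 2-cell over a composite that remains in $\frakW$ (which it does, by construction of $us$ and $us'$), so that killing the second identity 2-cell does not disturb the first. I do not expect any deeper obstacle — in particular, no coherence subtleties intrude, since all the 2-cells being cancelled have equal source and target 1-cells.
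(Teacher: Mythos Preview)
Your proof is correct and follows essentially the same route as the paper's: apply Lemma~\ref{lifting1} to $\alpha$ and $\alpha^{-1}$ to obtain simultaneous liftings $\beta,\beta'$ over a common $u\in\frakW$, then use the comparison clause of {\bf[WB4]} twice (once on the lifting of $\mathrm{id}_{wf}$, once on the lifting of $\mathrm{id}_{wg}$) to force $\beta'\cdot\beta$ and $\beta\cdot\beta'$ to become identities after further precomposition. The paper's argument is identical up to notation; your reading of the comparison square and the cancellation of $f\varepsilon$ are exactly what the paper does.
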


\begin{proof}
We begin by applying Lemma~\ref{lifting1} to the case where $h=f$, $\alpha_1=\alpha$ and $\alpha_2=\alpha^{-1}$.
This gives us an arrow $v\in \frakW$ and 2-cells $\gamma\colon fv\Rightarrow gv$ and $\gamma'\colon gv\Rightarrow fv$
such that $w\gamma=\alpha v$ and $w\gamma'=\alpha^{-1} v$.
So $w(\gamma'\cdot\gamma)=(\alpha^{-1}\cdot\alpha)v=\mbox{id}_{wf}v$.  This does not guarantee that  $\gamma$ and $ \gamma'$ are inverses, but  we will show that there is a further lifting $v'$ such that $vv' \in \frakW$ and $\gamma v'$ and $\gamma' v'$ are inverses.   

We create $v'$ in two stages.  First we find $u_1$ such that $(\gamma' u_1)(\gamma u_1) = \mbox{id}_{fu_1}$, and then we find $w_1$ such that $(\gamma u_1 w_1) (\gamma' u_1 w_1) = \mbox{id}_{fu_1w_1}$.  
To find $u_1$, we observe that both $w(\gamma' \gamma) = \mbox{id}_{wf} v$ and $w \circ \mbox{id}_{fv} = \mbox{id}_{wf} v$.  Thus,  $(v,\gamma'\cdot \gamma)$ and $(v,\mbox{id}_{fv})$ are both pairs of liftings of $\mbox{id}_{wf}$ with respect to $w$ as in {\bf [WB4]}.  The second half of {\bf [WB4]} gives a relationship between any two such pairs, so applying that here gives two maps, $u_1$ and $u_2$, and an 
 invertible 2-cell, 
$$
\xymatrix{
\ar[r]^{u_1}\ar@{}[dr]|{\delta\Downarrow\cong} \ar[d]_{u_2} & \ar[d]^v
\\
\ar[r]_v & 
}$$
with $vu_i\in\mathfrak{W}$ and  
such that
$$
\xymatrix{
& \ar[dl]_{u_1}\ar[d]^{u_2}\ar[dr]^{u_1} &&&& \ar[dl]_{u_1}\ar[d]^{u_1}\ar[dr]^{u_1} &
\\
\ar[d]_v\ar@{}[r]|{\stackrel{\delta}{\Rightarrow}} & \ar[dl]^v\ar[dr]_v\ar@{}[r]|{\stackrel{\delta^{-1}}{\Rightarrow}} & \ar[d]^v\ar@{}[drr]|= &&\ar[d]_v\ar@{}[dr]|{\stackrel{\gamma u_1}{\Rightarrow}} &\ar[d]|v \ar@{}[dr]|{\stackrel{\gamma' u_1}{\Rightarrow}}  &\ar[d]^v
\\
\ar[dr]_f\ar@{}[rr]|{\mbox{\scriptsize id}_{fv}} &&\ar[dl]^f&&\ar[dr]_f & \ar[d]^g & \ar[dl]^f
\\
&&&&&&
}$$
The left-hand side of this equation is equal to the identity 2-cell, $\mbox{id}_{fvu_1}$, so $\gamma'u_1\cdot\gamma u_1=\mbox{id}_{fvu_1}$.

Now we create $w_1$ via the same argument applied to the 2-cells  $\gamma u_1\cdot\gamma' u_1$ and $\mbox{id}_{gvu_1}$.
We know that  $w(\gamma u_1\cdot\gamma' u_1)=(\alpha\cdot\alpha^{-1})vu_1=\mbox{id}_{wg}vu_1=\mbox{id}_{wgvu_1}=w \mbox{id}_{gvu_1}$.
So both $(vu_1,\gamma u_1\cdot\gamma' u_1)$ and $(vu_1,\mbox{id}_{gvu_1})$ 
are liftings of $\mbox{id}_{wg}$ with respect to $w$, and applying the second half of {\bf [WB4]} as above gives us $w_1, w_2$ and an invertible 2-cell $\epsilon$ such that $vu_1w_i\in\mathfrak{W}$ and
$\gamma u_1w_1\cdot\gamma' u_1w_1=\mbox{id}_{gvu_1w_1}$.  
We conclude that $\gamma' u_1w_1=(\gamma u_1w_1)^{-1}$.  Therefore setting $v' = u_1 w_1$, $u = v v' = vu_1w_1$ and $\beta = \gamma u_1w_1$ satisfies the requirements of the proposition.  
\end{proof}

\begin{rmk}
Combining the proofs for Proposition~\ref{isolifting} and Lemma~\ref{lifting1} shows that if $\alpha$ in 
Proposition~\ref{isolifting}  is invertible, for any arrow $u\in\frakW$ with 2-cell
$\beta\colon fu\Rightarrow gu$ such that $w\beta=\alpha u$,
there is an arrow $s$ such that $\beta\circ s$ is invertible. 
\end{rmk}

The following result concerning cancellability of arrows in $\frakW$ was communicated to us by Matteo Tommasini \cite{Matteo0}.

\begin{lma}\label{Matteo1}
Let $\frakW$ satisfy the conditions {\bf[WB1]}--{\bf[WB4]}.
For any diagram $$\xymatrix@C=5em{C\ar@<1ex>[r]^f\ar@<-1ex>[r]_g\ar@{}[r]|{\Downarrow\beta_1\quad\Downarrow\beta_2} &B\ar[r]^w &A}$$ with $w\in\frakW$, if 
$w\beta_1=w\beta_2$ then there exists an arrow $v\colon D\to C$ in $\frakW$ such that $\beta_1v=\beta_2v$.
\end{lma}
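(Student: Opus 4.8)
The plan is to exploit the second (uniqueness-type) half of condition {\bf[WB4]}. The hypothesis $w\beta_1 = w\beta_2$ says that $\beta_1$ and $\beta_2$ are two liftings of one and the same 2-cell $\alpha \colon wf \Rightarrow wg$ — namely $\alpha := w\beta_1 = w\beta_2$, with $u = \mathrm{id}_C$ in the notation of {\bf[WB4]}. More precisely, the pairs $(\mathrm{id}_C, \beta_1)$ and $(\mathrm{id}_C, \beta_2)$ both qualify as the pairs $(u_i,\beta_i)$ appearing in the second half of {\bf[WB4]}. Applying that clause therefore yields arrows $s,t$ with $s = \mathrm{id}_C\circ s$ and $t = \mathrm{id}_C \circ t$ in $\frakW$, and an invertible 2-cell $\varepsilon\colon s \Rightarrow t$ (after identifying $\mathrm{id}_C\circ s$ with $s$, etc.) such that the square
$$
\xymatrix{
f\circ s\ar[rr]^{\beta_1\circ s} \ar[d]_{f\circ\varepsilon}&&g\circ s\ar[d]^{g\circ\varepsilon}\\
f\circ t\ar[rr]_{\beta_2\circ t}&& g\circ t
}
$$
commutes. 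This is not yet the conclusion, because the square equates $\beta_1\circ s$ with $\beta_2\circ t$ conjugated by $\varepsilon$ on two genuinely different whiskering arrows $s$ and $t$; I need both sides whiskered by a single arrow $v \in \frakW$ and the connecting 2-cell to disappear.

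The next step is to straighten out the discrepancy between $s$ and $t$ using the invertible 2-cell $\varepsilon \colon s \Rightarrow t$. First I would like $s$ and $t$ to have a common domain; this is automatic here since both are whiskering arrows out of $C$, so write $s,t\colon D\to C$. Now $\varepsilon$ is an invertible 2-cell between parallel arrows in $\frakW$ (note $s = \mathrm{id}_C\circ s\in\frakW$ and $t\in\frakW$), so I can apply the machinery of Section~\ref{D:add_2_cells} — specifically Lemma~\ref{lifting1} and/or Proposition~\ref{isolifting} would be overkill, but the relevant point is simpler: I want to ``kill'' $\varepsilon$ after further whiskering. The commuting square rewrites as
$$
\beta_2\circ t \;=\; (g\circ\varepsilon)\cdot(\beta_1\circ s)\cdot(f\circ\varepsilon^{-1}).
$$
Using the interchange law, $(g\circ\varepsilon)\cdot(\beta_1\circ s) = (\beta_1\circ t)\cdot(f\circ\varepsilon)$ — wait, that is exactly the naturality of $\beta_1$ with respect to $\varepsilon$ — so $(g\circ\varepsilon)\cdot(\beta_1\circ s)\cdot(f\circ\varepsilon^{-1}) = \beta_1\circ t$. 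Hence in fact $\beta_1\circ t = \beta_2\circ t$ already, with $v := t \in \frakW$. So the conclusion follows once I observe that whiskering a 2-cell is natural in the whiskering arrow.

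Let me reconsider whether that last interchange step is valid: for $\beta_1 \colon fs \Rightarrow gs$ — careful, $\beta_1$ is $fu\Rightarrow gu$ for $u=\mathrm{id}_C$, i.e.\ $\beta_1\colon f\Rightarrow g$, and $\beta_1\circ s$ denotes the whiskering on the right by $s$. Then $(g\circ\varepsilon)\cdot(\beta_1\circ s)$ and $(\beta_1\circ t)\cdot(f\circ\varepsilon)$ are both 2-cells $fs\Rightarrow gt$, and they agree by the interchange law applied to the horizontal composite $\beta_1 \circ \varepsilon$ (two ways of decomposing it). So indeed $\beta_1\circ t = \beta_2\circ t$, and $v=t$ works. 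The only genuine subtlety — and the place where the argument could go wrong — is making sure the arrows ``$s$'' and ``$t$'' produced by {\bf[WB4]} really are composites with $\mathrm{id}_C$ that can be identified with $s$ and $t$ themselves up to the coherence isomorphisms of the bicategory, and that these coherence isomorphisms do not reintroduce a nontrivial 2-cell; this is routine but should be spelled out, inserting the left-unit constraints where needed. I expect writing that bookkeeping carefully — rather than any conceptual difficulty — to be the main obstacle.
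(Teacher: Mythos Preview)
Your proposal is correct and follows essentially the same approach as the paper: apply the second clause of {\bf[WB4]} with $u_1=u_2=\mathrm{id}_C$, obtain the square relating $\beta_1$ and $\beta_2$ via $\varepsilon$, and then use the interchange law to cancel. The only cosmetic difference is that the paper cancels $g\varepsilon$ on the left to conclude $\beta_1 s=\beta_2 s$, whereas you cancel $f\varepsilon$ on the right to conclude $\beta_1 t=\beta_2 t$; both are immediate consequences of the identity $\beta_1\circ\varepsilon=\beta_2\circ\varepsilon$.
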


\begin{proof}
Apply the second part of {\bf [WB4]} to $\alpha:=w\beta_1=w\beta_2$, $u_1=u_2:=\mbox{id}_C$ and the 2-cells $\beta_1$ and $\beta_2$ as given (for simplicity we omit the structure cells from the bicategory in this calculation).
This gives us the existence of arrows $v,v'\colon D\rightrightarrows C$ such that $\mbox{id}_Cv,\mbox{id}_Cv'\in \frakW$, and hence $v,v'\in \frakW$ by {\bf [WB5]}, with an invertible 2-cell $\varepsilon\colon v\Rightarrow v'$ such that $\beta_1\circ \varepsilon=\beta_2\circ\varepsilon$.
Composing with $g \varepsilon^{-1}$ gives us that $\beta_1v=\beta_2v$ with $v\in\frakW$ as required.
\end{proof}

\subsection{Squares as in Condition {\bf [WB3]}}
In this section we address a question related to condition {\bf [WB3]}: 
if there are  two squares as in {\bf [WB3]} for the same cospan, how are these squares related to each other?
This question was answered in the proof of Lemma A.1.1 in \cite{Pr-comp} for cospans where  both arrows are in $\frakW$.
Here, we prove a more general result, for cospans with just one arrow in $\frakW$ and  assuming only the weaker condition {\bf [WB2]}.  This result will play a crucial role in the constructions of whiskering of 2-cells with arrows in the bicategory of fractions and in the construction of the associativity isomorphisms.
It will also be used in the study of the equivalence relation on the 2-cells diagrams.

\begin{prop}\label{anytwosquares}
 For $w\colon A\to B$ in $\frakW$ and $f\colon C\to B$ any  arrow in $\calB$, 
and any two squares,
$$
\xymatrix{
D_1\ar[d]_{v_1}\ar[r]^{g_1} \ar@{}[dr]|{\alpha_1\stackrel{\sim}{\Leftarrow}} 
    & A\ar[d]^w & D_2\ar[d]_{v_2}\ar[r]^{g_2} \ar@{}[dr]|{\alpha_2\stackrel{\sim}{\Leftarrow}} & A\ar[d]^w
\\
C\ar[d]_u\ar[r]_f &B & C\ar[d]_u\ar[r]_f &B
\\
X&&X
}$$
where $u, uv_1$ and $uv_2$ are all in $\frakW$,
then there are arrows $s_1$ and $s_2$ and invertible 2-cells $\beta$ and $\gamma$ as in
$$
\xymatrix{
& D_1\ar[dl]_{v_1}\ar[dr]^{g_1} & 
\\
C\ar@{}[r]|{\wr\Downarrow\beta} &E \ar[u]_{s_1}\ar[d]^{s_2}\ar@{}[r]|{\wr\Downarrow\gamma} & A
\\
&D_2\ar[ul]^{v_2}\ar[ur]_{g_2}
}
$$
such that $uv_1s_1\in \frakW$, and the composites $(f\beta)\cdot(\alpha_1s_1)$ and $(\alpha_2 s_2)\cdot(w\gamma)$ are equal:
$$
\xymatrix{\ar[r]^{s_1}\ar[d]_{s_2}\ar@{}[dr]|{\stackrel{\beta}{\Leftarrow}} & \ar[r]^{g_1}\ar[d]|{v_1}\ar@{}[dr]|{\stackrel{\alpha_1}{\Leftarrow}} & \ar[d]^w\ar@{}[drr]|{\textstyle\equiv} && \ar[d]_{s_2}\ar[r]^{s_1}\ar@{}[dr]|{\stackrel{\gamma}{\Leftarrow}} & \ar[d]^{g_1}
\\
\ar[r]_{v_2} & \ar[r]_{f} & && \ar[r]|{g_2}\ar@{}[dr]|{\stackrel{\alpha_2}{\Leftarrow}}\ar[d]_{v_2} & \ar[d]^w
\\
&&&&\ar[r]_f&}
$$
\end{prop}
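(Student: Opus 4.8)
The plan is to build the comparison object $E$ by repeatedly applying the structural conditions, much as in Lemma~\ref{lifting1}, and then to use the 2-cell cancellation available from {\bf [WB4]} (specifically Lemma~\ref{Matteo1}) to force the pasting-equality after a further lift. First I would apply {\bf [WB3]} to the cospan $v_1\colon D_1\to C$, $v_2\colon D_2\to C$ (note $v_1$ need not be in $\frakW$, but $v_2$ is, since $uv_2\in\frakW$ and $u\in\frakW$ — wait, this does not follow directly, so more care is needed). Actually the cleaner route: apply {\bf [WB3]} to the pair $uv_1\colon D_1\to X$ and $uv_2\colon D_2\to X$, both of which are in $\frakW$; this gives a square with vertices $E_0$, arrows $r_1\colon E_0\to D_1$, $r_2\colon E_0\to D_2$, with $r_2\in\frakW$ (so $uv_2 r_2\in\frakW$), and an invertible 2-cell $uv_1 r_1\cong uv_2 r_2$. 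Then using {\bf [WB2]} on the composable pair in $\frakW$ ending the chain, I obtain a pre-composition $E\to E_0$ making the relevant long composite land in $\frakW$, so that $uv_1 s_1\in\frakW$ with $s_1=r_1(\text{that pre-composition})$ and similarly $s_2$.

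The next step is to produce the invertible 2-cells $\beta\colon v_2 s_2\Rightarrow v_1 s_1$ and $\gamma\colon g_2 s_2\Rightarrow g_1 s_1$. For $\beta$: from the square above I have an invertible $\mu\colon u v_1 s_1\Rightarrow u v_2 s_2$; since $u\in\frakW$, Proposition~\ref{isolifting} (applied to $u$ and the invertible $\mu$, after suitable whiskering) yields, after a further precomposition which I fold into $s_1,s_2$, an invertible 2-cell $\beta\colon v_1 s_1\Rightarrow v_2 s_2$ with $u\beta=\mu$. For $\gamma$: consider the 2-cell $w g_1 s_1 \xRightarrow{\alpha_1 s_1} f v_1 s_1 \xRightarrow{f\beta} f v_2 s_2 \xRightarrow{(\alpha_2 s_2)^{-1}} w g_2 s_2$; this is an invertible 2-cell $wg_1 s_1\Rightarrow wg_2 s_2$ with $w\in\frakW$, so again Proposition~\ref{isolifting} produces, after one more precomposition absorbed into the $s_i$, an invertible $\gamma\colon g_1 s_1\Rightarrow g_2 s_2$ with $w\gamma$ equal to that composite — which is exactly the required pasting-equality $(f\beta)\cdot(\alpha_1 s_1)=(\alpha_2 s_2)\cdot(w\gamma)$.

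I expect the main obstacle to be bookkeeping rather than a conceptual hurdle: each application of {\bf [WB3]}, {\bf [WB2]}, and Proposition~\ref{isolifting} introduces a new precomposing arrow, and one must check that (i) the previously-established invertible 2-cells $\beta$, the square isomorphisms, etc., survive whiskering by these new arrows (they do, being invertible), and (ii) the membership $uv_1 s_1\in\frakW$ is preserved, which requires invoking {\bf [WB2]} at the right moments and {\bf [WB5]} to transport membership across the invertible structure 2-cells of the bicategory. The genuinely delicate point is the very last step: Proposition~\ref{isolifting} only gives $w\gamma$ equal to the desired composite \emph{after} precomposition, so I must verify that precomposing $\beta$ and the $\alpha_i s_i$ by the same arrow keeps the diagram coherent; this is where Lemma~\ref{Matteo1} is the safety net, since if two candidate $\gamma$'s give the same value after whiskering by $w$, they agree after a further $\frakW$-precomposition. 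A secondary subtlety is coherence with the bicategory's associators and unitors throughout the whiskering manipulations; following the convention in the proof of Lemma~\ref{lifting1}, I would suppress these and remark that they are inserted in the obvious way.
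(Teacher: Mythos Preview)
Your proposal is correct and follows essentially the same route as the paper's proof: apply {\bf [WB3]} to the cospan $(uv_1,uv_2)$ in $\frakW$, lift the resulting invertible 2-cell through $u$ via Proposition~\ref{isolifting} to obtain $\beta$, form the composite $(\alpha_2 s_2)^{-1}\cdot(f\beta)\cdot(\alpha_1 s_1)\colon wg_1s_1\Rightarrow wg_2s_2$, lift this through $w$ via Proposition~\ref{isolifting} again to obtain $\gamma$, and finally use {\bf [WB2]} (and {\bf [WB5]}) to force $uv_1s_1\in\frakW$. One small remark: your invocation of Lemma~\ref{Matteo1} as a ``safety net'' is unnecessary---once you absorb the precomposition arrow from the second application of Proposition~\ref{isolifting} into the $s_i$ and whisker $\beta$ accordingly, the pasting equality holds on the nose, exactly as in the paper.
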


\begin{proof}
 Since $uv_1$ is in $\frakW$,  condition {\bf [WB3]} gives us a square
$$
\xymatrix{
F\ar[d]_{\overline{v}_1}\ar[r]^{\overline{v}_2}\ar@{}[dr]|{\stackrel{\sim}{\Longleftarrow}\beta'} & D_1\ar[d]^{uv_1}
\\
D_2\ar[r]_{uv_2} & X
}$$
with $\overline{v}_1\in\frakW$. Applying Proposition~\ref{isolifting} to the 2-cell $\beta'\colon  u v_1 \overline{v}_2 \Rightarrow uv_2 \overline{v}_1$, we get an  arrow $\tilde{u}\colon F'\to F$ in $\frakW$ and an invertible 2-cell $\tilde{\beta}'\colon v_1 \overline{v}_2 \tilde{u} \Rightarrow  v_2 \overline{v}_1 \tilde{u}  $.   

Then we have the following invertible 2-cell from $wg_1\overline{v}_2\tilde{u}$ to $wg_2\overline{v}_1\tilde{u}$.
$$
\xymatrix{
&D_1\ar[r]^{g_1}\ar[dr]_{v_1} &A \ar@{}[d]|{\alpha_1\, \Downarrow}\ar[dr]^w
\\
F'\ar[ur]^{\overline{v}_2\tilde{u}}\ar@{}[rr]|{\Downarrow \, \tilde{\beta}'} \ar[dr]_{\overline{v}_1\tilde{u}} && C\ar[r]^f\ar@{}[d]|{\alpha_2^{-1}\, \Downarrow} &B
\\
&D_2\ar[ur]^{v_2}\ar[r]_{g_2}&A\ar[ur]_w&
}
$$
By applying Proposition~\ref{isolifting} with respect to $w$, there is an arrow $\tilde{w}\colon F''\to F'$ in $\frakW$ with an invertible 2-cell $\gamma'\colon g_1\overline{v}_2\tilde{u}\tilde{w}\Rightarrow g_2\overline{v}_1\tilde{u}\tilde{w}$
such that $w\gamma'$ is equal to the pasting of this last diagram composed with $\tilde{w}$.
Finally, by repeatedly applying condition {\bf [WB2]} to the string of composable $\frakW$ arrows  $uv_2, \overline{v}_1,\tilde{u},\tilde{w}$, there is an arrow $t\colon E\to F''$ such that $uv_2\overline{v}_1\tilde{u}\tilde{w}t\in\frakW$. By condition {\bf [WB5]} it follows that  $uv_1\overline{v}_2\tilde{u}\tilde{w}t\in\frakW$ as well.
The reader may verify that $s_1=\overline{v}_2\tilde{u}\tilde{w}t$, $s_2=\overline{v}_1\tilde{u}\tilde{w}t$, $\beta=\tilde{\beta}'\tilde{w}t$ and $\gamma=\gamma't$  
satisfy the conditions of this proposition.
\end{proof}

\begin{rmk} An extension of the result of Proposition~\ref{anytwosquares}, discussing how any two solutions to the problem of this proposition are related, can be found in Appendix~\ref{assoc1}, Proposition~\ref{unique-2-cell}.\end{rmk}


\section{The New Bicategory of Fractions Construction}\label{newBF2}
We will now show that the conditions  introduced in Section~\ref{WB-conditions}  are  sufficient to construct a bicategory of fractions $\calB(\frakW^{-1})$.  Given a bicategory $\calB$ and a class of arrows $\frakW$ which satisfies the conditions {{\bf [WB1]}}--{{\bf [WB5]}}, 
we first  describe the new bicategory $\calB(\frakW^{-1})$, and then show that it has the universal property of the bicategory of fractions.
The objects, arrows and 2-cells of $\calB(\frakW^{-1})$ are defined just as in \cite{Pr-comp}, but we will need to adjust the definition of composition and pasting.
We begin by reminding the reader of the definition as given in \cite{Pr-comp}.

\begin{itemize}
\item
{\it Objects} are the objects of $\calB$.

\item
{\it Arrows} are spans of the form $\xymatrix@1{&\ar[l]_w\ar[r]^f&}$ with $w\in\frakW$ and $f$ an arbitrary arrow in $\calB$.

\item
{\it 2-Cells} are equivalence classes of diagrams of the form 
\begin{equation}\label{first2-cell}
\xymatrix@C=4em{&C\ar[dl]_w\ar[dr]^f
\\
A\ar@{}[r]|{\Downarrow\alpha\cong} & D\ar[u]_u\ar[d]^{u'}\ar@{}[r]|{\Downarrow\beta} & B
\\
&C'\ar[ul]^{w'}\ar[ur]_{f'}\rlap{\quad,}
}
\end{equation}
where $wu$ is in $\frakW$ (and hence $w'u'$ is).
Such a diagram (\ref{first2-cell}) is \emph{equivalent} to another such diagram 
$$
\xymatrix@C=4em{&C\ar[dl]_w\ar[dr]^f
\\
A\ar@{}[r]|{\Downarrow\gamma\cong} & E\ar[u]_v\ar[d]^{v'}\ar@{}[r]|{\Downarrow\delta} & B
\\
&C'\ar[ul]^{w'}\ar[ur]_{f'}
}
$$
(with $wv$ in $\frakW$)
 if and only if there exists a diagram of the form
 $$
 \xymatrix{
 &C
 \ar@{}[d]|(.4){\stackrel{\sim}{\Rightarrow}}\ar@{}[d]|(.6){{\varepsilon}}
 \\
 D\ar[ur]^{u}\ar[dr]_{u'}&F\ar[l]_s\ar[r]^t \ar@{}[d]|(.4){\stackrel{\sim}{\Rightarrow}}\ar@{}[d]|(.6){{\varepsilon'}}&E\ar[dl]^{v'}\ar[ul]_{v}
 \\
 &C'
 }
 $$
with $wus\in \frakW$, such that
$$
\xymatrix{
&& C\ar@/^1ex/[dr]^f\ar@{}[dl]|{\stackrel{\varepsilon}{\Rightarrow}} &&&&&C\ar@/^1ex/[dr]^f
\\
D \ar@/^1.5ex/[urr]^u &\ar[l]^s F \ar[r]_t & E\ar[u]_v\ar[d]^{v'}\ar@{}[r]|{\Downarrow\delta} 
    &B &\equiv & E\ar@/_1.5ex/[drr]_{v'} &F\ar[l]_t\ar[r]^s\ar@{}[dr]|{\stackrel{\varepsilon'}{\Leftarrow}} 
    & D\ar[u]^u\ar[d]^{u'}\ar@{}[r]|{\Downarrow\beta} & B
\\
&&C'\ar@/_1ex/[ur]_{f'}&&&&&C'\ar@/_1ex/[ur]_{f'}
}
$$ 
and 
$$
\xymatrix{
&C\ar@/_1ex/[dl]_w && && & C\ar@/_1ex/[dl]_w \ar@{}[dr]|{\stackrel{\varepsilon}{\Leftarrow}}
\\
A\ar@{}[r]|{\Downarrow\alpha} &D\ar[u]_u\ar[d]_{u'} & F\ar[l]_s\ar[r]^t\ar@{}[dl]|{\stackrel{\varepsilon'}{\Rightarrow}} & \ar@/^1.5ex/[dll]^{v'} 
	&\equiv & A\ar@{}[r]|{\Downarrow\gamma}& \ar[u]^v\ar[d]^{v'}E &\ar[l]^t\ar[r]_s F & D\ar@/_1.5ex/[ull]_u
	\\
&  C'\ar@/^1ex/[ul]^{w'} &&&&&C'\ar@/^1ex/[ul]^{w'}\rlap{\quad.}
}
$$
  \end{itemize}

\begin{rmk}  In the description above, we consistently only require half of our arrow compositions 
to be in $\frakW$.  For example, we require only that $wv \in \frakW$, and not the corresponding $w'v'$;  similarly we only require $wus \in \frakW$.   However, since the $2$-cells are invertible and $\frakW$ satisfies {\bf [WB5]}, the other half  follows automatically. 
\end{rmk}  

The original condition {\bf BF2} was used in \cite{Pr-comp} in the construction of composition of arrows and horizontal and vertical  
composition of 2-cells in the bicategory of fractions.  In constructing these compositions  under our weaker conditions, we need to adjust for the fact that $\frakW$ is no longer closed under composition.  Instead, we have the condition {\bf [WB2]} that allows us to pre-compose with an additional arrow to get a composition in $\frakW$.  The description of the compositions in \cite{Pr-comp} relies heavily on the choices of squares as in
condition {\bf [WB3]} and liftings as in condition {\bf [WB4]} (although,  in fact, the construction only depends on the choices of the squares when they are used to compose the spans, as Tommasini \cite{Matteo1} has shown that different choices made in the composition of 2-cells  give equivalent representatives).   In describing the compositions in the new bicategory of fractions, we use a collection of choices for arrows for composites as in {\bf [WB2]} to augment the choices of squares and liftings to make sure that the necessary arrows are in $\frakW$.  
We list and label these choices here before beginning the constructions so we can refer back to them.

\begin{no}  \label{choices}
The following choices of arrows and 2-cells will be used in the construction of the bicategory of fractions $\calB(\frakW^{-1})$. The first three choices really determine the construction. The last four are just short-cuts for frequently used combinations of the first three.

\begin{itemize}
    \item[{\bf[C1]}]   For each pair of composable arrows $\xymatrix{\ar[r]^v&\ar[r]^u&}$ in $\frakW$ use {\bf [WB2]} to choose an arrow $w_{u,v}$ such that $uvw_{u,v}\in\frakW$.  
When $v$ is an identity arrow, choose  $w_{u,v}$ to be an identity as well.
    \item[{\bf [C2]}]  For every pair $\xymatrix{\ar[r]^{f}&&\ar[l]_u}$ with $u \in \frakW$ use {\bf [WB3]} to choose a square 
$$
\xymatrix{
R\ar[d]_{u'} \ar[r]^{f'}\ar@{}[dr]|{\stackrel{\alpha}{\Longleftarrow}} &T\ar[d]^{u}\\
S \ar[r]_{f} &B
}
$$
with $u'\in\frakW$ and $\alpha$ invertible.
When we want to stress the dependence of $\alpha$ on $f$ and $u$, we denote this cell by $\alpha_{f,u}$. Furthermore, require that when $u=1_B$, we choose the square,
$$
\xymatrix{
A\ar[rr]^f\ar[drr]|f\ar[d]_{1_A} && B\ar@{}[dll]|(.3){\stackrel{\lambda_f}{\Longleftarrow}}|(.7){\stackrel{\rho_f^{-1}}{\Longleftarrow}}\ar[d]^{1_B}
\\
A\ar[rr]_f&&B}
$$
where $\lambda_f$ and $\rho_f$ are the left and right unitor 2-cell respectively.

\item[{\bf [C3]}] Given $\alpha\colon w\,\circ\,f\,\Rightarrow\,w\,\circ\,g$, a 2-cell
with $w\,\in\, \frakW$, choose  a 1-cell $\tilde{w}\,\in\, \frakW$  and a 2-cell
$$\tilde\alpha\colon f\,\circ\,u\,\Rightarrow\,g\,\circ\,u$$ such that 
$\alpha\,\circ\,\tilde w\,=\,w\,\circ\,\tilde\alpha$.   Using Proposition~\ref{isolifting}, we   choose $\tilde\alpha$ to be invertible whenever $\alpha$ is. 

\item[{\bf [C4]}] \label{triplezz} For each  zig-zag, $\xymatrix@1{&\ar[l]_w\ar[r]^f &&\ar[l]_v}$ with $v$ and $w$ in $\frakW$,  {\bf [C2]} determines arrows $f'$ and $v'$ and an invertible 2-cell $\alpha_{f, v}\colon vf'\Rightarrow fv'$.   Compose this with the choice  $w_{w,v'}$ from {\bf [C1]} to get $w v' w_{w,v'} \in \frakW$, to obtain the diagram
$$
\xymatrix{
&&\ar[d]^{w_{w,v'}}
\\
&&\ar[dl]_{v'}\ar[dr]^{f'}\ar@{}[dd]|{\stackrel{\alpha_{f,v}}{\Leftarrow}}
\\
&\ar[dl]_{w}\ar[dr]_f && \ar[dl]^v
\\
&&&\rlap{\qquad.}
}
$$   Defining $\overline{v} = v' w_{w,v'}, \overline{f} = f' w_{w,v'}$ and $\alpha^{w}_{f, v} = \alpha_{f, v }   w_{w,v'}$ gives the chosen diagram 
\begin{equation*}
\xymatrix{
&&\ar[dl]_{\overline{v}}\ar[dr]^{\overline{f}}\ar@{}[dd]|{\stackrel{\alpha_{f,v}^w}{\Leftarrow}}
\\
&\ar[dl]_w\ar[dr]_f &&\ar[dl]^v\\
&&&
}
\end{equation*}
with $w\overline{v}\in\frakW$.  Note that $\overline{v}$ is not guaranteed to be in $\frakW$, but $w\overline{v}$ {\em is} always in $\frakW$ by construction.

\item[{\bf [C5]}] \label{compinW} For each cospan $\xymatrix@1{\ar[r]^w&&\ar[l]_v}$ with both arrows   $w, v \in \frakW$, apply {\bf [C2]} to obtain a square with an invertible 2-cell $\alpha_{w,v}$.  Then compose with $w_{w,v'}$ from {\bf [C1]} to get $v'w_{w,v'} \in \frakW$.  Define $\hat{v} = v'w_{w,v'}, \hat{w} = w'w_{w,v'}$ and $\hat{\alpha}_{w, v} =  \alpha_{w, v}w_{w,v'}$  to obtain the chosen square 

\begin{equation*}
\xymatrix{
\ar[d]_{\hat{v}}\ar[r]^{\hat{w}}\ar@{}[dr]|{\stackrel{\hat\alpha_{w,v}}{\Longleftarrow}}&\ar[d]^v
\\
\ar[r]_w&
}
\end{equation*}
where $w\hat{v}\in\frakW$ and the 2-cell $\hat\alpha_{w,v}$ is invertible. 
 
 \item[{\bf [C6]}] \label{longlifting} For each invertible 2-cell $\alpha\colon w\circ s_1\Rightarrow w\circ s_2$ with $w, ws_1,ws_2\in\frakW$,  apply {\bf [C3]} to obtain  $\tilde{w} \in \frakW$ and $\tilde{\alpha} \colon  s_1 \tilde{w} \Rightarrow s_2 \tilde{w}$, with $\tilde{\alpha}$ invertible.  Then $ws_1$ and $\tilde{w}$ are in $\frakW$, so apply {\bf [C1]} to obtain an arrow $u$ such that $ws_1\tilde{w} u \in \frakW$.  Since $\tilde{\alpha}$ in {\bf [C3]} is invertible, we conclude that $w s_2 \tilde{w} u$ is also in $\frakW$.  Setting $\overline{\tilde{w}} = \tilde{w} u$, we get the chosen lifting  
\begin{equation*}\overline{\tilde\alpha}\colon s_1\overline{\tilde{w}}\Rightarrow s_2\overline{\tilde{w}}\end{equation*} 
 such that $ws_1\overline{\tilde{w}}\in\frakW$ and $\overline{\tilde\alpha}$ is invertible.

\item[{\bf [C7]}] \label{cleverlift}
For each configuration,
$$
\xymatrix@C=3.5em{
&\ar[dl]_u\ar[dr]^{wf}
\\
&\ar[u]^v\ar[d]_{v'}\ar@{}[r]|{\beta\Downarrow}&
\\
&\ar[ur]_{wf'}
}
$$
with $uv$ and $w$ in $\frakW$, {\bf [C3]} determines $\tilde{w}\in \frakW$ and $\tilde\beta\colon fv\tilde{w}\Rightarrow f'v' \tilde{w}$, and {\bf [C1]} determines an arrow $w_{\tilde{w},uv}$ with $uv\tilde{w}w_{\tilde{w},uv} \in \frakW$.  Now write $\overline{\tilde{w}}:= \tilde{w}w_{\tilde{w},uv}$ and precomposing $\tilde{\beta}$ by $w_{\tilde{w},uv}$ gives the chosen 2-cell $\overline{\tilde{\beta}}_u$ with  $uv\overline{\tilde{w}}\in\frakW$.
\begin{equation*}
\xymatrix@C=3.5em{
&\ar[dl]_u\ar[dr]^{f}
\\
&\ar[u]^{v\overline{\tilde{w}}}\ar[d]_{v'\overline{\tilde{w}}}\ar@{}[r]|{\overline{\tilde\beta}_u\Downarrow}&
\\
&\ar[ur]_{f'}
}
\end{equation*}

\end{itemize}
\end{no}

With these choices determined, we will now define the bicategory of fractions.

\noindent {\bf Composition of 1-Cells}
We define the composition of spans $\xymatrix@1{A&S\ar[l]_{u_1}\ar[r]^{f_1} &B}$ and $\xymatrix{B&T\ar[l]_{u_2}\ar[r]^{f_2} & C}$ in $\calB(\frakW^{-1})$ using the chosen square in {\bf [C4]} of Notation~\ref{choices},
$$
\xymatrix{
&&\ar[dl]_{\overline{u}_2}\ar[dr]^{\overline{f}_1}\ar@{}[dd]|(.4){\alpha_{f_1,u_2}^{u_1}}|(.6){\Leftarrow}
\\
&\ar[dl]_{u_1}\ar[dr]_{f_1} &&\ar[dl]^{u_2}\\
&&&
}
$$
so that $u_1\overline{u}_2\in\frakW$.
Then the composition of spans is given by
$$
\xymatrix{A&\ar[l]_{u_1\overline{u}_2}\ar[r]^{f_2\overline{f}_1} & C.}
$$

\begin{rmks}\label{2-cell-coh}
\begin{enumerate}
\item\label{canonical}
Proposition~\ref{anytwosquares} implies that any other choice of a square to define the composition results in an isomorphic arrow in $\calB(\frakW^{-1})$: Proposition~\ref{anytwosquares} gives a 2-cell between the two arrows in $\calB(\frakW^{-1})$ that is observed to be invertible in Remark \ref{vertcomprmks} Part \ref{rmk2}. Proposition~\ref{unique-2-cell} below further shows that the isomorphism is unique when certain properties with respect to the defining squares are required. So given the squares used to define the two ways to compose, there is a canonical invertible 2-cell between the two resulting compositions.
\item
Horizontal composition of 1-cells is clearly not associative in general. In Appendix~\ref{assoc1}, Proposition~\ref{assoc-2-cells} we introduce the family of associativity 2-cells and in Appendix~\ref{assoc2}, Proposition~\ref{P:coherence}, we show that this family satisfies the associativity coherence conditions.
The definition of the associativity cells is a direct generalization of the ones given in \cite{Pr-comp}, but the proof of coherence is a bit more involved. The appendices highlight the technical results that lead to coherence in separate propositions.
\end{enumerate}
\end{rmks}

\noindent {\bf Vertical Composition of 2-Cells}   
We define the vertical composition of 2-cell diagrams,
$$
\xymatrix@C=4em{
&\ar[dl]_{u_1}\ar[dr]^{f_1} & && &  \ar[dl]_{u_2}\ar[dr]^{f_2}&
\\
\ar@{}[r]|{\Downarrow\alpha_1} & \ar[u]|{v_1}\ar[d]|{v_2}\ar@{}[r]|{\Downarrow\beta_1} &\ar@{}[rr]|{ \mbox{and}}
		&& \ar@{}[r]|{\Downarrow\alpha_2} & \ar[u]|{v_3}\ar[d]|{v_4}\ar@{}[r]|{\Downarrow\beta_2} & 
\\
&\ar[ul]^{u_2}\ar[ur]_{f_2} &&&&\ar[ul]^{u_3}\ar[ur]_{f_3} &.
}
$$
First, since $u_2v_3$ and $u_2v_2$ are both in $\frakW$, let 
$$
\xymatrix@C=4em{
\ar[r]^{v_3'}\ar[d]_{v_2'}\ar@{}[dr]|{\stackrel{\delta}{\Leftarrow}} & \ar[d]^{u_2v_2}
\\
\ar[r]_{u_2v_3} & 
}
$$
be the chosen square in {\bf [C5]} of Notation~\ref{choices}: $\delta=\hat{\alpha}_{u_2v_3,u_2v_2}$ and  $u_2v_3v_2'\in\frakW$.  Since   $\delta$ is invertible,  $u_2v_2v_3'\in\frakW$ also.  

Next, apply {\bf [C6]} to  $\delta\colon  u_2v_2v_3' \Rightarrow u_2v_3v_2'$ and obtain an arrow $\overline{\tilde{u}}_2 \in \frakW $ and an invertible  2-cell $\overline{\tilde{\delta}}\colon  v_2v_3'\overline{\tilde{u}}_2 \Rightarrow v_3v_2'\overline{\tilde{u}}_2$.
Note that  $u_2 v_2v_3'\overline{\tilde{u}}_2\in\frakW$, as indicated in {\bf [C6]}. 

This gives us the following representative for the vertical composition,
\begin{equation}\label{verticalcomp}
\xymatrix@C=5em@R=3em{
&&\ar[ddll]_{u_1}\ar[ddrr]^{f_1}&&
\\
&&\ar[u]_{v_1} \ar@{}[dll]|{\alpha_1\Downarrow} \ar@{}[drr]|{\beta_1\Downarrow} \ar[dl]_{v_2}\ar[dr]^{v_2} &&
\\
&\ar[l]_{u_2}\ar@{}[r]|{\overline{\tilde{\delta}}\Downarrow} & \ar[u]|{v_3'\overline{\tilde{u}}_2}\ar@{}[r]|{\overline{\tilde{\delta}}\Downarrow} \ar[d]|{v_2'\overline{\tilde{u}}_2} & \ar[r]^{f_2} &
\\
&& \ar[ul]^{v_3}\ar@{}[ull]|{\alpha_2\Downarrow} \ar[d]^{v_4}\ar@{}[urr]|{\beta_2\Downarrow} \ar[ur]_{v_3} &&
\\
&&\ar[uull]^{u_3}\ar[uurr]_{f_3}
}
\end{equation}
Observe that $u_2v_2v_3'\overline{\tilde{u}}_2\in\frakW$ by construction,  and $u_1v_1v_3'\overline{\tilde{u}}_2 $ and $u_3v_4v_2'\overline{\tilde{u}}_2$ are in $\frakW$ since they are isomorphic to $u_2v_2v_3'\overline{\tilde{u}}_2$.  So this diagram represents a 2-cell from $\xymatrix@1{&\ar[l]_{u_1}\ar[r]^{f_1} &}$ to  $\xymatrix@1{&\ar[l]_{u_3}\ar[r]^{f_3} &}$.

\begin{rmks}\label{vertcomprmks}
\begin{enumerate}
    \item In Appendix C, Proposition~\ref{vertical-well-defined} we show that vertical composition is well-defined on equivalence classes of 2-cell diagrams and in Appendix A, Proposition~\ref{vert_associative} we show that it is strictly associative on equivalence classes of 2-cell diagrams.
    \item \label{rmk2} It is straightforward to check that when both the left- and the right-hand 2-cells in a 2-cell diagram 
    $$
    \xymatrix@C=4em{
    &\ar[dl]_{u_1}\ar[dr]^{f_1} &
\\
\ar@{}[r]|{\wr\Downarrow\alpha} & \ar[u]|{v_1}\ar[d]|{v_2}\ar@{}[r]|{\wr\Downarrow\beta} &
\\
&\ar[ul]^{u_2}\ar[ur]_{f_2} &}
    $$
    are vertically invertible in the original bicategory $\calB$ then the 2-cell in $\calB(\frakW^{-1})$ represented by this diagram is vertically invertible with inverse represented by
    $$
    \xymatrix@C=4em{
    &\ar[dl]_{u_2}\ar[dr]^{f_2} &
\\
\ar@{}[r]|{\wr\Downarrow\alpha^{-1}} & \ar[u]|{v_2}\ar[d]|{v_1}\ar@{}[r]|{\wr\Downarrow\beta^{-1}}&
\\
&\ar[ul]^{u_1}\ar[ur]_{f_1} &.}
    $$
\end{enumerate}

\end{rmks}

\noindent  {\bf Horizontal Composition of 2-Cells} The construction for horizontal composition in \cite{Pr-comp} is given in terms of whiskering on the left and the right. We will address the two cases in the following two subsections.

\subsection{Left Whiskering}\label{left_whiskering}
Suppose we have 
$$
\xymatrix@C=4em{
&\ar[dl]_{u_1}\ar[dr]^{f_1} &
\\
\ar@{}[r]|{\alpha\Downarrow} & \ar[u]_{s_1}\ar[d]^{s_2} \ar@{}[r]|{\beta\Downarrow} & &\ar[l]_v\ar[r]^g & 
\\
&\ar[ul]^{u_2}\ar[ur]_{f_2} &
}
$$ with $u_is_i \in \frakW$ and $\alpha$ invertible, so that the left side represents a 2-cell.  
We begin by constructing the composites of the arrows involved.   
This gives us the cells in the following diagram,
$$
\xymatrix@C=4em{
&\ar[dl]_{u_1}\ar[dr]^{f_1} &\ar[l]_{\overline{v}_1}\ar@{}[d]|{\stackrel{\gamma_1}{\Leftarrow}}\ar[dr]^{\overline{f}_1}
\\
\ar@{}[r]|{\alpha\Downarrow} & \ar[u]_{s_1}\ar[d]^{s_2} \ar@{}[r]|{\beta\Downarrow} & &\ar[l]_v\ar[r]^g & 
\\
&\ar[ul]^{u_2}\ar[ur]_{f_2} &\ar[l]^{\overline{v}_2}\ar@{}[u]|{\stackrel{\gamma_2}{\Leftarrow}}\ar[ur]_{\overline{f}_2}
}
$$
where $\gamma_1=\alpha_{f_1,v}^{u_1}$ and $\gamma_2=\alpha_{f_2,v}^{u_2}$ are the chosen squares of {\bf [C4]} of Notation~\ref{choices}.  (Note that this is not a pasting diagram.)
The next step is to construct squares that complete the cospans $\xymatrix@1{\ar[r]^{s_1} &&\ar[l]_{\overline{v}_1}}$ and $\xymatrix@1{\ar[r]^{s_2} &&\ar[l]_{\overline{v}_2}}$.
Neither $s_i$ nor $\overline{v}_i$ (where $i=1,2$) are necessarily in $\frakW$, but the $u_is_i$ are by assumption, and  the $u_i\overline{v}_i$ are by {\bf [C4]}.   
Now take the squares chosen in {\bf [C5]} for $i=1,2$, 
$$
\xymatrix@C=6em{
\ar[r]^{{s}_i'}\ar[d]_{{v}_i'} \ar@{}[dr]|{\stackrel{\hat{\alpha}_{u_is_i,u_i\overline{v}_i}}{\Leftarrow}} 
	& \ar[d]^{u_i\overline{v}_i}
\\
\ar[r]_{u_is_i} & 
}
$$
where the  composites $u_is_i{v}_i'$ are in $\frakW$ and the 2-cells $\hat{\alpha}_{u_is_i,u_i\overline{v}_i}$ are invertible.  
Now we have $\hat{\alpha}_{u_is_i,u_i\overline{v}_i}\colon  u_i \overline{v}_i s_i'  \Rightarrow u_i s_i v_i'$ where $u_i\in \frakW$, and hence  {\bf [C6]} determines arrows $\tilde{u_i} $ and 2-cells 
$\delta_i\colon \overline{v}_i s_i' \tilde{u}_i  \Rightarrow s_i v_i' \tilde{u}_i $.  If we write $v_i' \tilde{u}_i = \tilde{v}_i$ then we have $u_is_i{\tilde{v}}_i\in\frakW$ for $i=1,2$.

Finally, we want to construct a square to complete the cospan 
$\xymatrix@1{\ar[r]^{\tilde{v}_1}&&\ar[l]_{\tilde{v}_2}}$. 
Neither of the $\tilde{v}_i$ is necessarily in $\frakW$, but the
$u_is_i{\tilde{v}}_i$ are.
Also, since $\alpha\colon u_1s_1\Rightarrow u_2s_2$ is invertible, it follows that 
$u_1s_1\tilde{v}_2\in\frakW$.  Using a sequence of chosen squares and lifts as above, we construct a square
$$
\xymatrix@C=4em{
\ar[r]^{t_2}\ar[d]_{t_1}\ar@{}[dr]|{\stackrel{\delta_3}{\Leftarrow}} & \ar[d]^{{\tilde{v}}_2}
\\
\ar[r]_{{\tilde{v}}_1} &
}
$$
with $\delta_3$ invertible and  $u_1s_1{\tilde{v}}_1t_1\in\frakW$.

To find the right-hand 2-cell in the diagram representing the left whiskering, 
we want to apply a choice of lifting as in condition {\bf [WB4]} to 
 the following diagram, 
$$
\xymatrix@C=5em{
\ar@/^3ex/[2,3]^{\overline{f}_1} \ar[dr]^{\overline{v}_1} &&&&
\\
\ar[u]^{{s}_1'{\tilde{u}}_1} \ar@{}[r]|{{\delta}_1\Downarrow} \ar[dr]|{{\tilde{v}}_1} & \ar[dr]^{f_1} \ar@{}[drr]|{\stackrel{\gamma_1}{\Leftarrow}} &&
\\
\ar[u]^{t_1}\ar[d]_{t_2}\ar@{}[r]|{\delta_3^{-1}\Downarrow} & \ar[u]_{s_1}\ar[d]^{s_2}
\ar@{}[r]|{\beta\Downarrow} & &\ar[l]_v\ar[r]^g &
\\
\ar[ur]|{{\tilde{v}}_2}\ar@{}[r]|{\delta_2^{-1}\Downarrow} \ar[d]_{{s}_2'{\tilde{u}}_2} & \ar[ur]_{f_2} \ar@{}[urr]|{\stackrel{\gamma_2^{-1}}{\Rightarrow}}
\\
\ar[ur]_{\overline{v}_2}\ar@/_3ex/[-2,3]_{\overline{f}_2}
}
$$
and lift with respect to $v$.
However, we need to do this in such a way that we obtain a valid  2-cell diagram.  By construction, $\tilde{v}_1  = v_1' \tilde{u}_1$, and hence the 2-isomoprhism $v_1\delta_1^{-1}t_1:  u_1s_1\tilde{v}_1t_2 \Rightarrow u_1 \overline{v}_1 s_1' \tilde{u}_1 t_1$ ensures that $u_1\overline{v}_1 s_1' \tilde{u}_1 t_1 \in \frakW$.   This allows us to apply  
 {\bf [C7]} to get a lifing  for the diagram
$$
\xymatrix@C=5em{
&&\ar@/_2ex/[2,-2]_{u_1\overline{v}_1}\ar@/^3ex/[2,3]^{\overline{f}_1} \ar[dr]^{\overline{v}_1} &&&&
\\
&&\ar[u]^{{s}_1'{\tilde{u}}_1} \ar@{}[r]|{{\delta}_1\Downarrow} \ar[dr]|{{\tilde{v}}_1} & \ar[dr]^{f_1} \ar@{}[drr]|{\stackrel{\gamma_1}{\Leftarrow}} &&
\\
&&\ar[u]^{t_1}\ar[d]_{t_2}\ar@{}[r]|{\delta_3^{-1}\Downarrow} & \ar[u]_{s_1}\ar[d]^{s_2}\ar@{}[r]|{\beta\Downarrow} & &\ar[l]_v &
\\
&&\ar[ur]|{{\tilde{v}}_2}\ar@{}[r]|{\delta_2^{-1}\Downarrow} \ar[d]_{{s}_2'{\tilde{u}}_2} & \ar[ur]_{f_2} \ar@{}[urr]|{\stackrel{\gamma_2^{-1}}{\Rightarrow}}
\\
&&\ar[ur]_{\overline{v}_2}\ar@/_3ex/[-2,3]_{\overline{f}_2}
}
$$
This gives us an arrow $\tilde{v}$ and a 2-cell 
$\tilde\beta\colon \overline{f}_1{s}_1'{\tilde{u}}_1t_1\tilde{v}\Rightarrow \overline{f}_2{s}_2{\tilde{u}}_2t_2\tilde{v}$
such that $v\tilde\beta$ is equal to the pasting of the previous diagram composed with $\tilde{v}$,  and $u_1\overline{v}_1s_1'{\tilde{u}}_1t_1\tilde{v}\in\frakW$.

The resulting representative for the horizontal composition can be described by
\begin{equation}\label{leftwhiskeringgeneral}
\xymatrix@C=3.3em@R=1.5em{
&&&& \ar[dll]_{\overline{v}_1}\ar[2,4]^{\overline{f}_1} &&&&
\\
&&\ar[dll]_{u_1}\ar@{}[rr]|{\delta_1\Downarrow} && \ar[dl]_{\tilde{v}_1}\ar[u]_{{s}_1'{\tilde{u}}_1} &&&&
\\
&\ar@{}[rr]|{\alpha\Downarrow} &&\ar[ul]_{s_1}\ar[dl]^{s_2} \ar@{}[r]|(.55){\delta_3^{-1}\tilde{v}\Downarrow} & \ar[u]_{t_1\tilde{v}}\ar[d]^{t_2\tilde{v}}\ar@{}[rrrr]|{\tilde\beta\Downarrow} &&&&\ar[r]_g &
\\
&&\ar[ull]^{u_2} \ar@{}[rr]|{\delta_2^{-1}\Downarrow} && \ar[ul]^{\tilde{v}_2} \ar[d]^{{s}_2'{\tilde{u}}_2} &&&&
\\
&&&&\ar[ull]^{\overline{v}_2}\ar[-2,4]_{\overline{f}_2} 
}
\end{equation}

\subsection{Right Whiskering}\label{right_whiskering}
Consider a diagram 
$$
\xymatrix@C=4em{
&&&\ar[dl]_{v_1}\ar[dr]^{g_1}
\\
&\ar[l]_u\ar[r]^f & \ar@{}[r]|{\alpha\Downarrow} & \ar[u]_{s_1}\ar[d]^{s_2}\ar@{}[r]|{\beta\Downarrow} & 
\\
&&&\ar[ul]^{v_2}\ar[ur]_{g_2}
}
$$ with  $ v_1s_1$ and $v_2s_2$ in $\frakW$, and $\alpha$ invertible, so the right side represents a 2-cell.  
Again, we begin  by constructing the horizontal compositions of the arrows involved using the squares of {\bf [C4]} in Notation~\ref{choices} as in the following diagram, 
$$
\xymatrix@C=5em{
&&\ar[dl]_{\overline{v}_1}\ar[r]^{\overline{f}_1} \ar@{}[d]|{\stackrel{\gamma_1}{\Leftarrow}} & \ar[dl]|{v_1}\ar[dr]^{g_1}
\\
&\ar[l]_u \ar[r]^f & \ar@{}[r]|{\alpha\Downarrow} &\ar[u]_{s_1}\ar[d]^{s_2}\ar@{}[r]|{\beta\Downarrow} &
\\
&&\ar[ul]^{\overline{v}_2}\ar[r]_{\overline{f}_2} \ar@{}[u]|{\stackrel{\gamma_2}{\Leftarrow}} &\ar[ul]|{v_2}\ar[ur]_{g_2}
}
$$
where $\gamma_i=\alpha_{f,v_i}^{u}$ and $u\overline{v}_i\in\frakW$ for $i=1,2$. (Note that this is not a pasting diagram.)

Since $v_is_i\in\frakW$ for $i=1,2$ and $u\in\frakW$, we have chosen squares from {\bf [C4]} giving 
\begin{equation}\label{deltas}
\xymatrix{
\ar[r]^{f_i'}\ar[d]_{s'_i}\ar@{}[dr]|{\stackrel{\delta_i}{\Leftarrow}} &\ar[d]^{v_is_i}
\\
\ar[r]_f &
}
\end{equation}
with $us_i'\in\frakW$.
Now apply Proposition~\ref{anytwosquares} to the pairs of squares for $i=1,2$,
$$
\xymatrix@C=3.5em{
\ar[r]^{\overline{f}_i}\ar[d]_{\overline{v}_i}\ar@{}[dr]|{\stackrel{\gamma_i}{\Leftarrow}} &\ar[d]^{v_i}\ar@{}[drr]|{\mbox{and}} && \ar[r]^{s_if_i'}\ar[d]_{s_i'}\ar@{}[dr]|{\stackrel{\delta_i}{\Leftarrow}} & \ar[d]^{v_i}
\\
\ar[r]_f \ar[d]_u& &&\ar[r]_f\ar[d]_u &
\\
&&&&
}
$$
We obtain arrows and invertible 2-cells,
$$
\xymatrix@C=3em{
\ar[r]^{r_i}\ar[d]_{t_i}\ar@{}[dr]|{\stackrel{\varepsilon_i}{\Leftarrow}}& \ar[d]^{s'_i}&\ar@{}[d]|{\mbox{and}}&\ar[d]_{t_i}\ar[r]^{r_i}\ar@{}[drr]|{\stackrel{\varphi_i}{\Leftarrow}} & \ar[r]^{f_i'} & \ar[d]^{s_i}
\\
\ar[r]_{\overline{v}_i} &&&\ar[rr]_{\overline{f}_i}&&
}
$$
such that $u\overline{v}_it_i\in\frakW$ for $i=1,2$ and the composites of the following two pasting diagrams are equal:
$$
\xymatrix@C=3em{
\ar[r]^{r_i}\ar[dd]_{t_i}\ar@{}[ddr]|{\stackrel{\varepsilon_i}{\Leftarrow}} &\ar[dd]^{s'_i}\ar[r]^{f'_i}\ar@{}[ddr]|{\stackrel{\delta_i}{\Leftarrow}} & \ar[d]^{s_i} & &\ar[r]^{r_i}\ar[d]_{t_i} \ar@{}[drr]|{\stackrel{\varphi_i}{\Leftarrow}} & \ar[r]^{{f}_i'} & \ar[d]^{s_i}
\\
&&\ar[d]^{v_i}&\equiv&\ar[rr]_{\overline{f}_i}\ar[d]_{\overline{v}_i}\ar@{}[drr]|{\stackrel{\gamma_i}{\Leftarrow}} &&\ar[d]^{v_i}
\\
\ar[r]_{\overline{v}_i}&\ar[r]_{f}&&&\ar[rr]_f &&
}
$$

Now apply Proposition~\ref{anytwosquares} to the following two squares, where  $v_1s_1, u, us_1', us_2'  \in\frakW$:
$$\xymatrix{
\ar[rr]^{f_1'}\ar[dd]_{s_1'}\ar@{}[ddrr]|{\stackrel{\delta_1}{\Leftarrow}} &&\ar[d]^{s_1} &&\ar[rr]^{f_2'}\ar[dd]_{s_2'}\ar@{}[ddr]|{\stackrel{\delta_2}{\Leftarrow}} &&\ar[dl]_{s_2}\ar[d]^{s_1}
\\
&&\ar[d]^{v_1}&\mbox{and}&&\ar[dr]_{v_2}\ar@{}[r]|{\stackrel{\alpha}{\Leftarrow}} & \ar[d]^{v_1} 
\\
\ar[rr]_{f} \ar[d]^u &&&&\ar[rr]_{f}  \ar[d]^u &&
 \\ &&&&&& \\ 
 }
$$
This gives us arrows and invertible 2-cells
$$
\xymatrix{
\ar[r]^{q}\ar[d]_{p}\ar@{}[dr]|{\stackrel{\tilde\alpha}{\Leftarrow}} & \ar[d]^{s_2'}&\ar@{}[d]|{\mbox{and}} 
& \ar[r]^{q}\ar[d]_{p}\ar@{}[dr]|{\stackrel{\tau}{\Leftarrow}} &\ar[d]^{f_2'}
\\
\ar[r]_{s_1'} & && \ar[r]_{f_1'} &
}
$$
such that u$s_1'p\in\frakW$ and the following two pasting diagrams give the same composite:
$$
\xymatrix@C=4em{
\ar[r]^q\ar[d]_p\ar@{}[dr]|{\stackrel{\tilde\alpha}{\Leftarrow}} & \ar[d]|{s_2'}\ar@{}[dr]|{ \stackrel{\delta_2 \cdot(\alpha f_2')}{\Leftarrow}}\ar[r]^{f_2'}&\ar[d]^{v_1s_1}\ar@{}[drr]|\equiv && \ar[d]_p\ar[r]^q\ar@{}[dr]|{\stackrel{\tau}{\Leftarrow}} &\ar[d]^{f_2'}
\\
\ar[r]_{s_1'}& \ar[r]_f&&&\ar[d]_{s_1'}\ar@{}[dr]|{\stackrel{\delta_1}{\Leftarrow}} \ar[r]|{f_1'} &\ar[d]^{v_1s_1}
\\
&&&&\ar[r]_f&&
}
$$
Thus far we have constructed the following part of the left-hand cell of the whiskered 2-cell diagram,
$$
\xymatrix@C=3.5em@R=1.5em{
&&&\ar@/_3ex/[3,-2]_{\overline{v}_1}
\\
&&&\ar[u]_{t_1}\ar[dl]^{r_1}
\\
&&\ar[dl]|{s_1'}\ar@{}[u]|{\varepsilon_1^{-1}\Downarrow}
\\
&\ar[l]_u\ar@{}[r]|{\tilde\alpha^{-1}\Downarrow} &\ar[u]_p\ar[d]^q
\\
&&\ar[ul]|{s_2'}\ar@{}[d]|{\varepsilon_2\Downarrow}
\\
&&&\ar[ul]_{r_2}\ar[d]^{t_2}
\\
&&&\ar@/^3ex/[-3,-2]^{\overline{v}_2}
}$$
We fill in the gap in the middle by chosen liftings of chosen squares  according to conditions {\bf [WB3]} and {\bf [WB4]}. First note that the $u\overline{v}_it_i$ are in $\frakW$ for $i=1,2$, and hence since $\varepsilon_i$ is invertible,  $us_i'r_i\in\frakW$.
So we have squares from {\bf [C2]},
$$
\xymatrix@C=6em{
\ar[r]^{p'}\ar@{}[dr]|{\stackrel{\rho_1'}{\Leftarrow}}\ar[d]_{r_1'}&\ar[d]^{us_1'r_1}&\ar@{}[d]|{\mbox{and}}&\ar[r]^{q'}\ar@{}[dr]|{\stackrel{\rho_2'}{\Leftarrow}}\ar[d]_{r_2'}&\ar[d]^{us_2'r_2}
\\
\ar[r]_{us_1'p}&&&\ar[r]_{us_2'q}&
}
$$
and we lift with respect to $us_1'$ and $us_2'$  respectively (as in {\bf [C3]}) and add additional arrows $w_1$ and $w_2$ to obtain
arrows $\overline{r}_1=r_1'\tilde{u}_1w_1$ and $\overline{r}_2=r_2'\tilde{u}_2w_2$ both in $\frakW$. If we denote $\overline{p}=p'\tilde{u}_1w_1$  and $\overline{q}=q'\tilde{u}_2w_2$,  we obtain invertible 2-cells
$$
\xymatrix{
\ar[r]^{\overline{p}}\ar@{}[dr]|{\stackrel{\rho_1}{\Leftarrow}}\ar[d]_{\overline{r}_1}&\ar[d]^{r_1}&\ar@{}[d]|{\mbox{and}}&\ar[r]^{\overline{q}}\ar@{}[dr]|{\stackrel{\rho_2}{\Leftarrow}}\ar[d]_{\overline{r}_2}&\ar[d]^{r_2}
\\
\ar[r]_{p}&&&\ar[r]_{q}&
}
$$
Finally, we take a chosen square according to {\bf [C2]},
$$
\xymatrix{
\ar[r]^{\tilde{r}_1}\ar@{}[dr]|{\stackrel{\rho_3}{\Leftarrow}}\ar[d]_{\tilde{r}_2} & \ar[d]^{\overline{r}_2}
\\
\ar[r]_{\overline{r}_1}&
}
$$
with $\tilde{r}_2\in\frakW$.
Since $us_1'p,\overline{r}_1,\tilde{r}_2\in\frakW$, let $x$ be a chosen arrow such that $us_1'p\overline{r}_1\tilde{r}_2x\in\frakW$.
Then the result of the whiskering becomes:
\begin{equation}\label{rightwhiskeringgeneral}
\xymatrix@C=4.5em@R=2em{
&&&\ar@/_3ex/[3,-2]_{\overline{v}_1}\ar@/^1.5ex/[2,2]^{\overline{f}_1}&&&
\\
&&&\ar[u]_{t_1}\ar[dl]|{r_1}\ar[dr]|{r_1}&&
\\
&&\ar[dl]|{s_1'}\ar@{}[u]|{\varepsilon_1^{-1}\Downarrow}&\ar[u]^{\overline{p}}\ar@{}[l]|{\rho_1\Downarrow}\ar@{}[r]|{\rho_1\Downarrow}\ar[dr]|{\overline{r}_1}\ar[dl]|{\overline{r}_1}&\ar[dr]^{f_1'}\ar@{}[u]|{\varphi_1^{-1}\Downarrow}&\ar[dr]^{g_1}
\\
&\ar[l]_u\ar@{}[r]|{\tilde\alpha^{-1}\Downarrow} &\ar[u]_p\ar[d]^q&\ar@{}[l]|{\rho_3^{-1}x\Downarrow}\ar@{}[r]|{\rho_3^{-1}x\Downarrow}\ar[u]|{\tilde{r}_2x}\ar[d]|{\tilde{r}_1x}&\ar[u]_p\ar[d]^q\ar@{}[r]|{\tau^{-1}\Downarrow} &\ar[u]_{s_1}\ar[d]^{s_2}\ar@{}[r]|{\beta\Downarrow} &
\\
&&\ar[ul]|{s_2'}\ar@{}[d]|{\varepsilon_2\Downarrow}&\ar[d]^{\overline{q}}\ar@{}[l]|{\rho_2^{-1}\Downarrow}\ar@{}[r]|{\rho_2^{-1}\Downarrow}\ar[ur]|{\overline{r}_2}\ar[ul]|{\overline{r}_2}&\ar[ur]_{f_2'}\ar@{}[d]|{\varphi_2\Downarrow}&\ar[ur]_{g_2}
\\
&&&\ar[ul]|{r_2}\ar[d]^{t_2}\ar[ur]|{r_2}&&
\\
&&&\ar@/^3ex/[-3,-2]^{\overline{v}_2}\ar@/_1.5ex/[-2,2]_{\overline{f}_2}&&&
}
\end{equation}

\begin{rmks}\label{first-comparison}
\begin{enumerate}
\item \label{fc1}
When the class $\frakW$ of arrows to be inverted satisfies the traditional {\bf BF1--BF5} conditions from \cite{Pr-comp}, this construction 
reduces to the construction given in that paper when one takes the identity arrow whenever a choice of an arrow based on condition {\bf [WB2]} is needed.
The definition of horizontal whiskering here is not exactly the same as the one given in \cite{Pr-comp}, but the 2-cell diagrams obtained are equivalent. This is shown in   \cite{Matteo1}, where it is proved that  various choices to fill the 2-cell diagrams for whiskering
all result in equivalent 2-cell diagrams.
\item
The fact that the horizontal whiskering operations described here are well-defined on equivalence classes of 2-cell diagrams is established in Appendix \ref{well-defined}, Propositions \ref{wd-left-whiskering} and \ref{wd-right-whiskering}.
\end{enumerate}
\end{rmks}

With these definitions, we get the following:   
\begin{thm}\label{weakerfracns}
For any bicategory $\calB$ with a class $\frakW$ of arrows that satisfies conditions {\bf [WB1]}--{\bf[WB5]},  there is a bicategory of fractions $\calB(\frakW^{-1})$ with a homomorphism 
$$J_{\frakW}\colon\calB\to\calB(\frakW^{-1})$$ which sends arrows in $\frakW$ to internal equivalences.  Moreover, this bicategory satisfies the following universal property:   for any bicategory $\calD$, composition with $J_{\frakW}$ 
induces an equivalence of categories $$\Hom(\calB(\frakW^{-1}),\calD)\simeq\Hom_{\frakW}(\calB,\calD),$$  where $\Hom(\calB(\frakW^{-1}),\calD)$ denotes the category of homomorphisms and pseudo, resp. lax, resp. oplax, transformations and $\Hom_{\frakW}(\calB,\calD)$ denotes the subcategory of homomorphisms and pseudo, resp. lax, resp. oplax, transformations  that send arrows in $\frakW$ to internal equivalences.  \end{thm}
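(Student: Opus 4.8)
The plan is to establish Theorem~\ref{weakerfracns} by carrying out the same strategy as in \cite{Pr-comp}: first verify that $\calB(\frakW^{-1})$ as constructed above is genuinely a bicategory, then construct the homomorphism $J_\frakW$ and check that it sends arrows of $\frakW$ to internal equivalences, and finally verify the universal property by exhibiting the (pseudo)inverse to composition with $J_\frakW$. The technical heart is really contained in the appendices referenced throughout the construction, so the proof body should be organized as an assembly of those pieces with the novel input being the verification that the weakened axioms {\bf [WB1]}--{\bf[WB5]} suffice at every point where \cite{Pr-comp} used {\bf BF1}--{\bf BF5}.

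For the bicategory axioms: the objects, $1$-cells and $2$-cells are already specified; vertical composition and horizontal whiskering were defined above (in \eqref{verticalcomp}, \eqref{leftwhiskeringgeneral}, \eqref{rightwhiskeringgeneral}), and these are well-defined on equivalence classes by Propositions~\ref{vertical-well-defined}, \ref{wd-left-whiskering}, \ref{wd-right-whiskering} (Appendix~\ref{well-defined}). Vertical composition is strictly associative with strict units by Proposition~\ref{vert_associative}; horizontal composition of $2$-cells is then obtained from the two whiskerings in the usual way, and the interchange law must be checked (again reducing, via the equivalence relation on $2$-cell diagrams, to an equality of pastings in $\calB$ that follows from Propositions~\ref{anytwosquares} and~\ref{unique-2-cell}). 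The associator is the family of $2$-cells of Proposition~\ref{assoc-2-cells} and the left/right unitors come from the normalized choices in {\bf [C1]}--{\bf[C2]} (the identity-arrow conventions built into {\bf [C1]}, {\bf [C2]}, {\bf [C4]}); pentagon and triangle coherence are Proposition~\ref{P:coherence} (Appendix~\ref{assoc2}). At each step one notes that the only new phenomenon relative to \cite{Pr-comp} is the presence of the extra {\bf [WB2]}-arrows $w_{u,v}$, and that the choices {\bf [C1]}--{\bf[C7]} were rigged precisely so the relevant composites land in $\frakW$; the equivalence relation on $2$-cell diagrams absorbs all the discrepancies between different such choices.

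The homomorphism $J_\frakW$ sends an object to itself, an arrow $f\colon A\to B$ to the span $\xymatrix@1{A & A\ar[l]_{1_A}\ar[r]^f & B}$, and a $2$-cell $\alpha\colon f\Rightarrow g$ to the evident diagram with left-hand cell an identity; the coherence constraints of $J_\frakW$ for composition and units come from the unitor choices in {\bf [C2]} together with the normalization in {\bf [C1]}. That $J_\frakW$ sends $w\in\frakW$ to an internal equivalence is proved exactly as in \cite{Pr-comp}: the span $\xymatrix@1{& \ar[l]_w\ar[r]^{1}&}$ is a quasi-inverse, with the unit and counit invertible $2$-cells supplied by {\bf [WB3]}/{\bf [WB5]} (and invertibility of liftings, which is now available by Proposition~\ref{isolifting} rather than assumed). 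For the universal property, given a homomorphism $F\colon\calB\to\calD$ sending $\frakW$ to internal equivalences, one builds $\tilde F\colon\calB(\frakW^{-1})\to\calD$ by sending a span $\xymatrix@1{& \ar[l]_w\ar[r]^f&}$ to $F(f)\circ F(w)^{-1}$ for a chosen quasi-inverse $F(w)^{-1}$, and a $2$-cell diagram \eqref{first2-cell} to the composite built from $F(\alpha)$, $F(\beta)$ and the chosen adjoint-equivalence data; one checks this is well-defined on equivalence classes, is a homomorphism, and that $\tilde F\circ J_\frakW\cong F$ via a canonical pseudo-transformation. Conversely $\Phi\mapsto\Phi\circ J_\frakW$ is checked to be essentially surjective (by the above) and fully faithful on pseudo/lax/oplax transformations, using that every span factors up to iso as $J_\frakW(f)\circ J_\frakW(w)^{-1}$ and every $2$-cell diagram decomposes through $J_\frakW$-images.

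The main obstacle I expect is not any single deep fact but the sheer bookkeeping in the coherence verifications — specifically, proving the interchange law and pentagon for the new whiskering operations \eqref{leftwhiskeringgeneral}--\eqref{rightwhiskeringgeneral}, where the extra layers of {\bf [WB2]}-arrows $w_{u,v}$ make the pasting diagrams substantially larger than in \cite{Pr-comp}. The conceptual tool that keeps this under control is Proposition~\ref{anytwosquares} (with its refinement Proposition~\ref{unique-2-cell}): it guarantees that any two "solutions" built from chosen squares and liftings are connected by a canonical invertible $2$-cell, so that all choices made in {\bf [C1]}--{\bf[C7]} are immaterial up to the equivalence relation. The secondary subtlety is verifying that $J_\frakW$ genuinely lands in $\Hom_\frakW(\calB,\calD)$ and that the constructed $\tilde F$ is strictly (not merely up to iso) compatible enough to give an \emph{equivalence} of hom-categories; this is handled, as in \cite{Pr-comp}, by choosing the quasi-inverses $F(w)^{-1}$ to be parts of adjoint equivalences and tracking the triangle identities. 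Since all of these reduce to identities among pastings in $\calB$ that hold by the lemmas already proved, the proof is a (lengthy) verification rather than requiring a new idea; accordingly the write-up will state each bicategory axiom, point to the appendix proposition that discharges it, and indicate where {\bf [WB2]} and Proposition~\ref{isolifting} replace the stronger hypotheses of \cite{Pr-comp}.
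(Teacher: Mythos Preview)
Your proposal is correct and follows essentially the same approach as the paper: assemble the bicategory structure from the appendix propositions, define $J_\frakW$ on objects/arrows/2-cells exactly as you describe, and then invoke that the universal-property argument of \cite[Theorem~21]{Pr-comp} goes through unchanged because it does not depend on the specific choices of squares or liftings. The paper's proof is considerably terser than your outline (it simply points to the appendices and says ``the remainder of the proof goes as in \cite{Pr-comp}''), but the strategy and the key inputs you identify---Propositions~\ref{anytwosquares} and~\ref{unique-2-cell} to control choices, Proposition~\ref{isolifting} in place of the old {\bf BF4} invertibility clause, and the normalizations in {\bf [C1]}--{\bf[C2]} for the pseudofunctor structure of $J_\frakW$---are exactly what the paper relies on.
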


\begin{rmks}
\begin{enumerate}
\item
We can speak of transformations sending arrows to internal equivalences by representing them through a pseudo functor into an appropriate bicategory of cylinders on $\calD$ (depending on the type of transformations). For pseudo transformations, the calculus of mates shows that $\Hom_{\frakW}(\calB,\calD)$  is a full subcategory of $\Hom(\calB,\calD)$,
but for lax and oplax transformations this is not the case in general.
\item The universal property phrased in terms of the pseudo transformations determines the bicategory of fractions up to equivalence of bicategories. The other two universal properties are invariant under equivalence of bicategories. Hence we may view this result as saying that whenever a class of arrows admits a calculus of fractions, its bicategorical localization will also have these other two universal properties. 
\item 
The description of the bicategory of fractions given here depends on the choices made for arrows, squares and liftings used in composition. However, the universal property implies that any other choice would give a biequivalent bicategory of fractions. We actually have a stronger result here: we can give explicit pseudofunctors going back and forth that are the identity in all dimensions (objects, arrows and 2-cells), but don't preserve horizontal composition strictly: composition in the domain bicategory may have been defined using a different square from the one used in the codomain bicategory. In Proposition~\ref{unique-2-cell} we show that there is a canonical 2-cell between these two compositions. The property established in Proposition~\ref{unique-2-cell} implies that they satisfy the coherence conditions to form the structure cells of a pseudo functor. Furthermore,  these functors form commutative triangles with the $J_{\frakW}$ functors from $\calB$ into the bicategories of fractions. We will also see in the next section that these bicategories of fractions are biequivalent to a bicategory of fractions as defined in \cite{Pr-comp}.
\end{enumerate}
\end{rmks}

\begin{proof}
Analogous to the situation in \cite{Pr-comp}, we define $J_{\frakW}$ as follows: on objects $J_{\frakW}(A)=A$; on arrows $J_{\frakW}$ sends $A\stackrel{f}{\longrightarrow}B$ to $A\stackrel{1_A}{\longleftarrow}A\stackrel{f}{\longrightarrow}B$; on 2-cells, $J_{\frakW}$ sends 
$\xymatrix@1{A\ar@/^1.5ex/[r]^f\ar@/_1.5ex/[r]_g\ar@{}[r]|{\Downarrow\alpha}&B}$
to
$$
\xymatrix@C=4em{
&A\ar[dl]_{1_A}\ar[dr]^f
\\
A\ar@{}[r]|{\rho_A^{-1}\lambda_A\Downarrow}&A\ar[u]_{1_A}\ar[d]^{1_A}\ar@{}[r]|{\alpha\Downarrow}&B
\\
&A\ar[ul]^{1_A}\ar[ur]_g
}
$$
where $\rho_A$ and $\lambda_A$ are the right and left unitors respectively for $1_A$.
By the way we chose squares involving identity arrows, this gives a pseudo functor $\calB\to\calB(\frakW^{-1})$ with structure cells as defined in \cite{Pr-comp}. The remainder of the proof goes as in \cite{Pr-comp}.
We have given definitions for all of the composition operations in $\calB(\frakW^{-1})$ and shown them to be well-defined and suitably associative, sending arrows in $\frakW$ to internal equivalences.  
There are no coherence requirements on the choices of squares or liftings, so this gives 
a valid construction of a bicategory with all necessary properties.   

The resulting homomorphism of bicategories has the same universal properties as the one for the original bicategory of fractions, 
since the proof of \cite[Theorem 21]{Pr-comp} does not depend on any specific properties of the choices made.
\end{proof} 
 
A different way to derive this result will be given in Theorem~\ref{comparison-thm}.


\section{Equivalences of Bicategories of Fractions}\label{comparisons}

The first  goal of this paper was to provide conditions under which we can take smaller classes of arrows to invert, while still obtaining an equivalent bicategory of fractions.
   In this section we develop a condition to allow us to restrict to a smaller subclass of arrows, namely when a subclass is weakly initial in the original class of arrows. This is related to the  condition {\em WISC}, where we have    weakly initial subsets of the class of arrows to be inverted.   This was  considered in \cite{Roberts-fractions} to obtain a locally essentially small bicategory of fractions.   

We show that if we start with a class of arrows satisfying {\bf [WB1]}--{\bf[WB5]}, and we have a weakly initial subclass which satisfies {\bf [WB1]} and {\bf [WB5]}, then in fact the subclass will satisfy all the conditions{\bf [WB1]}--{\bf[WB5]} and the bicategory of fractions for the subclass is equivalent to the one for the original class of arrows.  
We will then apply this result to a class $\frakW$ of arrows satisfying {\bf [WB1]}--{\bf[WB5]}, and consider its closure under composition and invertible 2-cells, $\widehat{\frakW}$.
We show that $\widehat{\frakW}$ satisfies the conditions {\bf BF1}--{\bf BF5} of \cite{Pr-comp}, and that $\frakW$ is weakly initial in  $\widehat{\frakW}$.  This  gives an equivalence of bicategories
$$
\calB(\frakW^{-1})\simeq\calB(\widehat{\frakW}^{-1}),
$$
giving another proof that the newly constructed bicategories of fractions of Section~\ref{newBF2} are indeed equivalent to the ones introduced in \cite{Pr-comp}.
 
\subsection{Weakly Initial Subclasses}\label{D:covexam}
We begin by reminding the reader of the notion of a weakly initial subclass of arrows.  We will show that the new calculus of fractions conditions descend from a class to a weakly initial subclass.

\begin{dfn}\label{coveringclass}
Let $\frakW\subseteq \frakV$ be two classes of arrows in a bicategory $\calB$.
Then  $\frakW$  is {\em weakly initial} in $\frakV$
if for each arrow $v\in\frakV$, 
there is an arrow $u$ such that $vu\in\frakW$.
\end{dfn}

\begin{prop} \label{P:cover}
Let $\calB$ be a bicategory with a class of arrows $\frakV$ satisfying all the conditions {\bf [WB1]}--{\bf [WB5]},  and a subclass $\frakW\subseteq \frakV$ which is weakly initial in  $\frakV$ and satisfies conditions {\bf [WB1]} and {\bf [WB5]}.   Then $\frakW$ also satisfies conditions {\bf [WB2]}--{\bf [WB4]}.
\end{prop}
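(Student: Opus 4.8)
The plan is to verify conditions {\bf [WB2]}--{\bf [WB4]} for $\frakW$ one at a time, in each case lifting the corresponding datum provided by $\frakV$ back into $\frakW$ by post-composing with the arrow supplied by weak initiality. The recurring move is this: since $\frakW$ is weakly initial in $\frakV$, whenever we produce some arrow $x \in \frakV$ we may choose an arrow $y$ with $xy \in \frakW$; and since $\frakW \subseteq \frakV$ and $\frakV$ satisfies {\bf [WB2]}, any finite string of composable $\frakV$-arrows can be pre-composed with one more arrow to land in $\frakV$, which we can then push further into $\frakW$. I would state this ``push into $\frakW$'' observation once at the start of the proof and invoke it repeatedly.

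For {\bf [WB2]}: given composable $\xymatrix@1{B\ar[r]^v&C\ar[r]^w&D}$ with $v,w \in \frakW \subseteq \frakV$, condition {\bf [WB2]} for $\frakV$ gives $\xymatrix@1{A\ar[r]^u&B}$ with $wvu \in \frakV$; then weak initiality gives $\xymatrix@1{A'\ar[r]^{u'}&A}$ with $(wvu)u' \in \frakW$, i.e.\ $wv(uu') \in \frakW$, which is exactly {\bf [WB2]} for $\frakW$. For {\bf [WB3]}: given $w \in \frakW \subseteq \frakV$ and $f$, apply {\bf [WB3]} for $\frakV$ to get $h$, $v$ with $v \in \frakV$ and invertible $\alpha$; then pick $v'$ with $vv' \in \frakW$, and precompose the whole square by $v'$ — the new square has top edge $hv'$, left edge $vv' \in \frakW$, and 2-cell $\alpha v'$, still invertible, so {\bf [WB3]} holds for $\frakW$. (One should double-check that precomposing a {\bf [WB3]}-square by an arrow again yields a {\bf [WB3]}-square up to the bicategory's associativity/unit cells; this is routine.)

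The main work is {\bf [WB4]}, which has two parts: existence of a lifting, and the comparison property between any two liftings. Existence is easy by the same trick: given $\alpha \colon wf \Rightarrow wg$ with $w \in \frakW \subseteq \frakV$, {\bf [WB4]} for $\frakV$ gives $u \in \frakV$ and $\beta \colon fu \Rightarrow gu$ with $\alpha u = w\beta$; choosing $u'$ with $uu' \in \frakW$ and whiskering $\beta$ by $u'$ gives a lifting $\beta u' \colon f(uu') \Rightarrow g(uu')$ with $uu' \in \frakW$ and $\alpha(uu') = w(\beta u')$ (modulo structure cells). The comparison property is the delicate point and the expected main obstacle: we are given two $\frakW$-liftings $(u_1,\beta_1)$ and $(u_2,\beta_2)$, we must produce $s,t$ with $u_1 s, u_2 t \in \frakW$, an invertible $\varepsilon \colon u_1 s \Rightarrow u_2 t$, and the commuting square of {\bf [WB4]}. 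Since $u_1, u_2 \in \frakW \subseteq \frakV$, these are also $\frakV$-liftings, so the second half of {\bf [WB4]} for $\frakV$ gives arrows $s_0, t_0$ with $u_1 s_0, u_2 t_0 \in \frakV$, an invertible $\varepsilon_0 \colon u_1 s_0 \Rightarrow u_2 t_0$, and the corresponding commuting square. Now I need to push $s_0, t_0$ simultaneously into $\frakW$ while keeping the square commuting and $\varepsilon$ invertible: pick $r$ with $(u_1 s_0) r \in \frakW$, set $s := s_0 r$; then $u_2 t_0$ is isomorphic to $u_1 s_0$ via $\varepsilon_0$, so $(u_2 t_0) r \in \frakW$ too by {\bf [WB5]} — wait, one must be careful, as $(u_2 t_0)r \cong (u_1 s_0) r$ only after whiskering $\varepsilon_0$ by $r$, which is legitimate — so set $t := t_0 r$, $\varepsilon := \varepsilon_0 r$ (an invertible 2-cell $u_1 s_0 r \Rightarrow u_2 t_0 r$, again modulo associativity cells), and the commuting square of {\bf [WB4]} for $\frakV$, whiskered by $r$, gives the one for $\frakW$. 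The care needed is entirely in bookkeeping the bicategory's associator and unitor 2-cells through the whiskerings; conceptually nothing new happens beyond the ``push into $\frakW$'' lemma. I would present the {\bf [WB4]} verification in full and leave the {\bf [WB2]}, {\bf [WB3]} verifications as brief variations on the same idea.
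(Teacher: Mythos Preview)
Your proposal is correct and follows essentially the same approach as the paper's proof: for each condition, apply the $\frakV$-version to obtain the required datum in $\frakV$, then use weak initiality to precompose by an arrow that pushes the relevant composite into $\frakW$, invoking {\bf [WB5]} where needed (in the comparison clause of {\bf [WB4]}) to transfer $\frakW$-membership across the invertible 2-cell. The paper carries this out in exactly the same order and with the same moves, including your use of a single further arrow $r$ (the paper's $u$) in the {\bf [WB4]} comparison step and the appeal to {\bf [WB5]} to conclude that $u_2 t_0 r \in \frakW$.
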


\begin{proof}
{\bf {\bf [WB2]}} \ Let $\xymatrix@1{A\ar[r]^{w_1}&B}$ and $\xymatrix@1{B\ar[r]^{w_2}&C}$ be a pair of composable arrows
in $\frakW$. Since $\frakW\subseteq \frakV$ and $\frakV$ satisfies condition {\bf [WB2]}, there is an arrow $u_1$ such that $w_2 w_1u_1\in\frakV$.  
Since $\frakW$ is weakly initial in  $\frakV$, there is an arrow $u_2$ such that $w_2w_1u_1u_2\in\frakW$. So $\frakW$ satisfies condition {\bf [WB2]}.

{\bf {\bf [WB3]}} Consider a cospan of arrows $\xymatrix@1{A\ar[r]^f & C & B\ar[l]_w}$ with $w\in\frakW$.
Since $\frakV$ satisfies {\bf [WB3]},  there is a square with an invertible 2-cell $\alpha$, 
$$
\xymatrix{
D\ar[d]_v\ar[r]^g \ar@{}[dr]|{\stackrel{\alpha}{\Longleftarrow}}& B\ar[d]^w
\\
A\ar[r]_f & C
}
$$
with $v\in\frakV$.
Since $\frakW$ is weakly initial in  $\frakV$, there is an arrow $\left(\xymatrix@1{E\ar[r]^u&D}\right)$ such that $vu\in\frakW$.   Then the square
$$
\xymatrix{
E\ar[d]_{vu}\ar[r]^{gu} \ar@{}[dr]|{\stackrel{\alpha u}{\Longleftarrow}}& B\ar[d]^w
\\
A\ar[r]_f & C}
$$
shows that $\frakW$ satisfies condition {\bf [WB3]}.

{\bf {\bf [WB4]}}  Let $\alpha\colon wf\Rightarrow wg$ be a 2-cell with $w\in\frakW$. Since $w\in\frakV$ and $\frakV$ satisfies {\bf [WB4]}, there is an arrow $v\in\frakV$ with a 2-cell $\beta\colon fv\Rightarrow gv$ such that $\alpha v=w\beta$.
And  since $\frakW$ is weakly initial in  $\frakV$, there is an arrow  $u$ such that $vu\in\frakW$.
Now take $w'=vu\in\frakW$ and $\beta'=\beta u$. Then $w\beta'=\alpha w'$.

To check that $\frakW$ also satisfies the second part of {\bf [WB4]}, let $(w_1', \beta_1) $ and $(w_2', \beta_2) $ be pairs such that $w_1', w_2' \in\frakW$, and $\beta_1\colon w_1'f\Rightarrow w_1'g$, $\beta_2\colon w_2'f\Rightarrow w_2'g$ such that $\alpha w_1'=w\beta_1$ and $\alpha w_2'=w\beta_2$. Since $w,w_1',w_2'\in\frakV$ and we assume that $\frakV$ satisfies {\bf [WB4]}, there are arrows $s,t$ such that $w_1's,w_2't\in\frakV$,  and an invertible 2-cell $\varepsilon\colon  w_1's\Rightarrow w_2't$ such that 
$$
\xymatrix{
fw_1's\ar[d]_{f\varepsilon}\ar[r]^{\beta_1s} &gw_1's\ar[d]^{g\varepsilon}
\\
fw_2't\ar[r]_{\beta_2t} & gw_2't
}
$$
commutes.
Since $w_1's\in\frakV$, there is an arrow $u$ such that $w_1'su\in\frakW$.
Then $w_2'tu\in\frakW$ as well, since $\varepsilon u\colon w_1'su \Rightarrow w_2'tu$ is an invertible 2-cell and   $\frakW$ is closed under invertible 2-cells by  condition {\bf [WB5]}.
So define $s'=su$, $t'=tu$, and $\varepsilon'=\varepsilon u\colon w_1's'\stackrel{\sim}{\Rightarrow}w_2't'$
to obtain a commutative diagram
$$
\xymatrix{
fw_1's'\ar[d]_{f\varepsilon}\ar[r]^{\beta_1s'} &gw_1's'\ar[d]^{g\varepsilon'}
\\
fw_2't'\ar[r]_{\beta_2t'} & gw_2't'
}
$$
as required.
\end{proof}

\begin{thm}\label{d:equivalence}
Let $\calB$ be a bicategory with a class of arrows $\frakV$ satisfying the conditions {\bf [WB1]}-{\bf [WB5]} and a class $\frakW\subseteq \frakV$ which is initial in $\frakV$ and satisfies {\bf [WB1]} and {\bf [WB5]}.   Then there is an equivalence of bicategories $J\colon \calB(\frakW^{-1})\to \calB(\frakV^{-1})$ that makes the following diagram commutative,
$$\xymatrix{
&\calB(\frakW^{-1})\ar[dd]^J_{\simeq}
\\
\calB\ar[ur]^{J_{\frakW}}\ar[dr]_{J_{\frakV}}&
\\
&\calB(\frakV^{-1}).}$$
\end{thm}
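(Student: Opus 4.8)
The plan is to construct $J$ directly as the identity on objects and as the inclusion of spans, then verify it is a homomorphism of bicategories and that it is a biequivalence. Since every arrow of $\frakW$ is already an arrow of $\frakV$, a span $\xymatrix@1{A&\ar[l]_w\ar[r]^f&B}$ representing a 1-cell of $\calB(\frakW^{-1})$ is literally a span representing a 1-cell of $\calB(\frakV^{-1})$; likewise a 2-cell diagram (\ref{first2-cell}) for $\frakW$ is one for $\frakV$. So on the underlying graphs $J$ is just ``do nothing''. The first thing to check is that the equivalence relation on 2-cell diagrams for $\frakW$ agrees with its restriction from $\frakV$: two $\frakW$-diagrams that are $\frakV$-equivalent are already $\frakW$-equivalent. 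This uses weak initiality exactly as in the proof of Proposition~\ref{P:cover}: given a $\frakV$-diagram $(F,s,t,\varepsilon,\varepsilon')$ witnessing the equivalence, precompose with an arrow $r$ making the relevant composite land in $\frakW$ (and invoke {\bf [WB5]} for the other leg); the pasting equations are preserved under whiskering by $r$.

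Next I would address the fact that $J$ is not strictly a homomorphism: composition of spans in $\calB(\frakW^{-1})$ and in $\calB(\frakV^{-1})$ is defined using the chosen squares of {\bf [C4]}, and these choices need not agree (indeed the $\frakW$-square need not even be a legal $\frakV$-square-choice). Here the key input is Proposition~\ref{anytwosquares}: any two squares completing a given cospan (with one leg in $\frakW\subseteq\frakV$) are related by a canonical invertible 2-cell, which by Proposition~\ref{unique-2-cell} (cited in Remark~\ref{2-cell-coh}) is coherent. These canonical 2-cells serve as the structure 2-cells $J_{B,C}\colon J(g)\circ J(f)\Rightarrow J(g\circ f)$ making $J$ a pseudofunctor; the unit constraints are handled similarly, using that {\bf [C1]} and {\bf [C2]} were set up to pick identities when an argument is an identity, so the identity spans and unitors match on the nose. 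I would then remark that the commuting triangle $J\circ J_{\frakW}=J_{\frakV}$ is immediate from the definition of $J_{\frakW}$ and $J_{\frakV}$ in the proof of Theorem~\ref{weakerfracns}, since both send $f$ to the span $(1,f)$ and act identically on 2-cells, and $J$ fixes these.

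Finally I would prove $J$ is a biequivalence by checking it is essentially surjective on objects (trivial: identity on objects) and a local equivalence, i.e. that $J_{A,B}\colon \calB(\frakW^{-1})(A,B)\to\calB(\frakV^{-1})(A,B)$ is an equivalence of categories for all $A,B$. Local fullness and faithfulness on 2-cells is exactly the statement that the equivalence relations agree, established in the first step (faithfulness) together with the observation that every $\frakV$-diagram between spans coming from $\frakW$ is, after precomposing with a suitable arrow via weak initiality, equivalent to a $\frakW$-diagram (fullness). For essential surjectivity on 1-cells: given a span $\xymatrix@1{A&\ar[l]_v\ar[r]^f&B}$ with $v\in\frakV$, weak initiality gives $u$ with $vu\in\frakW$, and then $\xymatrix@1{A&\ar[l]_{vu}\ar[r]^{fu}&B}$ is a $\frakW$-span; the invertible 2-cell $J_{\frakV}(u)$-type whiskering (built from $vu\to v$) exhibits it as isomorphic in $\calB(\frakV^{-1})$ to the original span. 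I expect the main obstacle to be the bookkeeping in the second paragraph: verifying that the canonical comparison 2-cells from Proposition~\ref{anytwosquares}/\ref{unique-2-cell} genuinely satisfy the hexagon/associativity and unit coherence axioms for a pseudofunctor, and that they are compatible with horizontal and vertical composition of 2-cells. Everything else is a routine ``weak initiality lets us push any $\frakV$-arrow into $\frakW$'' argument of the kind already carried out in Proposition~\ref{P:cover}.
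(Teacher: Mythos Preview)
Your proposal is correct and follows essentially the same approach as the paper: define $J$ as the identity on objects, spans, and 2-cell diagrams, equip it with the canonical comparison 2-cells from Proposition~\ref{anytwosquares}/\ref{unique-2-cell} (as in Remark~\ref{2-cell-coh}), and then verify biequivalence by checking essential surjectivity on 1-cells and that $J$ is fully faithful on 2-cells, each via a single application of weak initiality to push the relevant $\frakV$-arrow into $\frakW$. The paper's proof is organized in the same order as your third paragraph and treats the pseudofunctor coherence exactly as you anticipate, by deferring to the canonical-2-cell machinery rather than spelling out the hexagon.
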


\begin{proof}
By the universal property of $\calB(\frakV^{-1})$ there is a canonical pseudo functor $$J\colon \calB(\frakW^{-1})\to\calB(\frakV^{-1}),$$ 
which is the identity on objects, sends the span $(w,f)$ in $\calB(\frakW^{-1})$
to the span $(w,f)$ in $\calB(\frakV^{-1})$ and maps the 2-cell represented by the diagram
$$
\xymatrix@C=3em{
&\ar[dl]_{w_1}\ar[dr]^{f_1}
\\
\ar@{}[r]|{\alpha\Downarrow}&\ar[u]_{u_1}\ar[d]^{u_2}\ar@{}[r]|{\beta\Downarrow}&
\\
&\ar[ul]^{w_2}\ar[ur]_{f_2}
}
$$ in $\calB(\frakW^{-1})$ to the 2-cell represented by this same diagram in $\calB(\frakV^{-1})$.
Note that $J$ sends identity arrows to identity arrows and the comparison cells for compositions of arrows are the canonical 2-cells related to the choices of squares for composition in 
$\calB(\frakW^{-1})$ and $\calB(\frakV^{-1})$, as described in Remark~\ref{2-cell-coh}.(\ref{canonical}). It is clear that $J\circ J_{\frakW}=J_{\frakV}$ as required.

It is obvious that $J$ is an isomorphism on objects. To show that it is essentially surjective on arrows, let
$$\xymatrix{A&C\ar[l]_v\ar[r]^f&B}$$
be an arrow in $\calB(\frakV^{-1})$.
Since $\frakW$ is weakly initial in $\frakV$, there is an arrow $\left(\xymatrix@1{D\ar[r]^u&C}\right)$ such that $vu\in\frakW$.
So the span
$$\xymatrix{A&D\ar[l]_{vu}\ar[r]^{fu}&B}$$
is in the image of $J$.
Furthermore, there is an invertible 2-cell
$$
\xymatrix{
&D\ar[dl]_{vu}\ar[dr]^{fu}
\\
A\ar@{}[r]|{\cong}&D\ar[u]_{1_D}\ar[d]^{u}\ar@{}[r]|{\cong} & B
\\
&C\ar[ul]^v\ar[ur]_f
}
$$ 
showing that $J$ is essentially surjective on arrows.

It remains to show that $J$ is fully faithful on 2-cells.
To show that it is full on 2-cells,
consider the 2-cell represented by the diagram,
\begin{equation}\label{full_on_2-cells}
\xymatrix@C=3em{
&\ar[dl]_{w_1}\ar[dr]^{f_1}
\\
\ar@{}[r]|{\alpha\Downarrow} & \ar[u]^{v_1}\ar[d]_{v_2}\ar@{}[r]|{\beta\Downarrow} &
\\
& \ar[ul]^{w_2}\ar[ur]_{f_2}
}
\end{equation}
with $w_1,w_2\in \frakW$ and $w_1v_1,w_2v_2\in\frakV$.
Since $\frakW$ is weakly initial in  $\frakV$, there is an arrow $u$ such that $w_1v_1u\in\frakW$.
Hence, the 2-cell represented by 
$$
\xymatrix@C=5em{
&\ar[dl]_{w_1}\ar[dr]^{f_1}
\\
\ar@{}[r]|{\alpha u\Downarrow} & \ar[u]^{v_1u}\ar[d]_{v_2u}\ar@{}[r]|{\beta u\Downarrow}&
\\
& \ar[ul]^{w_2}\ar[ur]_{f_2}
}
$$
is in the image of $J$.
This  diagram represents the same 2-cell as (\ref{full_on_2-cells}), since the following diagram with unitor 2-cells gives an equivalence between them:
$$
\xymatrix@C=5em{
&
\\
\ar[ur]^{v_1u}\ar[dr]_{v_2u} & \ar[l]_1\ar[r]^{u} \ar@{}[u]|{\cong}\ar@{}[d]|{\cong} & \ar[ul]_{v_1}\ar[dl]^{v_2}
\\
&
}
$$
Hence (\ref{full_on_2-cells}) is in the image of $J$ and we conclude that $J$ is full on 2-cells.

To verify that $J$ is faithful on 2-cells, consider two 2-cells between the same spans of arrows 
\begin{equation}\label{faithful_on_2-cells}
\xymatrix@C=4em{
&\ar[dl]_{w_1}\ar[dr]^{f_1} &&& & \ar[dl]_{w_1}\ar[dr]^{f_1}
\\
\ar@{}[r]|{\alpha\Downarrow} & \ar[u]^{v_1}\ar[d]_{v_2}\ar@{}[r]|{\beta\Downarrow} &&\mbox{and}&\ar@{}[r]|{\alpha'\Downarrow} & \ar[u]^{v'_1}\ar[d]_{v'_2}\ar@{}[r]|{\beta'\Downarrow} &
\\
& \ar[ul]^{w_2}\ar[ur]_{f_2}&&&&\ar[ul]^{w_2}\ar[ur]_{f_2}
}
\end{equation}
and suppose that these diagrams represent the same 2-cell in $\calB(\frakV^{-1})$.
This means that there is an equivalence given by  arrows $s$ and $t$ with 2-cells $\gamma_1$ and $\gamma_2$
as in
$$
\xymatrix@C=3em{
&
\\
\ar[ur]^{v_1}\ar[dr]_{v_2} & \ar[l]_s\ar[r]^{t} \ar@{}[u]|{\stackrel{\gamma_1}{\Rightarrow}}\ar@{}[d]|{\stackrel{\gamma_2}{\Rightarrow}} & \ar[ul]_{v'_1}\ar[dl]^{v'_2}
\\
&
}
$$
such that the appropriate diagrams of 2-cells commute and $w_1v_1s\in\frakV$.
Since $\frakW$ is weakly initial in  $\frakV$, there is an arrow $u$ such that $w_1v_1su\in\frakW$.
So the diagram 
 $$
\xymatrix@C=4em{
&
\\
\ar[ur]^{v_1}\ar[dr]_{v_2} & \ar[l]_{su}\ar[r]^{tu} \ar@{}[u]|{\stackrel{\gamma_1u}{\Longrightarrow}}\ar@{}[d]|{\stackrel{\gamma_2u}{\Longrightarrow}} & \ar[ul]_{v'_1}\ar[dl]^{v'_2}
\\
&
}
$$
represents an equivalence of the  diagrams in (\ref{faithful_on_2-cells}) 
in $\calB(\frakW^{-1})$.
We conclude that $J$ is fully faithful on 2-cells, and hence is a biequivalence of bicategories.
\end{proof}

\begin{rmk} \label{r:eq}
This theorem implies that the choices made in constructing the bicategory of fractions in Section~\ref{newBF2} do not matter, since $\frakW$ is weakly initial in itself, and Theorem~\ref{d:equivalence} provides an equivalence of bicategories created with different choices.
\end{rmk}

This result can be combined with the condition {\em WISC}  given in \cite{Roberts-TAC} to obtain the following. 

\begin{cor} If $\frakV$ has a weakly initial subset $\frakS_X$ over each object $X$, and these subsets contain identities and are closed under 2-isomorphism (conditions {\bf [WB1]} and {\bf [WB5]}), then the arrows in the weakly initial subsets define a locally small bicategory of fractions $\calB(\frakS^{-1})$, equivalent to $\calB(\frakV^{-1})$. 
\end{cor}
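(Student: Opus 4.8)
The plan is to reduce the statement to Proposition~\ref{P:cover} and Theorem~\ref{d:equivalence} by first packaging the per-object data into a single subclass, and then to do the size bookkeeping by hand. Set $\frakS := \bigcup_X \frakS_X$, the class of all arrows of $\calB$ that lie in $\frakS_X$ for $X$ their (unique) codomain; then $\frakS \subseteq \frakV$, and the arrows of $\frakS$ with codomain $X$ form exactly the \emph{set} $\frakS_X$. First I would check that $\frakS$ is weakly initial in $\frakV$ in the sense of Definition~\ref{coveringclass}: given $v \in \frakV$ with codomain $X$, the hypothesis that $\frakS_X$ is weakly initial among the arrows of $\frakV$ into $X$ provides $u$ with $vu \in \frakS_X \subseteq \frakS$. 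Next, $\frakS$ satisfies {\bf [WB1]} because $1_X \in \frakS_X$ for every $X$, and it satisfies {\bf [WB5]} because an invertible $2$-cell $v \Rightarrow w$ forces $v$ and $w$ to be parallel, so if $w \in \frakS_X$ then $v$ has codomain $X$ and lies in $\frakS_X \subseteq \frakS$.

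With these facts in place, Proposition~\ref{P:cover} applies to the inclusion $\frakS \subseteq \frakV$ and yields that $\frakS$ also satisfies {\bf [WB2]}--{\bf [WB4]}, hence all of {\bf [WB1]}--{\bf [WB5]}; so Theorem~\ref{weakerfracns} produces a bicategory of fractions $\calB(\frakS^{-1})$, and Theorem~\ref{d:equivalence}, applied to $\frakS \subseteq \frakV$, gives an equivalence $\calB(\frakS^{-1}) \simeq \calB(\frakV^{-1})$ compatible with the canonical homomorphisms out of $\calB$.

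It remains to verify that $\calB(\frakS^{-1})$ is locally small; here I use that $\calB$ itself is locally small. The arrows $A \to B$ of $\calB(\frakS^{-1})$ are spans $A \xleftarrow{w} C \xrightarrow{f} B$ with $w \in \frakS$; since $w$ has codomain $A$ we have $w \in \frakS_A$, a set, and for each such $w$ the other leg $f$ ranges over the small hom-category $\calB(C,B)$, so there is only a set of spans. For the $2$-cells: a $2$-cell is an equivalence class of diagrams as in~(\ref{first2-cell}), determined by an apex $D$, legs $u\colon D\to C$ and $u'\colon D\to C'$, and $2$-cells $\alpha$, $\beta$ of $\calB$, subject to $wu \in \frakS$. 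As $wu$ has codomain $A$ it lies in $\frakS_A$, so the assignment $(D,u) \mapsto wu$ maps the possible choices of $(D,u)$ into the set $\frakS_A$, with the fibre over $z \in \frakS_A$ contained in the hom-set of $\calB$ from $\mathrm{dom}(z)$ to $C$; hence $(D,u)$ ranges over a set, and then $u'$, $\alpha$, $\beta$ range over hom-sets of $\calB$, all sets. Thus the diagrams~(\ref{first2-cell}), and so their equivalence classes, form a set, and $\calB(\frakS^{-1})(A,B)$ is small. The only point needing any care is this final size count, whose crux is that the requirement $wu \in \frakS$ confines the left leg $w$ to the \emph{set} $\frakS_A$ rather than the proper class $\frakV$, which in turn pins the apex down to $D = \mathrm{dom}(wu)$ up to a set's worth of choices; local smallness of $\calB$ then controls everything else. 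I do not anticipate any genuine obstacle.
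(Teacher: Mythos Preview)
Your argument is correct and is precisely the intended one: the paper states this result as an immediate corollary of Proposition~\ref{P:cover} and Theorem~\ref{d:equivalence} without spelling out a proof, and your write-up supplies exactly the details one would expect, including the necessary size check (which implicitly uses local smallness of $\calB$, a standing assumption in this discussion). There is nothing to add or correct.
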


This strengthens the result in \cite{Roberts-TAC} where one would only get a locally essentially small bicategory of fractions.

\begin{rmk}
Our notion of a weakly initial class of arrows is a dual notion to that of the right saturation of a class of arrows defined in \cite{Matteo2}. The right saturation enlarges the class of arrows to be inverted, rather than restricting to a smaller subclass.   
  
The right saturation of a class $\calW$ of arrows consists of those arrows $f\colon C\to D$ for which there exist arrows $g\colon B\to C$ and $h\colon A\to B$
such that $gh$ and $fg$ are both in $\calW$. If $\calW$ satisfies the conditions {\bf BF1}-{\bf BF5}, then so does its saturation,  and the saturation gives rise to an equivalent bicategory of fractions.
It is not difficult to use {\bf [WB3]} to show  that if $\calW\subseteq \calV$ is weakly initial in  $\calV$, then 
$\calV$ is a subset of the saturation of $\calW$.
This does not immediately imply the equivalence of the induced bicategories of fractions, because $\calW$ may  not satisfy {\bf BF2}. However, Theorem~\ref{d:equivalence}
implies that the equivalences of bicategories of fractions in \cite{Matteo2}  apply when we replace {\bf BF2} with {\bf [WB2]}.  
\end{rmk}

\begin{rmk} In the case where one is only interested in obtaining a smaller version of $\calB(\frakV^{-1})(X,Y)$ for a particular object  $X$ (or for a particular class of objects) in the bicategory $\calB$, there is a local version of  Theorem~\ref{d:equivalence}.    Given an object $X$ in $\calB$ and a class of arrows $\frakV$ in $\calB$, we say that a subclass $\frakA\subseteq \frakV$ {\em is weakly initial in  $\frakV$ at $X$}
when the class $\frakA/X$ of arrows in $\frakA$ with codomain $X$ is weakly initial in the class $\frakV/X$ of arrows in $\frakV$ with codomain $X$.  We write $\calB_\frakA(X,Y)$ for the category for spans from $X$ to $Y$ with reverse arrows in $\frakA$ and 2-cells defined using 2-cell diagrams with the appropriate composites in $\frakA$.
Now, if $\frakV$ satisfies conditions {\bf [WB1]}--{\bf [WB5]},  $\frakA\subseteq \frakV$ satisfies condition {\bf [WB1]} and {\bf [WB5]},  and $\frakA$ is weakly initial in $\frakV$ at $X$, there is an equivalence of categories
$$\calB_\frakA(X,Y)\stackrel{\sim}{\rightarrow}\calB(\frakV^{-1})(X,Y),
$$
for any object $Y$ in $\calB$.
\end{rmk}

\subsection{Closure Under Composition}
Given a class of arrows $\frakW$ in a bicategory $\calB$, let  $\widehat{\frakW}$ denote the class obtained from $\frakW$ by closure under composition and   invertible 2-cells.  So  $\widehat{\frakW}$ is the smallest class of arrows in $\calB$ such that
\begin{itemize}
\item
$\frakW\subseteq \widehat{\frakW}$;
\item
If $f_1,f_2\in\widehat{\frakW}$, and $f_2\circ f_1$ is defined, then $f_2\circ f_1\in\widehat{\frakW}$;
\item 
If $f\in\widehat{\frakW}$ and $\alpha\colon f\stackrel{\sim}{\Rightarrow}g$ is an invertible 2-cell in $\calB$, then $g\in\widehat{\frakW}$.
\end{itemize}
Then $\widehat{\frakW}$ has the stronger property {\bf BF2}, and each arrow $w\in\widehat{\frakW}$ will have an invertible 2-cell $\alpha\colon w \stackrel{\sim}{\Rightarrow} w_n\circ\cdots\circ w_1$ with codomain a finite composite of arrows  $w_1,\ldots,w_n\in\frakW$.  

\begin{lma}
If $\frakW$ satisfies the conditions {\bf [WB1]}--{\bf[WB5]}, then $\widehat{\frakW}$ defines a wide subcategory which satisfies the conditions from  \cite{Pr-comp} for constructing a bicategory of fractions.
\end{lma}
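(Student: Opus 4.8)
The plan is to verify the five conditions \textbf{BF1}--\textbf{BF5} of \cite{Pr-comp} for $\widehat{\frakW}$, using the corresponding \textbf{[WB1]}--\textbf{[WB5]} for $\frakW$ together with the defining closure properties of $\widehat{\frakW}$. The key structural fact I would record first, and use repeatedly, is the one already noted before the lemma: every $w\in\widehat{\frakW}$ admits an invertible 2-cell $\alpha\colon w\stackrel{\sim}{\Rightarrow}w_n\circ\cdots\circ w_1$ with each $w_i\in\frakW$. Combining this with an iterated application of \textbf{[WB2]} shows that for every $w\in\widehat{\frakW}$ there is an arrow $u$ with $wu\in\frakW$; i.e.\ $\frakW$ is weakly initial in $\widehat{\frakW}$. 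This ``reduction lemma'' is the workhorse: it lets me pull any problem about $\widehat{\frakW}$-arrows back, after precomposition, to a problem about $\frakW$-arrows, where \textbf{[WB3]}, \textbf{[WB4]} and Proposition~\ref{isolifting} are available.

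First, \textbf{BF1} (equivalences lie in $\widehat{\frakW}$) follows since $\widehat{\frakW}\supseteq\frakW$ contains all identities by \textbf{[WB1]}, $\widehat{\frakW}$ is closed under composition, and any equivalence is invertibly 2-isomorphic to a composite of an identity with itself --- more precisely one uses the standard argument (cf.\ Remark~\ref{r:bvswb}) that \textbf{[WB1]} plus closure under composition and invertible 2-cells forces all equivalences in. \textbf{BF2} (closure under composition) is immediate from the definition of $\widehat{\frakW}$, and likewise \textbf{BF5} (closure under invertible 2-cells) is built into the definition. That $\widehat{\frakW}$ is a wide subcategory is then just \textbf{BF1}${}+{}$\textbf{BF2}. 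So the real content is \textbf{BF3} and \textbf{BF4}.

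For \textbf{BF3}: given $w\colon A\to B$ in $\widehat{\frakW}$ and $f\colon C\to B$ arbitrary, pick $u$ with $wu\in\frakW$ by the reduction lemma, apply \textbf{[WB3]} to the cospan $(f, wu)$ to get a square with vertical leg $v\in\frakW\subseteq\widehat{\frakW}$ and invertible 2-cell, then paste on the 2-isomorphism expressing the top edge as (a composite through) $w$; since $v\in\widehat{\frakW}$ this produces the required square for $w$. For \textbf{BF4}: given $\alpha\colon wf\Rightarrow wg$ with $w\in\widehat{\frakW}$, again choose $u$ with $wu\in\frakW$, regard $\alpha$ as a 2-cell between $(wu)f$ and $(wu)g$ after inserting the coherence isomorphisms, apply \textbf{[WB4]} for $wu$ to get $(u',\beta')$, and note $u'\in\widehat{\frakW}$; the ``furthermore'' clause of \textbf{BF4} --- comparing two liftings --- transfers similarly by precomposing both given pairs so as to land over $wu\in\frakW$ and invoking the second half of \textbf{[WB4]}, using Lemma~\ref{Matteo1} to massage the resulting 2-cells. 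The old \textbf{BF4} also asks that an invertible $\alpha$ have an invertible lifting; this is exactly Proposition~\ref{isolifting} applied to $wu$, with $u'\in\widehat{\frakW}$.

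The main obstacle I anticipate is bookkeeping rather than conceptual: inserting and tracking the associativity/unitor coherence 2-cells of $\calB$ when one replaces $w$ by $wu$ and reassembles squares and liftings, and checking that the commuting-square requirement in the ``furthermore'' part of \textbf{BF4} survives precomposition by the reduction arrows. A secondary subtlety is verifying \textbf{BF1} cleanly --- one must be slightly careful that an arbitrary equivalence, not merely an arrow of $\frakW$, ends up in $\widehat{\frakW}$ --- but this is handled by the cited standard reduction of \textbf{BF1} to \textbf{[WB1]}. None of these steps requires new ideas beyond the reduction lemma and the results already established in Section~\ref{S:background}.
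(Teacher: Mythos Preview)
Your reduction lemma (every $w\in\widehat{\frakW}$ admits $u$ with $wu\in\frakW$) is correct and your treatment of \textbf{BF3} with it is fine---indeed slightly slicker than the paper's stacking of squares. The problem is \textbf{BF4}.

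Given $w\colon A\to B$ in $\widehat{\frakW}$, $f,g\colon C\to A$ and $\alpha\colon wf\Rightarrow wg$, your reduction lemma produces $u\colon A'\to A$ with $wu\in\frakW$. You then propose to ``regard $\alpha$ as a 2-cell between $(wu)f$ and $(wu)g$.'' But $wu$ has domain $A'$, while $f$ and $g$ have codomain $A$, so the composites $(wu)f$ and $(wu)g$ do not exist. The arrow $u$ sits on the wrong side of $w$: precomposing $w$ with $u$ cannot be inserted between $w$ and $f$. There is no coherence isomorphism to invoke here; the types simply do not match. The same objection applies to your sketch of the ``furthermore'' clause.

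The paper handles \textbf{BF4} by the other structural fact about $\widehat{\frakW}$: an arrow $w\in\widehat{\frakW}$ is 2-isomorphic to a finite composite $w_n\cdots w_1$ with each $w_i\in\frakW$. One conjugates $\alpha$ by this isomorphism and then lifts \emph{iteratively}, first through $w_n$ (using \textbf{[WB4]} for $\frakW$), then through $w_{n-1}$, and so on; the composite of the resulting $\frakW$-arrows lies in $\widehat{\frakW}$. For the compatibility clause the paper does use your weak-initiality idea, but on the lifting arrows $w',w''\in\widehat{\frakW}$ (where precomposition is on the correct side), not on $w$ itself. So the decomposition of $w$ into $\frakW$-factors is essential for the existence half of \textbf{BF4}; the weak-initiality reduction alone is not enough.
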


\begin{proof}
Since $\frakW$ contains all identities, so does $\widehat{\frakW}$, so $\widehat{\frakW}$ satisfies condition {\bf BF1} from \cite{Pr-comp}.    And $\widehat{\frakW}$ has been created to be closed under composition, verifying {\bf BF2}.   Conditions {\bf BF3}--{\bf BF5} are equivalent to conditions {\bf [WB3]}--{\bf[WB5]} (and {\bf BF3} and {\bf BF5} are identical to their weaker versions);  see Remark~\ref{r:bvswb}.  So it suffices to check conditions {\bf [WB3]}--{\bf [WB5]} for $\widehat{\frakW}$.

Since every arrow $v$ in $\widehat{\frakW}$ is isomorphic to a composition $w_1\circ\ldots\circ w_n$ of finitely many arrows in $\frakW$, repeated application of {\bf [WB3]} for $\frakW$ 
gives us {\bf [WB3]} for $\widehat{\frakW}$:
$$
\xymatrix{
\ar[r]^{f_n}\ar[d]_{w_n'}\ar@{}[dr]|{\alpha_n} & \ar[d]^{w_n}\ar@/^4ex/[4,0]^v_{\cong}
\\
\ar@{..}[d]\ar[r]_{f_{n-1}}\ar@{}[d]|\cdots&\ar@{..}[d]
\\
\ar[d]_{w_2'}\ar[r]^{f_2}\ar@{}[dr]|{\alpha_2}&\ar[d]^{w_2}
\\
\ar[d]_{w_1'}\ar@{}[dr]|{\alpha_1}\ar[r]^{f_1}&\ar[d]^{w_1}
\\
\ar[r]_f &}
$$
Note that $w_n'\circ\cdots\circ w_1'\in\widehat{\frakW}$ by definition.

To verify condition {\bf [WB4]},  suppose that $\alpha\colon wf\Rightarrow wg$ and $\gamma\colon w_n\cdots w_1\stackrel{\sim}{\Rightarrow}w$ with $w_1,\ldots,w_n\in\frakW$.
Repeatedly applying {\bf [WB4]} for $\frakW$ gives us arrows $w'_{n-k}$ and 2-cells $\beta_{n-k}\colon w_{n-k-1}\cdots w_1fw_n'\cdots w_{n-k}'\Rightarrow  w_{n-k-1}\cdots w_1gw_n'\cdots w_{n-k}'$ for $k=0,\ldots, n-1$ such that $w_{n-k}\cdots w_{n-1}w_n\beta_{n-k}=((\gamma^{-1}g)\cdot\alpha\cdot(\gamma f))w_n'w_{n-1}'\cdots w_{n-k}'$. So $\beta_1$ with $w_n'w_{n-1}'\cdots w_1'$ is the required lifting.

To check the compatibility condition in {\bf [WB4]}, consider $\alpha\colon wf\Rightarrow wg$ with liftings
$\alpha'\colon fw'\Rightarrow gw'$ and $\alpha''\colon fw''\Rightarrow gw''$.
Since $w',w''\in\widehat{\frakW}$, there are arrows  $w'_1,\ldots,w'_k$ and $w''_1,\ldots,w''_\ell $ in $\frakW$ with invertible 2-cells, $\delta\colon w'_k\cdots w'_1\Rightarrow w'$ and $\gamma\colon w''_\ell\cdots w''_1\Rightarrow w''$. By repeatedly applying condition {\bf [WB2]} for $\frakW$ there are arrows $u',u''$ such that $w'u'\in\frakW$ and $w''u''\in\frakW$. Hence we can apply {\bf [WB4]} for $\frakW$ to the liftings $\alpha' u'\colon fw'u'\Rightarrow gw'u'$ and $\alpha''u''\colon fw''u''\Rightarrow gw''u''$ and obtain arrows $s,t$ and a 2-cell $\varepsilon \colon w'u's\Rightarrow w''u''t$ showing compatibility of these liftings. This then gives us also the required arrows $u's$ and $u''t$ with the cell $\varepsilon$ to establish compatibility for the original liftings. 

Finally, $\widehat{\frakW}$ satisfies condition {\bf BF5} by construction.
\end{proof}

\begin{thm}\label{comparison-thm}
If $\frakW$ satisfies the conditions {\bf [WB1]}--{\bf[WB5]}, then there is an equivalence of bicategories $J\colon \calB(\frakW^{-1})\stackrel{\sim}{\longrightarrow}\calB(\widehat{\frakW}^{-1})$,
making the following triangle commute,
$$
\xymatrix{
&\calB(\frakW^{-1})\ar[dd]^{J}
\\
\calB\ar[ur]^{J_{\frakW}}\ar[dr]_{J_{\widehat{\frakW}}}
\\
&\calB(\widehat{\frakW}^{-1})
}
$$ 
where $\calB(\widehat{\frakW}^{-1})$ is the bicategory of fractions from \cite{Pr-comp} and $\calB(\frakW^{-1})$ is the bicategory of fractions defined in Section~\ref{newBF2}.   
\end{thm}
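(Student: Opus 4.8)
The plan is to obtain this as a corollary of Theorem~\ref{d:equivalence}. We have just shown in the preceding lemma that $\widehat{\frakW}$ satisfies {\bf BF1}--{\bf BF5}, hence in particular the conditions {\bf [WB1]}--{\bf [WB5]} (using Remark~\ref{r:bvswb} and Proposition~\ref{isolifting} to pass between {\bf BF4} and {\bf [WB4]}). Moreover $\frakW\subseteq\widehat{\frakW}$ by construction, $\frakW$ itself satisfies {\bf [WB1]} and {\bf [WB5]} by hypothesis, and the key point is that $\frakW$ is \emph{weakly initial} in $\widehat{\frakW}$. So the main step is to verify this weak initiality; once that is in hand, Theorem~\ref{d:equivalence} applied to the inclusion $\frakW\subseteq\widehat{\frakW}$ immediately produces an equivalence $J\colon\calB(\frakW^{-1})\to\calB(\widehat{\frakW}^{-1})$ together with the commuting triangle over $\calB$, since the $J_{\frakW}$ and $J_{\widehat{\frakW}}$ in that theorem are exactly the localization homomorphisms.

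To prove weak initiality, take any arrow $v\in\widehat{\frakW}$. By the remark preceding the lemma, there is an invertible 2-cell $v\cong w_n\circ\cdots\circ w_1$ with each $w_i\in\frakW$; by {\bf [WB5]} it suffices to find $u$ with $(w_n\circ\cdots\circ w_1)\circ u\in\frakW$. I would argue by induction on $n$, the base cases $n=0,1$ being trivial (take $u$ an identity). For the inductive step, write the composite as $w_n\circ(w_{n-1}\circ\cdots\circ w_1)$; by induction there is an arrow $u'$ with $w_{n-1}\circ\cdots\circ w_1\circ u'\in\frakW$, so we are reduced to the case of a composable pair $w_n$ and $w':=w_{n-1}\circ\cdots\circ w_1\circ u'$, both in $\frakW$. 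Now condition {\bf [WB2]} gives an arrow $u''$ with $w_n\circ w'\circ u''\in\frakW$, and then $u=u'\circ u''$ (composed appropriately, up to the associativity constraints of $\calB$, which only change things by an invertible 2-cell and hence are absorbed by {\bf [WB5]}) does the job. This shows $\frakW$ is weakly initial in $\widehat{\frakW}$.

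With weak initiality established, Theorem~\ref{d:equivalence} (with $\frakV:=\widehat{\frakW}$) gives the equivalence $J\colon\calB(\frakW^{-1})\xrightarrow{\ \sim\ }\calB(\widehat{\frakW}^{-1})$ fitting into the commutative triangle with $J_{\frakW}$ and $J_{\widehat{\frakW}}$; and by the lemma $\calB(\widehat{\frakW}^{-1})$ is a bicategory of fractions in the sense of \cite{Pr-comp}, while $\calB(\frakW^{-1})$ is the construction of Section~\ref{newBF2}, as claimed. As a byproduct this also yields the alternative derivation of Theorem~\ref{weakerfracns} promised after its proof: $\calB(\frakW^{-1})$ inherits the universal property from $\calB(\widehat{\frakW}^{-1})$ through the equivalence $J$.

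I expect the only real content to be the weak-initiality verification, and within that the only mild subtlety is bookkeeping with the associativity and unit constraints of $\calB$ when iterating {\bf [WB2]}; none of this is serious, since every such constraint is invertible and {\bf [WB5]} lets us ignore it. There are no coherence obligations to check for $J$ itself beyond those already supplied by Theorem~\ref{d:equivalence}.
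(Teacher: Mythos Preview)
Your proposal is correct and follows essentially the same approach as the paper: apply Theorem~\ref{d:equivalence} with $\frakV=\widehat{\frakW}$, using the preceding lemma to know $\widehat{\frakW}$ satisfies {\bf BF1}--{\bf BF5}. You are actually more careful than the paper in one respect---you explicitly verify that $\frakW$ is weakly initial in $\widehat{\frakW}$ via induction and {\bf [WB2]}, whereas the paper leaves this implicit---but slightly less careful in another: the paper invokes Remarks~\ref{first-comparison}(\ref{fc1}) and~\ref{r:eq} to justify identifying the Section~\ref{newBF2} construction $\calB(\widehat{\frakW}^{-1})$ with the classical \cite{Pr-comp} construction, a step you gloss as ``by the lemma'' when the lemma only establishes {\bf BF1}--{\bf BF5} for $\widehat{\frakW}$ and does not by itself compare the two constructions.
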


\begin{proof}
 We have shown that whenever a class of arrows $\frakW$ satisfies the stronger conditions {\bf BF1}-{\bf BF5}, the resulting bicategory of fractions is equivalent to
the traditional one from \cite{Pr-comp};  see
 Remarks~\ref{first-comparison}(\ref{fc1}) and  \ref{r:eq}.   So $\calB(\widehat{\frakW}^{-1})$ may be taken to be the classical bicategory of fractions  and Theorem~\ref{d:equivalence} now gives us the equivalence of the resulting bicategories of fractions.  
\end{proof}

\begin{cor}
When $\frakW$ satisfies the conditions {\bf [WB1]}--{\bf[WB5]}, the pseudo functor $J_{\frakW}\colon\calB\to\calB(\frakW^{-1})$ satisfies the universal property for the bicategory of fractions.
\end{cor}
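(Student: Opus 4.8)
The plan is to deduce this corollary directly from Theorem~\ref{comparison-thm} together with the known universal property of the classical bicategory of fractions. First I would recall that $\calB(\widehat{\frakW}^{-1})$, being the classical bicategory of fractions of \cite{Pr-comp} for the class $\widehat{\frakW}$ (which satisfies {\bf BF1}--{\bf BF5} by the preceding lemma), carries a pseudo functor $J_{\widehat{\frakW}}\colon\calB\to\calB(\widehat{\frakW}^{-1})$ with the universal property of \cite[Theorem 21]{Pr-comp}: for every bicategory $\calD$, precomposition with $J_{\widehat{\frakW}}$ induces an equivalence $\Hom(\calB(\widehat{\frakW}^{-1}),\calD)\simeq\Hom_{\widehat{\frakW}}(\calB,\calD)$.

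Next I would observe two things. First, since $\frakW\subseteq\widehat{\frakW}$ and every arrow of $\widehat{\frakW}$ is, up to invertible $2$-cell, a finite composite of arrows of $\frakW$, a homomorphism (or pseudo/lax/oplax transformation) sends all arrows of $\frakW$ to internal equivalences if and only if it sends all arrows of $\widehat{\frakW}$ to internal equivalences --- internal equivalences are closed under composition and under invertible $2$-cells in any bicategory. Hence $\Hom_{\frakW}(\calB,\calD)=\Hom_{\widehat{\frakW}}(\calB,\calD)$ as (sub)categories of $\Hom(\calB,\calD)$. Second, Theorem~\ref{comparison-thm} gives a biequivalence $J\colon\calB(\frakW^{-1})\to\calB(\widehat{\frakW}^{-1})$ with $J\circ J_{\frakW}=J_{\widehat{\frakW}}$; since a biequivalence induces an equivalence of hom-categories of homomorphisms $\Hom(\calB(\widehat{\frakW}^{-1}),\calD)\simeq\Hom(\calB(\frakW^{-1}),\calD)$ by precomposition with $J$ (a quasi-inverse being precomposition with a quasi-inverse of $J$), we may transport the universal property along $J$.

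Then I would assemble these: for any $\calD$, precomposition with $J_{\frakW}$ factors as precomposition with $J_{\widehat{\frakW}}$ followed by the inverse transport along $J$ --- more precisely, $(-)\circ J_{\frakW}=\bigl((-)\circ J_{\widehat{\frakW}}\bigr)\circ\bigl((-)\circ J\bigr)^{-1}$ up to the natural isomorphism coming from $J\circ J_{\frakW}\cong J_{\widehat{\frakW}}$, so $(-)\circ J_{\frakW}\colon\Hom(\calB(\frakW^{-1}),\calD)\to\Hom_{\frakW}(\calB,\calD)$ is a composite of an equivalence (transport along the biequivalence $J$) with the equivalence $(-)\circ J_{\widehat{\frakW}}\colon\Hom(\calB(\widehat{\frakW}^{-1}),\calD)\to\Hom_{\widehat{\frakW}}(\calB,\calD)$, and is therefore itself an equivalence. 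This is exactly the statement that $J_{\frakW}$ satisfies the universal property of the bicategory of fractions. Finally I would note that $J_{\frakW}$ sends arrows of $\frakW$ to internal equivalences, which was already established in Theorem~\ref{weakerfracns}, so all the defining data of a bicategory of fractions are present.

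The one point that requires a little care --- and which I expect to be the main (modest) obstacle --- is the bookkeeping showing that precomposition with the biequivalence $J$ really does yield an equivalence of the relevant functor categories, and that the resulting equivalence is compatible with the restriction to the subcategories $\Hom_{\frakW}$ and $\Hom_{\widehat{\frakW}}$; this is where one uses that $J$ is essentially surjective, locally an equivalence, and that $J\circ J_{\frakW}\cong J_{\widehat{\frakW}}$ via a pseudonatural isomorphism, so that the square of precomposition functors commutes up to natural isomorphism. None of this is difficult, but it is the step where the argument has genuine content rather than being a one-line invocation. Alternatively, one can bypass the transport argument entirely by simply citing Theorem~\ref{weakerfracns}, which already proves the universal property for $\calB(\frakW^{-1})$ directly; the present corollary is then just a restatement emphasizing that the construction of Section~\ref{newBF2} produces a genuine bicategory of fractions in the sense of \cite{Pr-comp}, and the two-paragraph deduction above is the conceptual explanation of why Theorem~\ref{comparison-thm} makes this automatic.
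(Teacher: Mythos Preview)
Your proposal is correct and takes essentially the same approach as the paper. The paper's proof is a single sentence --- that a pseudo functor sends arrows of $\frakW$ to equivalences if and only if it sends arrows of $\widehat{\frakW}$ to equivalences --- leaving the transport along the biequivalence $J$ of Theorem~\ref{comparison-thm} implicit; you have simply spelled out that transport in detail.
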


\begin{proof}
A pseudo functor $\calB\to\calC$ sends the arrows in $\frakW$ to equivalences if and only if it sends the arrows in $\widehat{\frakW}$ to equivalences. 
\end{proof}

This  result also applies to  results for categories of fractions in the 1-category case given in \cite{GZ}.

\begin{cor}\label{classicalGZ}
A class of arrows $W$ in a category $\calC$ allows for the construction of a category of right fractions $\calC[W^{-1}]$ if it satisfies the following conditions:
\begin{enumerate}
\item 
$W$ contains all identities;
\item
For any pair of composable arrows $\xymatrix@1{B\ar[r]^v&C\ar[r]^w&D}$ in $W$ there is an arrow $\xymatrix@1{A\ar[r]^u&B}$ such that $\xymatrix@1{A\ar[r]^{wvu}&D}$ is in $W$;
\item
For any arrow $w\in W$ and any arrow $f$ which shares its codomain with $w$, there is an arrow $w'\in W$ and an arrow $f'$ such that the following square is defined and commutes:
$$
\xymatrix{
\ar[r]^{f'}\ar[d]_{w'}&\ar[d]^{w}
\\
\ar[r]_{f}&}
$$
\item
Given $w\in W$ and parallel arrows $f_1,f_2$ such that $wf_1=wf_2$, then there is an arrow $w'\in W$ such that $f_1w'=f_2w'$,
$$
\xymatrix{\ar[r]^{w'}&\ar@<.6ex>[r]^{f_1}\ar@<-.6ex>[r]_{f_2}&\ar[r]^w&}
$$
\end{enumerate} 
\end{cor}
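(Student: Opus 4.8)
The plan is to reduce this to the bicategorical machinery already in place by regarding $\calC$ as a locally discrete bicategory $\calB$ — one in which the only $2$-cells are identities — and $W$ as a class $\frakW$ of $1$-cells. The first step is to check that, in this setting, conditions (1)--(4) say exactly what conditions {\bf [WB1]}--{\bf[WB5]} say. Condition {\bf [WB1]} is (1) verbatim. Because an invertible $2$-cell in a discrete bicategory is an identity, condition {\bf [WB3]} becomes the demand for a strictly commuting square $w\circ f' = f\circ w'$ with $w'\in\frakW$, i.e.\ (3); condition {\bf [WB5]} holds vacuously; and the first clause of {\bf [WB4]} becomes ``$w f_1 = w f_2$ with $w\in\frakW$ implies $f_1 w' = f_2 w'$ for some $w'\in\frakW$'', i.e.\ (4). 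Finally {\bf [WB2]} is (2).

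The one point that genuinely requires an argument, and which I expect to be the only real obstacle, is the second (compatibility) clause of {\bf [WB4]}. In the discrete setting this asks: given $u_1,u_2\in\frakW$ (here with $f u_i = g u_i$, though that plays no role), produce arrows $s,t$ with $u_1\circ s = u_2\circ t \in\frakW$, the required commuting square of $2$-cells then being automatic since every $2$-cell is an identity. I would obtain this by applying (3) to the pair $u_1,u_2$ (which share a codomain) to get a commuting square $u_1\circ a = u_2\circ b$ with $b\in\frakW$, then applying (2) to the composable pair of $\frakW$-arrows $b$ and $u_2$ to find $u$ with $u_2\circ b\circ u\in\frakW$, and setting $s = a\circ u$, $t = b\circ u$, so that $u_1\circ s = u_2\circ t = u_2\circ b\circ u\in\frakW$.

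With {\bf [WB1]}--{\bf[WB5]} verified, two routes close the argument. One may invoke Theorem~\ref{weakerfracns} to build $\calB(\frakW^{-1})$ and Theorem~\ref{comparison-thm} to identify it with $\calB(\widehat{\frakW}^{-1})$; since all $2$-cells of $\calC$ are identities, the bicategory-of-fractions construction of \cite{Pr-comp} specializes to the ordinary category-of-fractions construction, yielding $\calC[W^{-1}]$. More directly, the closure $\widehat{\frakW}$ of $\frakW$ under composition (closure under invertible $2$-cells being trivial here) satisfies {\bf BF1}--{\bf BF5} by the lemma preceding Theorem~\ref{comparison-thm}, and in the $1$-categorical setting {\bf BF1}--{\bf BF5} are precisely the Gabriel--Zisman axioms for a right calculus of fractions, so $\calC[\widehat W^{-1}]$ exists by \cite{GZ}; and since a functor out of $\calC$ inverts every arrow of $W$ iff it inverts every composite of such arrows, inverting $W$ and inverting $\widehat W$ carry the same universal property, so $\calC[W^{-1}] := \calC[\widehat W^{-1}]$ is the desired category of right fractions.
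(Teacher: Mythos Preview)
Your proposal is correct and follows exactly the approach the paper intends: the paper offers no explicit proof, presenting the corollary as an immediate specialization of the bicategorical results to the locally discrete case, and you have filled in precisely those details. In particular, your verification of the compatibility clause of {\bf [WB4]} via conditions (3) and (2) is the one nontrivial check the paper leaves to the reader, and your argument for it is sound.
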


\begin{egs}
\begin{enumerate}
\item
When one wants to add the inverse for an arrow $w$ in a monoid, the class $W$ in the traditional Gabriel-Zisman construction of \cite{GZ} would be required to contain all powers of $w$. In our case $W$ only needs to contain a cofinal set of powers of $w$.
\item
Consider the category of atlases and atlas maps for manifolds. 
In order to obtain the category containing all smooth maps between manifolds using the original conditions,  one needs to take  the category of fractions with respect to all atlas refinements.
With the new theory we may restrict ourselves to refinements in which no charts are repeated, or any other family of refinements that is weakly initial among all refinements.
\end{enumerate}
\end{egs}

\section{Simplifying 2-Cell Representatives}\label{simplifications}

As we have seen, the universal homomorphism $J_\frakW\colon\calB\to\calB(\frakW^{-1})$  is defined to be the identity on objects, and takes an arrow $f\colon  A \to B$ to the generalized arrow $\xymatrix{A&A\ar[l]_{1_A}\ar[r]^{f}&B}$ and a 2-cell $\alpha\colon  f \Rightarrow g$ to a 2-cell diagram of the form below.  
$$
\xymatrix@C=3em{
&\ar[dl]_{1_A}A\ar[dr]^f
\\
A\ar@{}[r]|{\iota_A\Downarrow}&A\ar[u]_{1_A}\ar[d]^{1_A}\ar@{}[r]|{\alpha1_A\Downarrow}&B
\\
&A\ar[ul]^{1_A}\ar[ur]_g
}
$$
As Tommasini observed in Remark 3.5 of \cite{Matteo1}, this homomorphism  is  neither 2-full nor 2-faithful in general.  
The map  $J_\frakW$ fails to be 2-full because not every 2-cell between $J_\frakW(f)$ and $J_\frakW(g)$ needs to have a representative of this particular form.
The map $J_\frakW$ fails to be 2-faithful because two  2-cell diagrams of this form, say with distinct right cells  $\beta$ and $\gamma$, represent the same 2-cell in the bicategory of fractions  when there is an arrow $t\in\frakW$ such that $\beta t=\gamma t$.
This leads us to consider the more general issue of the equivalence relation on the 2-cell diagrams.

In this section we discuss some variations of {\bf [WB4]} and consider when a 2-cell in the bicategory of fractions can be represented by a 2-cell diagram with a given left-hand side.  In the following section, we will look at choosing these left-hand sides to have nice additional properties that will simplify some of the composition constructions.  In some cases  representatives with a given left-hand side will even be unique.
We will prove in \cite{next_paper} that some of these properties hold for the case of essential equivalences between orbifold \'etale groupoids.  In fact they apply more generally to any fully faithful maps between \'etale topological groupoids.

Following the notation of \cite{AV} and \cite{Roberts-fractions} we say that an arrow $f\colon A\to B$ in a bicategory $\calB$ has a property $\mathcal P$  when the induced functor $f_*\colon \calB(X,A) \to \calB(X,B)$ has this property. Note that for full and faithful, these properties are closely related to Condition {\bf [WB4]}. In this section we will see that if the arrows in $\frakW$ have these properties and/or their duals, we are able to simplify our description of the 2-cells in the bicategory of fractions: each 2-cell will have a representative with a given left-hand side and we won't need equivalence classes if we have chosen representatives.

\begin{dfn}
An arrow $w$ in a bicategory $\calB$ is
\begin{itemize}
\item {\em full} if 
for any 2-cell $\alpha\colon wf\Rightarrow wg$ there is a 2-cell $\tilde\alpha\colon f\Rightarrow g$ such that $w\tilde\alpha=\alpha$.  
\item {\em fully faithful} or {\em ff} if 
for any 2-cell $\alpha\colon wf\Rightarrow wg$ there is a {\em unique} 2-cell $\tilde\alpha\colon f\Rightarrow g$ such that $w\tilde\alpha=\alpha$.  
\item {\em co-full} if  
for any 2-cell $\alpha\colon fw\Rightarrow gw$ there is a 2-cell $\alpha'\colon f\Rightarrow g$ such that $\alpha' w=\alpha$.  
\item {\em co-fully-faithful} or {\em co-ff} if  
for any 2-cell $\alpha\colon fw\Rightarrow gw$ there is a {\em unique} 2-cell $\alpha'\colon f\Rightarrow g$ such that $\alpha' w=\alpha$.  
\end{itemize}
\end{dfn}

Fractions condition {\bf [WB4]} connects some of these properties as follows:

\begin{lma}
If a class of arrows $\frakW$ satisfies condition {\bf[WB4]} and is co-fully-faithful it is also full.
\end{lma}

\begin{proof}
Consider a 2-cell $\alpha\colon wf\Rightarrow wg$ with $w\in \frakW$. Since $\frakW$ satisfies {\bf [WB4]} 
there is an arrow $v\in\frakW$ with a 2-cell $\beta\colon fv\Rightarrow gv$ such that $w\beta=\alpha v$.
Since $\frakW$ is co-full, there is a 2-cell $\tilde\alpha\colon f\Rightarrow g$ such that $\beta=\tilde\alpha v$. Hence, $w\tilde\alpha v=w\beta=\alpha v$. Since $\frakW$ is co-fully-faithful this implies that $w\tilde\alpha=\alpha$.
\end{proof}

\begin{lma} \label{L:ch2cell}
Let $\frakW$ be a class of co-full arrows in $\calB$ satisfying the conditions {\bf [WB1]}--{\bf [WB5]}.
Given any 2-cell diagram 
\begin{equation}\label{thislemma1}
\xymatrix@C=3em{
&\ar[dl]_{u_1}\ar[dr]^{f_1}
\\
\ar@{}[r]|{\alpha\Downarrow} & \ar[u]_{v_1}\ar[d]^{v_2}\ar@{}[r]|{\beta\Downarrow} & 
\\
&\ar[ul]^{u_2}\ar[ur]_{f_2}
}
\end{equation} 
in $\calB(\frakW^{-1})$
and any square
$$
\xymatrix{
\ar[r]^{t_1}\ar[d]_{t_2}\ar@{}[dr]|{\stackrel{\gamma}{\scriptstyle\stackrel{\sim}{\Leftarrow}}} &\ar[d]^{u_1}
\\
\ar[r]_{u_2}&
}
$$
in $\calB$ with $u_1t_1\in\frakW$,
there is a 2-cell $\delta$ such that the diagram
\begin{equation}\label{thislemma2}
\xymatrix{
&\ar[dl]_{u_1}\ar[dr]^{f_1}
\\
\ar@{}[r]|{\gamma\Downarrow} & \ar[u]_{t_1}\ar[d]^{t_2}\ar@{}[r]|{\delta\Downarrow} & 
\\
&\ar[ul]^{u_2}\ar[ur]_{f_2}
}
\end{equation} 
represents the same 2-cell in $\calB(\frakW^{-1})$ as (\ref{thislemma1}).
\end{lma}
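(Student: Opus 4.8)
The plan is to build an explicit common refinement of the two $2$-cell diagrams and then to read off $\delta$ by cancelling a co-full arrow. The delicate point, which I flag now and return to at the end, is that this cancellation forces one of the mediating arrows into $E$ to be a \emph{pure} composite of arrows of $\frakW$, so the usual {\bf [WB2]}-corrections that move composites into $\frakW$ must be postponed until after $\delta$ has been produced.

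First I would observe that $u_1,u_2\in\frakW$ (they are the left legs of spans of $\calB(\frakW^{-1})$), and that $u_1v_1,u_1t_1\in\frakW$ by hypothesis. Apply {\bf [WB3]} to the cospan $u_1v_1\colon D\to A$, $u_1t_1\colon E\to A$ to obtain an object $F_0$, arrows $p\colon F_0\to D$ and $q\colon F_0\to E$ with $q\in\frakW$, and an invertible $2$-cell $\zeta\colon u_1v_1p\Rightarrow u_1t_1q$. Since $u_1\in\frakW$ and $\zeta$ is invertible, Proposition~\ref{isolifting} produces $\mu\colon F_1\to F_0$ in $\frakW$ and an invertible $2$-cell $\tilde\zeta\colon v_1p\mu\Rightarrow t_1q\mu$ with $u_1\tilde\zeta=\zeta\mu$. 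Forming the invertible composite
$$\Xi:=(\gamma q\mu)\cdot(u_1\tilde\zeta)\cdot(\alpha p\mu)^{-1}\colon\ u_2(v_2p\mu)\Rightarrow u_2(t_2q\mu)$$
and applying Proposition~\ref{isolifting} once more, now with respect to $u_2\in\frakW$, yields $\nu\colon F_2\to F_1$ in $\frakW$ and an invertible $2$-cell $\varepsilon'\colon v_2p\mu\nu\Rightarrow t_2q\mu\nu$ with $u_2\varepsilon'=\Xi\nu$. Put $s=p\mu\nu$, $t=q\mu\nu$ and $\varepsilon=\tilde\zeta\nu\colon v_1s\Rightarrow t_1t$; then $\varepsilon,\varepsilon'$ are invertible and, unwinding definitions, $(u_2\varepsilon')\cdot(\alpha s)=(\gamma t)\cdot(u_1\varepsilon)$, which is exactly the compatibility between $\alpha,\gamma$ and $\varepsilon,\varepsilon'$ required of a common refinement. (Throughout I suppress the associativity and unit constraints of $\calB$, as elsewhere in the paper.)

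Now I would define $\delta$. The arrow $t=q\mu\nu$ is a composite of arrows of $\frakW$; since the co-full arrows of $\calB$ form a class that contains all arrows of $\frakW$, contains the identities, and is closed under composition, $t$ is co-full. Setting $\Theta:=(f_2\varepsilon')\cdot(\beta s)\cdot(f_1\varepsilon)^{-1}\colon f_1t_1t\Rightarrow f_2t_2t$, co-fullness of $t$ gives a $2$-cell $\delta\colon f_1t_1\Rightarrow f_2t_2$ with $\delta t=\Theta$, equivalently $(\delta t)\cdot(f_1\varepsilon)=(f_2\varepsilon')\cdot(\beta s)$, which is the remaining compatibility, between $\beta,\delta$ and $\varepsilon,\varepsilon'$. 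Because $u_1t_1\in\frakW$ and $\gamma$ is invertible, the diagram~(\ref{thislemma2}) with this $\delta$ is a legitimate $2$-cell diagram of $\calB(\frakW^{-1})$.

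Finally, to see that~(\ref{thislemma2}) represents the same $2$-cell as~(\ref{thislemma1}), I would correct the refinement so that the defining condition $u_1v_1s\in\frakW$ of a connecting datum holds: $u_1v_1s\cong u_1t_1t=u_1t_1q\mu\nu$ is a composite of arrows of $\frakW$, so by repeated use of {\bf [WB2]} there is an arrow $r$ with $u_1t_1tr\in\frakW$, whence $u_1v_1sr\in\frakW$ by {\bf [WB5]}. The datum $(sr,tr,\varepsilon r,\varepsilon' r)$ then satisfies $u_1v_1sr\in\frakW$ together with both compatibilities above whiskered by $r$, so by the definition of the equivalence relation on $2$-cell diagrams it exhibits~(\ref{thislemma2}) as equivalent to~(\ref{thislemma1}). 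The main obstacle is precisely the one flagged at the outset: one might hope to get the refinement and the compatibilities in one stroke from Proposition~\ref{anytwosquares}, but the mediating arrows it outputs already carry a {\bf [WB2]}-correction which need not be co-full, so $\delta$ cannot be cancelled off of them; the construction above avoids this by keeping $t$ a pure composite of $\frakW$-arrows and deferring the {\bf [WB2]}-correction to the connecting datum. The remainder is routine pasting of $2$-cells together with bookkeeping of the constraint cells of $\calB$.
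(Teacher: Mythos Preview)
Your proof is correct and follows essentially the same route as the paper's: apply {\bf [WB3]} to the cospan $(u_1v_1,u_1t_1)$, lift twice via {\bf [WB4]}/Proposition~\ref{isolifting} with respect to $u_1$ and $u_2$ to obtain the mediating cells $\varepsilon,\varepsilon'$, then use co-fullness to extract $\delta$ and finally correct with {\bf [WB2]} so that the witness of equivalence lands in $\frakW$. The one genuine difference is the order of the last two steps: the paper applies {\bf [WB2]} \emph{first} so that the arrow $\bar v_1\tilde u_1\tilde u_2 s$ through which it cancels actually lies in $\frakW$, whereas you defer {\bf [WB2]} and instead invoke the (true but unstated in the paper) fact that co-full arrows are closed under composition, so that the composite $t=q\mu\nu$ of $\frakW$-arrows is already co-full even though it need not be in $\frakW$. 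Both orderings work; your version has the mild advantage of isolating exactly where the co-full hypothesis is needed, while the paper's avoids appealing to closure properties of co-fullness beyond the literal hypothesis on $\frakW$.
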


\begin{proof}
By {\bf [WB3]} there is a square
$$
\xymatrix{
\ar@{}[ddrr]|{\theta\Downarrow}\ar[rr]^{\overline{t}_1}\ar[dd]_{\overline{v}_1} && \ar[d]^{v_1}
\\
&&\ar[d]^{u_1}
\\
\ar[r]_{t_1}&\ar[r]_{u_1}&
}
$$
with $\overline{v}_1\in\frakW$ and $\theta$ invertible.
By {\bf [WB4]} there is an arrow $\tilde{u}_1\in\frakW$ and an invertible 2-cell $\tilde\theta\colon (v_1\overline{t}_1)\tilde{u}_1\Rightarrow(t_1\overline{v}_1)\tilde{u}_1$.
Now consider the pasting of the diagram
\begin{equation}\label{pasting}
\xymatrix@C=4em{
&\ar[dr]_{v_1}\ar[r]^{v_2}\ar@{}[drr]|{\alpha^{-1}\Downarrow}&\ar[dr]^{u_2} 
\\
\ar[ur]^{\overline{t}_1\tilde{u}_1}\ar[dr]_{\overline{v}_1\tilde{u}_1}\ar@{}[rr]|{\tilde\theta\Downarrow} &&\ar[r]_{u_1}&
\\
&\ar[ur]^{t_1}\ar@{}[urr]|{\gamma\Downarrow} \ar[r]_{t_2}& \ar[ur]_{u_2}
}
\end{equation}
By {\bf [WB4]} there is an arrow $\tilde{u}_2\in\frakW$ with an invertible 2-cell $\zeta\colon (v_2(\overline{t}_1\tilde{u}_1))\tilde{u}_2\Rightarrow (t_2(\overline{v}_1\tilde{u}_1))\tilde{u}_2$ such that $u_2\zeta$ is equal to the pasting of the cells in (\ref{pasting}) composed with $\tilde{u}_2$. Finally, we need to ensure that certain compositions of arrows are in $\frakW$.
First consider the composition of arrows $\overline{v}_1\tilde{u}_1\tilde{u}_2$. Each of the three arrows in this composition is in $\frakW$, so by {\bf [WB2]} there is an arrow $s$ such that 
$\overline{v}_1\tilde{u}_1\tilde{u}_2s\in\frakW$.
Furthermore, $u_2t_2\in\frakW$ as well, so there is an arrow $r$ such that $(u_2t_2)(\overline{v}_1\tilde{u}_1\tilde{u}_2s)r\in\frakW$.
Then we have the following equality of pastings of 2-cells:
$$
\xymatrix@C=5em{
\ar[r]^{\overline{t}_1\tilde{u}_1\tilde{u}_2sr}\ar[d]_{\overline{v}_1\tilde{u}_1\tilde{u}_2sr}\ar@{}[dr]|{\tilde\theta\tilde{u}_2sr\Downarrow} & \ar[d]^{v_1}&&\ar[d]_{\overline{v}_1\tilde{u}_1\tilde{u}_2sr}\ar@{}[dr]|{\zeta sr\Downarrow}\ar[r]^{\overline{t}_1\tilde{u}_1\tilde{u}_2sr}&\ar[d]^{v_2}\ar[r]^{v_1}\ar@{}[dr]|{\alpha\Downarrow} & \ar[d]^{u_1}
\\
\ar[r]^{t_1}\ar[d]_{t_2}\ar@{}[dr]|{\gamma\Downarrow} & \ar[d]^{u_1}&\equiv & \ar[r]_{t_2}&\ar[r]_{u_2}&
\\
\ar[r]_{u_2}&
}
$$
We want to construct a cell $\delta$ such that $\beta$ and $\delta$ fit into a similar equality of 2-cell pastings.
So consider the following pasting diagram,
$$
\xymatrix@C=7em{
&\ar[r]^{t_1}\ar@{}[d]|{(\tilde\theta\tilde{u}_2s)^{-1}\Downarrow}&\ar[dr]^{f_1}
\\
\ar[ur]^{\overline{v}_1\tilde{u}_1\tilde{u}_2s}\ar[dr]_{\overline{v}_1\tilde{u}_1\tilde{u}_2s}\ar[r]|{\overline{t}_1\tilde{u}_1\tilde{u}_2s} & \ar@{}[d]|{\zeta s\Downarrow} \ar[ur]_{v_1}\ar[dr]^{v_2}\ar@{}[rr]|{\beta\Downarrow} &&
\\
&\ar[r]_{t_2}&\ar[ur]_{f_2}
}$$
Since the arrows in $\frakW$ are co-full, there is a 2-cell $\delta\colon f_1t_1\Rightarrow f_2t_2$ such that $\delta {\overline{v}_1\tilde{u}_1\tilde{u}_2s}$ is equal to the pasting of this diagram.
Then we get that 
$$
\xymatrix@C=5em{
\ar[r]^{\overline{t}_1\tilde{u}_1\tilde{u}_2sr}\ar[d]_{\overline{v}_1\tilde{u}_1\tilde{u}_2sr}\ar@{}[dr]|{\tilde\theta\tilde{u}_2sr\Downarrow} & \ar[d]^{v_1}&&\ar[d]_{\overline{v}_1\tilde{u}_1\tilde{u}_2sr}\ar@{}[dr]|{\zeta sr\Downarrow}\ar[r]^{\overline{t}_1\tilde{u}_1\tilde{u}_2sr}&\ar[d]^{v_2}\ar[r]^{v_1}\ar@{}[dr]|{\beta\Downarrow} & \ar[d]^{f_1}
\\
\ar[r]^{t_1}\ar[d]_{t_2}\ar@{}[dr]|{\delta\Downarrow} & \ar[d]^{f_1}&\equiv & \ar[r]_{t_2}&\ar[r]_{f_2}&
\\
\ar[r]_{f_2}&
}
$$
and hence we conclude that with $\delta$ thus defined, (\ref{thislemma2}) is equivalent to (\ref{thislemma1}).
\end{proof}

We now want to address the question about uniqueness of 2-cell representatives with a given left-hand side.
The following is the closest we can get to uniqueness for 2-cell diagrams with a given left-hand side without adding any further conditions on the class $\frakW$. This result is due to Matteo  Tommasini \cite{Matteo0}, who first pointed it out to us and proved it.  We include it here with his permission, with a different proof.  

\begin{prop}\label{Matteo3}
Let $\frakW$ be a class of arrows satisfying conditions {\bf[WB1]}--{\bf[WB5]}.
Let
\begin{equation}\label{equiv-diagrams}
\xymatrix{
&A_1\ar[dl]_{u_1}\ar[dr]^{f_1} &&&&A_1\ar[dl]_{u_1}\ar[dr]^{f_1}
\\
A\ar@{}[r]|{\Downarrow\alpha}&C\ar[u]^{v_1}\ar[d]_{v_2}\ar@{}[r]|{\beta\Downarrow}&B&\mbox{and}&A\ar@{}[r]|{\Downarrow\alpha}&C\ar[u]^{v_1}\ar[d]_{v_2}\ar@{}[r]|{\gamma\Downarrow}&B
\\
&A_2\ar[ul]^{u_2}\ar[ur]_{f_2} & && &A_2\ar[ul]^{u_2}\ar[ur]_{f_2}}
\end{equation}
be two equivalent 2-cell diagrams. Then there exists an arrow $w\colon D\to C$ such that $u_1v_1w\in \frakW$ and $\beta w=\gamma w$.
\end{prop}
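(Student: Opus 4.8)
The plan is to unwind the equivalence relation on 2-cell diagrams and then manufacture, after precomposing with a suitable arrow of $\frakW$, the one piece of data that would make the statement trivial: a 2-cell from $s$ to $t$ compatible with $\alpha$, $\beta$ and $\gamma$. Spelling out what it means for the two diagrams in (\ref{equiv-diagrams}) to be equivalent --- using that they share the apex objects $A_1,A_2$, the middle object $C$ with its legs $v_1,v_2$, and the left-hand cell $\alpha$ --- the equivalence data consists of an object $F$, arrows $s,t\colon F\to C$ with $u_1v_1s\in\frakW$, and invertible 2-cells $\varepsilon\colon v_1s\Rightarrow v_1t$ and $\varepsilon'\colon v_2s\Rightarrow v_2t$, subject to two pasting identities: a ``left-hand'' one $(u_2\varepsilon')\cdot(\alpha s)=(\alpha t)\cdot(u_1\varepsilon)$ and a ``right-hand'' one $(\gamma t)\cdot(f_1\varepsilon)=(f_2\varepsilon')\cdot(\beta s)$. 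The left-hand identity is exactly the compatibility with $\alpha$ that a 2-cell $\phi\colon s\Rightarrow t$ with $v_1\phi=\varepsilon$, $v_2\phi=\varepsilon'$ would satisfy; and if such a $\phi$ existed, substituting $\varepsilon=v_1\phi$, $\varepsilon'=v_2\phi$ into the right-hand identity and using naturality of $\gamma$ against $\phi$ would give $(f_2v_2\phi)\cdot(\gamma s)=(f_2v_2\phi)\cdot(\beta s)$, hence $\beta s=\gamma s$ (as $f_2v_2\phi=f_2\varepsilon'$ is invertible), finishing the proof with $w=s$.

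Since such a $\phi$ need not exist on the nose, I would build one over a $\frakW$-arrow in two stages, using the liftings of {\bf [WB4]} and the cancellation Lemma~\ref{Matteo1}. Regarding $u_1\varepsilon$ as a 2-cell $(u_1v_1)\circ s\Rightarrow(u_1v_1)\circ t$ with $u_1v_1\in\frakW$, condition {\bf [WB4]} yields $r\in\frakW$ and $\psi\colon sr\Rightarrow tr$ with $u_1(v_1\psi)=u_1(\varepsilon r)$; since $u_1\in\frakW$, Lemma~\ref{Matteo1} then gives $q\in\frakW$ with $v_1(\psi q)=\varepsilon(rq)$, and I set $\phi:=\psi q$. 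Next, precomposing the left-hand identity with $rq$, rewriting $u_1(\varepsilon rq)$ as $(u_1v_1)\phi$, applying naturality of $\alpha$ against $\phi$, and cancelling the invertible 2-cell $\alpha\circ(srq)$ gives $u_2(v_2\phi)=u_2(\varepsilon' rq)$; since $u_2\in\frakW$, a second application of Lemma~\ref{Matteo1} gives $p\in\frakW$ with $v_2(\phi p)=\varepsilon'(rqp)$. Writing $\sigma:=rqp$ and $\phi':=\phi p\colon s\sigma\Rightarrow t\sigma$, we now have $v_1\phi'=\varepsilon\sigma$ and $v_2\phi'=\varepsilon'\sigma$, so $\phi'$ plays the role of the hypothetical $\phi$ after precomposition by $\sigma$. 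Rerunning the computation of the first paragraph with $\phi'$ in place of $\phi$ --- precompose the right-hand identity with $\sigma$, rewrite $f_1\varepsilon\sigma=(f_1v_1)\phi'$ and $f_2\varepsilon'\sigma=(f_2v_2)\phi'$, use naturality of $\gamma$ against $\phi'$, and cancel the invertible 2-cell $(f_2v_2)\phi'=f_2(\varepsilon'\sigma)$ --- yields $\beta(s\sigma)=\gamma(s\sigma)$. Finally, repeated use of {\bf [WB2]} on the composable string of $\frakW$-arrows $u_1v_1s,\,r,\,q,\,p$ produces an arrow $b$ with $u_1v_1(s\sigma b)\in\frakW$, so $w:=s\sigma b$ satisfies $u_1v_1w\in\frakW$ and $\beta w=\gamma w$.

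The step I expect to be the main obstacle is precisely the asymmetry that $s$ and $t$ land in the same object but carry no 2-cell between them, while $v_1,v_2$ themselves need not lie in $\frakW$ (only the composites $u_1v_1$ and $u_2v_2$ do); this prevents lifting $\varepsilon$ directly along $v_1$. The workaround is to lift along the $\frakW$-composite $u_1v_1$ using {\bf [WB4]}, then descend past $u_1\in\frakW$ by Lemma~\ref{Matteo1}, and to do the analogous thing for the $v_2$-component, with the left-hand compatibility identity serving as the only available bridge that ties the two components into one 2-cell $\phi'$. As in the proof of Lemma~\ref{Matteo1}, the structure cells of $\calB$ are suppressed throughout and play no essential role.
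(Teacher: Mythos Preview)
Your argument is correct, and it is genuinely shorter than the paper's. Both proofs begin by unwinding the equivalence to obtain $s,t\colon F\to C$, the invertible cells $\varepsilon\colon v_1s\Rightarrow v_1t$ and $\varepsilon'\colon v_2s\Rightarrow v_2t$, and the two pasting identities. From here the paper proceeds symmetrically: it lifts $u_1\varepsilon$ along $u_1v_1$ to obtain $\tilde\varepsilon_1\colon s\tilde u_1\Rightarrow t\tilde u_1$, and independently lifts $u_2\varepsilon'$ along $u_2v_2$ to obtain $\tilde\varepsilon_2\colon s\tilde u_2\Rightarrow t\tilde u_2$; it must then invoke {\bf[WB3]} to build a square comparing the two domains, and use the left-hand identity to show that $\tilde\varepsilon_1$ and $\tilde\varepsilon_2$ agree (after further precomposition) before finally feeding this into the right-hand identity. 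Your route is asymmetric: you lift only $u_1\varepsilon$ to a single cell $\phi\colon s\sigma_0\Rightarrow t\sigma_0$ with $v_1\phi=\varepsilon\sigma_0$, and then use the left-hand identity together with interchange for $\alpha$ to force $u_2(v_2\phi)=u_2(\varepsilon'\sigma_0)$, whence a second application of Lemma~\ref{Matteo1} gives $v_2\phi'=\varepsilon'\sigma$ on the nose. This eliminates the second lifting, the {\bf[WB3]} comparison square, and the rather long pasting manipulation the paper carries out; the price is that your $\phi$ is doing double duty, but that is exactly what the left-hand identity is there to guarantee. The closing {\bf[WB2]} step to arrange $u_1v_1w\in\frakW$ is routine and handled correctly.
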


\begin{proof}
Since the two 2-cell diagrams in (\ref{equiv-diagrams}) are equivalent there is a diagram with invertible 2-cells,
$$
\xymatrix{
&A_1
\\
C\ar[ur]^{v_1}\ar[dr]_{v_2} & E\ar[l]_s\ar[r]^t\ar@{}[u]|{\stackrel{\varepsilon_1}{\Rightarrow}}
\ar@{}[d]|{\stackrel{\varepsilon_2}{\Rightarrow}} & C\ar[ul]_{v_1}\ar[dl]^{v_2}
\\
&A_2}
$$
with $u_1v_1s\in \frakW$ such that 
\begin{equation}\label{left-eq}
\xymatrix{
\ar[d]_{t}\ar[r]^s\ar@{}[dr]|{\varepsilon_1\Downarrow} & \ar[d]^{v_1}\ar@{}[drr]|\equiv && \ar[d]_{t}\ar[r]^s\ar@{}[dr]|{\varepsilon_2\Downarrow} &\ar[d]|{v_2}\ar[r]^{v_1}\ar@{}[dr]|{\alpha\Downarrow} & \ar[d]^{u_1}
\\
\ar[d]_{v_2}\ar[r]|{v_1}\ar@{}[dr]|{\alpha\Downarrow} & \ar[d]^{u_1} && \ar[r]_{v_2}&\ar[r]_{u_2}&
\\
\ar[r]_{u_2} &}
\end{equation}
and
\begin{equation}\label{right-eq}
\xymatrix{
\ar[d]_{t}\ar[r]^s\ar@{}[dr]|{\varepsilon_1\Downarrow} & \ar[d]^{v_1}\ar@{}[drr]|\equiv && \ar[d]_{t}\ar[r]^s\ar@{}[dr]|{\varepsilon_2\Downarrow} &\ar[d]|{v_2}\ar[r]^{v_1}\ar@{}[dr]|{\beta\Downarrow} & \ar[d]^{f_1}
\\
\ar[d]_{v_2}\ar[r]|{v_1}\ar@{}[dr]|{\gamma\Downarrow} & \ar[d]^{f_1} && \ar[r]_{v_2}&\ar[r]_{f_2}&
\\
\ar[r]_{f_2} &}
\end{equation}
We want to use the first equation to derive a relationship between $\varepsilon_1$ and $\varepsilon_2$.
To make it possible to cancel $\alpha$ we first apply Proposition~\ref{isolifting}  to $u_1\varepsilon_1\colon u_1v_1s\Rightarrow u_1v_1t$ to obtain an arrow $\tilde{u}_1\colon E_1\to E$ in $\frakW$ and an invertible 2-cell $\tilde\varepsilon_1\colon s\tilde{u}_1\to t \tilde{u}_1$ such that $u_1v_1\tilde\varepsilon_1=u_1\varepsilon_1\tilde{u}_1$. Furthermore, by Lemma~\ref{Matteo1} there is an arrow $w_1\colon E_1'\to E_1$ in $\frakW$ such that $v_1\tilde\varepsilon_1w_1=\varepsilon_1\tilde{u}_1w_1$. Similarly, applying Proposition~\ref{isolifting} to $u_2\varepsilon_2\colon u_2v_2s\Rightarrow u_2v_2t$ gives us an arrow $\tilde{u}_2\colon E_2\to E$ in $\frakW$ with an invertible 2-cell $\tilde\varepsilon_2\colon s\tilde{u}_2\to t \tilde{u}_2$ such that $u_2v_2\tilde\varepsilon_2=u_2\varepsilon_2\tilde{u}_2$ and  there is an arrow $w_2\colon E_2'\to E_2$ in $\frakW$ such that $v_2\tilde\varepsilon_2w_2=\varepsilon_2\tilde{u}_2w_2$.  By condition {\bf [WB2]}, let $x_i\colon E_i''\to E_i'$ (for $i=1,2$) be arrows such that $\tilde{u}_iw_ix_i\in \frakW$ for $i=1,2$. Now apply condition {\bf [WB3]} to obtain an invertible  2-cell
$$
\xymatrix@C=4em{
F\ar[r]^{\overline{u}_2}\ar[d]_{\overline{u}_1}\ar@{}[dr]|{\stackrel{\delta}{\Leftarrow}} & E_1\ar[d]^{\tilde{u}_2w_2x_2}
\\
E_1\ar[r]_{\tilde{u}_1w_1x_1} & E}
$$
with $\overline{u}_1\in\frakW$. Now write $z_1:=\tilde{u}_1w_1x_1\overline{u}_1$ and $z_2:=\tilde{u}_2w_2x_2\overline{u}_2$.
Precomposing equation (\ref{left-eq}) horizontally by $z_1$ and then vertically by $u_1v_1s\delta$ gives the following equation:
\begin{equation}\label{left-eq1}
\xymatrix@C=3em@R=4em{
\ar[d]_{tz_1} \ar@/_2ex/[r]_{z_1} \ar@/^2ex/[r]^{z_2} \ar@{}[r]|{\Downarrow\delta }
\ar@{}[drr]|{\varepsilon_1z_1\Downarrow} & \ar[r]^s & \ar[d]^{v_1}\ar@{}[drr]|\equiv && \ar[d]_{tz_1}\ar@/_2ex/[r]_{z_1}\ar@{}[r]|{\Downarrow\delta}\ar@/^2ex/[r]^{z_2}\ar@{}[drr]|{\varepsilon_2z_1\Downarrow}&\ar[r]^s &\ar[d]|{v_2}\ar[r]^{v_1}\ar@{}[dr]|{\alpha\Downarrow} & \ar[d]^{u_1}
\\
\ar[d]_{v_2}\ar[rr]|{v_1}\ar@{}[drr]|{\alpha\Downarrow} && \ar[d]^{u_1} && \ar[rr]_{v_2}&&\ar[r]_{u_2}&
\\
\ar[rr]_{u_2} &&}
\end{equation}
Similarly, (\ref{right-eq}) induces the following equation:
\begin{equation}\label{right-eq1}
\xymatrix@C=3em@R=4em{
\ar[d]_{tz_1} \ar@/_2ex/[r]_{z_1} \ar@/^2ex/[r]^{z_2} \ar@{}[r]|{\Downarrow\delta}
\ar@{}[drr]|{\varepsilon_1z_1\Downarrow} & \ar[r]^s & \ar[d]^{v_1}\ar@{}[drr]|\equiv && \ar[d]_{tz_1}\ar@/_2ex/[r]_{z_1y}\ar@{}[r]|{\Downarrow\delta}\ar@/^2ex/[r]^{z_2}\ar@{}[drr]|{\varepsilon_2z_1\Downarrow}&\ar[r]^s &\ar[d]|{v_2}\ar[r]^{v_1}\ar@{}[dr]|{\beta\Downarrow} & \ar[d]^{f_1}
\\
\ar[d]_{v_2}\ar[rr]|{v_1}\ar@{}[drr]|{\gamma\Downarrow} && \ar[d]^{f_1} && \ar[rr]_{v_2}&&\ar[r]_{f_2}&
\\
\ar[rr]_{f_2} &&}
\end{equation}
Since $\varepsilon_1z_1=\varepsilon_1\tilde{u}_1w_1x_1\overline{u}_1=v_1\tilde\varepsilon_1w_1x_1\overline{u}_1$
We rewrite the left-hand side of (\ref{left-eq1}) as follows:
$$
\xymatrix@R=4em@C=4em{\ar[d]_{tz_1} \ar@/_2ex/[r]_{z_1} \ar@/^2ex/[r]^{z_2} \ar@{}[r]|{\Downarrow\delta}
\ar@{}[drr]|{\varepsilon_1z_1\Downarrow} & \ar[r]^s & \ar[d]^{v_1}\ar@{}[drr]|\equiv && \ar[d]_{tz_1} \ar@/_2ex/[r]_{z_1} \ar@/^2ex/[r]^{z_2} \ar@{}[r]|{\Downarrow\delta}
\ar@{}[dr]|{\stackrel{\tilde\varepsilon_1w_1x_1\overline{u}_1}{\Leftarrow}} & \ar@/^4ex/[dl]^(.3)s 
\\
\ar[d]_{v_2}\ar[rr]|{v_1}\ar@{}[drr]|{\alpha\Downarrow} && \ar[d]^{u_1} &&\ar[d]_{v_2}\ar[rr]|{v_1}\ar@{}[drr]|{\alpha\Downarrow} && \ar[d]^{u_1}
\\
\ar[rr]_{u_2} && &&\ar[rr]_{u_2}&&}$$
Similarly, we rewrite the right-hand side of (\ref{left-eq1}) as follows:
$$
\xymatrix@C=3em@R=4em{
\ar[d]_{tz_1}\ar@/_2ex/[r]_{z_1}\ar@{}[r]|{\Downarrow\delta}\ar@/^2ex/[r]^{z_2}\ar@{}[drr]|{\varepsilon_2z_1\Downarrow}&\ar[r]^s &\ar[d]|{v_2}\ar[r]^{v_1}\ar@{}[dr]|{\alpha\Downarrow} & \ar[d]^{u_1}\ar@{}[drr]|\equiv && \ar@/_2ex/[r]_{z_1}\ar@{}[r]|{\Downarrow\delta}\ar@/^2ex/[r]^{z_2}&\ar[r]^s\ar[d]_t\ar@{}[dr]|{\varepsilon_2\Downarrow} & \ar[d]|{v_2}\ar[r]^{v_2}\ar@{}[dr]|{\Downarrow\alpha} &\ar[d]^{u_1}
\\
\ar[rr]_{v_2}&&\ar[r]_{u_2}& &&  & \ar[r]_{v_2}&\ar[r]_{u_2}&
\\
&&&\ar@{}[drr]|\equiv && \ar@/_2ex/[d]_(.45){z_1}\ar@{}[d]|(.45){\stackrel{\delta}{\Leftarrow}}\ar@/^2ex/[d]^(.45){z_2}\ar[rr]^{sz_2}\ar@{}[drr]|(.55){\Downarrow\varepsilon_2z_2} & & \ar[d]|{v_2}\ar@{}[dr]|{\Downarrow \alpha}\ar[r]^{v_1} & \ar[d]^{u_1}
\\
&&&&&\ar[rr]_{v_2t}&&\ar[r]_{u_2} &
\\
&&&\ar@{}[drr]|\equiv && \ar@/_2ex/[d]_(.55){z_1}\ar@{}[d]|(.55){\stackrel{\delta }{\Leftarrow}}\ar@/^2ex/[d]^(.55){z_2}\ar[rr]^{sz_2}\ar@{}[drr]^(.3){\Downarrow\tilde\varepsilon_2w_2x_2\overline{u}_2} & & \ar[d]|{v_2}\ar@{}[dr]|{\Downarrow \alpha}\ar[r]^{v_1} & \ar[d]^{u_1}
\\
&&&&&\ar@/_2ex/[urr]_{t}&&\ar[r]_{u_2} &
}
$$
By composing with $\alpha^{-1}tz_1$ with the rewritten left and right-hand sides of (\ref{left-eq1}) we derive that
$$
\xymatrix@C=5.4em{
\ar@/^2ex/[dr]^s &&&\ar@/^2ex/[dr]^{sz_2} \ar@/_2ex/[dd]_(.35){z_1}\ar@{}[dd]|{\stackrel{\delta}{\Leftarrow}}\ar@/^2ex/[dd]^(.35){z_2}
\\
\ar@{}[r]|(.55){{\scriptscriptstyle\tilde{\varepsilon}_1w_1x_1\overline{u}_1\Downarrow}}&\ar[r]^{u_2v_2}&\ar@{}[r]|(.35)\equiv & \ar@{}[r]|(.55){{\scriptscriptstyle\tilde{\varepsilon}_2w_2x_2\overline{u}_2\Downarrow}}&\ar[r]^{u_2v_2}&
\\
\ar@/^2ex/[uu]^(.65){z_2}\ar@{}[uu]|{\stackrel{\delta}{\Rightarrow}}\ar@/_2ex/[uu]_(.65){z_1}\ar@/_2ex/[ur]_{tz_1}&&&\ar@/_2ex/[ur]_{t}
}
$$
By Lemma~\ref{Matteo1} there is an arrow $(r\colon G\to F)\in\frakW$ such that 
\begin{equation}\label{equality}
\xymatrix@C=5.4em{
&\ar@/^2ex/[dr]^s &&\ar[r]^r&\ar@/^2ex/[dr]^{sz_2} \ar@/_2ex/[dd]_(.35){z_1}\ar@{}[dd]|{\stackrel{\delta}{\Leftarrow}}\ar@/^2ex/[dd]^(.35){z_2}
\\
&\ar@{}[r]|(.55){{\scriptscriptstyle\tilde{\varepsilon}_1w_1x_1\overline{u}_1\Downarrow}}&\ar@{}[r]|\equiv && \ar@{}[r]|(.55){{\scriptscriptstyle\tilde{\varepsilon}_2w_2x_2\overline{u}_2\Downarrow}}&
\\
\ar[r]^r&\ar@/^2ex/[uu]^(.65){z_2}\ar@{}[uu]|{\stackrel{\delta}{\Rightarrow}}\ar@/_2ex/[uu]_(.65){z_1}\ar@/_2ex/[ur]_{tz_1}&&&\ar@/_2ex/[ur]_{t}
}
\end{equation}
Finally there is an arrow $r'\colon D\to G$ such that $u_1v_1sz_1rr'\in\frakW$.

We will now combine this result with (\ref{right-eq1}).
We first manipulate $\varepsilon_1$ and $\varepsilon_2$ with $\delta$ just as we have done above. Note that we did not need the presence of $u_1$ or $u_2$ for this, so the same calculations apply to the compositions with $\beta$ and $\gamma$. This gives us 
$$
\xymatrix@C=5.4em{
\ar@/^2ex/[dr]^s &\ar[dr]^{f_1}&&\ar@/^2ex/[dr]^{sz_2} \ar@/_2ex/[dd]_(.35){z_1}\ar@{}[dd]|{\stackrel{\delta}{\Leftarrow}}\ar@/^2ex/[dd]^(.35){z_2}&\ar[dr]^{f_1}
\\
\ar@{}[r]|(.55){{\scriptscriptstyle\tilde{\varepsilon}_1w_1x_1\overline{u}_1\Downarrow}}&\ar[u]^{v_1}\ar[d]_{v_2}\ar@{}[r]|{\gamma\Downarrow}&\ar@{}[r]|(.35)\equiv & \ar@{}[r]|(.55){{\scriptscriptstyle\tilde{\varepsilon}_2w_2x_2\overline{u}_2\Downarrow}}&\ar[u]^{v_1}\ar[d]_{v_2}\ar@{}[r]|{\beta\Downarrow}&
\\
\ar@/^2ex/[uu]^(.65){z_2}\ar@{}[uu]|{\stackrel{\delta}{\Rightarrow}}\ar@/_2ex/[uu]_(.65){z_1}\ar@/_2ex/[ur]_{tz_1}&\ar[ur]_{f_2}&&\ar@/_2ex/[ur]_{t}&\ar[ur]_{f_2}
}
$$
Now precomposing by $rr'$ and using the result from (\ref{equality}) gives us that $\beta$ and $\gamma$ become equal when precomposed by the same invertible cell. So we can conclude that
$\beta sz_2rr'=\gamma sz_2rr'$ and since $u_1v_1sz_2rr' \cong u_1v_1sz_1rr'\in\frakW$, we also have that $u_1v_1sz_2rr' \in\frakW$ by {\bf [WB5]}. So $w=sz_2rr'\colon D\to C$ has the required property. 
\end{proof}

We use this result together with the condition that the arrows in $\frakW$ be co-fully-faithful to obtain uniqueness of 2-cell representatives with a given left-hand side. The following lemma, proved by Matteo Tommasini \cite{Matteo0} and included here with his permission, gives us a key ingredient.  

\begin{lma}\label{Matteo2}
Let $\frakW$ be a class of arrows satisfying conditions {\bf[WB1]}--{\bf[WB5]} and let $a\colon B\to A$ and $b\colon C\to B$ be arrows such that both $a$ and $ab$ are in $\frakW$.
Then there is an arrow $c\colon D\to C$ such that $bc\in\frakW$.
\end{lma}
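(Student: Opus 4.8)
The plan is to construct $c$ by first pulling the arrow $a$ back along $ab$ using condition {\bf [WB3]}, then cancelling $a$ on the left with Proposition~\ref{isolifting}, and finally correcting the composition using {\bf [WB2]} together with {\bf [WB5]}.

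The key move is to apply {\bf [WB3]} to the cospan $B\xrightarrow{\,a\,}A\xleftarrow{\,ab\,}C$, taking $ab\in\frakW$ to be the distinguished arrow of the condition. This produces an object $D$, arrows $h\colon D\to C$ and $v\colon D\to B$ with $v\in\frakW$, and an invertible $2$-cell $\alpha\colon (ab)\circ h\Rightarrow a\circ v$. Reassociating the source (harmless, since associators are invertible and $\frakW$ satisfies {\bf [WB5]}), we may read $\alpha$ as an invertible $2$-cell $a\circ(bh)\Rightarrow a\circ v$, that is, a $2$-cell of the form $a\circ f\Rightarrow a\circ g$ with $a\in\frakW$. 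Proposition~\ref{isolifting} then yields an arrow $u\colon D'\to D$ in $\frakW$ together with an \emph{invertible} $2$-cell $\beta\colon (bh)\circ u\Rightarrow v\circ u$.

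Now $v$ and $u$ are composable arrows of $\frakW$, so {\bf [WB2]} provides an arrow $u'\colon D''\to D'$ with $vuu'\in\frakW$. Whiskering $\beta$ by $u'$ gives an invertible $2$-cell $(bh)uu'\Rightarrow vuu'$, and since $vuu'\in\frakW$, condition {\bf [WB5]} forces $(bh)uu'\in\frakW$; reassociating once more (again by {\bf [WB5]}) gives $b\circ(huu')\in\frakW$. Hence $c:=huu'\colon D''\to C$ has the required property.

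I expect the only genuine obstacle to be spotting the right instance of {\bf [WB3]}: one must pull back along $ab$ rather than along $a$, so that the resulting square exhibits $bh$ and $v$ as becoming isomorphic \emph{after} left-composition with $a$ — precisely the situation that Proposition~\ref{isolifting} is designed to cancel. Once that square is in hand, the remaining steps (the $2$-cell lifting, the correction by {\bf [WB2]}, and the two uses of {\bf [WB5]} to absorb associators and to transport membership in $\frakW$ across invertible $2$-cells) are routine, and in particular no appeal to the more delicate second half of {\bf [WB4]} or to Lemma~\ref{Matteo1} is needed.
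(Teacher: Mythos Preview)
Your proof is correct and follows essentially the same route as the paper's: apply {\bf [WB3]} to the cospan $B\xrightarrow{a}A\xleftarrow{ab}C$ with $ab$ as the distinguished $\frakW$-arrow, then cancel $a$ via Proposition~\ref{isolifting}, and finally use {\bf [WB2]} and {\bf [WB5]} to arrange that the composite lies in $\frakW$. The only differences are notational (the paper writes $u,w,z$ where you write $h,u,u'$) and that you make the uses of {\bf [WB5]} for associators explicit, which the paper leaves implicit.
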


\begin{proof}
Since $ab\in\frakW$, condition {\bf [WB3]} gives us the existence of a square with an invertible 2-cell,
$$
\xymatrix{X\ar[r]^u\ar[d]_v\ar@{}[dr]|{\stackrel{\alpha}{\Leftarrow}}&C\ar[d]^{ab}
\\
B\ar[r]_a&A}$$
with $v\in\frakW$. Since $a\in\frakW$, we can apply Proposition~\ref{isolifting} to $\alpha\colon a(bu)\stackrel{\sim}{\Rightarrow} av$ to obtain an arrow $w\colon Y\to X$ in $\frakW$ and an invertible 2-cell $\tilde\alpha\colon buw\stackrel{\sim}{\Rightarrow}vw$. Since both $v$ and $w$ are in $\frakW$, there is an arrow $z\colon D\to Y$ such that $vwz\in\frakW$ by condition {\bf [WB2]}. Now $\tilde\alpha z \colon buwz\stackrel{\sim}{\Rightarrow}vwz$, so $buwz\in\frakW$ by condition {\bf [WB5]}. Hence $c= uwz\colon D\to C$ has the required property.
\end{proof}

\begin{thm}\label{main5}
Let $\frakW$ be a class of co-ff arrows in a bicategory $\calB$ satisfying conditions {\bf [WB1]}--{\bf[WB5]}.
Then each 2-cell in $\calB(\frakW^{-1})$ has at most one representative with a given left-hand 2-cell.
\end{thm}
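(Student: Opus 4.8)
The plan is to derive this as a short consequence of the two cancellation results already established, Proposition~\ref{Matteo3} and Lemma~\ref{Matteo2}, together with a single use of the co-ff hypothesis. So I would fix two 2-cell diagrams with the same underlying spans $A\xleftarrow{u_1}A_1\xrightarrow{f_1}B$ and $A\xleftarrow{u_2}A_2\xrightarrow{f_2}B$, the same middle object $C$ with the same legs $v_1\colon C\to A_1$ and $v_2\colon C\to A_2$, the same left-hand invertible 2-cell $\alpha\colon u_1v_1\Rightarrow u_2v_2$, but possibly different right-hand 2-cells $\beta,\gamma\colon f_1v_1\Rightarrow f_2v_2$, and assume the two diagrams represent the same 2-cell of $\calB(\frakW^{-1})$. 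The goal is to show $\beta=\gamma$.

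First I would apply Proposition~\ref{Matteo3} to these two equivalent diagrams, obtaining an arrow $w\colon D\to C$ with $u_1v_1w\in\frakW$ and $\beta w=\gamma w$. The issue is that $w$ itself is not known to lie in $\frakW$, so the co-ff property cannot yet be used to cancel it. To repair this I would invoke Lemma~\ref{Matteo2} with $a:=u_1v_1$ and $b:=w$: here $u_1v_1\in\frakW$ by the defining condition on 2-cell diagrams, and $u_1v_1w=ab\in\frakW$ by the previous step, so the lemma produces an arrow $c$ with $wc\in\frakW$. Co-whiskering the equality $\beta w=\gamma w$ by $c$ and using the coherence (naturality of the associator) of $\calB$ to pass from $(\beta w)c$ to $\beta(wc)$ gives $\beta(wc)=\gamma(wc)$. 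Since $wc\in\frakW$ is co-ff, the uniqueness clause in the definition of co-ff forces $\beta=\gamma$, which is exactly the claim.

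I do not expect a serious obstacle: the genuinely delicate part is already contained in Proposition~\ref{Matteo3}, and what remains is the short bookkeeping above. The one point needing a little care is the bicategorical coherence in moving from $(\beta w)c$ to $\beta(wc)$, but this is just naturality of the associator together with composing by the appropriate associativity isomorphisms, which does not affect the equality. It is worth noting that the co-ff hypothesis enters only in this last cancellation step; Proposition~\ref{Matteo3} and Lemma~\ref{Matteo2} require only {\bf [WB1]}--{\bf[WB5]}, so without co-ff one obtains from Proposition~\ref{Matteo3} only that $\beta$ and $\gamma$ agree after precomposition by some arrow whose relevant composite lies in $\frakW$, which is in general strictly weaker than $\beta=\gamma$.
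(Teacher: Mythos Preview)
Your proposal is correct and follows essentially the same route as the paper's own proof: apply Proposition~\ref{Matteo3} to obtain $w$ with $u_1v_1w\in\frakW$ and $\beta w=\gamma w$, then invoke Lemma~\ref{Matteo2} (with $a=u_1v_1$, $b=w$) to produce an arrow whose composite with $w$ lies in $\frakW$, and finally cancel using co-ff. The only differences are cosmetic (you write $c$ where the paper writes $x$) and your explicit remark on associativity coherence, which the paper leaves implicit.
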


\begin{proof}
Given two 2-cell diagrams with the same left-hand side as in (\ref{equiv-diagrams}), Proposition~\ref{Matteo3} gives us an arrow $w$ such that $u_1v_1w\in\frakW$ and $\beta w=\gamma w$.
Since $u_1v_1\in\frakW$ we can apply Lemma~\ref{Matteo2} to obtain an arrow $x\colon D'\to D$  such that $wx\in\frakW$. Now we have that $\beta wx=\gamma wx$ and since the arrows in $\frakW$ are co-ff we conclude that $\beta=\gamma$.
\end{proof}

\begin{cor}Let $\frakW$ be a class of co-ff arrows in a bicategory $\calB$ satisfying conditions {\bf [WB1]}--{\bf[WB5]}.
Then each 2-cell in $\calB(\frakW^{-1})$ has precisely one representative with a given left-hand 2-cell.
\end{cor}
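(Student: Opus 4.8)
The plan is to recognize the statement as the conjunction of an existence claim and a uniqueness claim, each of which has been prepared by an earlier result. Fix a $2$-cell $\Theta$ in $\calB(\frakW^{-1})$ between generalized arrows $(u_1,f_1)$ and $(u_2,f_2)$, and fix a prescribed left-hand $2$-cell, that is, arrows $t_1$ and $t_2$ and an invertible $2$-cell $\gamma\colon u_1t_1\Rightarrow u_2t_2$ with $u_1t_1\in\frakW$. I would first note that this datum really is admissible as the left-hand side of a $2$-cell diagram between the given spans: since $\gamma$ is invertible and $u_1t_1\in\frakW$, condition {\bf [WB5]} gives $u_2t_2\in\frakW$.

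For existence, I would invoke Lemma~\ref{L:ch2cell}. Every co-ff arrow is in particular co-full, so the hypotheses of that lemma are met by $\frakW$. Choosing any $2$-cell diagram that represents $\Theta$ and applying Lemma~\ref{L:ch2cell} with the square $\gamma$ as the target left-hand side produces a $2$-cell $\delta\colon f_1t_1\Rightarrow f_2t_2$ such that the $2$-cell diagram with left-hand side $(t_1,t_2,\gamma)$ and right-hand side $\delta$ represents the same $2$-cell $\Theta$. This gives at least one representative with the prescribed left-hand $2$-cell.

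For uniqueness, nothing remains: because $\frakW$ consists of co-ff arrows and satisfies {\bf [WB1]}--{\bf[WB5]}, Theorem~\ref{main5} tells us that a $2$-cell of $\calB(\frakW^{-1})$ has at most one representative with a given left-hand $2$-cell. Combining the two halves yields exactly one, which is the assertion of the corollary.

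I do not expect a genuine obstacle here: the corollary merely assembles Lemma~\ref{L:ch2cell} and Theorem~\ref{main5}, and all the work — the chosen-square and lifting manipulations behind Lemma~\ref{L:ch2cell}, and the cancellation arguments through Proposition~\ref{Matteo3} and Lemma~\ref{Matteo2} behind Theorem~\ref{main5} — is already carried out. The one thing I would be careful to check is purely a matter of matching conventions: the notion of ``left-hand $2$-cell'' used in the existence half (where Lemma~\ref{L:ch2cell} presents it as a square $u_1t_1\Rightarrow u_2t_2$) is literally the same datum occurring in the uniqueness half of Theorem~\ref{main5}, so the two results really do combine.
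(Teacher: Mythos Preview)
Your proof is correct and matches the paper's approach exactly: the paper's proof consists of the single sentence ``This follows from Lemma~\ref{L:ch2cell} and Theorem~\ref{main5},'' which is precisely the existence-plus-uniqueness decomposition you give.
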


\begin{proof}
This follows from Lemma~\ref{L:ch2cell} and Theorem~\ref{main5}.
\end{proof}
\begin{rmk}
 This provides further understanding in regard to the results provided in \cite{AV} and \cite{Roberts-fractions} where no equivalence relation is needed for the 2-cells in the localizations: Abbad and Vitale introduce a category of so called {\em faithful fractions} where the objects are arrows in $\frakW$ and hom-categories are hom-categories in the original bicategory between the domains of the objects. Roberts uses these conditions to obtain a decription of the 2-cells in his bicategory of fractions that can be viewed as the classical a 2-cell diagram with a strict pullback square as left-hand 2-cell. In the next section we will work out the case where one has pseudo pullbacks for arrows in $\frakW$. 
\end{rmk}
\begin{cor} \label{P:2-ff}
Suppose that  $\frakW$ be a class of co-ff arrows in a bicategory $\calB$ satisfying conditions {\bf [WB1]}--{\bf[WB5]}.  Then the universal homomorphism $J_\frakW\colon\calB\to\calB(\frakW^{-1})$ is 2-full and 2-faithful.
\end{cor}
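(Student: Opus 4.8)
The plan is to unwind the explicit description of $J_\frakW$ on $2$-cells and reduce both assertions to results already proved. Recall that $J_\frakW$ is the identity on objects, sends $f\colon A\to B$ to the span $(1_A,f)$, and sends $\alpha\colon f\Rightarrow g$ to the $2$-cell diagram whose left-hand cell is the fixed invertible cell $\iota_A=\rho_A^{-1}\lambda_A\colon 1_A1_A\Rightarrow 1_A1_A$ and whose right-hand cell is $\alpha 1_A\colon f1_A\Rightarrow g1_A$. Since $1_A\in\frakW$ by {\bf [WB1]} and $1_A1_A\cong 1_A$, condition {\bf [WB5]} gives $1_A1_A\in\frakW$, so this is a legitimate $2$-cell diagram; moreover the square underlying $\iota_A$ is exactly the chosen square of {\bf [C2]} for the cospan $A\xrightarrow{1_A}A\xleftarrow{1_A}A$. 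Also note that a co-ff arrow is in particular co-full, so Lemma~\ref{L:ch2cell} is available.

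\emph{$2$-fullness.} Given an arbitrary $2$-cell of $\calB(\frakW^{-1})$ from $J_\frakW(f)=(1_A,f)$ to $J_\frakW(g)=(1_A,g)$, choose a representing diagram, with some apex $D$, some leg $v$ with $1_Av\in\frakW$, and some right-hand cell. Applying Lemma~\ref{L:ch2cell} with the target square taken to be the {\bf [C2]}-square above (so $\gamma=\iota_A$; its hypothesis $1_A1_A\in\frakW$ holds), we obtain a $2$-cell $\delta\colon f1_A\Rightarrow g1_A$ such that the diagram with left-hand cell $\iota_A$ and right-hand cell $\delta$ represents the same $2$-cell. By naturality of the right unitor, right-whiskering by $1_A$ is a bijection on the relevant hom-sets, so there is a unique $\alpha_0\colon f\Rightarrow g$ with $\alpha_0 1_A=\delta$ (explicitly $\alpha_0=\rho_g\cdot\delta\cdot\rho_f^{-1}$). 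Then the diagram just produced is, on the nose, $J_\frakW(\alpha_0)$, whence the original $2$-cell equals $J_\frakW(\alpha_0)$.

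\emph{$2$-faithfulness.} Suppose $\alpha,\alpha'\colon f\Rightarrow g$ satisfy $J_\frakW(\alpha)=J_\frakW(\alpha')$ in $\calB(\frakW^{-1})$. The diagrams $J_\frakW(\alpha)$ and $J_\frakW(\alpha')$ are two representatives of this common $2$-cell sharing the same left-hand cell $\iota_A$, so Theorem~\ref{main5} forces them to be identical as diagrams; in particular their right-hand cells coincide, i.e.\ $\alpha 1_A=\alpha' 1_A$. Since right-whiskering by $1_A$ is injective (again by unitor naturality), $\alpha=\alpha'$.

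There is no real obstacle here beyond bookkeeping: the substantive content is entirely in Lemma~\ref{L:ch2cell} (which yields $2$-fullness) and Theorem~\ref{main5} (which yields $2$-faithfulness), with the co-ff hypothesis feeding both. The only care needed is to check $1_A1_A\in\frakW$ so that Lemma~\ref{L:ch2cell} applies, and to invoke that lemma with precisely the square built into the definition of $J_\frakW$, so that the resulting diagram is literally $J_\frakW(\alpha_0)$ rather than merely equivalent to such a diagram; the passage between $\alpha$ and $\alpha 1_A$ is handled throughout by unitor coherence.
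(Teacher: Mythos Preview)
Your proof is correct. The $2$-fullness argument is essentially identical to the paper's: both apply Lemma~\ref{L:ch2cell} with the fixed square $\iota_A$ to rewrite an arbitrary $2$-cell between $(1_A,f)$ and $(1_A,g)$ as one in the image of $J_\frakW$.

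For $2$-faithfulness you take a slightly different, and more economical, route. The paper unpacks the equivalence relation directly: it writes out the witnessing data $(r_1,r_2,\varepsilon_1,\varepsilon_2)$ for the equivalence of the two diagrams, uses the fact that the left-hand cells are both $\iota_A$ to deduce $\varepsilon_1'=\varepsilon_2'$, and from the right-hand equation obtains $\alpha r_1=\beta r_1$ with $r_1\in\frakW$, whence $\alpha=\beta$ by co-ff. You instead invoke Theorem~\ref{main5} (uniqueness of representatives with a given left-hand cell) as a black box, which immediately forces $\alpha 1_A=\alpha' 1_A$. This is cleaner, since Theorem~\ref{main5} has already packaged exactly the argument the paper repeats here; the paper's direct computation is really just the special case of Proposition~\ref{Matteo3} and Theorem~\ref{main5} for the trivial left-hand cell. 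Both approaches are valid and rest on the same co-ff hypothesis.
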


\begin{proof}
To show that the homomorphism is 2-full, consider an arbitrary 2-cell between  $J_\frakW(f)$ and $J_\frakW(g)$.  This will have a representative of the form  $$
\xymatrix{
&\ar[dl]_{1_A}A\ar[dr]^f
\\
A\ar@{}[r]|{\alpha\Downarrow}&C\ar[u]_{s}\ar[d]^{t}\ar@{}[r]|{\beta\Downarrow}&B
\\
&A\ar[ul]^{1_A}\ar[ur]_g
}
$$  
Now consider the square
$$
\xymatrix@C=1.5em@R=1.5em{A\ar[r]^{1_A}\ar[d]_{1_A}\ar@{}[dr]|{\iota_A}&A\ar[d]^{1_A}
\\A\ar[r]_{1_A}& A}$$ and Lemma~\ref{L:ch2cell} says that we can represent the 2-cell  between  $J_\frakW(f)$ and $J_\frakW(g)$ using this square on the left side.  Thus, the 2-cell is the image of a 2-cell in $\calB$.  

To show that the map $J_{\frakW}$ is 2-faithful, suppose that we have two 2-cells 
$J_\frakW(\alpha)$ and $J_\frakW(\beta)$, represented by   
\begin{equation}\label{2-faithful}
\xymatrix{
&\ar[dl]_{1_A}A\ar[dr]^f
&&& &\ar[dl]_{1_A}A\ar[dr]^f\\
A\ar@{}[r]|{\iota_A}&A\ar[u]_{1_A}\ar[d]^{1_A}\ar@{}[r]|{\alpha1_A\Downarrow }&B
&\mbox{and}& A\ar@{}[r]|{\iota_A}&A\ar[u]_{1_A}\ar[d]^{1_A}\ar@{}[r]|{\beta1_A\Downarrow}&B
\\
&A\ar[ul]^{1_A}\ar[ur]_g &&& &A\ar[ul]^{1_A}\ar[ur]_g
}
\end{equation}
which represent the same 2-cell in $\calB(\frakW^{-1}) $.  Then there must be maps $r_1,r_2\colon E \rightrightarrows A$ with 2-cells $\varepsilon_1$, $\varepsilon_2$ as in
$$
\xymatrix{
&A
\\
A\ar@/^1ex/[ur]^{1_A}\ar@/_1ex/[dr]_{1_A}&\ar[l]_{r_1}\ar[r]^{r_2}E\ar@{}[u]|{\stackrel{\varepsilon_1}{\Rightarrow}}\ar@{}[d]|{\stackrel{\varepsilon_2}{\Rightarrow}}&A\ar@/_1ex/[ul]_{1_A}\ar@/^1ex/[dl]^{1_A}
\\
&A
}
$$
satisfying the equations to make the two diagrams in (\ref{2-faithful}) equivalent and such that $1_A1_Ar_1\in\frakW$. Write $\varepsilon'_1,\varepsilon'_2\colon r_1\Rightarrow r_2$ for the induced 2-cells.
Since the left-hand squares are just identities, this implies that $\varepsilon_1'=\varepsilon_2'\colon r_1\Rightarrow r_2$. The other equation then implies that
$\alpha\circ\varepsilon_1'=\beta\circ\varepsilon_1'$.
Since $\varepsilon_1'$ is invertible, this implies that $\alpha r_1=\beta r_1$.

Since $1_A1_Ar_1\in\frakW$,we conclude by {\bf [WB5]} that $r_1\in\frakW$.    Hence, since the arrows in $\frakW$ are co-ff, we get that there is a unique $\gamma\colon f\Rightarrow g$ such that $\gamma r_1=\alpha r_1$.   Hence, $\alpha=\beta$.
\end{proof}

\section{Bicategories with Pseudo Pullbacks}\label{pullbacks}

We now apply the ideas of Section~\ref{simplifications} to represent generalized 2-cells using pseudo pullbacks.
If a bicategory has all pseudo pullbacks of the form
$$
\xymatrix@C=3em{
P\ar[d]_{\overline{w}}\ar[r]^{\overline{f}}\ar@{}[dr]|{\stackrel{\scriptstyle\stackrel{\sim}{\Longleftarrow}}{\rho}} & \ar[d]^w
\\
\ar[r]_{f}&
}
$$
where $w\in\frakW$, and  the class $\frakW$ is stable under these pseudo pullbacks in the sense that $w\in\frakW$ implies that $\overline{w}\in\frakW$, it is possible to use the pseudo pullbacks as chosen squares as in {\bf[C2]} of Notation~\ref{choices} in the construction of $\calB(\frakW^{-1})$.  This makes the construction of this bicategory more canonical;  see 
 \cite{Matteo2} for instance.

We are interested in a different use of the pseudo pullbacks: as the left-hand sides of the generalized 2-cell diagrams. (The case with strict pullbacks was considered in \cite{Roberts-fractions}.)  This will allow us to simplify the horizontal composition operations.   It will require some additional assumptions on $\calB$ and $\frakW$, 
so we will develop conditions under which each  2-cell
has a representative diagram where $\alpha$ is a pseudo pullback.   The first condition is the following.  

\begin{defn} \label{D:pbc} We say that $\frakW$ is {\em pullback closed} if 
for any pseudo pullback
$$
\xymatrix{
P\ar[r]^{\overline{u}}\ar[d]_{\overline{v}}\ar@{}[dr]|{\stackrel{\scriptstyle\stackrel{\sim}{\Leftarrow}}{\rho}} &B\ar[d]^v
\\
A\ar[r]_u&C
}
$$ with arrows $u,v\in\frakW$, the composite $u\overline{v}$ is again in $\frakW$.      \end{defn}

Since $\rho$ is invertible, {\bf [WB5]} will imply that   $v\overline{u}\in\frakW$ as well. 

\begin{prop} \label{P:pb-rep}
If $\calB$ has all pseudo pullbacks for cospans in $\frakW$, and $\frakW$ satisfies conditions {\bf [WB1]}--{\bf[WB5]}, is pullback closed, and all arrows in $\frakW$ are co-full,
then each 2-cell in $\calB(\frakW^{-1})$ has a representative with the left-hand 2-cell a pseudo pullback.
\end{prop}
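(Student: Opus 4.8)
The plan is to take an arbitrary 2-cell in $\calB(\frakW^{-1})$, represented by a diagram as in (\ref{first2-cell}) with left-hand cell $\alpha\colon wu\Rightarrow w'u'$ (both invertible, both composites in $\frakW$), and replace it by an equivalent diagram whose left-hand cell is a genuine pseudo pullback of the cospan $\xymatrix@1{A&\ar[l]_w\ar[r]^{w'}&A'}$ formed by the two ``back'' legs of the spans. First I would form the pseudo pullback
$$
\xymatrix@C=3em{
P\ar[r]^{\bar w'}\ar[d]_{\bar w}\ar@{}[dr]|{\stackrel{\scriptstyle\stackrel{\sim}{\Leftarrow}}{\rho}} & C'\ar[d]^{w'}\\
C\ar[r]_{w} & A
}
$$
which exists by hypothesis since $w,w'\in\frakW$. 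Because $\frakW$ is pullback closed, $w\bar w\in\frakW$ (and $w'\bar w'\in\frakW$ by {\bf [WB5]}, since $\rho$ is invertible). By the universal property of the pseudo pullback applied to the original cone $(u,u',\alpha)$ on the same cospan, there is an arrow $k\colon D\to P$ together with invertible 2-cells $\mu\colon \bar w k\Rightarrow u$ and $\mu'\colon \bar w' k\Rightarrow u'$ compatible with $\rho$ and $\alpha$ (i.e. $\alpha$ factors as the pasting of $\rho k$ with $w\mu$ and $w'\mu'$). The arrow $wuk'$-type composites need not yet be in $\frakW$; but $wu\in\frakW$ and we may correct this: by {\bf [WB3]} and the weaker composition condition {\bf [WB2]} one arranges, after precomposing $k$ with a suitable arrow, that the relevant composite along the new back leg lies in $\frakW$. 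Then $(\,\bar w k', \bar w' k', \text{(the evident cell)}\,)$ fits into a 2-cell diagram equivalent to the original one, exactly by the equivalence relation on 2-cell diagrams given in Section~\ref{newBF2}, the equivalence being witnessed by $k'$ together with the unitor-type cells built from $\mu,\mu'$.

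So far this only produces a diagram whose left-hand cell is the restriction $\rho k'$ of the pseudo pullback $\rho$ along $k'$, not $\rho$ itself. The second and main step is to promote this to a diagram whose left-hand side is literally a pseudo pullback. Here is where Lemma~\ref{L:ch2cell} does the work: given the 2-cell diagram we have just built, with left-hand cell some invertible square $\gamma\colon w\,a\Rightarrow w'\,b$ over the cospan $(w,w')$, and given \emph{any} other square over that same cospan with appropriate composite in $\frakW$ — in particular the pseudo pullback square $(\bar w,\bar w',\rho)$ itself (whose composite $w\bar w$ lies in $\frakW$ by pullback closedness) — Lemma~\ref{L:ch2cell} produces a right-hand cell $\delta$ making the diagram with left-hand cell $\rho$ represent the \emph{same} 2-cell. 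This is precisely the statement we want, and Lemma~\ref{L:ch2cell} is applicable because $\frakW$ consists of co-full arrows, which is among our hypotheses.

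The main obstacle I anticipate is the bookkeeping in the first step: ensuring that the comparison arrow $k$ out of the pseudo pullback can be precomposed with an arrow so that the resulting span-leg composite lands in $\frakW$, while simultaneously keeping all the coherence cells $\mu,\mu'$ (and their interaction with $\rho$ and $\alpha$) intact, so that one genuinely lands in the equivalence relation of Section~\ref{newBF2}. This is the usual ``correct by {\bf [WB2]}/{\bf [WB3]}'' maneuver that recurs throughout the paper (compare the construction of horizontal composition and the proof of Lemma~\ref{L:ch2cell}), but it must be carried out carefully here because the pseudo pullback's universal property only gives the comparison \emph{1-cell} up to invertible 2-cell, and we need those 2-cells to assemble into a valid equivalence-of-diagrams witness. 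Once that is in place, the reduction to an honest pseudo-pullback left-hand cell is a direct appeal to Lemma~\ref{L:ch2cell}, so the genuinely new content of the proposition is confined to this normalization step; everything else is either the universal property of pseudo pullbacks or results already established earlier in the paper.
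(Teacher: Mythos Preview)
Your proposal is correct, but it takes an unnecessary detour. The paper's proof is a one-line application of Lemma~\ref{L:ch2cell}: form the pseudo pullback of the cospan $(v,v')$, observe that pullback closedness gives $v\bar v'\in\frakW$, and then invoke Lemma~\ref{L:ch2cell} directly with that square. Your entire first step---factoring the original cone through the pseudo pullback via its universal property, adjusting the comparison arrow $k$ so that composites land in $\frakW$, and tracking the coherence cells $\mu,\mu'$---is superfluous. Lemma~\ref{L:ch2cell} takes as input \emph{any} 2-cell diagram and \emph{any} square over the same cospan with the appropriate composite in $\frakW$; it does not require the original diagram to already be related to the new square in any way. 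So there is no need to first pass to an intermediate diagram whose left-hand cell is $\rho k'$.

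The bookkeeping obstacle you anticipate is therefore nonexistent: the ``normalization step'' you worry about simply does not need to be carried out. What you identify as the second and main step is in fact the only step.
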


\begin{proof}
For any 2-cell diagram,
\begin{equation}\label{any-2-cell}
\xymatrix{
&A'\ar[dl]_{v}\ar[dr]^f\\
A\ar@{}[r]|{\alpha \Downarrow }&C\ar[u]_{u}\ar[d]^{u'}\ar@{}[r]|{\beta  \Downarrow} & B
\\
&A''\ar[ul]^{v'}\ar[ur]_{f'}
}
\end{equation}
the pseudo-pullback square
$$
\xymatrix{
P\ar[r]^{\overline{v}'}\ar@{}[dr]|{\stackrel{\scriptstyle\stackrel{\sim}{\Leftarrow}}{\rho_{v,v'}}}\ar[d]_{\overline{v}} & A'\ar[d]^{v}
\\
A''\ar[r]_{v'}& A
}
$$
exists and has the property that $v\overline{v}'\in\frakW$. Hence, by Lemma~\ref{L:ch2cell} there is a representative of (\ref{any-2-cell}) with this pseudo-pullback square as left-hand 2-cell.
\end{proof}

 Moreover, the argument from Theorem~\ref{main5} gives the following. 
 
\begin{prop}\label{P:upb-rep} If $\frakW$ satisfies conditions {\bf [WB1]}--{\bf[WB5]}, is pullback closed, and all arrows in $\frakW$ are co-ff, then  there is a canonical representation for each 2-cell which is unique up to equivalence of the central object.  \end{prop}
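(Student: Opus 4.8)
\textbf{Proof proposal for Proposition~\ref{P:upb-rep}.}

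The plan is to combine the existence result of Proposition~\ref{P:pb-rep} (which still applies here, since co-ff implies co-full) with the uniqueness machinery developed for Theorem~\ref{main5}. First I would invoke Proposition~\ref{P:pb-rep}: for any 2-cell diagram as in (\ref{any-2-cell}), the pseudo pullback $P$ of the cospan $\xymatrix@1{A'\ar[r]^v&A&A''\ar[l]_{v'}}$ exists, and pullback closure gives $v\bar{v}'\in\frakW$, so Lemma~\ref{L:ch2cell} produces a representative whose left-hand 2-cell is the pseudo pullback square $\rho_{v,v'}$. This settles existence. For the uniqueness-up-to-equivalence-of-central-object claim, suppose we have two representatives of the same 2-cell, both with a pseudo pullback as left-hand 2-cell. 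Since pseudo pullbacks of a fixed cospan are unique up to equivalence, I would first reduce to the case where the two left-hand squares have the same central object $P$ and the same legs $\bar v,\bar v'$ (whiskering by the connecting equivalence and its structure cells, using Lemma~\ref{L:ch2cell} once more to transport one representative along it). Now the two diagrams have literally the same left-hand 2-cell, so Theorem~\ref{main5} (applicable because the arrows of $\frakW$ are co-ff) forces the two right-hand 2-cells to be equal. Hence the two representatives coincide, and the only ambiguity was the choice of pseudo pullback, which is canonical up to equivalence of the central object.

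More carefully, the argument internal to the uniqueness step mirrors the proof of Theorem~\ref{main5}: given two representatives with the same left-hand side, Proposition~\ref{Matteo3} yields an arrow $w$ with $v\bar{v}'w\in\frakW$ and $\beta w=\gamma w$; then Lemma~\ref{Matteo2} (using that $v\bar{v}'\in\frakW$ by pullback closure) produces $x$ with $wx\in\frakW$, so $\beta wx=\gamma wx$, and co-ff-ness forces $\beta=\gamma$. The only additional ingredient beyond Theorem~\ref{main5} is the reduction identifying the two pseudo pullback central objects, which is where the phrase ``unique up to equivalence of the central object'' comes from: distinct choices of pseudo pullback for the same cospan are related by an equivalence compatible with the projections, and one checks that transporting the right-hand 2-cell along this equivalence (via Lemma~\ref{L:ch2cell}) gives an equivalent 2-cell diagram.

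I expect the main obstacle to be the bookkeeping in the reduction step: making precise that two representatives whose left-hand 2-cells are \emph{a priori different} pseudo pullbacks can be compared by first replacing one pseudo pullback square by the other along the canonical equivalence, while keeping track of the invertible structure 2-cells so that Lemma~\ref{L:ch2cell} applies and the resulting 2-cell diagram is genuinely equivalent to the original one. Once both representatives are expressed over the same central object with the same legs, the rest is a direct citation of Theorem~\ref{main5} (equivalently, of Proposition~\ref{Matteo3}, Lemma~\ref{Matteo2}, and co-ff-ness). Everything else — existence, and the three-step cancellation argument — is already available in the excerpt, so no new technical lemmas should be required.
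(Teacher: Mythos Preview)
Your proposal is correct and follows the same approach as the paper: existence via Proposition~\ref{P:pb-rep} (using that co-ff implies co-full), and uniqueness via Theorem~\ref{main5}. The paper's own proof is a single sentence (``The representation using the pseudo pullbacks is canonical and as unique as the choice of pseudo pullbacks''), so your elaboration of the reduction step and the internal appeal to Proposition~\ref{Matteo3} and Lemma~\ref{Matteo2} simply unpacks what the paper leaves implicit.
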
 

\begin{proof}
The representation using the pseudo pullbacks is canonical and as unique as the choice of pseudo pullbacks.
\end{proof}

We finally show that if $\frakW$ is closed under pseudo pullbacks 
(rather than pullback closed), we can still use pseudo pullbacks to define the 2-cells:

\begin{prop}
 If $\frakW$ satisfies conditions {\bf [WB1]}--{\bf[WB5]}, is closed under pseudo pullbacks, and all arrows in $\frakW$ are co-ff, then the 2-cells in the bicategory of fractions can be uniquely represented by 2-cell diagrams with a chosen pseudo pullback as left-hand 2-cell. 
\end{prop}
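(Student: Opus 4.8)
The plan is to reduce to Proposition~\ref{P:upb-rep} by passing to the closure $\widehat{\frakW}$ of $\frakW$ under composition and invertible 2-cells, as in Section~\ref{comparisons}. First note that asking $\frakW$ to be closed under pseudo pullbacks presupposes that $\calB$ has pseudo pullbacks of cospans with one leg in $\frakW$; by the pasting lemma for pseudo pullbacks one can then build a pseudo pullback of any cospan with one leg in $\widehat{\frakW}$ by iterating pseudo pullbacks of the arrows of $\frakW$ that compose that leg. We fix such iterated pseudo pullbacks as the chosen pseudo pullbacks for cospans with a leg in $\widehat{\frakW}$; by construction each of their projections lies in $\widehat{\frakW}$.

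Next I would check that $\widehat{\frakW}$ satisfies the hypotheses of Proposition~\ref{P:upb-rep}. By the lemma of Section~\ref{comparisons}, $\widehat{\frakW}$ satisfies {\bf[WB1]}--{\bf[WB5]} (indeed {\bf BF1}--{\bf BF5}). It is pullback closed: for $u,v\in\widehat{\frakW}$ the chosen pseudo pullback of $v$ along $u$ has projection $\bar v\in\widehat{\frakW}$ by the previous paragraph, and hence $u\bar v\in\widehat{\frakW}$ because $\widehat{\frakW}$ is closed under composition. And every arrow of $\widehat{\frakW}$ is co-ff: a composite of co-ff arrows is co-ff, and an arrow 2-isomorphic to a co-ff one is co-ff, since an invertible 2-cell $w\Rightarrow w'$ induces a natural isomorphism $(-)\circ w\cong(-)\circ w'$ between the induced functors on hom-categories, and full faithfulness is invariant under natural isomorphism.

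Proposition~\ref{P:upb-rep} now applies to $\widehat{\frakW}$: each 2-cell of $\calB(\widehat{\frakW}^{-1})$ has a representative whose left-hand 2-cell is the chosen pseudo pullback, and, the arrows of $\widehat{\frakW}$ being co-ff, Theorem~\ref{main5} makes this representative unique. By Theorem~\ref{comparison-thm} there is a biequivalence $J\colon\calB(\frakW^{-1})\to\calB(\widehat{\frakW}^{-1})$ that is the identity on objects, sends each span to the same span and each 2-cell diagram to the same diagram, and is fully faithful on 2-cells; so the 2-cells of $\calB(\frakW^{-1})$ correspond bijectively, through $J$, to those of the biequivalent bicategory $\calB(\widehat{\frakW}^{-1})$ between the matching arrows. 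Transporting the representation along $J$ gives the claim.

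I expect the substantive point to be the pullback-closedness of $\widehat{\frakW}$ in the middle paragraph, and more precisely the observation that it is the passage to $\widehat{\frakW}$ that is doing the work: ``closed under pseudo pullbacks'' is strictly weaker than ``pullback closed'', because a pseudo pullback of an arrow of $\frakW$ along an arrow of $\frakW$ has both projections in $\frakW$ but the composite of a projection with the opposite leg need not be, $\frakW$ not being closed under composition. The pasting lemma for pseudo pullbacks together with closure of $\widehat{\frakW}$ under composition is exactly what bridges this gap, provided one is careful to work throughout with chosen (iterated) pseudo pullbacks rather than pseudo pullbacks only up to equivalence.
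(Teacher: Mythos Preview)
Your proposal is correct and follows essentially the same route as the paper: pass to $\widehat{\frakW}$, verify it is pullback closed with co-ff arrows, apply Proposition~\ref{P:upb-rep} there, and transport back along the biequivalence $J\colon\calB(\frakW^{-1})\to\calB(\widehat{\frakW}^{-1})$. You supply more detail than the paper does---in particular the pasting-lemma justification for why $\widehat{\frakW}$ is pullback closed, and the explicit check that co-ff is stable under composition and 2-isomorphism---whereas the paper simply asserts these facts in one sentence.
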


\begin{proof}
Let $\widehat{\frakW}$ be the class of arrows generated from $\frakW$
under composition and closure under 2-isomorphisms.
Then $\widehat{\frakW}$ satisfies the stronger bicategory of fractions axioms, is pullback-closed and its arrows are still co-ff (this property is preserved by composition and closure under 2-isomorphisms).
So the result from Proposition \ref{P:upb-rep} applies to $\widehat{\frakW}$. Now note that $J\colon \calB(\frakW^{-1})\to\calB(\widehat{\frakW}^{-1})$ is an equivalence of bicategories and in particular, it is 2-full and 2-faithful.
Hence the 2-cells in $\calB(\widehat{\frakW}^{-1})$ between arrows in the image of $J$ are in 1-1 correspondence with 2-cells between the original arrows in $\calB(\frakW^{-1})$.
\end{proof}

Vertical composition of 2-cells  is not simplified by taking representatives with pseudo pullbacks. In fact it is slightly complicated, since  we need to  calculate the vertical composition of the 2-cell diagrams and then construct an equivalent 2-cell diagram that has the pseudo pullback on the left-hand side, using the lifting as in the proof of Lemma~\ref{L:ch2cell}.  
However, the horizontal whiskering operations can be significantly simplified by using pseudo pullbacks,  as we show in the following two subsections.

\subsection{Left Whiskering With Pseudo Pullbacks}  Throughout this subsection, we will assume that $\calB$ has all pseudo pullbacks of cospans in $\frakW$ and that  $\frakW$ satisfies all conditions of Proposition~\ref{P:pb-rep}:  its arrows are co-full, it satisfies conditions {\bf [WB1]}--{\bf[WB5]}, and is pullback closed. We will further require $\frakW$ to be full. (Note that if $\frakW$ is co-fully faithful, this is implied.) We furthermore choose a pseudo pullback
$$\xymatrix@C=3em{P_{u_1,u_2}\ar[r]^{\pi_1}\ar[d]_{\pi_2}\ar@{}[dr]|{\stackrel{\scriptstyle\stackrel{\sim}{\Longleftarrow}}{\rho_{u_1,u_2}}}&A'\ar[d]^{u_1}
\\
A''\ar[r]_{u_2} & A}$$
for each cospan $\xymatrix@1{A'\ar[r]^{u_1}&A&A''\ar[l]_{u_2}}$ in $\frakW$ and will now describe the left whiskering operation for 2-cell representatives with these chosen pseudo pullbacks as left-hand 2-cells.
So we consider whiskering of the form 
\begin{equation}\label{secondwhiskering}
\xymatrix@C=4em{
&A'\ar[dl]_{u_1}\ar[dr]^{f_1}
\\
A\ar@{}[r]|{\rho_{u_1,u_2}\Downarrow} & P_{u_1,u_2}\ar[u]_{\pi_1}\ar[d]^{\pi_2}\ar@{}[r]|{\beta\Downarrow} & B&B'\ar[l]_{v}\ar[r]^g & C
\\
&A''\ar[ul]^{u_2}\ar[ur]_{f_2}
}
\end{equation}
where $\rho_{u_1,u_2}$ is the chosen pseudo pullback.
We construct the composition of the 1-cells using chosen squares $\gamma_1$ and $\gamma_2$ as in Section~\ref{left_whiskering},
$$
\xymatrix{
D'\ar[d]_{\overline{v}_1}\ar[r]^{\overline{f}_1}\ar@{}[dr]|{\stackrel{\gamma_1}{\Leftarrow}} & B'\ar[d]^{v}\ar@{}[drr]|{\mbox{and}} && D''\ar[d]_{\overline{v}_2}\ar[r]^{\overline{f}_2}\ar@{}[dr]|{\stackrel{\gamma_2}{\Leftarrow}} & B'\ar[d]^{v}
\\
A'\ar[r]_{f_1} & B && A''\ar[r]_{f_2} &B
}
$$
such that $w_1:=u_1\overline{v}_1$ and $w_2:=u_2\overline{v}_2$ are in $\frakW$.
Let 
$$
\xymatrix@C=3em@R=3em{
P_{w_1,w_2}\ar@{}[dr]|{\stackrel{\scriptstyle\stackrel{\sim}{\Leftarrow}}{\rho_{w_1,w_2}}}\ar[r]^{\pi_1'}\ar[d]_{\pi_2'}&D' \ar[d]^{w_1=u_1\overline{v}_1}
\\
D''\ar[r]_{w_2=u_2\overline{v}_2}&A
}
$$ be the chosen pseudo pullback.
Then there is a unique arrow $h\colon P_{w_1,w_2}\to P_{u_1,u_2}$ such that $\pi_1h=\overline{v}_1\pi_1'$, $\pi_2h=\overline{v}_2\pi_2'$ and $\rho_{u_1,u_2}h=\rho_{w_1,w_2}$.
Finally, let $\tilde\beta\colon \overline{f}_1\pi_1'\Rightarrow\overline{f}_2\pi_2'$ be the lifting of the diagram,
$$
\xymatrix{
D'\ar@/^5ex/[2,3]^{\overline{f}_1}\ar[dr]^{\overline{v}_1}
\\
&A'\ar[dr]^{f_1}\ar@{}[r]|(.4){\stackrel{\gamma_1}{\Leftarrow}}&
\\
P_{w_1,w_2}\ar[r]^h\ar[uu]_{\pi_1'}\ar[dd]^{\pi_2'}\ar@{}[ur]|= \ar@{}[dr]|= & P_{u_1,u_2}\ar[u]_{\pi_1}\ar[d]^{\pi_2}\ar@{}[r]|{\beta\Downarrow}&&B'\ar[l]^v
\\
&A''\ar[ur]_{f_2}\ar@{}[r]|(.4){\stackrel{\gamma_2^{-1}}{\Rightarrow}}&
\\
D''\ar[ur]_{\overline{v}_2}\ar@/_5ex/[-2,3]_{\overline{f}_2}
}
$$ with respect to $v$ (this exists because we assume that $\frakW$ is full).
Then the result of whiskering as in (\ref{secondwhiskering}) is given by
\begin{equation}\label{result_secondwhiskering}
\xymatrix{
&A'\ar@/_1.5ex/[dl]_{u_1}&D'\ar[l]_{\overline{v}_1}\ar@/^2ex/[dr]^{\overline{f}_1}
\\
A\ar@{}[rr]|{\rho_{w_1,w_2}\Downarrow}&&P_{w_1,w_2}\ar[u]_{\pi_1'}\ar[d]^{\pi_2'}\ar@{}[r]|{\tilde\beta\Downarrow}&B'\ar[r]^g &C
\\
&A''\ar@/^1.5ex/[ul]^{u_2}&D''\ar[l]^{\overline{v}_2}\ar@/_2ex/[ur]_{\overline{f}_2}
}
\end{equation}

\begin{lma}\label{leftpbwhiskering}
Diagram {\em (\ref{result_secondwhiskering})} is equivalent to the diagram {\em (\ref{leftwhiskeringgeneral})} obtained for this type of whiskering in Section~\ref{left_whiskering}.
\end{lma}

\begin{proof}
 It was shown in \cite{Matteo1} that any pair of choices of the squares and liftings in the composition construction of Section~\ref{left_whiskering} give equivalent 2-cell diagrams as long as we use the composition squares from {\bf[C2]} of Notation~\ref{choices} for the composition of the 1-cells and  the squares have the right properties.  The only place where the chosen squares are essential is in the composition of the 1-cells, so with the exception of the cells $\gamma_1$ and $\gamma_2$ we can replace all cells used in the whiskering algorithm from Section~\ref{left_whiskering} with cells and squares we have just constructed above. So we will redo the construction from Section~\ref{left_whiskering} and use the universal properties of the pseudo pullbacks to adjust the squares to obtain a 
2-cell diagram that is clearly equivalent to (\ref{result_secondwhiskering}).

Recall that in Section~\ref{left_whiskering} we used chosen squares $\delta_1$, $\delta_2$ and $\delta_3$ to obtain diagrams
\begin{equation}\label{lefthand_old}
\xymatrix@C=3.9em{
&&\ar[dl]_{\overline{v}_1}& \ar[dr]|{\overline{v}_1}\ar@{}[ddr]|{\delta_1\Downarrow}\ar@/^2.5ex/[2,3]^{\overline{f}_1}
\\
&\ar[dl]_{u_1} \ar@{}[r]|{\delta_1\Downarrow}& \ar[u]_{{\pi}_1'\tilde{u}_1}\ar[dl]|{\tilde{v}_1}& \ar[u]^{{\pi}_1'\tilde{u}_1}\ar[dr]|{\tilde{v}_1} & \ar@{}[r]|{\stackrel{\gamma_1}{\Leftarrow}}\ar[dr]|{f_1} &
\\
\ar@{}[r]|{\rho_{u_1,u_2}\Downarrow} & P_{u_1,u_2}\ar[u]_{\pi_1}\ar[d]^{\pi_2}\ar@{}[r]|{\delta_3^{-1}\Downarrow} & \ar[u]_{t_1}\ar[d]^{t_2}T\ar@{}[r]|{\mbox{and}}& T\ar[u]^{t_1}\ar[d]_{t_2}\ar@{}[r]|{\delta_3^{-1}\Downarrow} &\ar[u]_{\pi_1}\ar[d]^{\pi_2}\ar@{}[r]|{\beta\Downarrow} &&\ar[l]^v\ar[r]_g &
\\
& \ar[ul]^{u_2}\ar@{}[r]|{\delta_2^{-1}\Downarrow} & \ar[ul]|{\tilde{v}_2} \ar[d]^{{\pi}'_2\tilde{u}_2} & \ar[ur]|{\tilde{v}_2}\ar@{}[r]|{\delta_2^{-1}\Downarrow}\ar[d]_{{\pi}_2'\tilde{u}_2} & \ar[ur]|{f_2}\ar@{}[r]|{\stackrel{\gamma_2^{-1}}{\Rightarrow}} &
\\
&&\ar[ul]^{\overline{v}_2} & \ar[ur]|{\overline{v}_2}\ar@/_2.5ex/[-2,3]_{\overline{f}_2}
}
\end{equation}

By the universal property of the pseudo pullback there is an arrow $\tilde{t}\colon T\to P_{u_1,u_2}$ such that the following diagram pastes to the same 2-cell as the first diagram in (\ref{lefthand_old}),
$$
\xymatrix@C=4em{
&&\ar[dl]_{\overline{v}_1}
\\
&\ar[dl]_{u_1} \ar@{}[r]|{=}& \ar[u]_{{\pi}_1'\tilde{u}_1}
\\
\ar@{}[r]|{\rho_{u_1,u_2}\Downarrow} & P_{u_1,u_2}\ar[u]_{\pi_1}\ar[d]^{\pi_2} & \ar[u]_{t_1}\ar[d]^{t_2}T\ar[l]_{\tilde{t}}
\\
& \ar[ul]^{u_2}\ar@{}[r]|{=} & \ar[d]^{{\pi}_2'\tilde{u}_2}
\\
&&\ar[ul]^{\overline{v}_2}
}
$$
We now replace the chosen squares $\delta_1$, $\delta_2$ by the new commuting squares in this diagram and let $\delta_3=\mbox{id}_{\tilde{t}}$. 
We obtain the following diagram,
$$
\xymatrix@C=4em{
&&\ar[dl]_{\overline{v}_1} \ar[dr]^{\overline{v}_1}\ar@{}[ddr]|{=}\ar@/^3.5ex/[2,3]^{\overline{f}_1}& 
\\
&\ar[dl]_{u_1} \ar@{}[r]|{=}& \ar[u]_{\overline{\pi}_1} &\ar[dr]^{f_1}\ar@{}[r]|{\stackrel{\gamma_1}{\Leftarrow}}&
\\
\ar@{}[r]|{\rho_{u_1,u_2}\Downarrow} & P_{u_1,u_2}\ar[u]_{\pi_1}\ar[d]^{\pi_2} & \ar[u]_{t_1}\ar[d]^{t_2}T\ar[l]^{\tilde{t}}\ar[r]_{\tilde{t}} & P_{u_1,u_2}\ar[u]_{\pi_1}\ar[d]^{\pi_2}\ar@{}[r]|\beta &&\ar[l]^v\ar[r]_g &
\\
& \ar[ul]^{u_2}\ar@{}[r]|{=} & \ar[d]^{\overline{\pi}_2}\ar@{}[r]|=&\ar[ur]_{f_2}\ar@{}[r]|{\stackrel{\gamma_2^{-1}}{\Rightarrow}}&
\\
&&\ar[ul]^{\overline{v}_2}\ar[ur]_{\overline{v}_2}\ar@/_3.5ex/[-2,3]_{\overline{f}_2}
}
$$
This is almost a 2-cell diagram: we just need to take a lifting $\tilde\beta'\colon \overline{f}_1\overline{\pi}_1t_1\Rightarrow\overline{f}_2\overline{\pi}_2t_2$ of the right-hand side with respect to $v$ (which is possible since $v$ is full)  

To show that the resulting 2-cell,
\begin{equation}\label{resulting}
\xymatrix@C=4em{
&\ar@/_1.5ex/[dl]_{u_1}&\ar[l]_{\overline{v}_1}\ar@/^2ex/[dr]^{\overline{f}_1}
\\
\ar@{}[r]|{\rho_{u_1,u_2}\Downarrow}&P_{u_1,u_2}\ar[u]_{\pi_1}\ar[d]^{\pi_2}&T\ar[l]^{\tilde{t}}\ar[u]_{\overline{\pi}_1t_1}\ar[d]^{\overline{\pi}_2t_2}\ar@{}[r]|{\tilde\beta'\Downarrow}&\ar[r]^g &
\\
&\ar@/^1.5ex/[ul]^{u_2}&\ar[l]^{\overline{v}_2}\ar@/_2ex/[ur]_{\overline{f}_2}
}
\end{equation} 
is equivalent to (\ref{result_secondwhiskering}),
note that there is a unique arrow $t'\colon T\to P_{w_1,w_2}$ such that 
$\rho_{w_1,w_2}t'=\rho_{u_1,u_2}\tilde{t}$. Now $\tilde\beta t'$ is another lifting of the 
right-hand side in (\ref{secondwhiskering}), so the diagrams with $\tilde\beta'$ and $\tilde\beta t'$ on the left-hand side are equivalent.
Hence, (\ref{result_secondwhiskering}) and (\ref{resulting}) are equivalent.
\end{proof}

\subsection{Right Whiskering With Pullbacks}  Throughout this section, we will assume all conditions of Proposition~\ref{P:pb-rep}: $\calB$ has all pseudo pullbacks of cospans in $\frakW$ (and we will use the chosen pseudo pullbacks as in the previous subsection), $\frakW$ satisfies  conditions {\bf [WB1]}--{\bf[WB5]}, is pullback closed, and its arrows are co-full. Furthermore, we will require $\frakW$ to be full as well.
We now consider right whiskering for 2-cell representatives where the left-hand 2-cell is a chosen pseudo pullback. So we start with the composition 
\begin{equation}\label{firstwhiskering}
\xymatrix@C=4em{
&&&B'\ar[dl]_{v_1}\ar[dr]^{g_1}
\\
A & A'\ar[l]_{u}\ar[r]^f & B\ar@{}[r]|{\rho_{v_1,v_2}\Downarrow} & P_{v_1,v_2} \ar@{}[r]|{\beta\Downarrow} \ar[u]_{\pi_1}\ar[d]^{\pi_2} & C
\\
&&&B''\ar[ul]^{v_2}\ar[ur]_{g_2}
}
\end{equation}
where $P_{v_1,v_2}$ is the chosen pseudo pullback of $v_1$ and $v_2$.
First we construct the composition of the 1-cells using chosen squares {\bf[C2]}
$$
\xymatrix{
D'\ar[d]_{\overline{v}_1}\ar[r]^{\overline{f}_1}\ar@{}[dr]|{\stackrel{\gamma_1}{\Leftarrow}} & B'\ar[d]^{v_1}\ar@{}[drr]|{\mbox{and}} && D''\ar[d]_{\overline{v}_2}\ar[r]^{\overline{f}_2}\ar@{}[dr]|{\stackrel{\gamma_2}{\Leftarrow}} & B''\ar[d]^{v_2}
\\
A'\ar[r]_f & B && A'\ar[r]_f &B
}
$$
such that $u_1:=u\overline{v}_1$ and $u_2:=u\overline{v}_2$ are in $\frakW$ as in Section~\ref{right_whiskering}.
Let 
$$\xymatrix@C=3.5em{P_{u_1,u_2}\ar[r]^{\overline\pi_1}\ar[d]_{\overline\pi_2}\ar@{}[dr]|{\stackrel{\scriptstyle\stackrel{\sim}{\Longleftarrow}}{\rho_{u_1,u_2}}} & D'\ar[d]^{u_1}\\D''\ar[r]_{u_2}&A}$$
be the chosen pseudo pullback of $u_1$ and $u_2$. Note that $\rho_{u_1,u_2}\colon u\overline{v_1}\overline\pi_1\Rightarrow u\overline{v}_2\overline\pi_2$. Since $u$ is full, there is a lifting $\tilde\rho_{u_1,u_2}\colon \overline{v}_1\overline\pi_1\Rightarrow\overline{v}_2\overline\pi_2$.
This cell can be pasted with $\gamma_1$ and $\gamma_2^{-1}$ to form
\begin{equation}\label{delta-pasting}
\xymatrix@C=4em{
& D'\ar[r]^{\overline{f}_1}\ar[d]_{\overline{v}_1}\ar@{}[dr]|{\stackrel{\gamma_1}{\Leftarrow}} & B'\ar[dr]^{v_1}
\\
P_{u_1,u_2} \ar[ur]^{\overline\pi_1}\ar[dr]_{\overline\pi_2}\ar@{}[r]|{\tilde\rho_{u_1,u_2}\Downarrow} & A'\ar[rr]_f && B
\\
& D''\ar@{}[ur]|{\stackrel{\gamma_2^{-1}}{\Rightarrow}}\ar[r]_{\overline{f}_2}\ar[u]^{\overline{v}_2} & B''\ar[ur]_{v_2}
}
\end{equation}
By the universal property of the pseudo pullback $P_{v_1,v_2}$, there is a unique arrow 
\begin{equation}\label{arrow-h}
h\colon P_{u_1,u_2}\to P_{v_1,v_2}\mbox{ such that }\pi_1h=\overline{f}_1\overline\pi_1\mbox{ and } \pi_2 h=\overline{f}_2\overline\pi_2
\end{equation}
and furthermore, $\rho_{v_1,v_2}h$ is equal to the pasting of (\ref{delta-pasting}). We claim that the following 2-cell diagram represents the result of whiskering (\ref{firstwhiskering}):
\begin{equation}\label{result_firstwhiskering}
\xymatrix@C=4em{
&\ar[dl]_{u_1} D'\ar[r]^{\overline{f}_1}\ar@{}[dr]|= & B'\ar[dr]^{g_1}
\\
A\ar@{}[r]|{\rho_{u_1,u_2}\Downarrow} & P_{u_1,u_2}\ar[r]_h\ar[u]_{\overline\pi_1}\ar[d]^{\overline\pi_2}\ar@{}[dr]|= & P_{v_1,v_2}\ar[u]_{\pi_1}\ar[d]^{\pi_2}\ar@{}[r]|{\beta\Downarrow} & C
\\
&D''\ar[ul]^{u_2}\ar[r]_{\overline{f}_2}&B''\ar[ur]_{g_2}
}
\end{equation}

\begin{lma}   
Diagram (\ref{result_firstwhiskering}) is equivalent to the diagram (\ref{rightwhiskeringgeneral}) obtained for this type of whiskering in Section~\ref{right_whiskering}.
\end{lma}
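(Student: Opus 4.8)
The plan is to invoke the same strategy used in the proof of the analogous left-whiskering lemma: appeal to Tommasini's result from \cite{Matteo1} that any two sets of choices for the squares and liftings (as long as the composition of the $1$-cells uses the chosen composition squares from {\bf [C2]}) yield equivalent $2$-cell diagrams, and then re-run the right-whiskering construction of Section~\ref{right_whiskering} using squares built from the chosen pseudo pullbacks rather than arbitrary chosen squares and liftings. So the first step is to observe that in diagram (\ref{rightwhiskeringgeneral}) the only squares that genuinely matter for the equivalence class are $\gamma_1$ and $\gamma_2$ (the composition squares for the $1$-cells); every other square $\delta_i$, $\varepsilon_i$, $\varphi_i$, $\tilde\alpha$, $\tau$, $\rho_j$ and the extra arrows may be replaced by any other valid choices. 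We therefore keep $\gamma_1,\gamma_2$ fixed, and everywhere else substitute cells coming from the pseudo pullback $P_{u_1,u_2}$ and the comparison arrow $h$ of (\ref{arrow-h}).

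The second step is the concrete substitution. Using the pseudo pullback $P_{u_1,u_2}$ and the fullness of $u$, we get the lifted cell $\tilde\rho_{u_1,u_2}$; pasting with $\gamma_1$ and $\gamma_2^{-1}$ as in (\ref{delta-pasting}) and applying the universal property of $P_{v_1,v_2}$ yields $h$ with $\pi_1 h=\bar f_1\bar\pi_1$, $\pi_2 h=\bar f_2\bar\pi_2$, and $\rho_{v_1,v_2}h$ equal to the pasting of (\ref{delta-pasting}). I would then spell out that the data $(P_{u_1,u_2}\xrightarrow{\bar\pi_1}D'\xrightarrow{\bar v_1}A')$, $(P_{u_1,u_2}\xrightarrow{\bar\pi_2}D''\xrightarrow{\bar v_2}A')$ together with the commuting identity triangles $\bar f_1\bar\pi_1 = \pi_1 h$ and $\bar f_2\bar\pi_2 = \pi_2 h$ play exactly the roles of the composites $s_1'r_1$ etc., the arrows $p,q$, the cell $\tilde\alpha$, and the squares $\rho_1,\rho_2,\rho_3$ in the general right-whiskering algorithm: all of these can be taken to be identities or the evident structural isomorphisms because $h$ already realizes the comparison through $P_{v_1,v_2}$. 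The right-hand $2$-cell $\beta$ is transported unchanged. The point is that the general algorithm's output is an equivalence class, and the specific representative (\ref{result_firstwhiskering}) arises from this particular admissible system of choices, so it lies in the same class as (\ref{rightwhiskeringgeneral}).

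The main obstacle is bookkeeping rather than conceptual: one must check that the system of choices used to produce (\ref{result_firstwhiskering}) really is an instance of the choices allowed in Section~\ref{right_whiskering}, i.e. that all the intermediate composites that the algorithm requires to lie in $\frakW$ do so. Here pullback-closedness of $\frakW$ gives $u\bar v_1\bar\pi_1 = u_1\bar\pi_1\in\frakW$ (and similarly for the other leg, via {\bf [WB5]} and invertibility of $\rho_{u_1,u_2}$), which is precisely the condition needed for (\ref{result_firstwhiskering}) to be a legitimate $2$-cell diagram; co-fullness of the arrows in $\frakW$ is what allowed the liftings in constructing $\tilde\rho_{u_1,u_2}$ and $\beta$'s transport, and fullness of $u$ is what produced $\tilde\rho_{u_1,u_2}$ in the first place. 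Once these membership checks are in place, the equivalence of (\ref{result_firstwhiskering}) and (\ref{rightwhiskeringgeneral}) follows from \cite{Matteo1} exactly as in the left-whiskering case, possibly after precomposing both representatives with one further arrow from {\bf [WB2]} to land the central object's relevant composite in $\frakW$ and to absorb the structural isomorphisms (unitors and associators) relating $h$-composites to the algorithm's composites. I would end the proof by noting that this final equivalence is an instance of the equivalence relation on $2$-cell diagrams with the explicit mediating data $(s,t,\varepsilon,\varepsilon')$ read off from $h$ and the identity triangles above.
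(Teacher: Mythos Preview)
Your proposal is correct and follows essentially the same approach as the paper: invoke Tommasini's result that only the composition squares $\gamma_1,\gamma_2$ are forced, then re-run the right-whiskering algorithm with the remaining choices specialized to data coming from $h$ and the pseudo pullback (taking $p,q,r_i$ to be identities, $\tilde\alpha=\tilde\rho_{u_1,u_2}$, $\tau=\mathrm{id}_h$, and $\delta_i$ the pasting of $\gamma_i$ with the identity square $\pi_ih=\bar f_i\bar\pi_i$). The paper carries this out more explicitly than you do---writing down each substitution and checking the required pasting equality---so when you write it up you should make concrete which cell replaces which (in particular $\delta_i$, $\varepsilon_i$, $\varphi_i$), rather than just asserting they ``can be taken to be identities or the evident structural isomorphisms.'' One small slip: co-fullness plays no role here; only fullness of $u$ is needed to produce $\tilde\rho_{u_1,u_2}$.
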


\begin{proof}
Again, we use the result from  \cite{Matteo1} that the equivalence classes of the resulting 2-cell diagrams in the whiskering constructions and vertical composition construction do not depend on the choice of the squares and liftings used as long as we use the chosen composition of 1-cells and the  appropriate arrows are in $\frakW$.  We will now go through the algorithm of Section~\ref{right_whiskering} and substitute the cells above.  We will show  that the result is precisely  (\ref{result_firstwhiskering}). 

In (\ref{deltas}), we take for $\delta_1$ and $\delta_2$  respectively,
$$
\xymatrix{
\ar[r]^h\ar[d]_{\overline{\pi}_1}\ar@{}[dr]|= & \ar[d]^{\pi_1}&&\ar[r]^h\ar[d]_{\overline{\pi}_2}\ar@{}[dr]|= & \ar[d]^{\pi_2}
\\
\ar[d]_{\overline{v}_1}\ar[r]|{\overline{f}_1}\ar@{}[dr]|{\stackrel{\gamma_1}{\Longleftarrow}} & \ar[d]^{v_1}&\mbox{and}& \ar[d]_{\overline{v}_2}\ar[r]|{\overline{f}_2}\ar@{}[dr]|{\stackrel{\gamma_2}{\Longleftarrow}} & \ar[d]^{v_2}
\\
\ar[r]_f&&&\ar[r]_f&
}
$$
This allows us to take $r_1$ and $r_2$ to be identity arrows and $t_i=\overline\pi_i$ for $i=1,2$. Furthermore, $\varphi_i$ is given by
$$
\xymatrix{
\ar[r]^h\ar[d]_{\overline{\pi}_i}\ar@{}[dr]|= & \ar[d]^{\pi_i}
\\
\ar[r]_{\overline{f}_i}&
}
$$
and $\varepsilon_i=\mbox{id}_{\overline{v}_i\overline\pi_i}$, for $i=1,2$.
The next step is then to compare the pastings,
$$
\xymatrix@C=3em{
\ar[d]_{\overline\pi_1} \ar[r]^h \ar@{}[dr]|=  &\ar[d]^{\pi_1}&&&\ar[dl]_{\overline\pi_2}\ar[r]^h\ar@{}[d]|=&\ar[dl]|{\pi_2}\ar[d]^{\pi_1}
\\
\ar[r]|{\overline{f}_1}\ar@{}[dr]|{\stackrel{\gamma_1}{\Longleftarrow}} \ar[d]_{\overline{v}_1}& \ar[d]^{v_1}&\mbox{and} &\ar[dr]_{\overline{v}_2}\ar[r]|{\overline{f}_2}\ar@{}[drr]|{\stackrel{\gamma_2}{\Longleftarrow}}& \ar[dr]|{v_2}\ar@{}[r]|{\stackrel{\rho_{v_1,v_2}}{\Longleftarrow}}&\ar[d]^{v_1}
\\
\ar[r]_f& && &\ar[r]_f&}
$$
Here we may choose $p$ and $q$ to be identity arrows, $\tau=\mbox{id}_h$
and $\tilde\alpha=\tilde\rho_{u_1,u_2}$, since
$$
\xymatrix@C=3em{
&\ar[dl]_{\overline\pi_2}\ar[d]|{\overline\pi_1}\ar[r]^h\ar@{}[dr]|= &\ar[d]^{\pi_1}&&&\ar[dl]_{\overline\pi_2}\ar[r]^h\ar@{}[d]|=&\ar[dl]|{\pi_2}\ar[d]|{\pi_1}
\\
\ar@{}[r]|{\stackrel{\tilde\rho_{u_1,u_2}}{\Longleftarrow}}\ar[dr]_{\overline{v}_2}& \ar[d]|{\overline{v}_1}\ar@{}[dr]|{\stackrel{\gamma_1}{\Longleftarrow}}\ar[r]|{\overline{f}_1}&\ar[d]^{v_1}&=&\ar[dr]_{\overline{v}_2}\ar[r]|{\overline{f}_2}\ar@{}[drr]|{\stackrel{\gamma_2}{\Longleftarrow}}& \ar[dr]|{v_2}\ar@{}[r]|{\stackrel{\rho_{v_1,v_2}}{\Longleftarrow}}&\ar[d]^{v_1}
\\
&\ar[r]_f & &&&\ar[r]_f&
}$$
by (\ref{delta-pasting}) and (\ref{arrow-h}).

Omitting the identity coherence cells, the resulting 2-cell diagram is
\begin{equation}\label{rwhiskeringwithpb}
\xymatrix@C=4em@R=5em{
&&&\ar@/_4ex/[2,-2]_{\overline{v}_1}\ar@/^0.5ex/[1,2]^{\overline{f}_1}&&
\\
&&\ar[dl]|{\overline{v}_1\overline\pi_1}&\ar[l]\ar@{}[ul]|{\mbox{\scriptsize id}} \ar[r]\ar@{}[ur]|{\mbox{\scriptsize id}}\ar[u]_{\overline\pi_1}&\ar[dr]^h&\ar@/^.5ex/[dr]^{g_1}
\\
&\ar[l]_u\ar@{}[r]|{\tilde\rho_{u_1,u_2}\Downarrow} & \ar[u]\ar[d] \ar@{}[ur]|{\mbox{\scriptsize id}}\ar@{}[dr]|{\mbox{\scriptsize id}} &\ar[l] \ar[u]\ar[d]\ar[r]\ar@{}[ur]|{\mbox{\scriptsize id}}\ar@{}[dr]|{\mbox{\scriptsize id}} &\ar[u]\ar[d] \ar@{}[r]|{\mbox{\scriptsize id}}&\ar[u]_{\pi_1}\ar[d]^{\pi_2}\ar@{}[r]|{\beta\Downarrow} &
\\
&&\ar[ul]|{\overline{v}_2\overline{\pi}_2} & \ar[d]^{\overline\pi_2}  \ar[r]\ar[l] \ar@{}[dr]|{\mbox{\scriptsize id}} \ar@{}[dl]|{\mbox{\scriptsize id}}
&\ar[ur]_h&\ar@/_0.5ex/[ur]_{g_2}
\\
&&&\ar@/^4ex/[-2,-2]^{\overline{v}_2}\ar@/_0.5ex/[urr]_{\overline{f}_2}&&
}
\end{equation}
where all unlabeled arrows are identity arrows.
Composing the cells in both the left-hand side and the right-hand side of this diagram gives us the 2-cell diagram in (\ref{result_firstwhiskering}) as required.
\end{proof}

\subsection{Horizontal Composition of 2-Cell Diagrams with Pseudo Pullbacks}\label{horcomppb}

Suppose that we have two 2-cells that we want to compose:
\begin{equation}\label{composable-2-cells}
\xymatrix@C=3.5em{
&A'\ar[dl]_{u_1}\ar[dr]^{f_1}&&B'\ar[dl]_{v_1}\ar[dr]^{g_1}
\\
A\ar@{}[r]|{\rho_{u_1,u_2}\Downarrow}&P_{u_1,u_2}\ar[u]_{\pi_{A'}}\ar[d]^{\pi_{A''}}\ar@{}[r]|{\beta\Downarrow}&B\ar@{}[r]|{\rho_{v_1,v_2}\Downarrow} &P_{v_1,v_2}\ar[u]_{\pi_{B'}}\ar[d]^{\pi_{B''}}\ar@{}[r]|{\gamma\Downarrow}&C
\\
&A''\ar[ul]^{u_2}\ar[ur]_{f_2} && B''\ar[ul]^{v_2}\ar[ur]_{g_2}
}
\end{equation}

The horizontal composition of these two general 2-cell diagrams is rather involved, being a combination of two whiskering operations and a vertical composition. However, for 2-cell diagrams with pseudo pullbacks as left-hand cells, the right-hand side of the horizontal composition can be calculated as a lifting with respect to $v_1$ of $\beta$ composed with suitable invertible 2-cells, whiskered with $g_1$ and then post-composed with $\gamma$. If furthermore, $\beta$ is invertible, the horizontal composition can be calculated by using two universal arrows obtained from the two pseudo-pullback squares in the initial diagram, whiskered with $\gamma$.  
We describe this here.

Let $\delta_1$ and $\delta_2$  be chosen squares (as in [{\bf C4}]) such that 
$u_1\overline{v}_1$ and $u_2\overline{v}_2$ are in $\frakW$, as in the following diagram.  
$$
\xymatrix@C=4em{
&& D\ar[dl]_{\overline{v}_1}\ar[dr]^{\overline{f}_1}\ar@{}[dd]|{\stackrel{\delta_1}{\Leftarrow}}
\\
&A'\ar[dl]_{u_1}\ar[dr]^{f_1}&&B'\ar[dl]_{v_1}\ar[dr]^{g_1}
\\
A\ar@{}[r]|{\rho_{u_1,u_2}\Downarrow}&P_{u_1,u_2}\ar[u]_{\pi_{A'}}\ar[d]^{\pi_{A''}}\ar@{}[r]|{\beta\Downarrow}&B\ar@{}[r]|{\rho_{v_1,v_2}\Downarrow} &P_{v_1,v_2}\ar[u]_{\pi_{B'}}\ar[d]^{\pi_{B''}}\ar@{}[r]|{\gamma\Downarrow}&C
\\
&A''\ar[ul]^{u_2}\ar[ur]_{f_2} && B''\ar[ul]^{v_2}\ar[ur]_{g_2}
\\
&&D'\ar[ul]^{\overline{v}_2}\ar[ur]_{\overline{f}_2}\ar@{}[uu]|{\stackrel{\delta_2}{\Leftarrow}}
}
$$
(Note that this diagram is not a pasting diagram.)
The left-hand side of the composed 2-cell diagram will be the chosen pseudo pullback $\rho_{u_1\overline{v}_1,u_2\overline{v}_2}$.   By the universal property of $\rho_{u_1,u_2}$, we obtain a unique arrow $$w_{u_1,u_2}\colon P_{u_1\overline{v}_1,u_2\overline{v}_2}\to P_{u_1,u_2}$$ such that $\rho_{u_1,u_2}w_{u_1,u_2}=\rho_{u_1\overline{v}_1,u_2\overline{v}_2}$, 
$$ \xymatrix@C=3.5em{
&& D\ar[dl]_{\overline{v}_1}&&&D \ar[dl]_{\overline{v}_1}
\\
&A'\ar[dl]_{u_1}\ar@{}[dr]|=& & & A' \ar[dl]_{u_1}
\\
A\ar@{}[r]|{\rho_{u_1,u_2}\Downarrow}&P_{u_1,u_2}\ar[u]_{\pi_{A'}}\ar[d]^{\pi_{A''}}&P_{u_1\overline{v}_1,u_2\overline{v}_2}\ar[uu]_{\pi_D}\ar[dd]^{\pi_{D'}}\ar[l]_{w_{u_1,u_2}}\ar@{}[r]|{\textstyle=}& A\ar@{}[rr]|{\rho_{u_1\overline{v}_1,u_2\overline{v}_2}\Downarrow}&&P_{u_1\overline{v}_1,u_2\overline{v}_2}\ar[uu]_{\pi_D}\ar[dd]^{\pi_{D'}}
\\
&A''\ar[ul]^{u_2}\ar@{}[ur]|= & & & A''\ar[ul]^{u_2}
\\
&&D'\ar[ul]^{\overline{v}_2} & && D'\ar[ul]^{\overline{v}_2}
}
$$
The arrow $w_{u_1,u_2}$ can be used to construct the following pasting diagram,
$$
\xymatrix{
D\ar[r]^{\overline{f}_1}\ar[dr]_{\overline{v}_1} & B'\ar@{}[d]|{\stackrel{\delta_1}{\Leftarrow}} \ar[ddr]^{v_1}
\\
& A'\ar[dr]|{f_1}
\\
P_{u_1\overline{v}_1,u_2\overline{v}_2}\ar@{}[ur]|=\ar@{}[dr]|=\ar[uu]^{\pi_D}\ar[dd]_{\pi_{D'}}\ar[r]^{w_{u_1,u_2}}&P_{u_1,u_2}\ar@{}[r]|{\beta\Downarrow} \ar[u]^{\pi_{A'}}\ar[d]_{\pi_{A''}}& B
\\
&A'' \ar[ur]|{f_2}\ar@{}[d]|{\stackrel{\delta_2^{-1}}{\Rightarrow}}
\\
D'\ar[ur]^{\overline{v}_2}\ar[r]_{\overline{f}_2} & B''\ar[uur]_{v_2}
}
$$
If $\beta$ is invertible, the universal property of the pseudo pullback $P_{v_1,v_2}$ gives rise to a unique arrow $$w_{v_1,v_2}\colon P_{u_1\overline{v}_1,u_2\overline{v}_2}\to P_{v_1,v_2}$$ 
such that 
$$
\xymatrix@C=3em{
D\ar[r]^{\overline{f}_1}\ar[dr]_{\overline{v}_1} & B'\ar@{}[d]|{\stackrel{\delta_1}{\Leftarrow}} \ar[ddr]^{v_1} & && D\ar[r]^{\overline{f}_1} & B'\ar[ddr]^{v_1}
\\
& A'\ar[dr]|{f_1}
\\
P_{u_1\overline{v}_1,u_2\overline{v}_2}\ar@{}[ur]|=\ar@{}[dr]|=\ar[uu]^{\pi_D}\ar[dd]_{\pi_{D'}}\ar[r]^{w_{u_1,u_2}}&P_{u_1,u_2}\ar@{}[r]|{\beta\Downarrow} \ar[u]^{\pi_{A'}}\ar[d]_{\pi_{A''}}& B&=& P_{u_1\overline{v}_1,u_2\overline{v}_2}\ar@{}[uur]|=\ar@{}[ddr]|=\ar[uu]^{\pi_D}\ar[dd]_{\pi_{D'}}\ar[r]^{w_{v_1,v_2}} & P_{v_1,v_2}\ar[uu]^{\pi_{B'}}\ar[dd]_{\pi_{B''}}\ar@{}[r]|{\rho_{v_1,v_2}\Downarrow} & B
\\
&A'' \ar[ur]|{f_2}\ar@{}[d]|{\stackrel{\delta_2^{-1}}{\Rightarrow}}
\\
D'\ar[ur]^{\overline{v}_2}\ar[r]_{\overline{f}_2} & B''\ar[uur]_{v_2} & && D'\ar[r]_{\overline{f}_2} & B''\ar[uur]_{v_2}
}
$$
Then the 2-cell diagram representing the horizontal composition of the 2-cell diagrams in (\ref{composable-2-cells}) is
\begin{equation}\label{horcomposition}
\xymatrix@C=5em{
&&D\ar[dll]_{u_1\overline{v}_1}\ar[r]^{\overline{f}_1} & B'\ar[dr]^{g_1}
\\
A\ar@{}[rr]|{\rho_{u_1\overline{v}_1,u_2\overline{v}_2}\Downarrow} && P_{u_1\overline{v}_1,u_2\overline{v}_2}\ar@{}[ur]|=\ar@{}[dr]|=\ar[u]_{\pi_{D}}\ar[d]^{\pi_{D'}} \ar[r]^{w_{v_1,v_2}} & P_{v_1,v_2}\ar[u]_{\pi_{B'}}\ar[d]^{\pi_{B''}}\ar@{}[r]|{\gamma\Downarrow} & C\\
&&D'\ar[ull]^{u_2\overline{v}_2}\ar[r]_{\overline{f}_2} & B''\ar[ur]_{g_2}
}
\end{equation}
The full details that  diagram (\ref{horcomposition}) is indeed the desired horizontal composition of the composable 2-cells in (\ref{composable-2-cells}) are given in Appendix \ref{horcomp}.

If $\beta$ is not invertible, we cannot use the universal property of the pseudo pullback $P_{u_1,u_2}$ as described above and we do not obtain such a nice reduction, but we will present the horizontal composition for that case in Appendix \ref{horcomp} as well.

\section{Future Directions: An Application to Orbifolds}\label{conclusions}
  In this section, we briefly sketch how the  results in this paper apply to the bicategory of orbigroupoids.  Details will be given in  \cite{next_paper}; here we only give an overview.  
  
One way to define orbifolds is by using the 2-category of orbigroupoids:  \'etale groupoids internal to a category of suitable topological spaces, such as topological manifolds or some more general category of spaces.   Then we consider the class of essential equivalences,  maps that are categorical equivalences internal to the topological category chosen:  they satisfy a suitably topologized version of  being essentially surjective and fully faithful. This bicategory has all pseudo pullbacks for cospans of essential equivalences. For more details, see \cite{WIT,MP}.   We define orbifolds as the bicategory of fractions of orbigroupoids with respect to the class of essential equivalences.  Essential equivalences are both ff and co-ff.
The class of essential equivalences is also pullback closed as in Definition~\ref{D:pbc}, and satisfies  the {\bf BF} conditions from \cite{Pr-comp}. Thus, we can apply the results of  Corollary~\ref{P:2-ff} and Proposition~\ref{P:upb-rep} to get the following: 

\begin{thm}
\begin{enumerate}
\item
The universal map from the 2-category of orbigroupoids to its bicategory of fractions with respect to the class $\frakW$ of essential equivalences,
$$
\xymatrix{
J_{\frakW}\colon \mbox{\bf OrbiGroupoids}\longrightarrow\mbox{\bf OrbiGroupoids}(\frakW^{-1})
}
$$ 
is 2-fully faithful. 
\item Each 2-cell in  $\mbox{\bf OrbiGroupoids}(\frakW^{-1})$ has a unique representation by a 2-cell diagram with any given left-hand side. 
\item Given a choice of pseudo pullbacks for cospans of essential equivalences the 2-cells in $\mbox{\bf OrbiGroupoids}(\frakW^{-1})$ can be uniquely represented by diagrams with these pseudo pullbacks as left-hand 2-cells and horizontal composition can be calculated as in Section~\ref{pullbacks}.
\end{enumerate}
\end{thm}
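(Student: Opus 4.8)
The plan is to observe that the final theorem is a direct application of the machinery developed in Sections~\ref{simplifications} and \ref{pullbacks} to the class $\frakW$ of essential equivalences; the only real work is to check that this class meets the standing hypotheses of Corollary~\ref{P:2-ff} and Proposition~\ref{P:upb-rep}, after which the three assertions are a sequence of invocations.

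First I would record that $\frakW$ satisfies conditions {\bf [WB1]}--{\bf[WB5]}. It is established in \cite{WIT,MP} (building on \cite{Pr-comp}) that essential equivalences satisfy the stronger axioms {\bf BF1}--{\bf BF5}, and by Remark~\ref{r:bvswb} these imply {\bf [WB1]}--{\bf[WB5]}: {\bf BF3} and {\bf BF5} coincide with {\bf [WB3]} and {\bf [WB5]}, {\bf BF2} (closure under composition) yields {\bf [WB2]} by taking the pre-composing arrow to be an identity, {\bf BF1} gives {\bf [WB1]}, and {\bf [WB4]} follows from {\bf BF4}. Next I would invoke the fact that essential equivalences are both ff and co-ff. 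An essential equivalence is by construction an internal categorical equivalence, i.e.\ (suitably topologized) essentially surjective and fully faithful, so for every orbigroupoid $X$ the induced functor on hom-categories $\calB(X,-)$ and its pre-composition counterpart are equivalences of categories, hence in particular full, faithful, co-full and co-faithful. This is precisely the cancellation property exploited in \cite{AV,Roberts-fractions,Roberts-admitting}; the only genuinely new point is the topological bookkeeping — that the cancelled 2-cell is continuous for the chosen category of spaces — which is deferred to \cite{next_paper}.

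I would then verify the pseudo-pullback hypotheses: that $\calB$ has all pseudo pullbacks of cospans in $\frakW$, and that $\frakW$ is pullback closed in the sense of Definition~\ref{D:pbc}. The existence of pseudo pullbacks of cospans of essential equivalences is established in \cite{WIT,MP}. For pullback-closedness, one uses that essential equivalences are stable under pseudo pullback — the opposite leg $\bar v$ of a pseudo pullback of $v\in\frakW$ again lies in $\frakW$ — together with {\bf [WB5]} and the invertibility of the comparison 2-cell, which then forces the composite $u\bar v$ into $\frakW$ whenever $u\in\frakW$.

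With these checks in hand the three assertions follow formally. Part~(1) is Corollary~\ref{P:2-ff} applied to the co-ff class $\frakW$ satisfying {\bf [WB1]}--{\bf[WB5]}, which gives that $J_\frakW$ is 2-full and 2-faithful. Part~(2) is the corollary to Theorem~\ref{main5}: Lemma~\ref{L:ch2cell} produces, for any prescribed left-hand square, a representative with that square on the left, and Theorem~\ref{main5} forces uniqueness once the arrows of $\frakW$ are co-ff. Part~(3) is Proposition~\ref{P:upb-rep} applied to the fixed choice of pseudo pullbacks, combined with the explicit whiskering and horizontal-composition formulas of Section~\ref{pullbacks}, which express composites through the universal arrows out of the relevant pseudo pullbacks. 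The main obstacle in all of this is the single step of showing that essential equivalences in the chosen, possibly quite general, category of spaces are genuinely ff and co-ff — that the 2-cell obtained by cancelling an essential equivalence is continuous — which is exactly the technical heart postponed to \cite{next_paper}; granted that, the theorem is a routine assembly of results already proved.
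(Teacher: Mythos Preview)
Your proposal is correct and follows essentially the same route as the paper: verify that essential equivalences satisfy {\bf [WB1]}--{\bf[WB5]} (via the stronger {\bf BF} axioms), are ff and co-ff, and are pullback closed with the requisite pseudo pullbacks; then invoke Corollary~\ref{P:2-ff}, the corollary to Theorem~\ref{main5}, and Proposition~\ref{P:upb-rep} together with the formulas of Section~\ref{pullbacks}. One small imprecision: in your argument for pullback-closedness you appeal to {\bf [WB5]} and the invertible comparison cell, but what you actually need there is closure under composition ({\bf BF2}), which you already recorded---pullback stability gives $\bar v\in\frakW$, and then $u\bar v\in\frakW$ by {\bf BF2}; {\bf [WB5]} only transports this to $v\bar u$.
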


Furthermore, there is a subclass $\frakC\subset\frakW$ of {\em essential covering maps}, defined by,

\begin{dfn}
Let $\calG$ be an \'etale groupoid. An {\em essential covering map} $$\varphi^{\calU}\colon\calG^*(\calU)\to\calG$$ is determined by a (non-repeating) collection of open subsets $\calU\subseteq \calP(\calG_0)$   which  meets every orbit of $\calG$ (although it may not cover $\calG_0$).   Then $\calG^*(\calU) $ is the groupoid defined by  $\calG^*(\calU)_0=\coprod_{U\in\calU}U$, with  $\varphi^\calU_0\colon\calG(\calU)_0\to\calG_0$ defined by the inclusion maps. Furthermore, the space $\calG(\calU)_1$ and the remaining maps are determined by the pullback diagram
$$
\xymatrix@C=4em@R=1.5em{
\calG(\calU)_1\ar[d]_{(s,t)}\ar[r]^{\varphi^{\calU}_1}&\calG_1\ar[d]^{(s,t)}
\\
\calG(\calU)_0\times\calG(\calU)_0\ar[r]_{\varphi_0^\calU\times\varphi_0^\calU}&\calG_0\times\calG_0}
$$
\end{dfn}

The class $\frakC$ of essential covering maps is locally small and satisfies conditions {\bf[WB1]}--{\bf[WB5]}. As essential equivalences they are also ff and co-ff. So we get a bicategory $\mbox{\bf OrbiGroupoids}(\frakC^{-1})$ with small hom-categories, where $$J_{\frakC}\colon \mbox{\bf OrbiGroupoids}\longrightarrow\mbox{\bf OrbiGroupoids}(\frakC^{-1})$$ is 2-fully faithful. Furthermore, the essential covering maps are weakly initial in the essential equivalences in the sense described in  Definition~\ref{coveringclass}.   Hence,
there is an equivalence of bicategories,
$\mbox{\bf OrbiGroupoids}(\frakC^{-1})\simeq\mbox{\bf OrbiGroupoids}(\frakW^{-1})$. 

Now  $\frakC$ is not pullback-closed.  However, because of this equivalence of bicategories we can use the 2-cell diagrams from 
$\mbox{\bf OrbiGroupoids}(\frakW^{-1})$ as 2-cells between arrows in $\mbox{\bf OrbiGroupoids}(\frakC^{-1})$, and hence represent these by  2-cell diagrams with pseudo pullbacks as left-hand 2-cells;  these are not  necessarily in the shape required of 2-cell diagrams in $\mbox{\bf OrbiGroupoids}(\frakC^{-1})$  because certain composites will not be in $\frakC$, but they can be used as an alternate way to represent the 2-cells in this bicategory.  This allows us to  use the simplified composition  described in Section~\ref{pullbacks}. 
So we conclude:
 
\begin{thm}
\begin{enumerate}
\item The bicategory of fractions of orbigroupoids with respect to essential covering maps, $\mbox{\bf OrbiGroupoids}(\frakC^{-1})$ has small hom-categories. 
\item
The pseudo functor
$
\xymatrix@1{
J_{\frakW}\colon \mbox{\bf OrbiGroupoids}\longrightarrow\mbox{\bf OrbiGroupoids}(\frakC^{-1})
}
$ 
is 2-fully faithful. 
\item Each 2-cell in  $\mbox{\bf OrbiGroupoids}(\frakC^{-1})$ has a unique representation by a 2-cell diagram with any given left-hand side. 
\item Given a choice of pseudo-pullback squares the 2-cells in $\mbox{\bf OrbiGroupoids}(\frakC^{-1})$ can be uniquely represented by diagrams with pseudo pullbacks as left-hand 2-cells,  and horizontal composition can be calculated as in Section~\ref{pullbacks}.
\end{enumerate}
\end{thm}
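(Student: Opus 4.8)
The plan is to derive parts (1)--(3) directly from the results of Section~\ref{simplifications} applied to the class $\frakC$ of essential covering maps, and to obtain part (4) by combining Proposition~\ref{P:upb-rep} for the larger class $\frakW$ of essential equivalences with the biequivalence $\mbox{\bf OrbiGroupoids}(\frakC^{-1})\simeq\mbox{\bf OrbiGroupoids}(\frakW^{-1})$ of Theorem~\ref{d:equivalence}. The facts used throughout are: $\frakC$ is locally small; $\frakC$ satisfies {\bf [WB1]}--{\bf [WB5]}; the arrows of $\frakC$ are essential equivalences, hence co-ff (and in particular co-full); $\frakC$ is weakly initial in $\frakW$; and $\frakW$ is pullback closed while $\mbox{\bf OrbiGroupoids}$ has all pseudo pullbacks of cospans in $\frakW$.

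For (1) I would first note that $\mbox{\bf OrbiGroupoids}$ is itself locally small, and that for a fixed orbigroupoid $X$ there is only a set of essential covering maps with codomain $X$ (each being determined by a set of opens of its base space); hence for fixed $X,Y$ the spans $X\xleftarrow{w}Z\xrightarrow{f}Y$ with $w\in\frakC$ form a set. Fixing two such spans $(u_1,f_1)$ and $(u_2,f_2)$, applying {\bf [WB3]} to $u_1$ and $u_2$ and then correcting with {\bf [WB2]} and {\bf [WB5]} produces a square completing the cospan $(u_1,u_2)$ whose top-left composite lies in $\frakC$; by Lemma~\ref{L:ch2cell} every 2-cell between the two spans has a representative with this square on the left, and by Theorem~\ref{main5} at most one. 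So the 2-cells between the two spans inject into the (small) set of 2-cells $f_1t_1\Rightarrow f_2t_2$ of $\mbox{\bf OrbiGroupoids}$, and summing over the set of pairs of spans shows the hom-category is small.

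Parts (2) and (3) are then immediate: (2) is Corollary~\ref{P:2-ff} applied to the co-ff class $\frakC$, and (3) is the corollary to Theorem~\ref{main5} (existence via Lemma~\ref{L:ch2cell}, uniqueness via Theorem~\ref{main5}), again applied to $\frakC$. For (4) the difficulty is that $\frakC$ is not pullback closed, so Proposition~\ref{P:upb-rep} does not apply to $\frakC$ itself; it does, however, apply to $\frakW$, which is pullback closed with co-ff arrows and for which $\mbox{\bf OrbiGroupoids}$ has the pseudo pullbacks required by Propositions~\ref{P:pb-rep} and~\ref{P:upb-rep}. So for any chosen system of pseudo pullbacks of cospans in $\frakW$, every 2-cell of $\mbox{\bf OrbiGroupoids}(\frakW^{-1})$ has a canonical representative whose left-hand 2-cell is a pseudo pullback, and Section~\ref{pullbacks} gives the formulas for whiskering and horizontal composition of such representatives. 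By Theorem~\ref{d:equivalence}, the canonical pseudo functor $J\colon\mbox{\bf OrbiGroupoids}(\frakC^{-1})\to\mbox{\bf OrbiGroupoids}(\frakW^{-1})$ is a biequivalence that is the identity on objects, arrows and 2-cell diagrams; being 2-fully faithful, it identifies, for any two arrows of $\mbox{\bf OrbiGroupoids}(\frakC^{-1})$, the 2-cells between them with the 2-cells between the corresponding arrows of $\mbox{\bf OrbiGroupoids}(\frakW^{-1})$. Transporting the pseudo-pullback representatives back along $J$ then presents the 2-cells of $\mbox{\bf OrbiGroupoids}(\frakC^{-1})$ by 2-cell diagrams with pseudo-pullback left-hand sides, which are legitimate 2-cell diagrams of $\mbox{\bf OrbiGroupoids}(\frakW^{-1})$ read off as 2-cells of $\mbox{\bf OrbiGroupoids}(\frakC^{-1})$ via $J$, with horizontal composition computed by the formulas of Section~\ref{pullbacks}.

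The step I expect to be the main obstacle is the last one: verifying that the simplified horizontal composition valid \emph{inside} $\mbox{\bf OrbiGroupoids}(\frakW^{-1})$ really does compute horizontal composition of 2-cells \emph{of} $\mbox{\bf OrbiGroupoids}(\frakC^{-1})$. This requires the biequivalence $J$ to intertwine the two horizontal composition operations up to coherent comparison 2-cells. That holds because $J$ is a pseudo functor and the composition of 1-cells in the two bicategories differs only through the chosen defining squares, between which there is a canonical comparison 2-cell by Proposition~\ref{unique-2-cell}; one must then check that these comparison cells are harmlessly absorbed on passing to pseudo-pullback representatives. Everything else is a direct citation of the results of Sections~\ref{comparisons}--\ref{pullbacks} with $\frakC$ or $\frakW$ playing the role of the distinguished class of arrows.
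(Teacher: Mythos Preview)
Your proposal is correct and follows essentially the same approach as the paper's discussion, which itself is only a sketch deferring details to \cite{next_paper}: parts (1)--(3) are obtained by applying the results of Section~\ref{simplifications} directly to the locally small, co-ff class $\frakC$, and part (4) is obtained by passing through the biequivalence with $\mbox{\bf OrbiGroupoids}(\frakW^{-1})$ where $\frakW$ is pullback closed. Your explicit argument for (1) via Lemma~\ref{L:ch2cell} and Theorem~\ref{main5}, and your honest flagging of the coherence check needed in (4), add detail the paper omits but do not depart from its strategy.
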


For further details, proofs, and applications, see \cite{next_paper}.


\begin{appendices}
\section{Associativity Part I:  Associativity 2-cells} \label{assoc1}
The goal of these appendices is to study associativity coherence and well-definedness for composition in $\calB(\frakW^{-1})$.    In Appendix~\ref{assoc1} we will construct the associativity 2-cells,  based on an extension of  Proposition~\ref{anytwosquares}.  In Appendix~\ref{assoc2} we will show that these cells satisfy the coherence pentagon condition.  In Appendix~\ref{well-defined} we verify that all composition operations are well-defined on equivalence classes. In Appendix~\ref{horcomp} we give a proof for the presentation, given in Section \ref{horcomppb}, of the horizontal composition of two 2-cell diagrams with pull-back squares for left-hand 2-cells and where the left 2-cell diagram is invertible. Throughout the appendices,  we assume that $\calB$ is a bicategory and $\frakW$ is a class of arrows satisfying conditions {\bf [WB1]}--{\bf [WB5]}.

Consider the 2-cells $\beta$ and $\gamma$ in Proposition~\ref{anytwosquares}. They give rise to a generalized 2-cell
 in $\calB(\frakW^{-1})$,
$$
\xymatrix@C=4em{
&D_1\ar[dl]_{uv_1}\ar[dr]^{g_1}
\\
X\ar@{}[r]|{u\beta\Downarrow}&E\ar[u]_{s_1}\ar[d]^{s_2}\ar@{}[r]|{\gamma\Downarrow} &A
\\
&D_2\ar[ul]^{uv_2}\ar[ur]_{g_2}
}
$$
We  show that this is the unique cell with this property: if $\beta'$ and $\gamma'$ also satisfy the conditions of Proposition~\ref{anytwosquares}, then the 2-cell diagram defined by $\beta'$ and $\gamma'$ is equivalent to this one.

\begin{prop}\label{unique-2-cell}
For $v\colon C\to X$ and $w\colon A\to B$ both in $\frakW$ and $f\colon C\to B$ any arrow in $\calB$, 
and any two squares,
$$
\xymatrix{
D_1\ar[d]_{w_1}\ar[r]^{f_1} \ar@{}[dr]|{\alpha_1\stackrel{\sim}{\Leftarrow}} 
    & A\ar[d]^w & D_2\ar[d]_{w_2}\ar[r]^{f_2} \ar@{}[dr]|{\alpha_2\stackrel{\sim}{\Leftarrow}} & A\ar[d]^w
\\
C\ar[d]_v\ar[r]_f &B & C\ar[d]_v\ar[r]_f &B
\\
X&&X
}$$
with $vw_1,vw_2\in \frakW$,
there is a unique 2-cell in $\calB(\frakW^{-1})$
\begin{equation}\label{uniquecell}
\xymatrix{
&D_1\ar[dl]_{vw_1}\ar[dr]^{f_1}
\\
X\ar@{}[r]|{v\beta\Downarrow}&E\ar[u]_{s_1}\ar[d]^{s_2}\ar@{}[r]|{\gamma\Downarrow} &A
\\
&D_2\ar[ul]^{vw_2}\ar[ur]_{f_2}
}
\end{equation}
 such that the composites $(f\beta)\cdot(\alpha_1s_1)$ and $(\alpha_2 s_2)\cdot(w\gamma)$ are equal.
\end{prop}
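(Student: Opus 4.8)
The plan is to get existence straight from Proposition~\ref{anytwosquares} and to prove uniqueness by a two–dimensional diagram chase fed by the cancellation properties of arrows in $\frakW$.

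\emph{Existence.} I would invoke Proposition~\ref{anytwosquares} with its ``$u$'' taken to be $v$: the hypotheses there ($w\in\frakW$ and $u,uw_1,uw_2\in\frakW$) are exactly our hypotheses ($w\in\frakW$ and $v,vw_1,vw_2\in\frakW$). This produces arrows $s_1,s_2$ and invertible $2$-cells $\beta\colon w_1s_1\Rightarrow w_2s_2$ and $\gamma\colon f_1s_1\Rightarrow f_2s_2$ with $vw_1s_1\in\frakW$ and $(f\beta)\cdot(\alpha_1s_1)=(\alpha_2s_2)\cdot(w\gamma)$. Since $\beta$ is invertible so is $v\beta$, hence diagram (\ref{uniquecell}) is a legitimate $2$-cell diagram with the required pasting property.

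\emph{Uniqueness.} Suppose $(E,s_1,s_2,\beta,\gamma)$ and $(E',s_1',s_2',\beta',\gamma')$ both give $2$-cell diagrams of the form (\ref{uniquecell}) satisfying the pasting condition; I must show they represent the same $2$-cell, i.e.\ are equivalent diagrams. Concretely I need an object $F$, arrows $p\colon F\to E$, $q\colon F\to E'$, invertible $2$-cells $\varepsilon_1\colon s_1p\Rightarrow s_1'q$ and $\varepsilon_2\colon s_2p\Rightarrow s_2'q$ with $vw_1s_1p\in\frakW$, satisfying the two coherence equations from the definition of equivalence (one relating $v\beta$ and $v\beta'$ through the $\varepsilon_i$, the other relating $\gamma$ and $\gamma'$). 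I would build these as follows. First apply {\bf [WB3]} to the cospan $E\xrightarrow{vw_1s_1}X\xleftarrow{vw_1s_1'}E'$ to obtain a preliminary span $F_1\to E$, $F_1\to E'$ together with an invertible comparison $2$-cell over $X$. Next lift this comparison successively through $v$ and then through $w_1$ (both in $\frakW$) using Proposition~\ref{isolifting}, and absorb the resulting $\frakW$-arrows, together with $\frakW$-arrows supplied by {\bf [WB2]}, into a single precomposing arrow; this yields $p,q$ and an invertible $\varepsilon_1\colon s_1p\Rightarrow s_1'q$ with $vw_1s_1p\in\frakW$, membership following from {\bf [WB5]} applied to the (invertible) comparison cell. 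Now let $\widehat{\varepsilon_2}$ be the composite of $v\beta'q$, $vw_1\varepsilon_1$ and $(v\beta p)^{-1}$ dictated by the first coherence equation, an invertible $2$-cell $vw_2s_2p\Rightarrow vw_2s_2'q$; since $vw_2\in\frakW$, Proposition~\ref{isolifting} gives (after a harmless further precomposition) an invertible $\varepsilon_2\colon s_2p\Rightarrow s_2'q$ with $vw_2\varepsilon_2=\widehat{\varepsilon_2}$, which is precisely the first coherence equation.

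It remains to verify the second coherence equation, and this is where the pasting hypothesis and the main computation enter. I would whisker the desired identity on the left by $w$, substitute $w\gamma=(\alpha_2s_2)^{-1}\cdot(f\beta)\cdot(\alpha_1s_1)$ and $w\gamma'=(\alpha_2s_2')^{-1}\cdot(f\beta')\cdot(\alpha_1s_1')$ from the two pasting conditions, and repeatedly use the interchange law to slide the (invertible) $\alpha_i$ past the $\varepsilon_i$; using the first coherence equation (with the $v$ stripped off via Lemma~\ref{Matteo1}) the two sides then collapse so as to agree after whiskering by $w$. Since $w\in\frakW$, Lemma~\ref{Matteo1} produces an arrow in $\frakW$ after precomposing by which the second coherence equation holds on the nose; this last precomposition preserves the first coherence equation (equations of $2$-cells survive whiskering) and the membership $vw_1s_1p\in\frakW$ is re-established by {\bf [WB2]} and {\bf [WB5]} exactly as before. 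The main obstacle is purely the bookkeeping: essentially every step delivers its $2$-cells and equations only up to precomposition by some arrow of $\frakW$, so one must repeatedly invoke {\bf [WB2]} to keep composites inside $\frakW$, Lemma~\ref{Matteo1} to turn whiskered equations into honest ones, and check at each stage that the previously arranged equations persist; the only genuinely substantive point is the interchange-law cancellation in the last step.
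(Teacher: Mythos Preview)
Your approach is essentially the same as the paper's: existence via Proposition~\ref{anytwosquares}, then for uniqueness build the comparison cells $\varepsilon_1,\varepsilon_2$ from a {\bf[WB3]}-square plus liftings, verify the $\gamma$-coherence after whiskering by $w$ using the two pasting hypotheses and interchange, and finally strip $w$ with Lemma~\ref{Matteo1}. The paper carries out exactly this programme (its $\tilde\delta$ and $\varepsilon$ are your $\varepsilon_1$ and $\varepsilon_2$), and your remark that the bookkeeping of repeated {\bf[WB2]}/{\bf[WB5]} adjustments is the only real obstacle is accurate.

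There is one genuine slip. You write ``lift this comparison successively through $v$ and then through $w_1$ (both in $\frakW$)'', but $w_1$ is \emph{not} assumed to lie in $\frakW$; only $v$, $w$, $vw_1$, $vw_2$ are. So Proposition~\ref{isolifting} is not available for $w_1$ on its own. The fix is immediate and is what the paper does: lift the {\bf[WB3]}-comparison directly through $vw_1\in\frakW$ in one step to obtain $\varepsilon_1$ with $vw_1\varepsilon_1$ equal to the given cell. Everything downstream (the definition of $\widehat{\varepsilon_2}$, the lifting through $vw_2$, and the interchange computation) uses only $vw_1\varepsilon_1$ and $vw_2\varepsilon_2$, so nothing else changes. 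Your later step of stripping the $v$ from the first coherence equation via Lemma~\ref{Matteo1} before feeding it into the $w$-whiskered computation is correct and makes explicit a passage the paper handles inside its large pasting calculation.
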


\begin{proof}  Existence is a consequence of Proposition \ref{anytwosquares}, so we need only prove uniqueness.  
Let 
\begin{equation}\label{alternate}
\xymatrix{
&D_1\ar[dl]_{vw_1}\ar[dr]^{f_1}
\\
X\ar@{}[r]|{v\beta'\Downarrow}&E' \ar[u]_{t_1}\ar[d]^{t_2}\ar@{}[r]|{\gamma'\Downarrow} &A
\\
&D_2\ar[ul]^{vw_2}\ar[ur]_{f_2}
}
\end{equation}
be another 2-cell diagram with the property that the composites $(f\beta')\cdot(\alpha_1t_1)$ and $(\alpha_2 t_2)\cdot(w\gamma')$ are equal.
Let 
$$
\xymatrix{\ar[r]^{\overline{t}_1}\ar@{}[dr]|{\stackrel{\delta}{\Leftarrow}}\ar[d]_{\overline{s}_1}&\ar[d]^{vw_1s_1}
\\
\ar[r]_{vw_1t_1}&}
$$
be a square as in condition {\bf [WB3]} and let $\tilde{v}$ with
$$
\xymatrix{
\ar[r]^{\overline{t}_1\tilde{v}}\ar@{}[dr]|{\stackrel{\tilde{\delta}}{\Leftarrow}}\ar[d]_{\overline{s}_1\tilde{v}}&\ar[d]^{s_1}
\\
\ar[r]_{t_1}&}
$$
be a lifting as in {\bf [WB4]} for $\delta$ with respect to $vw_1$.
We use this cell in the following pasting,
$$
\xymatrix@C=4em@R=3em{
\ar[r]^{\overline{t}_1\tilde{v}}\ar@{}[dr]|{\stackrel{\tilde\delta}{\Leftarrow}}\ar[d]_{\overline{s}_1\tilde{v}}& \ar[d]^{s_1}\ar@{}[ddr]|{\stackrel{v\beta^{-1}}{\Leftarrow}}\ar[dr]^{s_2}
\\
\ar[dr]_{t_2}\ar[r]^{t_1}\ar@{}[drr]|{\stackrel{v\beta'}{\Leftarrow}}&\ar[dr]|{vw_1} & \ar[d]^{vw_2}
\\
&\ar[r]_{vw_2}&
}
$$
and then use condition {\bf[WB4]} to obtain an arrow $\overline{v}$ and a cell 
$$\xymatrix{\ar@{}[dr]|{\stackrel{\varepsilon}{\Leftarrow}}\ar[r]^{\overline{t}_1\tilde{v}\overline{v}}\ar[d]_{\overline{s}_1\tilde{v}\overline{v}}&\ar[d]^{s_2}
\\
\ar[r]_{t_2}&}$$ which form a  lifting for this pasting with respect to $vw_2$.
We would like to use the diagram
$$
\xymatrix@R=3em{
&\ar@{}[d]|{\stackrel{\tilde\delta\overline{v}}{\Leftarrow}}
\\
\ar@/^1ex/[ur]^{t_1}\ar@/_1ex/[dr]_{t_2}&\ar[l]_{\overline{s}_1\tilde{v}\overline{v}}\ar[r]^{\overline{t}_1\tilde{v}\overline{v}}\ar@{}[d]|{\stackrel{\varepsilon}{\Leftarrow}}&\ar@/_1ex/[ul]_{s_1}\ar@/^1ex/[dl]^{s_2}
\\
&
}
$$
to show that the two 2-cell diagrams are equivalent. However, we still need to make a couple of small adjustments.

By construction we have that the following pastings are equal:
$$
\xymatrix@C=3.5em{
\ar[r]^{\overline{t}_1\tilde{v}\overline{v}}\ar@{}[dr]|{\stackrel{\tilde\delta\overline{v}}{\Leftarrow}}\ar[d]_{\overline{s}_1\tilde{v}\overline{v}} &\ar[d]^{s_1} &&\ar[r]^{\overline{t}_1\tilde{v}\overline{v}}\ar@{}[dr]|{\stackrel{\varepsilon}{\Leftarrow}}\ar[d]_{\overline{s}_1\tilde{v}\overline{v}}&\ar[d]_{s_2}\ar@{}[dr]|{\stackrel{v\beta}{\Leftarrow}}\ar[r]^{s_1}&\ar[d]^{vw_1}
\\
\ar[r]_{t_1}\ar[d]_{t_2}\ar@{}[dr]|{\stackrel{v\beta'}{\Leftarrow}}&\ar[d]^{vw_1}&=&\ar[r]_{t_2}&\ar[r]_{vw_2}&
\\
\ar[r]_{vw_2}&
}
$$
By Lemma \ref{Matteo1} there is an arrow $v^*$ in $\frakW$ such that 
\begin{equation}\label{beta-equation}
\xymatrix@C=3.5em{
\ar[r]^{\overline{t}_1\tilde{v}\overline{v}v^*}\ar@{}[dr]|{\stackrel{\tilde\delta\overline{v}v^*}{\Leftarrow}}\ar[d]_{\overline{s}_1\tilde{v}\overline{v}v^*} &\ar[d]^{s_1} &&\ar[r]^{\overline{t}_1\tilde{v}\overline{v}v^*}\ar@{}[dr]|{\stackrel{\varepsilon v^*}{\Leftarrow}} \ar[d]_{\overline{s}_1\tilde{v}\overline{v}v^*}&\ar[d]_{s_2}\ar@{}[dr]|{\stackrel{\beta}{\Leftarrow}}\ar[r]^{s_1}&\ar[d]^{w_1}
\\
\ar[r]_{t_1}\ar[d]_{t_2}\ar@{}[dr]|{\stackrel{\beta'}{\Leftarrow}}&\ar[d]^{w_1}&=&\ar[r]_{t_2}&\ar[r]_{w_2}&
\\
\ar[r]_{w_2}&
}
\end{equation}

To obtain the corresponding result with $\gamma, \gamma'$ instead of $\beta, \beta'$,  we need to compose with the arrow $w$ so that the hypothesis of  {\bf[WB4]}  is satisfied.
We will also compose the pasting diagrams we are interested in with the cells $\alpha_2$ and
$\beta'^{-1}$.
This leads to the following calculation,
$$
\xymatrix@C=3em@R=1.8em{
&\ar[r]^{s_1} \ar@{}[d]|{\tilde\delta\overline{v}v^*\Downarrow} & \ar[r]^{f_1} \ar@{}[d]|{\gamma'\Downarrow}&\ar[dr]^w \ar@{}[dd]|{\alpha_2\Downarrow} &&& \ar[r]^{s_1} \ar@{}[d]|{\tilde\delta\overline{v}v^*\Downarrow} & \ar[r]^{f_1} &\ar[dr]^w \ar@{}[ddl]|{\alpha_1t_1\Downarrow} &
\\
\ar[ur]^{\overline{t}_1\tilde{v}\overline{v}v^*} \ar[r]_{\overline{s}_1\tilde{v}\overline{v}v^*} &\ar[ur]|{t_1} \ar[r]|{t_2}\ar[dr]_{t_1}&\ar@{}[d]|{\beta'^{-1}\Downarrow} \ar[dr]|{w_2}\ar[ur]|{f_2}&&\ar@{}[r]|=& \ar[ur]^{\overline{t}_1\tilde{v}\overline{v}v^*} \ar[r]_{\overline{s}_1\tilde{v}\overline{v}v^*} &\ar[ur]|{t_1}\ar[dr]_{t_1}&&&
\\
&&\ar[r]_{w_1}&\ar[ur]_f&&&&  \ar[r]_{w_1}&\ar[ur]_f 
\\
&&&& &&\ar[dr]^{s_1}&&\ar[dr]^w&
\\
&&&&\ar@{}[r]|= &\ar[ur]^{\overline{t}_1\tilde{v}\overline{v}v^*} \ar[dr]_{\overline{s}_1\tilde{v}\overline{v}v^*} \ar@{}[rr]|{\tilde\delta\overline{v}v^*\Downarrow} && \ar[ur]^{f_1}\ar[dr]_{w_1}\ar@{}[rr]|{\alpha_1\Downarrow}&&
\\
&&&& &&\ar[ur]_{t_1} &&\ar[ur]_f
\\
&&&& &&&\ar[r]^{f_1} \ar@{}[ddr]|{\alpha_1s_1\Downarrow} &\ar[dr]^w
\\
&&&&\ar@{}[r]|= &\ar[r]^{\overline{t}_1\tilde{v}\overline{v}v^*}\ar[dr]_{\overline{s}_1\tilde{v}\overline{v}v^*} &\ar@{}[d]|{\tilde\delta\overline{v}v^*\Downarrow}\ar[ur]^{s_1}\ar[dr]|{s_1}&&&
\\
&&&& &&\ar[r]_{t_1} & \ar[r]_{w_1}&\ar[ur]_{f}&
\\
&&&&& &&\ar[r]^{f_1} \ar@{}[d]|{\gamma\Downarrow} &\ar[dr]^{w}
\\
&&&&\ar@{}[r]|= &\ar[dr]_{\overline{s}_1\tilde{v}\overline{v}v^*} \ar[r]^{\overline{t}_1\tilde{v}\overline{v}v^*} & \ar[ur]^{s_1} \ar[r]|{s_2}\ar[dr]|{s_1} \ar@{}[d]|{\tilde\delta\overline{v}v^*\Downarrow} &\ar@{}[d]|{\beta^{-1}\Downarrow}\ar@{}[rr]|{\alpha_2\Downarrow}\ar[ur]|{f_2}\ar[dr]|{w_2} &&
\\
&&&&& &\ar[r]_{t_1} & \ar[r]_{w_1}& \ar[ur]_f
\\
&&&&& &&\ar[dr]^{f_1}\ar@{}[dd]|{\gamma\Downarrow}
\\
&&&&& &\ar[ur]^{s_1} \ar[dr]_{s_2} \ar@{}[dd]|{\varepsilon v^*\Downarrow} &&\ar[dr]^{w}\ar@{}[dd]|{\alpha_2\Downarrow}
\\
&&&&\ar@{}[r]|= &\ar[dr]_{\overline{s}_1\tilde{v}\overline{v}v^*} \ar[ur]^{\overline{t}_1\tilde{v}\overline{v}v^*} &&\ar[ur]^{f_2}\ar[dr]_{w_2}\ar@{}[dd]|{\beta'^{-1}\Downarrow} &&
\\
&&&&& &\ar[ur]^{t_2}\ar[dr]_{t_1} &&\ar[ur]_f
\\
&&&&& & & \ar[ur]_{w_1}
}
$$
where the last equality follows from (\ref{beta-equation}).
Since $\beta'$ and $\alpha_2$ are invertible 2-cells, we conclude that
$$
\xymatrix@C=3em{\ar[r]^{\overline{t}_1\tilde{v}\overline{v}v^*} \ar[d]_{\overline{s}_1\tilde{v}\overline{v}v^*} \ar@{}[dr]|{\tilde\delta\overline{v}v^*\Downarrow} & \ar[d]^{s_1}
\\
\ar[r]_{t_1}\ar[d]_{t_2}\ar@{}[dr]|{\gamma'\Downarrow} & \ar[d]^{f_1}&=& \ar[r]^{\overline{t}_1\tilde{v}\overline{v}v^*} \ar[d]_{\overline{s}_1\tilde{v}\overline{v}v^*} \ar@{}[dr]|{\varepsilon v^*\Downarrow} &\ar[d]|{s_2}\ar[r]^{s_1}\ar@{}[dr]|{\gamma\Downarrow} &\ar[d]^{f_1}
\\
\ar[r]_{f_2}&\ar[r]_{w}&&\ar[r]_{t_2}&\ar[r]_{f_2}& \ar[r]_w&}
$$
By Lemma~\ref{Matteo1} there is an arrow $\widetilde{w}\in\frakW$ such that
$$
\xymatrix@C=3em{\ar[r]^{\widetilde{w}}& \ar[r]^{\overline{t}_1\tilde{v}\overline{v}v^*} \ar[d]_{\overline{s}_1\tilde{v}\overline{v}v^*} \ar@{}[dr]|{\tilde\delta\overline{v}v^*\Downarrow} & \ar[d]^{s_1}
\\
&\ar[r]_{t_1}\ar[d]_{t_2}\ar@{}[dr]|{\gamma'\Downarrow} & \ar[d]^{f_1}&=&\ar[r]^{\widetilde{w}}& \ar[r]^{\overline{t}_1\tilde{v}\overline{v}v^*} \ar[d]_{\overline{s}_1\tilde{v}\overline{v}v^*} \ar@{}[dr]|{\varepsilon v^*\Downarrow} &\ar[d]|{s_2}\ar[r]^{s_1}\ar@{}[dr]|{\gamma} &\ar[d]^{f_1}
\\
&\ar[r]_{f_2}&&&&\ar[r]_{t_2}&\ar[r]_{f_2}& }
$$
Finally, let $r$ be an arrow such that the composition $vw_1s_1\overline{t}_1\tilde{v}\overline{v}v^*\tilde{w}r\in\frakW$.
Then the cells 
$$
\xymatrix@R=3em@C=4.5em{
&\ar@{}[d]|{\stackrel {\tilde\delta\overline{v}v^*\tilde{w}r}{\Leftarrow}}
\\
\ar@/^1.5ex/[ur]^{t_1}\ar@/_1.5ex/[dr]_{t_2}&\ar[l]_{\overline{s}_1\tilde{v}\overline{v}v^*\tilde{w}r}\ar[r]^{\overline{t}_1\tilde{v}\overline{v}v^*\tilde{w}r}\ar@{}[d]|{\stackrel{\varepsilon v^*\tilde{w}r} {\Leftarrow}} &\ar@/_1.5ex/[ul]_{s_1}\ar@/^1.5ex/[dl]^{s_2}
\\
&
}
$$
satisfy the equations to establish the fact that (\ref{uniquecell}) and (\ref{alternate}) 
are equivalent 2-cell diagrams, as claimed.
\end{proof}

\begin{no} \label{connect}
We will say that the 2-cell 
$$
\xymatrix@R=1.9em@C=3em{
&D_1 \ar[dl]_{vw_1}\ar[dr]^{f_1}
\\
X \ar@{}[r]|{v\beta \Downarrow}& E \ar[u]_{s_1}\ar[d]^{s_2} \ar@{}[r]|{\gamma \Downarrow} &A
\\
&D_2\ar[ul]^{vw_2}\ar[ur]_{f_2}
}
$$
above {\em connects} the squares $\alpha_1$ and $\alpha_2$.
\end{no}

\begin{lma}\label{multiplicativity}
Let $v\colon C\to X$ and $w\colon A\to B$ both be in $\frakW$ and $f\colon C\to B$ any arrow in $\calB$,  and let 
$$
\xymatrix{
D_i\ar[r]^{\overline{f}_i}\ar[d]_{\overline{w}_i}\ar@{}[dr]|{\stackrel{\alpha_i}{\Leftarrow}}&A\ar[d]^w
\\
C\ar[d]_{v}\ar[r]_f & B
\\
X}
$$
be invertible 2-cells with $v\overline{w}_i\in\frakW$ for $i=1,2,3$.
For each pair $i,j$, let $(v\beta_{ij},\gamma_{ij})$ be the canonical 2-cell connecting the squares $\alpha_i$ and $\alpha_j$ as given in Proposition~\ref{unique-2-cell},
$$
\xymatrix@C=4em{
&D_i\ar[dl]_{vw_2}\ar[dr]^{f_i}
\\
X\ar@{}[r]|{v\beta_{ij}\Downarrow}&E_{ij}\ar[u]_{s_{ij}}\ar[d]^{t_{ij}}\ar@{}[r]|{\gamma_{ij}\Downarrow} &A
\\
&D_j\ar[ul]^{vw_j}\ar[ur]_{f_j}
}
$$
Then $(v\beta_{ii},\gamma_{ii})$ is the identity 2-cell on the span $(v\overline{w}_i,f)$ and these two cells are closed under vertical composition: $(v\beta_{jk},\gamma_{jk})\cdot(v\beta_{ij},\gamma_{ij})=(v\beta_{ik},\gamma_{ik})$.
\end{lma}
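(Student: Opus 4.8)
Both assertions will be deduced from the uniqueness clause of Proposition~\ref{unique-2-cell}. That proposition says that, among all $2$-cell diagrams of the relevant shape — central object mapping to the two middle objects $D_i, D_k$, left-hand cell of the form $v$ whiskered with a $2$-cell, right-hand cell arbitrary, and the evident composite in $\frakW$ — there is exactly \emph{one} $2$-cell of $\calB(\frakW^{-1})$ satisfying the connecting equation $(f\beta)\cdot(\alpha_i s_1)=(\alpha_k s_2)\cdot(w\gamma)$. So the plan is, in each case, to produce \emph{some} representative of the proposed value and to check that it satisfies the appropriate connecting equation; uniqueness then forces it to be the canonical connecting $2$-cell.

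For the identity statement I would take the standard representative of $\mathrm{id}_{(v\bar w_i,\bar f_i)}$: central object $D_i$, both legs equal to $1_{D_i}$, and both structure cells the identities $\mathrm{id}_{\bar w_i\circ 1_{D_i}}$, $\mathrm{id}_{\bar f_i\circ 1_{D_i}}$ (note $v\bar w_i\circ 1_{D_i}\in\frakW$ by {\bf [WB5]}, being isomorphic to $v\bar w_i$). Its left-hand cell is $v$ whiskered with $\mathrm{id}_{\bar w_i 1_{D_i}}$, so it has the required shape, and the connecting equation for the pair $(\alpha_i,\alpha_i)$ collapses, after inserting the evident unitors, to the tautology $\alpha_i\circ 1_{D_i}=\alpha_i\circ 1_{D_i}$. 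Hence this diagram represents the canonical connecting $2$-cell, which is therefore $(v\beta_{ii},\gamma_{ii})$.

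For the composition statement I would compute a representative of $(v\beta_{jk},\gamma_{jk})\cdot(v\beta_{ij},\gamma_{ij})$ straight from the recipe~(\ref{verticalcomp}). Let $a,b$ be the legs of the chosen square of {\bf [C5]} filling the cospan $\bigl(v\bar w_j s_{jk},\,v\bar w_j t_{ij}\bigr)$, let $m$ be the auxiliary arrow supplied by {\bf [C6]}, and let $\bar{\tilde\delta}\colon t_{ij}am\Rightarrow s_{jk}bm$ be the associated lift. Then~(\ref{verticalcomp}) produces the representative with central object $\mathrm{dom}(m)$, legs $\ell=s_{ij}am$ (to $D_i$) and $r=t_{jk}bm$ (to $D_k$), left-hand cell $v\beta$ with $\beta=(\beta_{jk}\cdot bm)\cdot(\bar w_j\bar{\tilde\delta})\cdot(\beta_{ij}\cdot am)$, and right-hand cell $\gamma=(\gamma_{jk}\cdot bm)\cdot(\bar f_j\bar{\tilde\delta})\cdot(\gamma_{ij}\cdot am)$; the relevant composite lies in $\frakW$ by construction of vertical composition. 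It then remains to verify $(f\beta)\cdot(\alpha_i\ell)=(\alpha_k r)\cdot(w\gamma)$. I would run the following chain: expand $f\beta$; rewrite $(f\beta_{ij}\cdot am)\cdot(\alpha_i\ell)$ by the connecting equation for $(v\beta_{ij},\gamma_{ij})$ whiskered by $am$, which replaces $\alpha_i$ by $\alpha_j t_{ij}am$ and produces $w\gamma_{ij}\cdot am$; slide $\alpha_j$ past $\bar w_j\bar{\tilde\delta}$ by the interchange law, turning $f\bar w_j\bar{\tilde\delta}$ into $w\bar f_j\bar{\tilde\delta}$; then rewrite $(f\beta_{jk}\cdot bm)\cdot(\alpha_j s_{jk}bm)$ by the connecting equation for $(v\beta_{jk},\gamma_{jk})$ whiskered by $bm$, which replaces it by $(\alpha_k r)\cdot(w\gamma_{jk}\cdot bm)$. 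What remains reassembles, by functoriality of whiskering by $w$, into $(\alpha_k r)\cdot(w\gamma)$, as required. By uniqueness the composite equals $(v\beta_{ik},\gamma_{ik})$.

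The one genuine nuisance I expect is the coherence bookkeeping: both~(\ref{verticalcomp}) and the connecting equation involve iterated whiskerings, so every line of the chase above carries suppressed associator and unitor isomorphisms of $\calB$, and the argument is cleanest if rendered as a single pasting-diagram manipulation rather than the string of equalities sketched here. Conceptually, though, nothing beyond Proposition~\ref{unique-2-cell}, the interchange law, and the defining equations of the two factors enters, and in particular no new closure property of $\frakW$ is used.
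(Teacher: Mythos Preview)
Your proposal is correct and is precisely the ``straight forward calculation'' the paper alludes to: produce a representative (the identity diagram, respectively the vertical composite from~(\ref{verticalcomp})), observe that its left-hand cell is $v$ whiskered with something, verify the connecting equation using the defining equations of the factors together with interchange, and invoke the uniqueness clause of Proposition~\ref{unique-2-cell}. Your observation that the left-hand cell of the vertical composite automatically has the form $v\beta$ --- because $\alpha_1=v\beta_{ij}$, $\alpha_2=v\beta_{jk}$, and the middle cell is $u_2\bar{\tilde\delta}=v\bar w_j\bar{\tilde\delta}$ --- is exactly the point that makes the uniqueness clause applicable, and your interchange step $(f\bar w_j\bar{\tilde\delta})\cdot(\alpha_j t_{ij}am)=(\alpha_j s_{jk}bm)\cdot(w\bar f_j\bar{\tilde\delta})$ is the only nontrivial manoeuvre in the chase.
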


\begin{proof}
Straight forward calculation.
\end{proof}
\begin{prop} \label{assoc-2-cells}  For any path of composable spans:
\begin{equation}\label{composablepath}
\xymatrix{
&\ar[dl]_{w_1}\ar[dr]_{f_1}&&\ar[dl]_{w_2}\ar[dr]_{f_2}&&\ar[dl]_{w_3}\ar[dr]^{f_3}
\\
&&&&&&
}
\end{equation}
there is an associativity 2-cell 
$$\alpha_{(w_3,f_3),(w_2,f_2),(w_1,f_1)}\colon (w_3,f_3)\circ((w_2,f_2)\circ(w_1,f_1))\Rightarrow((w_3,f_3)\circ(w_2,f_2))\circ(w_1,f_1)$$ between the composites as constructed in Section~\ref{newBF2}.  
\end{prop}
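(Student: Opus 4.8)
The plan is to construct the associativity 2-cell by making explicit the two ways of composing the path of spans (\ref{composablepath}) and then invoking the canonical comparison 2-cells of Proposition~\ref{unique-2-cell} (via the ``connects'' terminology of Notation~\ref{connect}). First I would write down both composites. To form $(w_3,f_3)\circ((w_2,f_2)\circ(w_1,f_1))$, one first uses the chosen square {\bf [C4]} for the cospan $\xymatrix@1{\ar[r]^{f_1}&&\ar[l]_{w_2}}$ to get $\bar w_2, \bar f_1$ with $w_1\bar w_2\in\frakW$, obtaining the inner composite $\xymatrix@1{&\ar[l]_{w_1\bar w_2}\ar[r]^{f_2\bar f_1}&}$, and then the chosen square for $\xymatrix@1{\ar[r]^{f_2\bar f_1}&&\ar[l]_{w_3}}$ to finish. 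Dually, $((w_3,f_3)\circ(w_2,f_2))\circ(w_1,f_1)$ first forms $\xymatrix@1{&\ar[l]_{w_2\bar w_3}\ar[r]^{f_3\bar f_2}&}$ using the square for $\xymatrix@1{\ar[r]^{f_2}&&\ar[l]_{w_3}}$, and then the square for $\xymatrix@1{\ar[r]^{f_1}&&\ar[l]_{w_2\bar w_3}}$. The upshot is that each composite is obtained by a cospan of the form $\xymatrix@1{\ar[r]^{f}&&\ar[l]_{w}}$ (with $w\in\frakW$ of the form $w_3$ or a composite) together with a tower of chosen squares; both give a span from the common source to the common target whose left leg is in $\frakW$.

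Next I would reduce both composites to a common shape so that Proposition~\ref{unique-2-cell} applies. The key observation is that both composites arise as iterated chosen squares over the cospan $\xymatrix@1{C\ar[r]^{f_1}&\,&\ar[l]_{w_2}}$ followed by a square over a cospan whose right leg is $w_3$, just taken in two different orders; pasting the relevant invertible structure 2-cells, each yields a square of the form
$$
\xymatrix{
D_i\ar[d]_{v_i}\ar[r]^{g_i}\ar@{}[dr]|{\alpha_i\stackrel{\sim}{\Leftarrow}} & A'\ar[d]^{w'}\\
C'\ar[d]_{v}\ar[r]_{f'} & B'\\
X
}
$$
for a suitable cospan $\xymatrix@1{C'\ar[r]^{f'}&B'&\ar[l]_{w'}}$ with $w',v\in\frakW$ and $v v_i\in\frakW$ — here $f'$ is the appropriate composite built from $f_1,f_2,f_3$, $w'$ is built from $w_2,w_3$, and $v$ is built from $w_1$. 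I would need to check that the two composites genuinely present as two such squares $\alpha_1,\alpha_2$ over \emph{the same} outer cospan after absorbing unitors/associators of $\calB$; this is a bookkeeping exercise using that the choices {\bf [C1]}–{\bf [C4]} agree on identities and that invertible 2-cells may be freely absorbed. Then Proposition~\ref{unique-2-cell} produces the unique 2-cell in $\calB(\frakW^{-1})$ connecting $\alpha_1$ and $\alpha_2$, and I would \emph{define} $\alpha_{(w_3,f_3),(w_2,f_2),(w_1,f_1)}$ to be this cell, noting it is automatically invertible since $\beta$ and $\gamma$ in that proposition are invertible, hence represents an iso in $\calB(\frakW^{-1})$.

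The main obstacle I anticipate is precisely the identification in the previous paragraph: massaging both the ``left-normalized'' and ``right-normalized'' triple composites into two instances of the single-cospan square picture of Proposition~\ref{unique-2-cell}. This requires tracking several layers of chosen squares ({\bf [C4]}, and implicitly {\bf [C1]}–{\bf [C3]} inside them) and the bicategory's own coherence cells, and showing that after pasting them the two resulting squares sit over a common cospan $\xymatrix@1{\ar[r]^{f'}&&\ar[l]_{w'}}$ with the required arrows in $\frakW$ (using {\bf [WB2]} repeatedly to supply the $v_i$'s and {\bf [WB5]} to propagate membership in $\frakW$ across invertible 2-cells). Once that is set up, the existence and canonicity of $\alpha_{(w_3,f_3),(w_2,f_2),(w_1,f_1)}$ is immediate from Proposition~\ref{unique-2-cell}, and its naturality and the fact that it does not depend on auxiliary choices will follow from the uniqueness clause there together with Lemma~\ref{multiplicativity}. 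The pentagon coherence is deferred to Appendix~\ref{assoc2}, so here I would only record the construction and its invertibility.
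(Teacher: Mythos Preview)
Your instinct to reach for Proposition~\ref{unique-2-cell} is correct, but the proposal has a real gap at precisely the point you flag as ``bookkeeping''. The two triple composites do \emph{not} naturally present as two squares over a \emph{common} cospan $\xymatrix@1{C'\ar[r]^{f'}&B'&\ar[l]_{w'}A'}$. In the left-normalized composite the outer chosen square $\beta_1$ sits over the cospan $(f_2\bar f_1,\,w_3)$ with $v=w_1\bar w_2$, whereas in the right-normalized composite the outer square $\beta_2$ sits over the different cospan $(f_1,\,w_2\bar w_3)$ with $v=w_1$. No amount of absorbing unitors and associators of $\calB$ will make these into squares over the same $(f',w')$: the apex of the left composite maps to the domain of $w_3$, while the apex of the right composite maps to the domain of $\bar w_3$, and there is no canonical arrow between these. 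So a single application of Proposition~\ref{unique-2-cell} is not available.

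The paper's fix is to insert an intermediate diagram built from \emph{three} chosen squares $\alpha_1,\alpha_2,\alpha_3$ (a ``common subdivision'' of the two bracketings), where $\alpha_3$ is a {\bf [C4]}-square for the cospan $(\bar f_1,\bar w_3)$. The left composite and the intermediate differ only in the square over $(f_2\bar f_1,w_3)$, namely $\beta_1$ versus the pasting $\alpha_2\!\cdot\!\alpha_3$, so Proposition~\ref{unique-2-cell} applies (with $v=w_1\bar w_2$) to give a connecting 2-cell, which one then whiskers with $f_3$. Symmetrically, the intermediate and the right composite differ only in the square over $(f_1,w_2\bar w_3)$, namely the pasting $\alpha_1\!\cdot\!\alpha_3$ versus $\beta_2$, and Proposition~\ref{unique-2-cell} (with $v=w_1$) gives a second connecting 2-cell, whiskered with $f_3\bar f_2$. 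The associativity 2-cell is defined as the vertical composite of these two, and Lemma~\ref{multiplicativity} shows it is independent of the choice of $\alpha_3$. This two-step decomposition through the intermediate is not merely a convenience: it is what makes each comparison fit the single-cospan hypothesis of Proposition~\ref{unique-2-cell}, and it is also what drives the pentagon argument in Appendix~\ref{assoc2}.
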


\begin{proof} 
If we first compose the left-hand pair and use the choices as described in the construction of $\calB(\frakW^{-1})$, we obtain $(w_3,f_3)\circ((w_2,f_2)\circ(w_1,f_1))$ as the following span,
\begin{equation}\label{leftfirst}
\xymatrix{
&&&\ar[dl]_{\widetilde{w}_3}\ar[ddrr]^{\widetilde{f}_2}\ar@{}[dddr]|{\stackrel{\beta_1}{\Leftarrow}}
\\
&&\ar[dl]_{\overline{w}_2}\ar@{}[dd]|{\stackrel{\alpha_1}{\Leftarrow}}\ar[dr]^{\overline{f}_1}&&
\\
&\ar[dl]_{w_1}\ar[dr]_{f_1}&&\ar[dl]^{w_2}\ar[dr]_{f_2}&&\ar[dl]^{w_3}\ar[dr]^{f_3}
\\
&&&&&&
}
\end{equation}
Note that $w_1\overline{w}_2\widetilde{w}_3\in\frakW$.
If we first compose the right-hand pair we get $((w_3,f_3)\circ(w_2,f_2))\circ(w_1,f_1)$ as the span,
\begin{equation}\label{rightfirst}
\xymatrix{
&&&\ar[ddll]_{\widetilde{w}_2}\ar[dr]^{\widetilde{f}_1}\ar@{}[dddl]|{\stackrel{\beta_2}{\Leftarrow}}
\\
&&&&\ar[dl]_{\overline{w}_3}\ar@{}[dd]|{\stackrel{\alpha_2}{\Leftarrow}}\ar[dr]^{\overline{f}_2}
\\
&\ar[dl]_{w_1}\ar[dr]_{f_1}&&\ar[dl]^{w_2}\ar[dr]_{f_2}&&\ar[dl]^{w_3}\ar[dr]^{f_3}
\\
&&&&&&
}
\end{equation}
where $w_1\widetilde{w}_2\in\frakW$ and $w_2\overline{w}_3\in\frakW$.
The associativity 2-cell will be a vertical composite of two 2-cells going through the intermediate:  
\begin{equation}\label{middle}
\xymatrix{
&&&\ar[dl]_{\overline{\overline{w}}_3}\ar[dr]^{\overline{\overline{f}}_1}\ar@{}[dd]|{\stackrel{\alpha_3}{\Leftarrow}}
\\
&&\ar[dl]_{\overline{w}_2}\ar@{}[dd]|{\stackrel{\alpha_1}{\Leftarrow}}\ar[dr]|{\overline{f}_1}&&\ar[dl]|{\overline{w}_3}\ar@{}[dd]|{\stackrel{\alpha_2}{\Leftarrow}}\ar[dr]^{\overline{f}_2}
\\
&\ar[dl]_{w_1}\ar[dr]_{f_1}&&\ar[dl]^{w_2}\ar[dr]_{f_2}&&\ar[dl]^{w_3}\ar[dr]^{f_3}
\\
&&&&&&
}
\end{equation}
where $\alpha_3$ is chosen as in {\bf  [C4]} with $w_1\overline{w}_2\overline{\overline{w}}_3\in\frakW$; also $w_2\overline{w}_3\in\frakW$ by the choice of $\alpha_2$ as in {\bf  [C4]}.
We construct the associativity 2-cell as a vertical composition  of two 2-cells:   (\ref{leftfirst}) $\Rightarrow$  (\ref{middle})  and 
 (\ref{middle}) $\Rightarrow$ (\ref{rightfirst}).  (Note that by Lemma~\ref{multiplicativity} the resulting associativity 2-cell does not depend on the choice of the square $\alpha_3$.)

 (\ref{leftfirst}) $\Rightarrow$  (\ref{middle}):    the diagrams in  (\ref{leftfirst}) and (\ref{middle}) only differ in the following chosen squares:
$$
\xymatrix@C=4em{
\ar[d]_{\widetilde{w}_3}\ar[rr]^{\widetilde{f}_2}\ar@{}[drr]|{\beta_1\Downarrow}&&\ar[d]^{w_3}\ar@{}[drr]|{\mbox{and}}&&\ar[d]_{\overline{\overline{w}}_3}\ar@{}[dr]|{\alpha_3\Downarrow}\ar[r]^{\overline{\overline{f}}_1}&\ar[d]_{\overline{w}_3}\ar@{}[dr]|{\alpha_2\Downarrow}\ar[r]^{\overline{f}_2} & \ar[d]^{w_3}
\\
\ar[d]_{w_1\overline{w}_2}\ar[r]_{\overline{f}_1}&\ar[r]_{f_2}&  && \ar[d]_{w_1\overline{w}_2} \ar[r]_{\overline{f}_1}&\ar[r]_{f_2}&
\\
&&&&&}
$$
By Proposition~\ref{unique-2-cell} there is a unique 2-cell in $\calB(\frakW^{-1})$ connecting these two squares. Let 
$$
\xymatrix@C=6em{
&\ar[dl]_{(w_1\overline{w}_2)\widetilde{w}_3}\ar[dr]^{\widetilde{f}_2}
\\
\ar@{}[r]|{(w_1\overline{w}_2)\varepsilon_1\Downarrow}&\ar[u]^{s_1}\ar[d]_{t_1}\ar@{}[r]|{\delta_1\Downarrow}&
\\
&\ar[ul]^{(w_1\overline{w}_2)\overline{\overline{w}}_3}\ar[ur]_{\overline{f}_2\overline{\overline{f}}_1}
}
$$
be a diagram representing this 2-cell.
Composing it with $f_3$ gives,
\begin{equation}\label{cell-introduction}
\xymatrix@C=6em{
&\ar[dl]_{(w_1\overline{w}_2)\widetilde{w}_3}\ar[dr]^{f_3\widetilde{f}_2}
\\
\ar@{}[r]|{(w_1\overline{w}_2)\varepsilon_1\Downarrow}&\ar[u]^{s_1}\ar[d]_{t_1}\ar@{}[r]|{f_3\delta_1\Downarrow}&
\\
&\ar[ul]^{(w_1\overline{w}_2)\overline{\overline{w}}_3}\ar[ur]_{f_3\overline{f}_2\overline{\overline{f}}_1}
}
\end{equation}

 (\ref{middle}) $\Rightarrow$ (\ref{rightfirst}):  the diagrams in (\ref{middle}) and (\ref{rightfirst}) only differ by the following two squares:
$$
\xymatrix@C=3.5em{
\ar[d]_{\overline{\overline{w}}_3}\ar[r]^{\overline{\overline{f}}_1}\ar@{}[dr]|{\stackrel{\alpha_3}{\Leftarrow}}&\ar[d]^{\overline{w}_3}&&\ar[dd]_{\widetilde{w}_2}\ar@{}[ddr]|{\stackrel{\beta_2}{\Leftarrow}}\ar[r]^{\widetilde{f}_1}&\ar[d]^{\overline{w}_3}
\\
\ar[d]_{\overline{w}_2}\ar@{}[dr]|{\stackrel{\alpha_1}{\Leftarrow}}\ar[r]|{\overline{f}_1}&\ar[d]^{w_2}&\mbox{and}&&\ar[d]^{w_2}
\\
\ar[d]_{w_1}\ar[r]_{f_1}&&&\ar[d]_{w_1}\ar[r]_{f_1}&
\\
&&&
}
$$
By Proposition~\ref{unique-2-cell} there is a unique 2-cell in 
$\calB(\frakW^{-1})$ connecting these two squares. Let 
$$
\xymatrix@C=6em{
&\ar[dl]_{w_1\overline{w}_2\overline{\overline{w}}_3}\ar[dr]^{\overline{\overline{f}}_1}
\\
\ar@{}[r]|{w_1\overline{w}_2\varepsilon_2\Downarrow}&\ar[u]^{s_2}\ar[d]_{t_2}\ar@{}[r]|{\delta_2\Downarrow}&
\\
&\ar[ul]^{w_1\widetilde{w}_2}\ar[ur]_{\widetilde{f}_1}
}
$$
be a diagram representing this 2-cell. Composing with $f_3\overline{f}_2$ gives,
\begin{equation}\label{cell-elimination}
\xymatrix@C=6em{
&\ar[dl]_{w_1\overline{w}_2\overline{\overline{w}}_3}\ar[dr]^{f_3\overline{f}_2\overline{\overline{f}}_1}
\\
\ar@{}[r]|{w_1\overline{w}_2\varepsilon_2\Downarrow}&\ar[u]^{s_2}\ar[d]_{t_2}\ar@{}[r]|{f_3\overline{f}_2\delta_2\Downarrow}&
\\
&\ar[ul]^{w_1\widetilde{w}_2}\ar[ur]_{f_3\overline{f}_2\widetilde{f}_1}
}
\end{equation}

The associativity 2-cell for the composable path given in (\ref{composablepath}) is the vertical composition of (\ref{cell-introduction}) and (\ref{cell-elimination}).  To calculate this composition (as described in Section~\ref{newBF2}), we use the choices of {\bf [C5]} and {\bf [C6]} of Notation~\ref{choices} to obtain a square
$$
\xymatrix{
\ar[r]^{\overline{s}_2}\ar[d]_{\overline{t}_1}\ar@{}[dr]|{\stackrel{\varphi}{\Leftarrow}}& \ar[d]^{t_1}
\\
\ar[r]_{s_2}&
}
$$
with $\varphi$ invertible and $w_1\overline{w}_2\overline{\overline{w}}_3s_2\overline{t}_1\in\frakW$.
Then the associativity 2-cell $\alpha_{(w_3,f_3),(w_2,f_2),(w_1,f_1)}$ is represented by
$$
\xymatrix@C=7em{
&&\ar@/_.8ex/[ddll]_{w_1\overline{w}_2\widetilde{w}_3}\ar@/^.8ex/[ddrr]^{f_3\widetilde{f}_2}&&
\\
&&\ar[u]_{s_1}\ar[dl]_{t_1}\ar[dr]^{t_1}\ar@{}[dll]|{w_1\overline{w}_2\varepsilon_1\Downarrow}\ar@{}[drr]|{f_3\delta_1\Downarrow}&&
\\
&\ar[l]_(.45){w_1\overline{w}_2\overline{\overline{w}}_3}\ar@{}[r]|{\varphi\Downarrow}&\ar[u]_{\overline{s}_2}\ar[d]^{\overline{t}_1}\ar@{}[r]|{\varphi\Downarrow}&\ar[r]^(.45){f_3\overline{f}_2\overline{\overline{f}}_1}&
\\
&& \ar[ul]^{s_2}\ar[ur]_{s_2}\ar[d]^{t_2}\ar@{}[ull]|{w_1\overline{w}_2\varepsilon_2\Downarrow} \ar@{}[urr]|{f_3\overline{f}_2\delta_2\Downarrow}&&
\\
&&\ar@/^.8ex/[uull]^{w_1\widetilde{w}_2}\ar@/_.8ex/[uurr]_{f_3\overline{f}_2\widetilde{f}_1}
}
$$
\end{proof}

\begin{prop}\label{vert_associative}
Vertical composition of 2-cells is strictly associative.
\end{prop}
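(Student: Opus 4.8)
The plan is to show that for any three vertically composable 2-cell diagrams $\sigma_1,\sigma_2,\sigma_3$ in $\calB(\frakW^{-1})$ --- with central objects $C_1,C_2,C_3$ sitting over the spans $(u_1,f_1),\dots,(u_4,f_4)$, where $\sigma_i$ carries legs $v_{2i-1},v_{2i}$ and cells $\alpha_i,\beta_i$ --- the two iterated vertical composites $(\sigma_3\circ_v\sigma_2)\circ_v\sigma_1$ and $\sigma_3\circ_v(\sigma_2\circ_v\sigma_1)$ are equivalent 2-cell diagrams. The first step is to replace the rigid recipe of (\ref{verticalcomp}) by a flexible description of $\sigma_2\circ_v\sigma_1$: for any \emph{gluing datum} $(E,\,q_1\colon E\to C_1,\,q_2\colon E\to C_2,\,\phi\colon v_2q_1\stackrel{\sim}{\Rightarrow}v_3q_2)$ with $u_2v_2q_1\in\frakW$, I claim the 2-cell diagram with central object $E$, left leg $v_1q_1$, right leg $v_4q_2$, invertible cell $(\alpha_2q_2)\cdot(u_2\phi)\cdot(\alpha_1q_1)$ and right-hand cell $(\beta_2q_2)\cdot(f_2\phi)\cdot(\beta_1q_1)$ (associators suppressed) represents $\sigma_2\circ_v\sigma_1$. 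The construction (\ref{verticalcomp}) is the special case where the gluing is produced by {\bf [C5]} followed by {\bf [C6]}; that an arbitrary gluing datum yields the same equivalence class is proved by comparing two such data $(E,q_i,\phi)$ and $(E',q_i',\phi')$: form a square over the cospan $E\to C_1\leftarrow E'$ using {\bf [WB3]} (after composing with $u_1v_1$ so that one leg lands in $\frakW$), lift it along $u_2v_2$ by {\bf [WB4]} and Proposition~\ref{isolifting}, and apply Lemma~\ref{Matteo1} to force the two transported copies of $\phi$ to agree; this produces the object and the invertible cells $\varepsilon,\varepsilon'$ required for an equivalence of 2-cell diagrams, the two coherence squares following from naturality of pasting in $\calB$.

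With this gluing description in hand, both iterated composites can be written out explicitly as \emph{triple gluings}. For $(\sigma_3\circ_v\sigma_2)\circ_v\sigma_1$ one chooses a gluing $(E_{23},r_2,r_3,\psi)$ of $C_2$ and $C_3$ over the span $(u_3,f_3)$, then a gluing $(F,a_1,a_{23},\chi)$ of $C_1$ and $E_{23}$ over $(u_2,f_2)$ along the leg $v_3r_2\colon E_{23}\to S_2$, and checks that $(F,\,a_1,\,r_2a_{23},\,r_3a_{23})$ equipped with $\chi$ and $\psi a_{23}$ is a triple gluing whose associated diagram --- invertible cell obtained by pasting $\alpha_1,\alpha_2,\alpha_3$ along the two gluing cells, right-hand cell by pasting $\beta_1,\beta_2,\beta_3$ --- represents $(\sigma_3\circ_v\sigma_2)\circ_v\sigma_1$. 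The symmetric procedure (glue $C_1,C_2$ first, then $C_3$) yields another triple gluing $(F',\,p_1',\,p_2',\,p_3')$ representing $\sigma_3\circ_v(\sigma_2\circ_v\sigma_1)$. Both associated diagrams are assembled by pasting exactly the cells $\alpha_1,\alpha_2,\alpha_3$ of $\calB$ (respectively $\beta_1,\beta_2,\beta_3$), merely parenthesized differently; since pasting of a diagram of 2-cells in the bicategory $\calB$ is associative, once a common refinement $G$ with compatible maps to $F$ and $F'$ is produced --- again by repeated use of {\bf [WB2]}, {\bf [WB3]}, Proposition~\ref{isolifting} and Lemma~\ref{Matteo1} --- the two diagrams restrict to literally the same cells on $G$, hence are equivalent, which is the desired identity.

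The main obstacle is the first step: establishing that \emph{every} gluing datum, not merely every choice of square and lift within the fixed shape of (\ref{verticalcomp}), gives the same equivalence class. This is where the comparison machinery of Section~\ref{S:background} --- Proposition~\ref{anytwosquares}, Proposition~\ref{isolifting}, Lemma~\ref{Matteo1} --- together with an explicit verification of the two coherence diagrams defining equivalence of 2-cell diagrams, does the real work, and it also carries essentially all of the associator and unitor bookkeeping from $\calB$. Once the gluing description is in place the remainder is formal, reducing strict associativity of vertical composition in $\calB(\frakW^{-1})$ to associativity of pasting in $\calB$. (Alternatively, granting Proposition~\ref{vertical-well-defined}, the first step may be recast as: compute both associations using triple gluings that admit a common refinement, and invoke well-definedness directly.)
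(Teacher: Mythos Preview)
Your approach is correct and uses the same underlying machinery as the paper, but the packaging differs. The paper does not first isolate a general ``gluing lemma''; instead it introduces a specific intermediate triple gluing (diagram (\ref{third_composite}), built from the chosen squares $\delta_1,\delta_2$ and a further $\delta_3$) and shows directly, via a single application of Proposition~\ref{anytwosquares} to the two competing rectangles ($\varepsilon_1$ versus the pasting $\delta_2\cdot\delta_3$), that each association is equivalent to this intermediate. The argument is then run once with generic cells $\gamma_i$ and specialized to both the $\alpha_i$ and the $\beta_i$.

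Your route---prove once that any gluing datum represents the composite, then view each association as a triple gluing and compare on a common refinement---is more modular and makes clearer why associativity is ultimately formal (it reduces to associativity of pasting in $\calB$). The paper's route is shorter for this one proposition, since it avoids stating and proving the gluing lemma in full generality and instead applies Proposition~\ref{anytwosquares} just once per side. Both proofs rest on the same core step: comparing two squares over the same cospan via Proposition~\ref{anytwosquares} (with Proposition~\ref{isolifting} and Lemma~\ref{Matteo1} handling the lifts), so neither is more elementary in substance.
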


\begin{proof}
Consider three vertically composable 2-cell diagrams,
$$
\xymatrix@C=3.2em{
&\ar[dl]_{u_1}\ar[dr]^{f_1} & && &  \ar[dl]_{u_2}\ar[dr]^{f_2}& && &  \ar[dl]_{u_3}\ar[dr]^{f_3} &
\\
\ar@{}[r]|{\Downarrow\alpha_1} & \ar[u]^{s_1}\ar[d]_{t_1}\ar@{}[r]|{\Downarrow\beta_1} &\ar@{}[rr]|{ \mbox{and  }}
		&& \ar@{}[r]|{\Downarrow\alpha_2} & \ar[u]^{s_2}\ar[d]_{t_2}\ar@{}[r]|{\Downarrow\beta_2} &\ar@{}[rr]|{ \mbox{and  }}
		&& \ar@{}[r]|{\Downarrow\alpha_3} & \ar[u]^{s_3}\ar[d]_{t_3}\ar@{}[r]|{\Downarrow\beta_3} & 
\\
&\ar[ul]^{u_2}\ar[ur]_{f_2} &&&&\ar[ul]^{u_3}\ar[ur]_{f_3} &&&&\ar[ul]^{u_4}\ar[ur]_{f_4} &.
}
$$
Our proof that the two ways of composing these cells vertically are equivalent will mimick the construction of the associativity isomorphism in the proof of the previous proposition.
The constructed cells will in this case become the cells that witness the equivalence. 
However, since we are only interested in the equivalence rather that the cells witnessing it, we will do this in two steps without composing the cells obtained in the two steps.
 
The two possible vertical compositions correspond to choices of squares $\delta_i$ and $\varepsilon_i$ with $i=1,2$ as in
\begin{equation}\label{two_composites}
\xymatrix@C=1.8em@R=1.8em{
&&&\ar[dl]_{\widetilde{s}_3}\ar[ddrr]^{\widetilde{t}_2}\ar@{}[3,1]|{\stackrel{\varepsilon_1}{\Leftarrow}}&&&&&&&\ar[ddll]_{\widetilde{s}_2}\ar[dr]^{\widetilde{t}_1}\ar@{}[3,-1]|{\stackrel{\varepsilon_2}{\Leftarrow}}&&&
\\
&&\ar[dl]_{\overline{s}_2}\ar[dr]^{\overline{t}_1}\ar@{}[dd]|{\stackrel{\delta_1}{\Leftarrow}}&&&&&&&&&\ar[dl]_{\overline{s}_3}\ar[dr]^{\overline{t}_2}\ar@{}[dd]|{\stackrel{\delta_2}{\Leftarrow}}
\\
&\ar[dl]_{s_1}\ar[dr]_{t_1}&&\ar[dl]^{s_2}\ar[dr]_{t_2}&&\ar[dl]^{s_3}\ar[dr]^{t_3}&\ar@{}[r]|{\mbox{and}}&&\ar[dl]_{s_1}\ar[dr]_{t_1}&&\ar[dl]^{s_2}\ar[dr]_{t_2}&&\ar[dl]^{s_3}\ar[dr]^{t_3}
\\
&&&&&&&&&&&&&
}
\end{equation}
with $u_2s_2\overline{t}_1\widetilde{s}_3\in\frakW$ and $u_2s_2\overline{s}_3\widetilde{t}_1\in\frakW$.
We will also consider the following diagram:
\begin{equation}\label{third_composite}
\xymatrix{
&&&\ar[dl]_{\hat{s}_3}\ar[dr]^{\hat{t}_1}
\\
&&\ar[dl]_{\overline{s}_2}\ar[dr]|{\overline{t}_1}\ar@{}[rr]|{\stackrel{\delta_3}{\Leftarrow}}&&\ar[dl]|{\overline{s}_3}\ar[dr]^{\overline{t}_2}
\\
&\ar[dl]_{s_1}\ar[dr]_{t_1}\ar@{}[rr]|{\stackrel{\delta_1}{\Leftarrow}}&&\ar[dl]^{s_2}\ar[dr]_{t_2}\ar@{}[rr]|{\stackrel{\delta_2}{\Leftarrow}}&&\ar[dl]^{s_3}\ar[dr]^{t_3}
\\
&&&&&&}
\end{equation}
where $\delta_3$ is an invertible 2-cell such that $u_1s_1\overline{s}_2\hat{s}_3$ is in $\frakW$.
Note that none of these are pasting diagrams yet, but they can be made into pasting diagrams by  adding the cells $\alpha_i$ or the cells $\beta_i$ as a bottom row to the diagrams.
With the $\alpha_i$ cells we obtain the left-hand 2-cells of our composite 2-cell diagrams and with the $\beta_i$ cells we obtain the right-hand 2-cells of our composite diagrams. As we want to argue about both at the same time, we will give the argument for variable $\gamma_1$, $\gamma_2$ and $\gamma_3$. We begin by comparing the diagrams
$$
\xymatrix@C=1.8em{
&&&&\ar[dl]_{\widetilde{s}_3} \ar[ddrr]^{\widetilde{t}_2} \ar@{}[dddr]|{\stackrel{\varepsilon_1}{\Leftarrow}} &&&&&&& \ar[dl]_{\hat{s}_3}\ar[dr]^{\hat{t}_1}
\\
&&& \ar[dl]_{\overline{s}_2} \ar[dr]|{\overline{t}_1} \ar@{}[dd]|{\stackrel{\delta_1}{\Leftarrow}} &&&&&&& \ar[dl]_{\overline{s}_2}\ar[dr]|{\overline{t}_1} \ar@{}[rr]|{\stackrel{\delta_3}{\Leftarrow}}&& \ar[dl]|{\overline{s}_3}\ar[dr]^{\overline{t}_2}
\\
&&\ar[dl]_{s_1} \ar[dr]|{t_1}&&\ar[dl]|{s_2} \ar[dr]|{t_2} &&\ar[dl]|{s_3} \ar[dr]^{t_3}&\ar@{}[r]|{\mbox{and}}&&\ar[dl]_{s_1}\ar[dr]|{t_1}\ar@{}[rr]|{\stackrel{\delta_1}{\Leftarrow}}&&\ar[dl]|{s_2}\ar[dr]|{t_2}\ar@{}[rr]|{\stackrel{\delta_2}{\Leftarrow}}&&\ar[dr]^{t_3}\ar[dl]|{s_3}
\\
&\ar[dl]_{u_1}\ar@/_1.5ex/[drrr]_{x_1}\ar@{}[rr]|{\stackrel{\gamma_1}{\Leftarrow}} && 
\ar[dr]|{x_2}\ar@{}[rr]|{\stackrel{\gamma_2}{\Leftarrow}}&&\ar[dl]|{x_3} \ar@{}[rr]|{\stackrel{\gamma_3}{\Leftarrow}} && \ar@/^1.5ex/[1,-3]^{x_4}&\ar[dl]_{u_1}\ar@/_1.5ex/[1,3]_{x_1} \ar@{}[rr]|{\stackrel{\gamma_1}{\Leftarrow}} && 
\ar[dr]|{x_2} \ar@{}[rr]|{\stackrel{\gamma_2}{\Leftarrow}}&&\ar[dl]|{x_3}\ar@{}[rr]|{\stackrel{\gamma_3}{\Leftarrow}}&&\ar@/^1.5ex/[1,-3]^{x_4}
\\
&&&&&&&&&&&&&
}
$$ 
These two diagrams only differ in the rectangle with $\varepsilon_1$ versus the composition of 
$\delta_3$ and $\delta_2$. As both $u_1s_1\overline{s}_2\widetilde{s}_3$ and $u_1s_1\overline{s}_2\hat{s}_3$ are in $\frakW$, we can apply Proposition~\ref{anytwosquares} to these two rectangles and obtain arrows and 2-cells as in the following diagram,
$$
\xymatrix{
&\ar@/_1.5ex/[dl]_{\widetilde{s}_3}\ar@/^1.5ex/[dr]^{\widetilde{t}_2}
\\
\ar@{}[r]|{\Downarrow\sigma_1}&\ar[u]|{y_1}\ar[d]|{y_2}\ar@{}[r]|{\Downarrow\tau_1}&
\\
&\ar@/^1.5ex/[ul]^{\hat{s}_3}\ar@/_1.5ex/[ur]_{\overline{t}_2\hat{t}_1}
}
$$
with the property that
$$
\xymatrix@C=1.8em{
&&&&\ar@/_1.5ex/[dddl]_{\hat{s}_3} &&&&&&&\ar@/^1.5ex/[4,2]^{\widetilde{t}_2}
\\
&&&&\ar[u]_{y_2}\ar[d]^{y_1}\ar@{}[ddl]|(.45){\stackrel{\sigma_1}{\Leftarrow}}&&&&&&&\ar[u]^{y_1}\ar[d]_{y_2}\ar@{}[dr]|{\stackrel{\tau_1}{\Leftarrow}}
\\
&&&&\ar[dl]|{\widetilde{s}_3} \ar[ddrr]^{\widetilde{t}_2} \ar@{}[dddr]|{\stackrel{\varepsilon_1}{\Leftarrow}} &&&&&&& \ar[dl]_{\hat{s}_3}\ar[dr]|{\hat{t}_1}&&
\\
&&& \ar[dl]_{\overline{s}_2} \ar[dr]|{\overline{t}_1} \ar@{}[dd]|{\stackrel{\delta_1}{\Leftarrow}} &&&&&&& \ar[dl]_{\overline{s}_2}\ar[dr]|{\overline{t}_1} \ar@{}[rr]|{\stackrel{\delta_3}{\Leftarrow}}&& \ar[dl]|{\overline{s}_3}\ar[dr]|{\overline{t}_2}
\\
&&\ar[dl]_{s_1} \ar[dr]|{t_1}&&\ar[dl]|{s_2} \ar[dr]|{t_2} &&\ar[dl]|{s_3} \ar[dr]^{t_3}&\ar@{}[r]|{\equiv}&&\ar[dl]_{s_1}\ar[dr]|{t_1}\ar@{}[rr]|{\stackrel{\delta_1}{\Leftarrow}}&&\ar[dl]|{s_2}\ar[dr]|{t_2}\ar@{}[rr]|{\stackrel{\delta_2}{\Leftarrow}}&&\ar[dr]^{t_3}\ar[dl]|{s_3}
\\
&\ar[dl]_{u_1}\ar@/_1.5ex/[drrr]_{x_1}\ar@{}[rr]|{\stackrel{\gamma_1}{\Leftarrow}} && 
\ar[dr]|{x_2}\ar@{}[rr]|{\stackrel{\gamma_2}{\Leftarrow}}&&\ar[dl]|{x_3} \ar@{}[rr]|{\stackrel{\gamma_3}{\Leftarrow}} && \ar@/^1.5ex/[1,-3]^{x_4}&\ar[dl]_{u_1}\ar@/_1.5ex/[1,3]_{x_1} \ar@{}[rr]|{\stackrel{\gamma_1}{\Leftarrow}} && 
\ar[dr]|{x_2} \ar@{}[rr]|{\stackrel{\gamma_2}{\Leftarrow}}&&\ar[dl]|{x_3}\ar@{}[rr]|{\stackrel{\gamma_3}{\Leftarrow}}&&\ar@/^1.5ex/[1,-3]^{x_4}
\\
&&&&&&&&&&&&&
}
$$ 
By substituting the $\alpha_i$ for the $\gamma_i$ and by subtituting the $\beta_i$ for the $\gamma_i$ we see that if the vertical composition had been constructed with the cells $\delta_1$, $\delta_2$ and $\delta_3$ it would have been equivalent to the composition obtained by composing the first two 2-cells first. By a similar argument we see that the new composition is also equivalent to the composition obtained by composing the last two diagrams first. So we conclude that the two compositions considered are equivalent and hence vertical composition is strictly associative.
\end{proof}

\section{Associativity Part II:  Coherence} \label{assoc2}

We will only sketch the proof for the associativity pentagon. The other coherence diagrams are straight forward.
We will  view the diagram  (\ref{middle}) as a kind of common subdivision of (\ref{leftfirst}) and (\ref{rightfirst}), and break up the coherence into transitions given by Proposition~\ref{unique-2-cell}, and transitions with two layers of cells. There are two versions of this  two layer case. They seem dual to each other, but their proofs are not, as the arrows in $\frakW$ play very different roles. The two cases are covered in Propositions~\ref{double-layer1} and~\ref{double-layer2} below.  
\begin{prop}\label{double-layer1}
Suppose we have two diagrams in $\calB$,
\begin{equation}\label{leftconfigurations}
\xymatrix{
\ar[rr]^{\overline{f}_2}\ar@{}[drr]|{\alpha_2\Downarrow}\ar[d]_{\overline{w}_3}&&\ar[d]^{w_3}&\mbox{and}&\ar[rr]^{\widetilde{f}_2}\ar@{}[drr]|{\beta_2\Downarrow}\ar[d]_{\widetilde{w}_3}&&\ar[d]^{w_3}
\\
\ar[d]_{\overline{w}_2}\ar[r]^{\overline{f}_1}\ar@{}[dr]|{\alpha_1\Downarrow}&\ar[d]^{w_2}\ar[r]_{f_2}&&&\ar[d]_{\widetilde{w}_2}\ar[r]^{\widetilde{f}_1}\ar@{}[dr]|{\beta_1\Downarrow}&\ar[d]^{w_2}\ar[r]_{f_2}&
\\
\ar[d]_{w_1}\ar[r]_{f_1}& &&& \ar[d]_{w_1}\ar[r]_{f_1}&
\\
&&&&&
}
\end{equation}
with $\alpha_1,\alpha_2,\beta_1$ and $\beta_2$ invertible and all of $w_1, w_1\overline{w}_2,w_1\overline{w}_2\overline{w}_3, w_1\widetilde{w}_2$, and $w_1\widetilde{w}_2\widetilde{w}_3$ in $\frakW$. Furthermore, suppose that we have
two  2-cell diagrams 
$$
\xymatrix@C=3.5em{
&\ar[dl]_{w_1\overline{w}_2}\ar[dr]^{\overline{f}_1}&
\\
\ar@{}[r]|{w_1\varepsilon_i\Downarrow}&\ar[u]_{s_i}\ar[d]^{t_i}\ar@{}[r]|{\delta_i\Downarrow}& &\mbox{ for }i=1,2,
\\
&\ar[ul]^{w_1\widetilde{w}_2}\ar[ur]_{\widetilde{f}_1}
}
$$
that both connect $\alpha_1$ and $\beta_1$ in the sense of Notation~\ref{connect}.
And, suppose that there are 2-cells $\sigma_i$, $\tau_i$ and $\theta_i$ for $i=1,2$
as in
$$
\xymatrix@C=4em{
&\ar[dl]_{\overline{w}_2}\ar@{}[dr]|{\sigma_i\Downarrow}&\ar[l]_{\overline{w}_3}\ar[dr]^{\overline{f}_2}
\\
\ar@{}[r]|{\varepsilon_i\Downarrow}&\ar[u]^{s_i}\ar[d]_{t_i}\ar@{}[dr]|{\tau_i\Downarrow} & \ar[l]_{v_{3,i}}\ar[u]_{\overline{s}_i}\ar[d]^{\overline{t}_i}\ar@{}[r]|{\theta_i\Downarrow}&
\\
&\ar[ul]^{\widetilde{w}_2} &\ar[l]^{\widetilde{w}_3}\ar[ur]_{\widetilde{f}_2}
}
$$
such that $w_1\overline{w}_2\overline{w}_3\overline{s}_i\in\frakW$ and 
$$
\xymatrix@C=3em{
&\ar[drr]^{\overline{f}_2}\ar@{}[dd]|{\theta_i\Downarrow} &&&&&\ar[d]^{\overline{w}_3}\ar[drr]^{\overline{f}_2}\ar@{}[ddrr]|{\alpha_2\Downarrow}
\\
\ar[ur]^{\overline{s}_i}\ar[dr]^{\overline{t}_i}\ar[d]_{v_{3,i}}\ar@{}[ddr]|{\tau_i\Downarrow} &&&\ar[d]^{w_3}\ar@{}[drr]|=&&\ar[d]_{v_{3,i}}\ar[ur]^{\overline{s}_i}\ar@{}[r]|{\sigma_i\Downarrow}&\ar[dr]^{\overline{f}_1}&&\ar[d]^{w_3}
\\
\ar[dr]_{t_i}&\ar[d]^{\widetilde{w}_3}\ar[urr]^{\widetilde{f}_2}\ar@{}[rr]|{\beta_2\Downarrow}&&&&\ar[ur]_{s_i}\ar[dr]_{t_i}\ar@{}[rr]|{\delta_i\Downarrow}&& \ar[r]_{f_2}&
\\
&\ar[urr]_{f_2\widetilde{f}_1}&&&&&\ar[ur]_{\widetilde{f}_1}
}
$$
for $i=1,2$. Then the 2-cell diagrams,
\begin{equation}\label{equivcells}
\xymatrix@C=4em{
&\ar[dl]_{w_1\overline{w}_2}\ar@{}[dr]|{\sigma_1\Downarrow}&\ar[l]_{\overline{w}_3}\ar[dr]^{\overline{f}_2} &&& \ar[dl]_{w_1\overline{w}_2}\ar@{}[dr]|{\sigma_2\Downarrow}&\ar[l]_{\overline{w}_3}\ar[dr]^{\overline{f}_2} 
\\
\ar@{}[r]|{w_1\varepsilon_1\Downarrow}&\ar[u]^{s_1}\ar[d]_{t_1}\ar@{}[dr]|{\tau_1\Downarrow} & \ar[l]|{v_{3,1}}\ar[u]_{\overline{s}_1}\ar[d]^{\overline{t}_1}\ar@{}[r]|{\theta_1\Downarrow}&\ar@{}[r]|{\mbox{and}} & \ar@{}[r]|{w_1\varepsilon_2\Downarrow}&\ar[u]^{s_2}\ar[d]_{t_2}\ar@{}[dr]|{\tau_2\Downarrow} & \ar[l]|{v_{3,2}}\ar[u]_{\overline{s}_2}\ar[d]^{\overline{t}_2}\ar@{}[r]|{\theta_2\Downarrow}&
\\
&\ar[ul]^{w_1\widetilde{w}_2} &\ar[l]^{\widetilde{w}_3}\ar[ur]_{\widetilde{f}_2}&&&\ar[ul]^{w_1\widetilde{w}_2} &\ar[l]^{\widetilde{w}_3}\ar[ur]_{\widetilde{f}_2}
}
\end{equation}
are equivalent.
\end{prop}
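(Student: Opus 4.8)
The plan is to show that both 2-cell diagrams in (\ref{equivcells}) represent the same 2-cell in $\calB(\frakW^{-1})$ by comparing them to the pasted-together composite 2-cell diagram that is built from the $\delta_i$ cells via (\ref{leftconfigurations}), exactly in the style of the associativity construction in Appendix~\ref{assoc1}. More precisely, observe that pasting the diagram for $\sigma_i,\tau_i,\theta_i$ on top of the configuration (\ref{leftconfigurations}) gives, on the right-hand side, a 2-cell $f_2\widetilde f_1 \Rightarrow \bar f_2$-type composite, and the hypothesis relating $\theta_i,\tau_i,\beta_2$ to $\delta_i,\sigma_i,\alpha_2$ says precisely that these two 2-cell diagrams have the prescribed left-hand 2-cell $w_1\bar w_2\bar w_3\bar s_i$ and agree when whiskered appropriately. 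So the key is that both diagrams in (\ref{equivcells}) are, by construction, 2-cell diagrams with the \emph{same} left-hand cell (built from $\sigma_i,\varepsilon_i$) over the composable span, and with right-hand cells that are forced to agree after precomposition with an arrow in $\frakW$.

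First I would set up the comparison: apply Proposition~\ref{anytwosquares} (or rather its refinement Proposition~\ref{unique-2-cell}) to the two squares obtained by pasting $\sigma_1,\tau_1$ against $\sigma_2,\tau_2$ over the cospan determined by $w_1\bar w_2$, $w_1\widetilde w_2$ and the common codomain; since $w_1\bar w_2\bar w_3\bar s_i\in\frakW$ for $i=1,2$, this produces arrows $y_1,y_2$ and invertible 2-cells connecting the two left-hand cells. This is exactly the move made in the proof of Proposition~\ref{vert_associative}. Next I would verify that once these left-hand cells are identified, the right-hand 2-cells $\theta_1$ and $\theta_2$ (and the data $\tau_i$) become equal after whiskering by an arrow in $\frakW$: this is where the displayed compatibility equation in the hypothesis is used, together with the assumption that the two 2-cell diagrams $(\varepsilon_i,\delta_i)$ both connect $\alpha_1$ and $\beta_1$ in the sense of Notation~\ref{connect}, so by the uniqueness in Proposition~\ref{unique-2-cell} they are already equivalent. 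Finally, invoke Lemma~\ref{Matteo1} to cancel the invertible whiskering 2-cell and conclude that the right-hand cells agree, which yields the equivalence of the two 2-cell diagrams in (\ref{equivcells}) after precomposing by a suitable arrow $r$ with the relevant composite in $\frakW$ (obtained by repeated use of {\bf [WB2]}).

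The main obstacle I anticipate is bookkeeping: the diagrams in (\ref{equivcells}) have three horizontal layers of cells ($\sigma_i$, then $\varepsilon_i/\tau_i$, then $\theta_i$), and to feed them into Proposition~\ref{unique-2-cell} one must carefully repackage them as a single left-hand invertible cell plus a single right-hand cell over the correct cospan, tracking which composites are guaranteed to lie in $\frakW$. The assumption $w_1\bar w_2\bar w_3\bar s_i\in\frakW$ together with the chain $w_1, w_1\bar w_2, w_1\bar w_2\bar w_3\in\frakW$ is what makes the squares eligible for Proposition~\ref{anytwosquares}/\ref{unique-2-cell}, but the lifted arrows $y_1,y_2$ produced there need not be in $\frakW$, so one must again apply {\bf [WB2]} to pre-compose with a further arrow before the final cancellation. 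Once that is handled, the verification that the two pasting composites coincide is the "straightforward calculation" referred to in Lemma~\ref{multiplicativity}, and the equivalence follows by the same mechanism as in Proposition~\ref{vert_associative}: exhibit the connecting arrows $\overline{s}_1\tilde v\overline v\tilde w r$, $\overline{t}_1\tilde v\overline v\tilde w r$ with their structure cells and check the two defining equations for equivalence of 2-cell diagrams.
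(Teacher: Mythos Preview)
Your proposal has the right starting ingredient but contains a genuine confusion and leaves the main technical step unexplained.

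The correct observation is that the two diagrams $(w_1\varepsilon_i,\delta_i)$ both connect $\alpha_1$ and $\beta_1$, so by Proposition~\ref{unique-2-cell} they are equivalent; this is indeed the paper's first move, yielding arrows $r_1,r_2$ and invertible cells $\varphi\colon s_1r_1\Rightarrow s_2r_2$, $\psi\colon t_1r_1\Rightarrow t_2r_2$ satisfying the usual equations with $\varepsilon_i,\delta_i$. But your claim that the two diagrams in (\ref{equivcells}) have ``the same left-hand cell'' is false: their left-hand cells are the pastings of $w_1\varepsilon_1,\sigma_1$ versus $w_1\varepsilon_2,\sigma_2$, which are genuinely different, with different middle objects. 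So you cannot invoke Proposition~\ref{Matteo3} or any uniqueness-with-fixed-left-side statement. Likewise, your proposed application of Proposition~\ref{anytwosquares} ``to the two squares obtained by pasting $\sigma_1,\tau_1$ against $\sigma_2,\tau_2$'' is not well-posed: the $\sigma_i,\tau_i$ are not squares over a cospan with one leg in $\frakW$ in the sense required there.

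What the paper actually does, and what your sketch does not supply, is a \emph{lifting} of the equivalence $(\varphi,\psi)$ from the bottom layer to the top. Concretely: one first builds comparison squares between $v_{3,i}$ and $r_i$ (using that $w_1\overline{w}_2s_iv_{3,i}$ and $w_1\overline{w}_2s_ir_i$ are both in $\frakW$, via {\bf[WB3]}, {\bf[WB4]}, {\bf[WB2]}), then pastes $\sigma_1,\varphi,\sigma_2^{-1}$ into a cell $\overline{w}_3\overline{s}_1(\cdots)\Rightarrow\overline{w}_3\overline{s}_2(\cdots)$ and lifts it with respect to $w_1\overline{w}_2\overline{w}_3$ to obtain $\widetilde\varphi\colon\overline{s}_1(\cdots)\Rightarrow\overline{s}_2(\cdots)$; similarly pastes $\tau_1,\psi,\tau_2^{-1}$ and lifts with respect to $w_1\widetilde{w}_2\widetilde{w}_3$ to get $\widetilde\psi$. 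These lifted cells $\widetilde\varphi,\widetilde\psi$ are then the witnesses for the equivalence of the two diagrams in (\ref{equivcells}), and the verification uses the compatibility hypothesis together with the equations satisfied by $\varphi,\psi$. This lifting is the substance of the argument; Lemma~\ref{Matteo1} and Proposition~\ref{unique-2-cell} alone do not produce it.
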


\begin{proof}
By Proposition~\ref{unique-2-cell} we know that
$$
\xymatrix@C=3.5em{
&\ar[dl]_{w_1\overline{w}_2}\ar[dr]^{\overline{f}_1}& && &\ar[dl]_{w_1\overline{w}_2}\ar[dr]^{\overline{f}_1}&
\\
\ar@{}[r]|{w_1\varepsilon_1\Downarrow}&\ar[u]_{s_1}\ar[d]^{t_1}\ar@{}[r]|{\delta_1\Downarrow}& &\mbox{and}&\ar@{}[r]|{w_1\varepsilon_2\Downarrow}&\ar[u]_{s_2}\ar[d]^{t_2}\ar@{}[r]|{\delta_2\Downarrow}&
\\
&\ar[ul]^{w_1\widetilde{w}_2}\ar[ur]_{\widetilde{f}_1}& && & \ar[ul]^{w_1\widetilde{w}_2}\ar[ur]_{\widetilde{f}_1}}
$$
are equivalent 2-cell diagrams as they both connect $\alpha_1$ and $\beta_1$.
So there are 2-cells
$$
\xymatrix{
&
\\
\ar@/^1ex/[ur]^{s_1}\ar@/_1ex/[dr]_{t_1}&\ar[l]_{r_1}\ar[r]^{r_2}\ar@{}[u]|{\stackrel{\varphi}{\Rightarrow}} \ar@{}[d]|{\stackrel{\psi}{\Rightarrow}} &\ar@/_1ex/[ul]_{s_2}\ar@/^1ex/[dl]^{t_2}
\\
&
}
$$
such that $w_1\overline{w}_2s_1r_1\in\frakW$ and
$$
\xymatrix@C=2.8em{
\ar[r]^{r_1}\ar[d]_{r_2}\ar@{}[dr]|{\varphi\Downarrow} &\ar[d]^{s_1}\ar@{}[dr]|= & \ar[r]^{r_1}\ar[d]_{r_2}\ar@{}[dr]|{\psi\Downarrow}&\ar[d]|{t_1}\ar[r]^{s_1}\ar@{}[dr]|{w_1\varepsilon_1\Downarrow}& \ar[d]^{w_1\overline{w}_2} \ar@{}[drr]|{\mbox{and}} && \ar[r]^{r_1}\ar[d]_{r_2}\ar@{}[dr]|{\varphi\Downarrow} &\ar[d]^{s_1}\ar@{}[dr]|= & \ar[r]^{r_1}\ar[d]_{r_2}\ar@{}[dr]|{\psi\Downarrow}&\ar[d]|{t_1}\ar[r]^{s_1}\ar@{}[dr]|{\delta_1\Downarrow}& \ar[d]^{\overline{f}_1}
\\
\ar[r]_{s_2}\ar[d]_{t_2}\ar@{}[dr]|{w_1\varepsilon_2\Downarrow} & \ar[d]^{w_1\overline{w}_2} & \ar[r]_{t_2}&\ar[r]_{w_1\widetilde{w}_2} & && \ar[r]_{s_2}\ar[d]_{t_2}\ar@{}[dr]|{\delta_2\Downarrow} & \ar[d]^{\overline{f}_1} & \ar[r]_{t_2}&\ar[r]_{\widetilde{f}_1} & 
\\
\ar[r]_{w_1\widetilde{w}_2} & &&&&&\ar[r]_{\widetilde{f}_1}&&&&
}
$$
Now consider the cospan $\xymatrix@1{\ar[r]^{v_{3,i}}&&\ar[l]_{r_i}}$.
Since both $w_1\overline{w}_2s_iv_{3,i}$ and $w_1\overline{w}_2s_ir_i$ are in $\frakW$ we can use conditions {\bf [WB3]}, {\bf [WB4]} and {\bf [WB2]} to obtain a square with an invertible 2-cell,
$$
\xymatrix{
\ar[r]^{r'_i}\ar[d]_{v'_{3,i}}\ar@{}[dr]|{\stackrel{\rho'_i}{\Leftarrow}} & \ar[d]^{v_{3,i}}
\\
\ar[r]_{r_i}&
}
$$
with $w_1\overline{w}_2s_ir_i\overline{v}_{3,i}\in\frakW$.
We apply the same conditions then to $w_1\overline{w}_2s_1r_1{v}'_{3,1}$
and $w_1\overline{w}_2s_2r_2{v}'_{3,2}$ to obtain a square with an invertible 2-cell,
$$
\xymatrix{
\ar[r]^{u_2}
\ar[d]_{u_1}\ar@{}[dr]|{\stackrel{\omega}{\Leftarrow}} & \ar[d]^{{v}'_{3,2}}
\\
\ar[r]_{{v}'_{3,1}}&
}
$$
such that $w_1\overline{w}_2s_1r_1{v}'_{3,1}u_1\in\frakW$.
Now write $\rho_1:=\rho_1'u_1$, $\overline{r}_1:={r}'_1u_1$, 
$\overline{v}_3:={v}'_{3,1}u_1$, and $\overline{r}_2:=r_2'u_2$. Finally, write
$\rho_2$ for the pasting of
$$
\xymatrix{
\ar[d]_{u_1}\ar[r]^{u_2} &\ar[ddl]^{{v}'_{3,2}}\ar[r]^{r_2'}\ar@{}[dl]|{\stackrel{\omega}{\Leftarrow}}\ar@{}[ddr]|{\stackrel{\rho_2'}{\Leftarrow}} &\ar[dd]^{v_{3,2}}
\\
\ar[d]_{v'_{3,1}}
\\
\ar[rr]_{r_2}&&
}
$$
Then we obtain the following diagram,
$$
\xymatrix@C3.5em{
\ar[d]_{v_{3,1}}\ar@{}[dr]|{\stackrel{\rho_1}{\Rightarrow}} &\ar[l]_{\overline{r}_1}\ar[r]^{\overline{r}_2}\ar[d]_{\overline{v}_3}\ar@{}[dr]|{\stackrel{\rho_2}{\Leftarrow}} &\ar[d]^{v_{3,2}}
\\
&\ar[l]^{r_1}\ar[r]_{r_2} &
}
$$
Now consider the following two pasting diagrams,
$$
\xymatrix@C=3.5em{
& \ar[dl]_{\overline{s}_1} \ar[d]|{v_{3,1}} 
& \ar[l]_{\overline{r}_1} \ar@{}[dl]|{\stackrel{\rho_1}{\Rightarrow}}\ar[d]|{\overline{v}_3} \ar@{}[dr]|{\stackrel{\rho_2^{-1}}{\Rightarrow}} \ar[r]^{\overline{r}_2} & \ar[d]|{v_{3,2}} \ar[dr]^{\overline{s}_2} 
&&& \ar[dl]_{\overline{t}_1} \ar[d]|{v_{3,1}} & \ar[l]_{\overline{r}_1} \ar@{}[dl]|{\stackrel{\rho_1}{\Rightarrow}}\ar[d]|{\overline{v}_3} \ar@{}[dr]|{\stackrel{\rho_2^{-1}}{\Rightarrow}}\ar[r]^{\overline{r}_2} & \ar[d]|{v_{3,2}}\ar[dr]^{\overline{t}_2}
\\
\ar@{}[r]|{\stackrel{\sigma_1}{\Rightarrow}} \ar@/_1.5ex/[drr]_{\overline{w}_3} & \ar@/_.75ex/[dr]|{s_1} 
& \ar[l]^{r_1} \ar[r]_{r_2} \ar@{}[d]|{\stackrel{\varphi}{\Rightarrow}} 
& \ar@/^.75ex/[dl]|{s_2} \ar@{}[r]|{\stackrel{\sigma_1^{-1}}{\Rightarrow}} &\ar@/^1.5ex/[dll]^{\overline{w}_3} 
& \ar@{}[r]|{\stackrel{\tau_1}{\Rightarrow}} \ar@/_1.5ex/[drr]_{\widetilde{w}_3} & \ar@/_.75ex/[dr]|{t_1} & \ar[l]^{r_1}\ar[r]_{r_2}\ar@{}[d]|{\stackrel{\psi}{\Rightarrow}} & 
\ar@/^.75ex/[dl]|{t_2} \ar@{}[r]|{\stackrel{\tau_2^{-1}}{\Rightarrow}} &\ar@/^1.5ex/[dll]^{\widetilde{w}_3}
\\
&& \ar[d]_{w_1\overline{w}_2} && & && \ar[d]_{w_1\widetilde{w}_2}
\\
&&&&&&&
}
$$
Use condition {\bf [WB4]} to lift the first pasting with respect to $w_1\overline{w}_2\overline{w}_3$ to obtain $\varphi'\colon \overline{s}_1\overline{r}_1u\Rightarrow\overline{s}_2\overline{r}_2u$; similarly, apply condition {\bf[WB4]} to the pasting of the second diagram composed with $u$ and lift with respect to $w_1\widetilde{w}_2\widetilde{w}_3$ to obtain $\widetilde\psi\colon\overline{t}_1\overline{r}_1uu'\Rightarrow\overline{t}_2\overline{r}_2uu'$.
Now write $\widetilde{r}_1=\overline{r}_1uu'$, $\widetilde{r}_2=\overline{r}_2uu'$, and $\widetilde\varphi=\varphi'u'$. 
Then the reader may check that the 2-cells 
$$
\xymatrix{
&
\\
\ar@/^1ex/[ur]^{\overline{s}_1}\ar@/_1ex/[dr]_{\overline{t}_1}&\ar[l]_{\widetilde{r}_1}\ar[r]^{\widetilde{r}_2}\ar@{}[u]|{\stackrel{\widetilde{\varphi}}{\Rightarrow}}\ar@{}[d]|{\stackrel{\widetilde{\psi}}{\Rightarrow}} &\ar@/_1ex/[ul]_{\overline{s}_2}\ar@/^1ex/[dl]^{\overline{t}_2}
\\
&
}
$$
witness to the 2-cell diagrams in (\ref{equivcells}) being equivalent.
\end{proof}

The following proposition is the dual to the previous one; however, the proof is quite different,   due to the special role played by arrows in $\frakW$.

\begin{prop}\label{double-layer2}
Suppose we have two diagrams in $\calB$,
\begin{equation}\label{rightconfigurations}
\xymatrix{
&\ar[l]_{w_1}\ar@{}[drr]|{\alpha_1\Downarrow}\ar[d]_{{f}_1}&&\ar[ll]_{\overline{w}_2}\ar[d]^{\overline{f}_1}&\mbox{and}&&\ar[l]_{w_1}\ar@{}[drr]|{\beta_1\Downarrow}\ar[d]_{f_1}&&\ar[ll]_{\widetilde{w}_2}\ar[d]^{\widetilde{f}_1}
\\
&&\ar[l]^{w_2}\ar[d]_{f_2}\ar@{}[dr]|{\alpha_2\Downarrow}&\ar[l]_{\overline{w}_3}\ar[d]^{\overline{f}_2}&&&&\ar[l]^{w_2}\ar[d]_{f_2}\ar@{}[dr]|{\beta_2\Downarrow}&\ar[l]_{\widetilde{w}_3}\ar[d]^{\widetilde{f}_2}
\\
&&&\ar[l]^{w_3} && &&& \ar[l]^{w_3}
}
\end{equation} with all 2-cells invertible and all of $w_3, w_2\overline{w}_3, w_2\widetilde{w}_3, w_1\overline{w}_2$, and $w_1\widetilde{w}_2$ in $\frakW$.
Suppose further that we have two 2-cell diagrams 
$$
\xymatrix@C=3.5em{
&\ar[dl]_{w_2\overline{w}_3}\ar[dr]^{\overline{f}_2}&
\\
\ar@{}[r]|{w_2\varepsilon_i\Downarrow}&\ar[u]_{s_i}\ar[d]^{t_i}\ar@{}[r]|{\delta_i\Downarrow}& & \mbox{ for $i=1,2$},
\\
&\ar[ul]^{w_2\widetilde{w}_3}\ar[ur]_{\widetilde{f}_2}
}
$$
that both connect $\alpha_2$ and $\beta_2$.
Suppose that there are 2-cells $\sigma_i$, $\tau_i$ and $\zeta_i$ for $i=1,2$
as in,
$$
\xymatrix@C=4em{
&\ar[dl]_{\overline{w}_2}\ar@{}[dr]|{\sigma_i\Downarrow}\ar[r]^{\overline{f}_1}&\ar[dr]^{\overline{f}_2}
\\
\ar@{}[r]|{\zeta_i\Downarrow}&\ar[u]^{\overline{s}_i}\ar[d]_{\overline{t}_i}\ar@{}[dr]|{\tau_i\Downarrow} \ar[r]_{g_{1,i}}& \ar[u]_{{s}_i}\ar[d]^{{t}_i}\ar@{}[r]|{\delta_i\Downarrow}&
\\
&\ar[ul]^{\widetilde{w}_2} \ar[r]_{\widetilde{f}_1}&\ar[ur]_{\widetilde{f}_2}
}
$$
such that $w_1\overline{w}_2\overline{s}_i\in\frakW$ for $i=1,2$, and
\begin{equation}\label{compatibilityeqn}
\xymatrix@C=3.5em{
&&\ar[dll]_{\overline{w}_2}&&&&\ar[dll]_{\overline{w}_2}\ar[d]_{\overline{f}_1}
\\
\ar@{}[rrr]|{\zeta_i\Downarrow} \ar[d]_{f_1} &&&\ar[ul]_{\overline{s}_i}\ar[dl]|{\overline{t}_i} \ar[d]^{g_{1,i}} \ar@{}[dr]|= & \ar[d]_{f_1}\ar@{}[rr]|{\alpha_1\Downarrow} && \ar[dl]|{\overline{w}_3}\ar@{}[r]|{\sigma_i\Downarrow}&\ar[d]^{g_{1,i}}\ar[ul]_{\overline{s}_i}
\\
\ar@{}[rr]|{\beta_1\Downarrow} &&\ar[d]^{\widetilde{f}_1}\ar[ull]|{\widetilde{w}_2}\ar@{}[r]|{\tau_i^{-1}\Downarrow} & \ar[dl]^{t_i} & & \ar[l]^{w_2} \ar@{}[rr]|{\varepsilon_i\Downarrow} && \ar[ul]|{s_i}\ar[dl]^{t_i}
\\
&&\ar[ull]^{w_2\widetilde{w}_3} &&&&\ar[ul]^{\widetilde{w}_3}&
}
\end{equation}
for $i=1,2$. Then the 2-cell diagrams,
\begin{equation}\label{equivcells2}
\xymatrix@C=4em{
&\ar[dl]_{w_1\overline{w}_2}\ar@{}[dr]|{\sigma_1\Downarrow}\ar[r]^{\overline{f}_1}&\ar[dr]^{\overline{f}_2} &&& \ar[dl]_{w_1\overline{w}_2}\ar@{}[dr]|{\sigma_2\Downarrow}\ar[r]^{\overline{f}_1}&\ar[dr]^{\overline{f}_2} 
\\
\ar@{}[r]|{w_1\zeta_1\Downarrow}&\ar[u]^{\overline{s}_1}\ar[d]_{\overline{t}_1}\ar@{}[dr]|{\tau_1\Downarrow} \ar[r]_{g_{1,1}}& \ar[u]_{s_1}\ar[d]^{t_1}\ar@{}[r]|{\delta_1\Downarrow}&\ar@{}[r]|{\mbox{and}} & \ar@{}[r]|{w_1\zeta_2\Downarrow}&\ar[u]^{\overline{s}_2}\ar[d]_{\overline{t}_2}\ar@{}[dr]|{\tau_2\Downarrow} \ar[r]_{g_{1,2}} & \ar[u]_{s_2}\ar[d]^{t_2}\ar@{}[r]|{\delta_2\Downarrow}&
\\
&\ar[ul]^{w_1\widetilde{w}_2}  \ar[r]_{\widetilde{f}_1}&\ar[ur]_{\widetilde{f}_2}&&&\ar[ul]^{w_1\widetilde{w}_2} \ar[r]_{\widetilde{f}_1}&\ar[ur]_{\widetilde{f}_2}
}
\end{equation}
are equivalent.
\end{prop}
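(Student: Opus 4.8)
\textbf{Plan for the proof of Proposition~\ref{double-layer2}.}

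The overall strategy parallels the proof of Proposition~\ref{double-layer1}, but I will exploit the different role played by the arrows in $\frakW$. The two 2-cell diagrams in \eqref{equivcells2} have the same left-hand side $w_1\overline{w}_2$, so by Theorem~\ref{main5} — more precisely, by Proposition~\ref{Matteo3}, which does not require co-ff — it suffices to produce an arrow $r$ with $w_1\overline{w}_2\overline{s}_1r\in\frakW$ after which $\beta$- and $\gamma$-type right-hand cells of the two diagrams agree upon whiskering by $r$. Actually, since we only want equivalence of 2-cell diagrams and not literal equality after restriction, I will instead build the connecting data directly: I want arrows $\widetilde r_1,\widetilde r_2$ and invertible 2-cells $\varphi\colon \overline{s}_1\widetilde r_1\Rightarrow\overline{s}_2\widetilde r_2$, $\psi\colon \overline{t}_1\widetilde r_1\Rightarrow\overline{t}_2\widetilde r_2$ with $w_1\overline{w}_2\overline{s}_1\widetilde r_1\in\frakW$, satisfying the two pasting equations that witness equivalence of the diagrams in \eqref{equivcells2}.

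First I would use that the two 2-cell diagrams $(w_2\varepsilon_i,\delta_i)$, $i=1,2$, both connect $\alpha_2$ and $\beta_2$; by Proposition~\ref{unique-2-cell} they are equivalent, so there are arrows $r_1,r_2$ and invertible 2-cells $\varphi_0\colon s_1r_1\Rightarrow s_2r_2$, $\psi_0\colon t_1r_1\Rightarrow t_2r_2$ with $w_2\overline{w}_3 s_1 r_1\in\frakW$ satisfying the compatibility pastings with $w_2\varepsilon_i$ and with $\delta_i$. The idea is then to transport $r_1,r_2$ up along the arrows $g_{1,i}$: I consider the cospan $\xymatrix@1{\ar[r]^{g_{1,i}}&&\ar[l]_{r_i}}$ and, using {\bf [WB3]}, {\bf [WB4]}, {\bf [WB2]} (and {\bf [WB5]} to keep composites in $\frakW$ since $w_1\overline{w}_2\overline{s}_i$ and $w_2\overline{w}_3 s_i$ are related through the invertible cells $\zeta_i$, $\sigma_i$, $\tau_i$), produce squares with invertible 2-cells $\rho_i'\colon g_{1,i} r_i'\Rightarrow r_i v_{3,i}'$, then reconcile the two over $i=1,2$ with one more application of {\bf [WB3]}/{\bf [WB4]}/{\bf [WB2]} to get common arrows $\widetilde r_1,\widetilde r_2$ and a single invertible 2-cell comparing $g_{1,1}$- and $g_{1,2}$-liftings. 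This is exactly the bookkeeping carried out in Proposition~\ref{double-layer1}, just with $g_{1,i}$ in place of $v_{3,i}$.

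With $\widetilde r_1,\widetilde r_2$ in hand I would form the two candidate pastings for $\varphi$ and $\psi$: for $\varphi$, paste $\sigma_1$, the transported $\varphi_0$, and $\sigma_2^{-1}$ around $\overline{s}_1\widetilde r_1$ versus $\overline{s}_2\widetilde r_2$, and lift with respect to $w_1\overline{w}_2$ (which is where {\bf [WB4]} is used and why $w_1\overline{w}_2$ being in $\frakW$ matters); for $\psi$, paste $\tau_1$, the transported $\psi_0$, and $\tau_2^{-1}$ and lift with respect to $w_1\widetilde{w}_2$, possibly after one further {\bf [WB2]} adjustment to make the two liftings share a domain. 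The compatibility hypothesis \eqref{compatibilityeqn} is precisely what is needed to check that these lifted cells $\varphi$, $\psi$ fit together with $\delta_i$ and $\alpha_1,\beta_1$ so that the witnessing equations for equivalence of \eqref{equivcells2} hold; verifying those two pasting equations is a diagram chase using \eqref{compatibilityeqn}, invertibility of all the $\alpha,\beta,\sigma,\tau,\zeta$ cells, and Lemma~\ref{Matteo1} to cancel invertible arrows in $\frakW$ where needed.

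\textbf{Main obstacle.} The conceptual content is routine once the analogy with Proposition~\ref{double-layer1} is set up, but the real difficulty — and the reason the authors flag this as genuinely dual rather than identical — is that here the arrows in $\frakW$ sit on the \emph{outside} (the $w_i$'s), so the liftings via {\bf [WB4]} must be taken with respect to $w_1\overline{w}_2$ and $w_1\widetilde{w}_2$ rather than with respect to the $\overline w_3,\widetilde w_3$ type arrows; keeping track of which composites are in $\frakW$ through the transport step, and ensuring that the final witnessing 2-cells $\varphi,\psi$ are \emph{invertible} (not merely 2-cells), is the delicate part. Invertibility should follow from the remark after Proposition~\ref{isolifting} — that a lifting of an invertible cell becomes invertible after precomposing with a suitable arrow — applied to each of $\varphi$ and $\psi$, with one last {\bf [WB2]} step to bring everything over a common arrow $r$ with $w_1\overline{w}_2\overline{s}_1\widetilde r_1 r\in\frakW$.
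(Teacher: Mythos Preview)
Your overall shape is right --- produce witnessing cells $\varphi\colon \overline{s}_1\widetilde r_1\Rightarrow\overline{s}_2\widetilde r_2$ and $\psi\colon \overline{t}_1\widetilde r_1\Rightarrow\overline{t}_2\widetilde r_2$ and verify the two equivalence equations --- and your first step (invoke Proposition~\ref{unique-2-cell} to get $r_1,r_2,\varphi_0,\psi_0$ at the lower level) matches the paper. But the construction you propose for $\varphi$ does not type-check, and this is exactly the point where the duality with Proposition~\ref{double-layer1} breaks.

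You write: ``paste $\sigma_1$, the transported $\varphi_0$, and $\sigma_2^{-1}$ \ldots\ and lift with respect to $w_1\overline{w}_2$.'' But the cells $\sigma_i$ live on the \emph{right} side of the diagram: they relate $\overline{f}_1\overline{s}_i$ to $s_i g_{1,i}$. A pasting of $\sigma_1$, transported $\varphi_0$, and $\sigma_2^{-1}$ therefore yields a cell of the form $\overline{f}_1\overline{s}_1(\cdots)\Rightarrow \overline{f}_1\overline{s}_2(\cdots)$, with $\overline{f}_1$ on the outside --- not $w_1\overline{w}_2$. Since $\overline{f}_1$ is not in $\frakW$, {\bf [WB4]} gives you nothing here, and the lift you want is unavailable. (In Proposition~\ref{double-layer1} the analogous $\sigma_i$ sat on the \emph{left}, relating $\overline{w}_3$-type arrows, which is why that lifting worked there.) The same objection applies to your proposed construction of $\psi$ via $\tau_i$.

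The paper's fix is to reverse the order of the two equations. It obtains $\widetilde\varphi$ \emph{directly} from {\bf [WB3]}--{\bf[WB4]}--{\bf[WB2]} applied to the cospan $w_1\overline{w}_2\overline{s}_1$, $w_1\overline{w}_2\overline{s}_2$ (both in $\frakW$), without reference to $\sigma_i$ or $\varphi_0$ at all. It then \emph{defines} $\widetilde\psi$ as a lift, with respect to $w_1\widetilde{w}_2\in\frakW$, of the pasting $(w_1\zeta_2)\cdot(w_1\overline{w}_2\,\widetilde\varphi)\cdot(w_1\zeta_1^{-1})$; this makes the \emph{left} equivalence equation (the one in the $\zeta_i$) hold by construction. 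All of the real work --- and where your ingredients $\varphi_0,\psi_0$, the transport along $g_{1,i}$ versus $r_i$, and the hypothesis \eqref{compatibilityeqn} finally enter --- goes into establishing the \emph{right} equation, which is the long calculation culminating in equation~(\ref{bigclaim}). Your plan inverts this: it tries to build $\varphi,\psi$ so the right equation is nearly automatic and then check the left one, but the right-side cells give no handle for lifting.

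A minor point: your opening remark that the two diagrams in \eqref{equivcells2} ``have the same left-hand side'' conflates arrows with cells. They share the arrows $w_1\overline{w}_2$, $w_1\widetilde{w}_2$, but the left-hand \emph{cells} are $w_1\zeta_1$ and $w_1\zeta_2$, which differ; so Proposition~\ref{Matteo3} is not applicable, as you then correctly note.
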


\begin{proof}
By Proposition~\ref{unique-2-cell} we know that
$$
\xymatrix@C=3.5em{
&\ar[dl]_{w_2\overline{w}_3}\ar[dr]^{\overline{f}_2}& && &\ar[dl]_{w_2\overline{w}_3}\ar[dr]^{\overline{f}_2}&
\\
\ar@{}[r]|{w_2\varepsilon_1\Downarrow}&\ar[u]_{s_1}\ar[d]^{t_1}\ar@{}[r]|{\delta_1\Downarrow}& &\mbox{and}&\ar@{}[r]|{w_2\varepsilon_2\Downarrow}&\ar[u]_{s_2}\ar[d]^{t_2}\ar@{}[r]|{\delta_2\Downarrow}&
\\
&\ar[ul]^{w_2\widetilde{w}_3}\ar[ur]_{\widetilde{f}_2}& && & \ar[ul]^{w_2\widetilde{w}_3}\ar[ur]_{\widetilde{f}_2}}
$$
are equivalent 2-cell diagrams as they both connect $\alpha_2$ and $\beta_2$.
So there are 2-cells
\begin{equation}\label{phipsi}
\xymatrix{
&
\\
\ar@/^1ex/[ur]^{s_1}\ar@/_1ex/[dr]_{t_1}&\ar[l]_{r_1}\ar[r]^{r_2}\ar@{}[u]|{\stackrel{\varphi}{\Rightarrow}} \ar@{}[d]|{\stackrel{\psi}{\Rightarrow}} &\ar@/_1ex/[ul]_{s_2}\ar@/^1ex/[dl]^{t_2}
\\
&
}
\end{equation}
such that $w_2\overline{w}_3s_1r_1\in\frakW$ and
\begin{equation}\label{eqveqns}
\xymatrix@C=2.75em{
\ar[r]^{r_1}\ar[d]_{r_2}\ar@{}[dr]|{\varphi\Downarrow} &\ar[d]^{s_1}\ar@{}[dr]|= & \ar[r]^{r_1}\ar[d]_{r_2}\ar@{}[dr]|{\psi\Downarrow}&\ar[d]|{t_1}\ar[r]^{s_1}\ar@{}[dr]|{w_2\varepsilon_1\Downarrow}& \ar[d]^{w_2\overline{w}_3} \ar@{}[drr]|{\mbox{and}} && \ar[r]^{r_1}\ar[d]_{r_2}\ar@{}[dr]|{\varphi\Downarrow} &\ar[d]^{s_1}\ar@{}[dr]|= & \ar[r]^{r_1}\ar[d]_{r_2}\ar@{}[dr]|{\psi\Downarrow}&\ar[d]|{t_1}\ar[r]^{s_1}\ar@{}[dr]|{\delta_1\Downarrow}& \ar[d]^{\overline{f}_2}
\\
\ar[r]_{s_2}\ar[d]_{t_2}\ar@{}[dr]|{w_2\varepsilon_2\Downarrow} & \ar[d]^{w_2\overline{w}_3} & \ar[r]_{t_2}&\ar[r]_{w_2\widetilde{w}_3} & && \ar[r]_{s_2}\ar[d]_{t_2}\ar@{}[dr]|{\delta_2\Downarrow} & \ar[d]^{\overline{f}_2} & \ar[r]_{t_2}&\ar[r]_{\widetilde{f}_2} & 
\\
\ar[r]_{w_2\widetilde{w}_3} & &&&&&\ar[r]_{\widetilde{f}_2}&&&&
}
\end{equation}
Since the composites $w_1\overline{w}_2\overline{s}_i\in\frakW$ for $i=1,2$, we can use conditions {\bf[WB3]}, {\bf[WB4]} and {\bf[WB2]} to obtain  an invertible 2-cell $\varphi'$ as in
$$
\xymatrix@C=3em{
&&\ar[dl]_{\overline{s}_1}
\\
&\ar[l]_{w_1\overline{w}_2}\ar@{}[r]|(.55){{\varphi}'\Downarrow}
&\ar[u]_{{r}'_1}\ar[d]^{r'_2}
\\
&&\ar[ul]^{\overline{s}_2}
}
$$
with $w_1\overline{w}_2\overline{s}_1r'_1\in\frakW$.
We want to define a corresponding cell $\psi'$. So
consider the diagram,
\begin{equation}\label{tildepsi}
\xymatrix@C=4.5em{
\ar[r]^{\overline{t}_1} \ar[dr]|{\overline{s}_1} \ar@{}[drr]|{w_1\zeta_1^{-1}\Downarrow} & \ar[dr]^{w_1\widetilde{w}_2}
\\
\ar[u]^{r'_1} \ar@{}[r]|{\varphi'\Downarrow} \ar[d]_{r'_2} & \ar[r]|{w_1\overline{w}_2} &
\\
\ar[ur]|{\overline{s}_2} \ar@{}[urr]|{w_1\zeta_2\Downarrow} \ar[r]_{\overline{t}_2}&\ar[ur]_{w_1\widetilde{w}_2}
}
\end{equation}
Since $w_1\widetilde{w}_2\in\frakW$, we apply conditions {\bf[WB4]} and {\bf[WB2]}
to lift the pasting of this diagram with respect to $w_1\widetilde{w}_2$ to obtain
${\psi}'\colon \overline{t}_1r'_1w'\Rightarrow\overline{t}_2r'_2w'$.
Now note that $\widetilde{w}_2\psi'$ and the composite of
$$
\xymatrix@C=4em{
\ar[r]^{\overline{t}_1} \ar[dr]|{\overline{s}_1} \ar@{}[drr]|{\zeta_1^{-1}\Downarrow} & \ar[dr]^{\widetilde{w}_2}
\\
\ar[u]^{r'_1w'} \ar@{}[r]|{\varphi'w'\Downarrow} \ar[d]_{r'_2w'} & \ar[r]^{\overline{w}_2} &
\\
\ar[ur]|{\overline{s}_2} \ar@{}[urr]|{\zeta_2\Downarrow} \ar[r]_{\overline{t}_2}&\ar[ur]_{\widetilde{w}_2}
}
$$
are both liftings of the pasting of (\ref{tildepsi}) with respect to $w_1$.
So by condition {\bf [WB4]} there is an arrow $w''$ such that $\psi'w''$ is equal to the composition of this last pasting with $w''$. We will need this in our calculations, so we write $\overline{r}_i=r'_iw'w''$, $\widetilde\varphi=\varphi'w'w''$, and $\widetilde\psi=\psi'w''$. This gives us the following diagram
\begin{equation}\label{tildephipsi}
\xymatrix{
&
\\
\ar@/^1ex/[ur]^{\overline{s}_1}\ar@/_1ex/[dr]_{\overline{t}_1}&\ar[l]_{\overline{r}_1}\ar[r]^{\overline{r}_2}\ar@{}[u]|{\stackrel{\widetilde\varphi}{\Rightarrow}} \ar@{}[d]|{\stackrel{\widetilde\psi}{\Rightarrow}} &\ar@/_1ex/[ul]_{\overline{s}_2}\ar@/^1ex/[dl]^{\overline{t}_2}
\\
&
}
\end{equation}
These cells satisfy the required equation with the $\zeta_i$ by construction:
$$
\xymatrix@C=4em@R=2.6em{
\ar[r]^{\overline{r}_1}\ar[d]_{\overline{r}_2}\ar@{}[dr]|{\widetilde\varphi\Downarrow}&\ar[d]^{\overline{s}_1}&\ar@{}[d]|{=}&\ar[r]^{\overline{r}_1}\ar[d]_{\overline{r}_2}\ar@{}[dr]|{\widetilde\psi\Downarrow}&\ar[r]^{\overline{s}_1}\ar[d]_{\overline{t}_1}\ar@{}[dr]|{w_1\zeta_1\Downarrow}&\ar[d]^{w_1\overline{w}_2}
\\
\ar[r]^{\overline{s}_2}\ar[d]_{\overline{t}_2}\ar@{}[dr]|{w_1\zeta_2\Downarrow}&\ar[d]^{w_1\overline{w}_2}&&\ar[r]_{\overline{t}_2}&\ar[r]_{w_1\widetilde{w}_2}&
\\
\ar[r]_{w_1\widetilde{w}_2}&
}
$$
We will next see that after precomposing with an appropriate arrow they will also satisfy the equation for the composites of the right-hand sides of (\ref{equivcells2}).
Since the cells $\varphi$ and $\psi$ satisfy the equation with the $\delta_i$ as stated in (\ref{eqveqns}), we will 
focus on the cylinder with the diagram (\ref{phipsi}) as bottom and (\ref{tildephipsi})
as top. The sides of this cylinder are given by 
$$\xymatrix@C=3em{
\ar[d]_{\overline{f}_1}&\ar[l]_{\overline{s}_1}\ar[r]^{\overline{t}_1}\ar[d]|{g_{1,1}}\ar@{}[dl]|{\stackrel{\sigma_1}{\Rightarrow}}\ar@{}[dr]|{\stackrel{\tau_1}{\Rightarrow}}&\ar[d]^{\widetilde{f}_1}\ar@{}[drr]|{\mbox{and}}&&\ar[d]_{\overline{f}_1}&\ar[l]_{\overline{s}_2}\ar[r]^{\overline{t}_2}\ar[d]|{g_{1,2}}\ar@{}[dl]|{\stackrel{\sigma_2}{\Rightarrow}}\ar@{}[dr]|{\stackrel{\tau_2}{\Rightarrow}}&\ar[d]^{\widetilde{f}_1}
\\
&\ar[l]^{s_1}\ar[r]_{t_1}&&&&\ar[l]^{s_2}\ar[r]_{t_2}&}
$$
Before we can discuss the commutativity of this cylinder, we need to build cells to fill in the following frame,
$$
\xymatrix{
\ar[d]_{g_{1,1}}&\ar[l]_{\overline{r}_1}\ar[r]^{\overline{r}_2}&\ar[d]^{g_{1,2}}
\\
&\ar[l]_{r_1}\ar[r]^{r_2}&}
$$
Since $w_1\overline{w}_2s_1r_1\in\frakW$, we can use conditions {\bf[WB3]}, {\bf[WB4]} and {\bf[WB2]} to construct an invertible 2-cell $\rho_1$ as in
$$
\xymatrix{
\ar[d]_{h_1}\ar[r]^u\ar@{}[drr]|{\rho_1\Downarrow} & \ar[r]^{\overline{r}_1}&\ar[d]^{g_{1,1}}
\\
\ar[rr]_{r_1}&&
}
$$
where $w_1\overline{w}_2\overline{s}_1\overline{r}_1u\in\frakW$. Use this to construct a left-hand square in the frame.
To obtain a cell to fill the remaining right-hand square, we consider the following pasting diagram,
$$
\xymatrix@C=2.8em@R=2.8em{
\ar[dd]_{h_1}\ar@{}[ddr]|{\rho_1\Downarrow}\ar[r]^u&\ar[d]|{\overline{r}_1}\ar@{}[dr]|{\widetilde{\psi}^{-1}\Downarrow}\ar[r]^{\overline{r}_2}&\ar[d]^{\overline{t}_2}\ar[dr]^{g_{1,2}}\ar@{}[ddr]|{\tau_2\Downarrow}
\\
&\ar[d]|{g_{1,1}}\ar[r]^{\overline{t}_1}\ar@{}[drr]|{\tau_1^{-1}\Downarrow}&\ar[dr]|{\widetilde{f}_1}&\ar[d]^{t_2}
\\
\ar[r]^{r_1}\ar[dr]_{r_2}&\ar@{}[d]|{\psi\Downarrow}\ar[rr]^{t_1}&&\ar[r]_{w_2\widetilde{w}_3}&
\\
&\ar[urr]_{t_2}
}
$$
Now lift with respect to $w_2\widetilde{w}_3t_2$ to obtain $\rho_2\colon g_{1,2}\overline{r}_2u\widetilde{t}\Rightarrow r_2h_1\widetilde{t}$.
So the middle frame gets filled as follows:
$$
\xymatrix@C=3.5em{
\ar[d]_{g_{1,1}}& \ar[l]_{\overline{r}_1u\widetilde{t}} \ar[d]|{h_1\widetilde{t}} 
\ar@{}[dl]|{\rho_1\widetilde{t}\Downarrow} \ar@{}[dr]|{\rho_2\Downarrow} \ar[r]^{\overline{r}_2u\widetilde{t}} &\ar[d]^{g_{1,2}}
\\
&\ar[l]^{r_1}\ar[r]_{r_2}&}
$$
Furthermore, we have adjusted the top of the cylinder to become
$$
\xymatrix@C=4em{
&
\\
\ar@/^1ex/[ur]^{\overline{s}_1}\ar@/_1ex/[dr]_{\overline{t}_1}&\ar[l]_{\overline{r}_1u\widetilde{t}}\ar[r]^{\overline{r}_2u\widetilde{t}}\ar@{}[u]|{\stackrel{\widetilde\varphi u\widetilde{t}}{\Rightarrow}} \ar@{}[d]|{\stackrel{\widetilde\psi u\widetilde{t}}{\Rightarrow}} &\ar@/_1ex/[ul]_{\overline{s}_2}\ar@/^1ex/[dl]^{\overline{t}_2}
\\
&
}
$$
We have defined $\rho_2$ in such a way that if the half of the cylinder that contains the $\psi,\widetilde\psi, \tau_1$ and $\tau_2$ gets composed with $w_2\widetilde{w}_3$ it commutes. Condition {\bf[WB4]} now gives that there is an arrow $x$ such that if we precompose the top of the cylinder and the middle frame both with $x$, this half of the cylinder commutes.
So now the top and the middle frame are respectively,
$$
\xymatrix@C=4em{
&&\ar@{}[drr]|{\mbox{and}}&&\ar[d]_{g_{1,1}}& \ar[l]_{\overline{r}_1u\widetilde{t}x} \ar[d]|{h_1\widetilde{t}x} 
\ar@{}[dl]|{\rho_1\widetilde{t}x\Downarrow} \ar@{}[dr]|{\rho_2x\Downarrow} \ar[r]^{\overline{r}_2u\widetilde{t}x} &\ar[d]^{g_{1,2}}
\\
\ar@/^1ex/[ur]^{\overline{s}_1}\ar@/_1ex/[dr]_{\overline{t}_1}&\ar[l]_{\overline{r}_1u\widetilde{t}x}\ar[r]^{\overline{r}_2u\widetilde{t}x}\ar@{}[u]|{\stackrel{\widetilde\varphi u\widetilde{t}x}{\Rightarrow}} \ar@{}[d]|{\stackrel{\widetilde\psi u\widetilde{t}x}{\Rightarrow}} &\ar@/_1ex/[ul]_{\overline{s}_2}\ar@/^1ex/[dl]^{\overline{t}_2}& &&\ar[l]^{r_1}\ar[r]_{r_2}&
\\
&}
$$
To investigate the commutativity of the other half of the cylinder, we will show that 
\begin{equation}\label{bigclaim}
\xymatrix@C=4em@R=3em{
&&&&&\ar[dr]^{\overline{s}_1} \ar@{}[d]|{\widetilde\varphi u\widetilde{t}x\Downarrow} &
\\
\ar[r]^{\overline{r}_1u\widetilde{t}x} \ar[d]_{h_1\widetilde{t}x}
\ar@{}[dr]|{\rho_1\widetilde{t}x\Downarrow} & \ar[d]|{g_{1,1}} \ar[r]^{\overline{s}_1}
\ar@{}[dr]|{\sigma_1\Downarrow} & \ar[d]^{\overline{f}_1} \ar@{}[drr]|{\textstyle =} &&\ar[ur]^{\overline{r}_1u\widetilde{t}x} \ar[r]_{\overline{r}_2u\widetilde{t}x}\ar[d]_{h_1\widetilde{t}x} \ar@{}[dr]|{\rho_2x\Downarrow} &\ar[d]|{g_{1,2}}\ar[r]_{\overline{s}_2}\ar@{}[dr]|{\sigma_2\Downarrow} &\ar[d]^{\overline{f}_1}
\\
\ar[dr]_{r_2}\ar[r]_{r_1} & \ar@{}[d]|{\varphi\Downarrow} \ar[r]_{s_1}&\ar[dr]^{w_2\overline{w}_3} &&\ar[r]_{r_2}&\ar[r]_{s_2} 
\ar[dr]_{t_2}\ar@{}[drr]|{w_2\varepsilon_2\Downarrow}&\ar[dr]^{w_2\overline{w}_3}
\\
&\ar[ur]_{s_2}\ar@{}[rr]|{w_2\varepsilon_2\Downarrow} \ar[dr]_{t_2}&&&&&\ar[r]_{w_2\widetilde{w}_3} &&
\\
&&\ar[ur]_{w_2\widetilde{w}_3} &&&&&
}
\end{equation}
We begin by rewriting the left-hand side.
By (\ref{eqveqns}) this pasting is equal to the pasting of
$$
\xymatrix@C=5em@R=3em{
\ar[r]^{\overline{r}_1u\widetilde{t}x} \ar[d]_{h_1\widetilde{t}x} \ar@{}[dr]|{\rho_1\widetilde{t}x\Downarrow} & \ar[r]^{\overline{s}_1} \ar[d]|{g_{1,1}}\ar@{}[dr]|{\sigma_1\Downarrow} &\ar[d]^{\overline{f}_1}
\\
\ar[r]_{r_1}\ar[d]_{r_2}\ar@{}[dr]|{\psi\Downarrow} & \ar[r]_{s_1}\ar[d]|{t_1}\ar@{}[dr]|{w_2\varepsilon_1\Downarrow} & \ar[d]^{w_2\overline{w}_3}
\\
\ar[r]_{t_2}&\ar[r]_{w_2\widetilde{w}_3}&
}
$$
We use (\ref{eqveqns}) to rewrite the right two 2-cells in this diagram to get
$$
\xymatrix@C=4.5em{
\ar[r]^{\overline{r}_1u\widetilde{t}x} \ar[d]_{h_1\widetilde{t}x} \ar@{}[dr]|{\rho_1\widetilde{t}x\Downarrow} &\ar[d]|{g_{1,1}}\ar@{}[ddr]|{\tau_1\Downarrow}\ar[dr]^{\overline{t}_1}\ar@{}[drr]|{\zeta_1\Downarrow}\ar[r]^{\overline{s}_1}& \ar[dr]^{\overline{w}_2}  \ar[rr]^{\overline{f}_1} &\ar@{}[dr]|{\alpha_1^{-1}\Downarrow}&\ar[dd]^{w_2\overline{w}_3}
\\
\ar[d]_{r_2} \ar@{}[dr]|{\psi\Downarrow} \ar[r]_{r_1} & \ar[dr]_{t_1}&\ar[d]^{\widetilde{f}_1}\ar@{}[dr]|{\beta_1\Downarrow} \ar[r]_{\widetilde{w}_2}&\ar[dr]^{f_1}&
\\
\ar[rr]_{t_2}&&\ar[rr]_{w_2\widetilde{w}_3}&&
}
$$
Now note that we have constructed $\widetilde\varphi$ and $\widetilde\psi$ such that 
$$
\xymatrix@C=3.5em{
\ar[r]^{\overline{s}_1\overline{r}_1}\ar[d]_{\overline{t}_1\overline{r}_1}\ar@{}[dr]|{\zeta_1\overline{r}_1\Downarrow}&\ar[d]^{\overline{w}_2}\ar@{}[drr]|= && \ar[d]_{\overline{r}_1}\ar[dr]|{\overline{r}_2}\ar[r]^{\overline{r}_1}\ar@{}[drr]|{\widetilde\varphi\Downarrow}\ar@{}[ddr]|{\widetilde{\psi}^{-1}\Downarrow} & \ar[dr]^{\overline{s}_1}
\\
\ar[r]_{\widetilde{w}_2}&&& \ar[dr]_{\overline{t}_1}&\ar[d]^{\overline{t}_2}\ar[r]^{\overline{s}_2}\ar@{}[dr]|{\zeta_2\Downarrow}&\ar[d]^{\overline{w}_2}
\\
&&&&\ar[r]_{\widetilde{w}_2}&
}
$$
so we make this substitution in the diagram above to obtain,
$$
\xymatrix@C=4.5em{
\ar[r]^{u\widetilde{t}x} \ar[dd]_{h_1\widetilde{t}x}
\ar@{}[ddr]|{\rho_1\widetilde{t}x\Downarrow} & \ar[d]_{\overline{r}_1}
\ar@{}[ddr]|{\widetilde{\psi}^{-1}\Downarrow} \ar[dr]|{\overline{r}_2}\ar@{}[drr]|{\widetilde{\varphi}\Downarrow}\ar[r]^{\overline{r}_1}&\ar[dr]^{\overline{s}_1}
\\
&\ar[d]_{g_{1,1}}\ar[dr]|{\overline{t}_1}\ar@{}[ddr]|{\tau_1\Downarrow} & \ar[d]_{\overline{t}_2}\ar[r]_{\overline{s}_2}\ar@{}[dr]|{\zeta_2\Downarrow} & \ar[d]^{\overline{w}_2}\ar[r]^{\overline{f}_1}\ar@{}[dr]|{\alpha_1^{-1}\Downarrow} &\ar[dd]^{w_2\overline{w}_3}
\\
\ar[r]^{r_1}\ar[d]_{r_2}\ar@{}[dr]|{\psi\Downarrow} & \ar[dr]_{t_1} &\ar[d]_{\widetilde{f}_1} \ar[r]_{\widetilde{w}_2}\ar@{}[dr]|{\beta_1\Downarrow}&\ar[dr]|{f_1} &
\\
\ar[rr]_{t_2}&&\ar[rr]_{w_2\widetilde{w}_3}&&
}
$$
We use (\ref{eqveqns}) again; this time to rewrite the bottom right-hand corner of the diagram:
$$
\xymatrix@C=4.5em{
\ar[r]^{u\widetilde{t}x} \ar[dd]_{h_1\widetilde{t}x}
\ar@{}[ddr]|{\rho_1\widetilde{t}x\Downarrow} & \ar[d]_{\overline{r}_1}
\ar@{}[ddr]|{\widetilde{\psi}^{-1}\Downarrow} \ar[dr]|{\overline{r}_2}\ar@{}[drr]|{\widetilde{\varphi}\Downarrow}\ar[r]^{\overline{r}_1}&\ar[dr]^{\overline{s}_1}
\\
&\ar[d]_{g_{1,1}}\ar[dr]|{\overline{t}_1}\ar@{}[ddr]|{\tau_1\Downarrow} & \ar[d]_{\overline{t}_2} \ar[r]_{\overline{s}_2}\ar[dr]|{g_{1,2}} & \ar@{}[d]|{\sigma_2\Downarrow}\ar[r]^{\overline{f}_1} &\ar[dd]^{w_2\overline{w}_3}
\\
\ar[r]^{r_1}\ar[d]_{r_2}\ar@{}[dr]|{\psi\Downarrow} & \ar[dr]_{t_1} &\ar[d]^{\widetilde{f}_1} \ar@{}[r]|{\stackrel{\tau_2}{\Leftarrow}}&\ar@{}[dr]|{w_2\varepsilon_2\Downarrow}\ar[dl]|{t_2}\ar[ur]_{s_2} &
\\
\ar[rr]_{t_2}&&\ar[rr]_{w_2\widetilde{w}_3}&&
}
$$
and by the definition of $\rho_2$, this is equal to
$$
\xymatrix@C=4.5em{
\ar[r]^{u\widetilde{t}x} \ar[dd]_{h_1\widetilde{t}x}
\ar@{}[3,2]|{\rho_2x\Downarrow} & 
\ar[dr]|{\overline{r}_2}\ar@{}[drr]|{\widetilde{\varphi}\Downarrow}\ar[r]^{\overline{r}_1}&\ar[dr]^{\overline{s}_1}
\\
& &  \ar[r]_{\overline{s}_2}\ar[dr]|{g_{1,2}} & \ar@{}[d]|{\sigma_2\Downarrow}\ar[r]^{\overline{f}_1} &\ar[dd]^{w_2\overline{w}_3}
\\
\ar[d]_{r_2} &  & &\ar@{}[dr]|{w_2\varepsilon_2\Downarrow}\ar[dl]|{t_2}\ar[ur]_{s_2} &
\\
\ar[rr]_{t_2}&&\ar[rr]_{w_2\widetilde{w}_3}&&
}
$$
This completes our proof of equation (\ref{bigclaim}).
Since $\varepsilon_2$ is invertible we can compose both sides of (\ref{bigclaim}) by $w_2\varepsilon_2^{-1}r_2h_1\widetilde{t}x$ and it follows that
$$
\xymatrix@C=4em@R=3em{
&&&&&\ar[dr]^{\overline{s}_1} \ar@{}[d]|{\widetilde\varphi u\widetilde{t}x\Downarrow} &
\\
\ar[r]^{\overline{r}_1u\widetilde{t}x} \ar[d]_{h_1\widetilde{t}x}
\ar@{}[dr]|{\rho_1\widetilde{t}x\Downarrow} & \ar[d]|{g_{1,1}} \ar[r]^{\overline{s}_1}
\ar@{}[dr]|{\sigma_1\Downarrow} & \ar[d]^{\overline{f}_1} \ar@{}[drr]|{\textstyle =} &&\ar[ur]^{\overline{r}_1u\widetilde{t}x} \ar[r]_{\overline{r}_2u\widetilde{t}x}\ar[d]_{h_1\widetilde{t}x} \ar@{}[dr]|{\rho_2x\Downarrow} &\ar[d]|{g_{1,2}}\ar[r]_{\overline{s}_2}\ar@{}[dr]|{\sigma_2\Downarrow} &\ar[d]^{\overline{f}_1}
\\
\ar[dr]_{r_2}\ar[r]_{r_1} & \ar@{}[d]|{\varphi\Downarrow} \ar[r]_{s_1}&\ar[dr]^{w_2\overline{w}_3} &&\ar[r]_{r_2}&\ar[r]_{s_2} 
&\ar[dr]^{w_2\overline{w}_3}
\\
&\ar[ur]_{s_2}&&&&&&&
}
$$
It follows from condition {\bf[WB4]} that there is an arrow $y$ such that
$$
\xymatrix@C=4em@R=3em{
&&&&&\ar[dr]^{\overline{s}_1} \ar@{}[d]|{\widetilde\varphi u\widetilde{t}x\Downarrow} &
\\
\ar[r]^{\overline{r}_1u\widetilde{t}xy} \ar[d]_{h_1\widetilde{t}xy}
\ar@{}[dr]|{\rho_1\widetilde{t}xy\Downarrow} & \ar[d]|{g_{1,1}} \ar[r]^{\overline{s}_1}
\ar@{}[dr]|{\sigma_1\Downarrow} & \ar[d]^{\overline{f}_1} \ar@{}[drr]|{\textstyle =} &&\ar[ur]^{\overline{r}_1u\widetilde{t}xy} \ar[r]_{\overline{r}_2u\widetilde{t}xy}\ar[d]_{h_1\widetilde{t}xy} \ar@{}[dr]|{\rho_2xy\Downarrow} &\ar[d]|{g_{1,2}}\ar[r]_{\overline{s}_2}\ar@{}[dr]|{\sigma_2\Downarrow} &\ar[d]^{\overline{f}_1}
\\
\ar[dr]_{r_2}\ar[r]_{r_1} & \ar@{}[d]|{\varphi\Downarrow} \ar[r]_{s_1}&&&\ar[r]_{r_2}&\ar[r]_{s_2} 
&
\\
&\ar[ur]_{s_2}&&&&&&
}
$$
Hence, it follows from the arguments above that the cells
$$
\xymatrix@C=4em{
&
\\
\ar@/^1ex/[ur]^{\overline{s}_1}\ar@/_1ex/[dr]_{\overline{t}_1}&\ar[l]_{\overline{r}_1u\widetilde{t}xy}\ar[r]^{\overline{r}_2u\widetilde{t}xy}\ar@{}[u]|{\stackrel{\widetilde\varphi u\widetilde{t}xy}{\Rightarrow}} \ar@{}[d]|{\stackrel{\widetilde\psi u\widetilde{t}xy}{\Rightarrow}} &\ar@/_1ex/[ul]_{\overline{s}_2}\ar@/^1ex/[dl]^{\overline{t}_2}
\\
&
}
$$
witness to the equivalence of the 2-cell diagrams in (\ref{equivcells2}).
\end{proof}

\begin{rmk}
Analogous to the situation in Proposition~\ref{unique-2-cell}, we say that the 2-cell diagrams in (\ref{equivcells}) (respectively in (\ref{equivcells2})) {\em connect} the 2-cell configurations in (\ref{leftconfigurations}) (respectively (\ref{rightconfigurations})). Propositions~\ref{double-layer1} and \ref{double-layer2}  only state uniqueness results, but it is not hard to prove existence as well. Since we will only need uniqueness in the proof of associativity coherence, we will not include the proofs of existence.  
\end{rmk}

\begin{prop} \label{P:coherence}
For any composable path of four spans,
$$\xymatrix{
&\ar[dl]_{w_1}\ar[dr]_{f_1}&&\ar[dl]_{w_2}\ar[dr]_{f_2}&&\ar[dl]_{w_3}\ar[dr]_{f_3}&&\ar[dl]_{w_4}\ar[dr]^{f_4}\\
&&&&&&&&
}
$$
the associativity 2-cells defined in Propostion~\ref{assoc-2-cells} make the associativity coherence pentagon commute.
\end{prop}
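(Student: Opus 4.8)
\textbf{Proof plan for Proposition~\ref{P:coherence}.}
The plan is to reduce the pentagon, which involves five composites of five spans arranged around a pentagon, to a chain of equivalences of 2-cell diagrams each of which is controlled by one of the uniqueness results established above. The key observation is the one announced in the introductory paragraph of Appendix~\ref{assoc2}: for each of the five bracketings of a composable path of four spans there is a common refinement, analogous to diagram (\ref{middle}), obtained by first performing all three composition squares in a fixed nested order, and each associativity 2-cell is built (as in the proof of Proposition~\ref{assoc-2-cells}) by factoring the transition between two adjacent bracketings through such a refinement via Proposition~\ref{unique-2-cell} applied to the one composition square that changes. So the plan is: first, spell out the five composite spans, the five associativity 2-cells $\alpha_1,\dots,\alpha_5$ appearing on the edges of the pentagon, and for each edge the refinement diagram and the square-change it corresponds to; then observe that both ways around the pentagon, when translated into the category $\calB(\frakW^{-1})$, become vertical composites of the elementary transition 2-cells attached to single square-changes.

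The heart of the argument is then to show that the two resulting vertical composites of elementary transitions are equal. I would organise this by introducing a ``maximal'' refinement diagram with all four layers of squares present (one square for each of the four adjacent composable pairs in the path, plus the nesting squares), playing for the pentagon the role that (\ref{middle}) plays for the triangle. Every bracketing and every edge of the pentagon maps into this maximal diagram, and the two paths around the pentagon both factor through it. Each edge-transition is then recognised as one of the three basic moves: a pure square-change governed by Proposition~\ref{unique-2-cell} (the ``single layer'' case), or one of the two ``double layer'' moves governed by Propositions~\ref{double-layer1} and \ref{double-layer2} (these are exactly the two ways a re-bracketing can move a composition square past an already-formed layer of two cells, and the asymmetry between the two propositions mirrors the asymmetry of left- versus right-association). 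Concretely, I would verify that along each of the two paths around the pentagon, the sequence of moves, read as transitions between subdiagrams of the maximal diagram, can be brought into a common normal form using: the multiplicativity/functoriality of the canonical connecting 2-cells in Lemma~\ref{multiplicativity} (which lets elementary transitions be composed and reordered whenever they act on disjoint parts of the diagram), the uniqueness clauses of Propositions~\ref{double-layer1} and \ref{double-layer2} (which identify any two connecting 2-cells between the same pair of configurations), and the strict associativity of vertical composition from Proposition~\ref{vert_associative} (so that the bracketing of the chain of transitions is immaterial).

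The main obstacle I anticipate is bookkeeping: making precise the ``maximal refinement diagram,'' tracking which composition square and which liftings each of the roughly eight elementary transitions depends on, and checking that the two orderings of these transitions differ only by moves that act on non-interacting regions of the diagram, so that Lemma~\ref{multiplicativity} and the uniqueness statements genuinely apply at each step. There is also a subtlety in that the connecting 2-cells are only unique after precomposition with a suitable arrow of $\frakW$ (as in Propositions~\ref{unique-2-cell}, \ref{double-layer1}, \ref{double-layer2}); so at the end one must, as in those proofs, pass to a common further refinement $\frakW$-arrow on which all the intermediate witnesses agree, using {\bf [WB2]} and {\bf [WB3]} to produce it. Once the combinatorial structure is set up, no essentially new calculation is needed beyond what is already contained in the three uniqueness propositions and Lemma~\ref{multiplicativity}; the coherence is then formal. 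For this reason I would present the proof as a sketch, as the paper already announces, pointing to the maximal refinement diagram and citing Propositions~\ref{unique-2-cell}, \ref{double-layer1}, \ref{double-layer2}, Lemma~\ref{multiplicativity}, and Proposition~\ref{vert_associative} at the steps where they are used, rather than drawing out every intermediate pasting diagram.
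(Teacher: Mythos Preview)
Your plan is in the right spirit and correctly identifies the essential ingredients---Propositions~\ref{unique-2-cell}, \ref{double-layer1}, \ref{double-layer2}---but the organisation differs from the paper's in a way worth noting. You propose to introduce one ``maximal refinement diagram'' through which both paths around the pentagon factor, and then to argue that the two composites of elementary transitions can be brought to a common normal form by reordering moves on disjoint regions (invoking Lemma~\ref{multiplicativity} and Proposition~\ref{vert_associative}). The paper instead subdivides the interior of the pentagon directly: it inserts several explicit intermediate configurations (not a single maximal one), cutting the pentagon into eight labelled regions, and then checks each region separately. For each region the paper names the two competing squares (or two-layer configurations) and cites exactly one of Proposition~\ref{unique-2-cell}, \ref{double-layer1}, or \ref{double-layer2} to conclude that the two bounding 2-cells coincide.

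The practical difference is that the paper's decomposition avoids the ``reordering'' step you anticipate as the main obstacle: rather than arguing that transitions on non-interacting pieces commute, it arranges the subdivision so that every region is already in the exact shape handled by one uniqueness proposition. Your maximal-refinement approach would work, but to make it rigorous you would in effect have to carry out the same subdivision implicitly, and the step ``the two orderings differ only by moves on non-interacting regions'' is precisely where the bookkeeping you flag lives. The paper's region-by-region check is how that bookkeeping is actually discharged. If you pursue your plan, be aware that Lemma~\ref{multiplicativity} alone does not give you commutation of transitions acting on different layers; that is exactly what Propositions~\ref{double-layer1} and \ref{double-layer2} are for, and you would still need to invoke them at the appropriate places rather than only the single-layer Proposition~\ref{unique-2-cell}.
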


\begin{proof}

The following diagram shows the associativity coherence pentagon.  
\[
\scalebox{.58}{\includegraphics{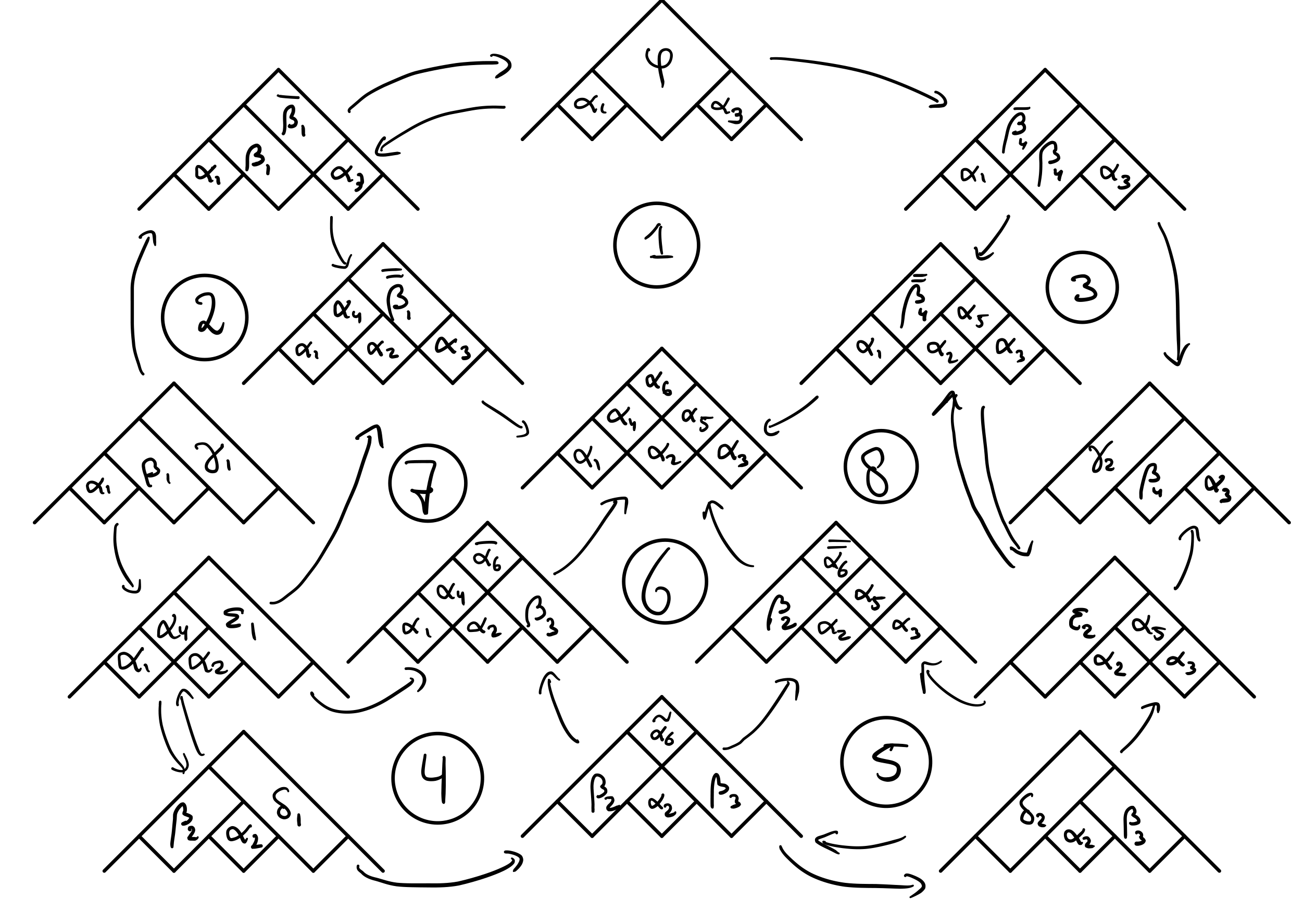}}
\]
 We  have divided the pentagon into regions corresponding to various subdivisions, and we will show that   each region commutes by one of the three results in Propositions~\ref{unique-2-cell},  \ref{double-layer1} and \ref{double-layer2}.  We sketch the argument for each region, leaving the details for the reader.

For region \raisebox{.5pt}{\textcircled{\raisebox{-.9pt} {1}}}  both composites provide a whiskering of a 2-cell that connects the squares
$$
\xymatrix@C=1.5em@R=1.5em
{
\ar[dd]\ar[rr]\ar@{}[ddrr]|{\stackrel{\varphi}{\Leftarrow}}&&\ar[dd] &&\ar[d]\ar[r]\ar@{}[dr]|{\stackrel{\alpha_6}{\Leftarrow}}&\ar[d]\ar[r]\ar@{}[dr]|{\stackrel{\alpha_5}{\Leftarrow}}&\ar[d]
\\
&&&\mbox{and}&\ar[d]\ar[r]\ar@{}[dr]|{\stackrel{\alpha_4}{\Leftarrow}}&\ar[d]\ar[r]\ar@{}[dr]|{\stackrel{\alpha_2}{\Leftarrow}}&\ar[d]
\\
\ar[d]\ar[rr]&&&&\ar[d]\ar[r]&\ar[r]&
\\
&&&&
}
$$
Since there is only one such 2-cell by Proposition~\ref{unique-2-cell}, this region commutes.

For region \raisebox{.5pt}{\textcircled{\raisebox{-.9pt} {2}}}  the two compositions connect the diagrams
$$
\xymatrix@R=1.5em@C=1.5em{
\ar[d]\ar@{}[1,3]|{\stackrel{\gamma_1}{\Leftarrow}}\ar[rrr]&&&\ar[d] && \ar[d]
\ar@{}[1,2]|{\stackrel{\overline{\overline{\beta}}_1}{\Leftarrow}}\ar[rr]&&\ar[d]\ar[r]
\ar@{}[dr]|{\stackrel{\alpha_3}{\Leftarrow}}&\ar[d]
\\
\ar[d]\ar[rr]\ar@{}[drr]|{\stackrel{\beta_1}{\Leftarrow}}&&\ar[d]\ar[r]& &\mbox{and} & \ar[d]\ar[r]\ar@{}[dr]|{\stackrel{\alpha_4}{\Leftarrow}}&\ar[d]\ar[r]\ar@{}[dr]|{\stackrel{\alpha_2}{\Leftarrow}} &\ar[d]\ar[r]&
\\
\ar[d]\ar[rr]&& &&&\ar[d]\ar[r]&\ar[r]&
\\
&&&&&
}
$$
as in Proposition~\ref{double-layer1}. 

Region \raisebox{.5pt}{\textcircled{\raisebox{-.9pt} {3}}}  is the dual of region \raisebox{.5pt}{\textcircled{\raisebox{-.9pt} {2}}} and follows from  Proposition~\ref{double-layer2}.

For region \raisebox{.5pt}{\textcircled{\raisebox{-.9pt} {4}}}  commutativity is obtained from Proposition~\ref{double-layer1} applied to 
$$
\xymatrix@R=1.5em@C=1.5em{
\ar[d]\ar[rrr]\ar@{}[1,3]|{\stackrel{\delta_1}{\Leftarrow}}&&&\ar[d] &&\ar[r]\ar[d]\ar@{}[dr]|{\stackrel{\overline{\alpha}_6}{\Leftarrow}}&\ar[d]\ar[rr]\ar@{}[drr]|{\stackrel{\beta_3}{\Leftarrow}}&&\ar[d]
\\
\ar[dd]\ar[r]\ar@{}[ddr]|{\stackrel{\beta_2}{\Leftarrow}}&\ar[rr]\ar[dd]&& &\mbox{and}&\ar[d]\ar[r]\ar@{}[dr]|{\stackrel{\alpha_4}{\Leftarrow}}&\ar[d]\ar[rr]&&
\\
&&&&&\ar[d]\ar[r]\ar@{}[dr]|{\stackrel{\alpha_1}{\Leftarrow}}&\ar[d]
\\
\ar[r]\ar[d]&&&&&\ar[r]\ar[d]&
\\
&&&&&
}
$$
where we view the pasting of $\alpha_1$ and $\alpha_4$ as a single cell.

Region \raisebox{.5pt}{\textcircled{\raisebox{-.9pt} {5}}}  is the dual  of region \raisebox{.5pt}{\textcircled{\raisebox{-.9pt} {4}}}  and commutativity can be obtained by applying Proposition~\ref{double-layer2} to
$$
\xymatrix@R=1.5em@C=1.5em{
&\ar[d]\ar[l]&&&\ar[lll]\ar@{}[dlll]|{\stackrel{\delta_2}{\Leftarrow}}\ar[d]&&&\ar[d]\ar[l]&&\ar[ll]\ar@{}[dll]|{\stackrel{\beta_2}{\Leftarrow}}\ar[d]&\ar[l]\ar[d]\ar@{}[dl]|{\stackrel{\overline{\overline{\alpha}}_6}{\Leftarrow}}
\\
&&&\ar[dd]\ar[ll]&\ar[l]\ar@{}[ddl]|{\stackrel{\beta_3}{\Leftarrow}}\ar[dd]&\mbox{and} &&&&\ar[ll]\ar[d]&\ar[l]\ar[d]\ar@{}[dl]|{\stackrel{\alpha_5}{\Leftarrow}}
\\
&&&& && &&&\ar[d]&\ar[d]\ar[l]\ar@{}[dl]|{\alpha_3}
\\
&&&&\ar[l]&& &&&&\ar[l]
}
$$
where we view the pasting of $\alpha_5$ and $\alpha_3$ as a single cell and the pasting of $\overline{\overline{\alpha}}_6$ and $\beta_2$ as a single cell.

Region \raisebox{.5pt}{\textcircled{\raisebox{-.9pt} {6}}}  could be done with an application of either Proposition~\ref{double-layer1} or Proposition~\ref{double-layer2}. If we use Proposition~\ref{double-layer1}, we focus on the diagrams,
$$
\xymatrix@C=1.5em@R=1.5em{
\ar[r]\ar@{}[dr]|{\stackrel{\widetilde{\alpha}_6}{\Leftarrow}}\ar[d]&\ar[d]\ar[rr]\ar@{}[drr]|{\stackrel{\beta_3}{\Leftarrow}}&&\ar[d]&&\ar[r]\ar@{}[dr]|{\stackrel{\alpha_6}{\Leftarrow}}\ar[d]&\ar[d]\ar[r]\ar@{}[dr]|{\stackrel{\alpha_5}{\Leftarrow}}&\ar[d]\ar[r]\ar@{}[dr]|{\stackrel{\alpha_3}{\Leftarrow}}&\ar[d]
\\
\ar[d]\ar[r]\ar@{}[dr]|{\stackrel{\alpha_4}{\Leftarrow}}&\ar[d] \ar[rr]&&&\mbox{and}&\ar[dd]\ar[r]\ar@{}[ddr]|{\stackrel{\beta_2}{\Leftarrow}}&\ar[dd]\ar[r]&\ar[r]&
\\
\ar[d]\ar[r]\ar@{}[dr]|{\stackrel{\alpha_1}{\Leftarrow}}&\ar[d]&&&&&
\\
\ar[r]&&&&&\ar[r]&
}
$$
Here we consider the pasting of $\alpha_4$ and $\alpha_1$ as a single cell, the pasting of $\widetilde{\alpha}_6$ and $\beta_3$ as a single cell, and the pasting of $\alpha_6$, $\alpha_5$ and $\alpha_3$ as a single cell.

For region \raisebox{.5pt}{\textcircled{\raisebox{-.9pt} {7}}}  the two ways of composing provide to 2-cells that connect the rectangles,
$$
\xymatrix@C=1.5em@R=1.5em
{\ar[rrr]\ar[d]\ar@{}[drrr]|{\stackrel{\varepsilon_1}{\Leftarrow}}&&&\ar[d]&\ar@{}[d]|{\mbox{and}}&\ar[d]\ar[r]\ar@{}[dr]|{\stackrel{\alpha_6}{\Leftarrow}}&\ar[d]\ar[r]\ar@{}[dr]|{\stackrel{\alpha_5}{\Leftarrow}}&\ar[d]\ar[r]\ar@{}[dr]|{\stackrel{\alpha_3}{\Leftarrow}}&\ar[d]
\\
\ar[rrr]\ar[d]&&& && \ar[d]\ar[r]&\ar[r]&\ar[r]&\\
&&&&&
}
$$
and there is only one such cell by Proposition~\ref{unique-2-cell}, so this region commutes.

Region \raisebox{.5pt}{\textcircled{\raisebox{-.9pt} {8}}}  is the dual of region \raisebox{.5pt}{\textcircled{\raisebox{-.9pt} {7}}}  whose  two compositions give the 2-cell connecting the rectangles,
$$
\xymatrix{
&\ar[l]\ar[d]&&&\ar[lll]\ar[d]\ar@{}[dlll]|{\stackrel{\varepsilon_2}{\Leftarrow}}&\ar@{}[d]|{\mbox{and}}&
&\ar[l]\ar[d]&\ar[d]\ar[l]\ar@{}[dl]|{\stackrel{\alpha_1}{\Leftarrow}}&\ar[d]\ar[l]\ar@{}[dl]|{\stackrel{\alpha_4}{\Leftarrow}}&\ar[l]\ar[d]\ar@{}[dl]|{\stackrel{\alpha_6}{\Leftarrow}}
\\
&&&&\ar[lll]&&&&\ar[l]&\ar[l]&\ar[l]
}
$$
\end{proof}
\section{Well-Definedness of Composition} \label{well-defined}
In this appendix we show that vertical composition and horizontal whiskering are well-defined on equivalence classes of 2-cell diagrams.
 We start by observing that the equivalence relation on 2-cell diagrams is generated by the following non-symmetric relation:
$$
\xymatrix{
& C_1\ar[dl]_{u_1}\ar[dr]^{f_1}& && &C_1\ar[dl]_{u_1}\ar[dr]^{f_1}
\\
A\ar@{}[r]|{\alpha'\Downarrow}&D'\ar[u]_{v_1'} \ar[d]^{v_2'}\ar@{}[r]|{\beta'\Downarrow} &B &\preceq&A\ar@{}[r]|{\alpha\Downarrow}&D\ar[u]_{v_1} \ar[d]^{v_2}\ar@{}[r]|{\beta\Downarrow} &B
\\
& C_2\ar[ul]^{v_2}\ar[ur]_{f_2}&&&&C_2\ar[ul]^{v_2}\ar[ur]_{f_2}}
$$
if there are invertible 2-cells $\gamma_1,\,\gamma_2$ such that 
$$
\xymatrix@C=3em@R=3em{
&C_1\ar[dr]^{u_1} & &&C_1\ar[dr]^{u_1} 
\\
D'\ar[r]^t \ar[ur]^{v_1'}\ar[dr]_{v_2'}& D\ar@{}[ul]|(.25){\stackrel{\gamma_1}{\Rightarrow}}\ar@{}[dl]|(.25){\stackrel{\gamma_2}{\Leftarrow}}\ar[u]_{v_1}\ar[d]^{v_2}\ar@{}[r]|{\alpha\Downarrow}&A &=&D' \ar[u]^{v_1'}\ar[d]_{v_2'}\ar@{}[r]|{\alpha'\Downarrow}&A
\\
&C_2\ar[ur]_{u_2}&&&C_2\ar[ur]_{u_2}
}
$$
and 
$$
\xymatrix@C=3em@R=3em{
&C_1\ar[dr]^{f_1} & &&C_1\ar[dr]^{f_1} 
\\
D'\ar[r]^t \ar[ur]^{v_1'}\ar[dr]_{v_2'}& D\ar@{}[ul]|(.25){\stackrel{\gamma_1}{\Rightarrow}}\ar@{}[dl]|(.25){\stackrel{\gamma_2}{\Leftarrow}}\ar[u]_{v_1}\ar[d]^{v_2}\ar@{}[r]|{\beta\Downarrow}&B &=&D' \ar[u]^{v_1'}\ar[d]_{v_2'}\ar@{}[r]|{\beta'\Downarrow}&B 
\\
&C_2\ar[ur]_{f_2}&&&C_2\ar[ur]_{f_2}
}
$$ where $u_1t \in \calW$ (equivalently, $u_1' \in \calW$). 
So it is sufficient to check well-definedness with respect to this relation.
The main tool we will use for this is Proposition~\ref{anytwosquares}.
We will repeatedly create squares that can be compared using this proposition and the cells produced that way will show that the 2-cell diagrams resulting from composing or whiskering equivalent 2-cell diagrams are again equivalent.

\begin{prop}\label{vertical-well-defined}
Vertical composition of 2-cell diagrams is well-defined on equivalence classes.
\end{prop}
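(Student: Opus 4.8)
The plan is to reduce the statement to the non-symmetric generating relation described just before the proposition, and then verify that vertical composition respects that one-sided move on either side of a vertical composite. So suppose we have a vertically composable pair of 2-cell diagrams, with the first one replaced by an equivalent diagram via the relation $\preceq$ (the argument for replacing the second is dual). Concretely, we have data $(u_1,v_1,v_2,u_2,\alpha,\beta)$ for the first diagram and $(u_1',v_1',v_2',u_2',\alpha',\beta')$ for a $\preceq$-predecessor, connected by an arrow $t$ and invertible 2-cells $\gamma_1,\gamma_2$ satisfying the two pasting equations; then we compose each with a fixed second diagram $(u_2,v_3,v_4,u_3,\alpha_2,\beta_2)$ and must show the two resulting composite diagrams are equivalent.

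First I would write out both composites explicitly using the recipe of Section~\ref{newBF2}: each composite is obtained by choosing a square $\delta$ (as in {\bf [C5]}) completing the cospan of the relevant $\frakW$-arrows $u_2v_3$ and $u_2v_2$ (resp. $u_2v_3$ and $u_2v_2't$), then lifting via {\bf [C6]} to get $\bar{\tilde{\delta}}$, yielding the representative (\ref{verticalcomp}). The key observation is that the arrow $t$ together with $\gamma_1,\gamma_2$ gives a comparison between the cospan $(u_2v_3, u_2v_2)$ and the cospan $(u_2v_3, u_2v_2't)$: indeed $u_2v_2't = u_2v_2 t'$ up to an invertible structure cell once we note $u_2'=u_2\cdot(\text{something})$… more precisely, the second pasting equation for $\gamma_2$ says $u_2 u_2' t \cong u_2 u_2$ is not quite it — rather it says the span data match after whiskering, so that the squares used to build the two composites have the same outer cospan and I can invoke Proposition~\ref{anytwosquares} to produce arrows $s_1,s_2$ and invertible 2-cells $\beta',\gamma'$ connecting the two chosen squares. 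Then Proposition~\ref{unique-2-cell} tells me this connecting 2-cell is the \emph{unique} one with the appropriate compatibility with the defining squares, which is exactly what is needed to conclude that the two composite 2-cell diagrams are equivalent: the connecting data $s_1,s_2,\beta',\gamma'$, after precomposition by a suitable arrow putting the relevant composite in $\frakW$ (using {\bf [WB2]}), furnish the invertible 2-cells $\varepsilon_1,\varepsilon_2$ witnessing the equivalence relation between the two composite diagrams.

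The main obstacle I expect is bookkeeping: carefully tracking how the arrow $t$ and the cells $\gamma_1,\gamma_2$ propagate through the square-choosing and lifting steps {\bf [C5]}, {\bf [C6]}, and then matching the resulting pasting diagrams against the definition of the equivalence relation on 2-cell diagrams. In particular one must check that the two pasting equations (the one involving $\alpha_i$'s and the one involving $\beta_i$'s) both hold after the comparison; the $\alpha$-side equation follows from the uniqueness clause of Proposition~\ref{unique-2-cell} applied to the left-hand (invertible) cells, while the $\beta$-side requires pasting the right-hand cells of both original diagrams with the connecting 2-cell and then using Lemma~\ref{Matteo1} to cancel an arrow in $\frakW$ and force the required equality of 2-cells up to precomposition by a further $\frakW$-arrow. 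Once both sides are in place, the whiskered comparison data $s_1,s_2$ (suitably precomposed so the relevant composite lands in $\frakW$ by repeated {\bf [WB2]}) give the desired equivalence, and the symmetric argument for modifying the second factor completes the proof, since the general equivalence relation is generated by these one-sided moves applied in either slot.
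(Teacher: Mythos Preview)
Your high-level strategy---reduce to the generating relation $\preceq$ and then use Proposition~\ref{anytwosquares} to compare the squares appearing in the two vertical composites---is the same as the paper's. But the execution has a gap at the point where you try to apply Proposition~\ref{anytwosquares}, and you reach for machinery (Proposition~\ref{unique-2-cell}, Lemma~\ref{Matteo1}) that is neither needed nor clearly applicable here.

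The issue is this: the two squares chosen by {\bf [C5]} in the two composites are \emph{not} squares over the same cospan. In one composite the middle square sits over the cospan $(u_2v_2,\,u_2v_3)$; in the other it sits over $(u_2v_2',\,u_2v_3)$, and $v_2'$ has a different domain from $v_2$. Proposition~\ref{anytwosquares} requires the two squares to share their outer boundary, so you cannot invoke it directly, and your passage ``indeed $u_2v_2't = u_2v_2 t'$ up to an invertible structure cell\ldots'' shows you felt this but did not resolve it. The paper's move is to first \emph{substitute} the witnessing cells $\gamma_i$ into the second composite diagram: using the defining equations of $\preceq$, one rewrites the composite built from $(\alpha',\beta')$ entirely in terms of $(\alpha,\beta)$, $t$, and the $\gamma_i$, obtaining a new pasting whose middle square now genuinely shares its outer cospan with the square $u_2\delta_1$ from the first composite. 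Only then does Proposition~\ref{anytwosquares} apply, yielding comparison arrows and cells $\varepsilon_1,\varepsilon_2$ which, together with the $\gamma_i$, assemble \emph{directly} into the witness for equivalence of the two composite 2-cell diagrams.

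Once that substitution is in place, the $\alpha$-side and $\beta$-side equations required by the equivalence relation both follow from the single compatibility equation produced by Proposition~\ref{anytwosquares} (applied with a generic cell, then specialized to $\alpha$ and to $\beta$); there is no separate appeal to uniqueness or to Lemma~\ref{Matteo1}. So drop those detours, and insert the rewriting step before invoking Proposition~\ref{anytwosquares}.
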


\begin{proof}
Consider two 2-cell diagrams
\begin{equation}\label{firsttwo}
\xymatrix@C=4em{
&\ar[dl]_{u_1}\ar[dr]^{f_1} & && &  \ar[dl]_{u_2}\ar[dr]^{f_2}&
\\
\ar@{}[r]|{\Downarrow\alpha_1} & \ar[u]^{v_1}\ar[d]_{v_2}\ar@{}[r]|{\Downarrow\beta_1} &\ar@{}[rr]|{ \mbox{and  }}
		&& \ar@{}[r]|{\Downarrow\alpha_2} & \ar[u]^{v_3}\ar[d]_{v_4}\ar@{}[r]|{\Downarrow\beta_2} & 
\\
&\ar[ul]^{u_2}\ar[ur]_{f_2} &&&&\ar[ul]^{u_3}\ar[ur]_{f_3} &
}
\end{equation}
as in Section~\ref{newBF2} and two 2-cell diagrams
\begin{equation}\label{secondtwo}
\xymatrix@C=4em{
&\ar[dl]_{u_1}\ar[dr]^{f_1} & && &  \ar[dl]_{u_2}\ar[dr]^{f_2}&
\\
\ar@{}[r]|{\Downarrow\alpha_1'} & \ar[u]^{w_1}\ar[d]_{w_2}\ar@{}[r]|{\Downarrow\beta_1'} &\ar@{}[rr]|{ \mbox{and  }}
		&& \ar@{}[r]|{\Downarrow\alpha_2'} & \ar[u]^{w_3}\ar[d]_{w_4}\ar@{}[r]|{\Downarrow\beta_2'} & 
\\
&\ar[ul]^{u_2}\ar[ur]_{f_2} &&&&\ar[ul]^{u_3}\ar[ur]_{f_3} &
}
\end{equation}
with 2-cells $\gamma_1,\gamma_2,\gamma_3$ and $\gamma_4$ such that
\begin{equation}\label{eqrel1}
\xymatrix@C=3em@R=3em{
&C_1\ar[dr]^{u_1} & &&C_1\ar[dr]^{u_1} 
\\
D'\ar[r]^{t_1} \ar[ur]^{w_1}\ar[dr]_{w_2}& D\ar@{}[ul]|(.25){\stackrel{\gamma_1}{\Rightarrow}}\ar@{}[dl]|(.25){\stackrel{\gamma_2}{\Leftarrow}}\ar[u]_{v_1}\ar[d]^{v_2}\ar@{}[r]|{\alpha_1\Downarrow}&A &=&D' \ar[u]^{w_1}\ar[d]_{w_2}\ar@{}[r]|{\alpha_1'\Downarrow}&A
\\
&C_2\ar[ur]_{u_2}&&&C_2\ar[ur]_{u_2}
}
\end{equation}
\begin{equation}\label{eqrel2}
\xymatrix@C=3em@R=3em{
&C_1\ar[dr]^{f_1} & &&C_1\ar[dr]^{f_1} 
\\
D'\ar[r]^{t_1} \ar[ur]^{w_1}\ar[dr]_{w_2}& D\ar@{}[ul]|(.25){\stackrel{\gamma_1}{\Rightarrow}}\ar@{}[dl]|(.25){\stackrel{\gamma_2}{\Leftarrow}}\ar[u]_{v_1}\ar[d]^{v_2}\ar@{}[r]|{\beta_1\Downarrow}&B &=&D' \ar[u]^{w_1}\ar[d]_{w_2}\ar@{}[r]|{\beta_1'\Downarrow}&B 
\\
&C_2\ar[ur]_{f_2}&&&C_2\ar[ur]_{f_2}
}
\end{equation}
\begin{equation}\label{eqrel3}
\xymatrix@C=3em@R=3em{
&C_1\ar[dr]^{u_1} & &&C_1\ar[dr]^{u_1} 
\\
D'\ar[r]^{t_2} \ar[ur]^{w_4}\ar[dr]_{w_3}& D\ar@{}[ul]|(.25){\stackrel{\gamma_3}{\Rightarrow}}\ar@{}[dl]|(.25){\stackrel{\gamma_{4}}{\Leftarrow}}\ar[u]_{v_3}\ar[d]^{v_4}\ar@{}[r]|{\alpha_2\Downarrow}&A &=&D' \ar[u]^{w_3}\ar[d]_{w_4}\ar@{}[r]|{\alpha_2'\Downarrow}&A
\\
&C_2\ar[ur]_{u_2}&&&C_2\ar[ur]_{u_2}
}
\end{equation}
and
\begin{equation}\label{eqrel4}
\xymatrix@C=3em@R=3em{
&C_1\ar[dr]^{f_1} & &&C_1\ar[dr]^{f_1} 
\\
D'\ar[r]^{t_2} \ar[ur]^{w_3}\ar[dr]_{w_4}& D\ar@{}[ul]|(.25){\stackrel{\gamma_3}{\Rightarrow}}\ar@{}[dl]|(.25){\stackrel{\gamma_4}{\Leftarrow}}\ar[u]_{v_3}\ar[d]^{v_4}\ar@{}[r]|{\beta_2\Downarrow}&B &=&D' \ar[u]^{w_3}\ar[d]_{w_4}\ar@{}[r]|{\beta_2'\Downarrow}&B 
\\
&C_2\ar[ur]_{f_2}&&&C_2\ar[ur]_{f_2}
}
\end{equation}
Vertical composition of the two 2-cell diagrams in (\ref{firsttwo})
is given by
\begin{equation}\label{verticalcomp1}
\xymatrix@C=5em@R=3em{
&&\ar[ddll]_{u_1}\ar[ddrr]^{f_1}&&
\\
&&\ar[u]_{v_1} \ar@{}[dll]|{\alpha_1\Downarrow} \ar@{}[drr]|{\beta_1\Downarrow} \ar[dl]_{v_2}\ar[dr]^{v_2} &&
\\
&\ar[l]_{u_2}\ar@{}[r]|{\delta_1\Downarrow} & \ar[u]|{x_1}\ar@{}[r]|{\delta_1\Downarrow} \ar[d]|{x_2} & \ar[r]^{f_2} &
\\
&& \ar[ul]^{v_3}\ar@{}[ull]|{\alpha_2\Downarrow} \ar[d]^{v_4}\ar@{}[urr]|{\beta_2\Downarrow} \ar[ur]_{v_3} &&
\\
&&\ar[uull]^{u_3}\ar[uurr]_{f_3}
}
\end{equation}
and vertical composition of the two 2-cell diagrams in (\ref{secondtwo})
is given by:
\begin{equation}\label{verticalcomp2}
\xymatrix@C=5em@R=3em{
&&\ar[ddll]_{u_1}\ar[ddrr]^{f_1}&&
\\
&&\ar[u]_{w_1} \ar@{}[dll]|{\alpha'_1\Downarrow} \ar@{}[drr]|{\beta'_1\Downarrow} \ar[dl]_{w_2}\ar[dr]^{w_2} &&
\\
&\ar[l]_{u_2}\ar@{}[r]|{\delta_2\Downarrow} & \ar[u]|{y_1}\ar@{}[r]|{\delta_2\Downarrow} \ar[d]|{y_2} & \ar[r]^{f_2} &
\\
&& \ar[ul]^{w_3}\ar@{}[ull]|{\alpha'_2\Downarrow} \ar[d]^{w_4}\ar@{}[urr]|{\beta'_2\Downarrow} \ar[ur]_{w_3} &&
\\
&&\ar[uull]^{u_3}\ar[uurr]_{f_3}
}
\end{equation}
for suitable arrows $x_1,x_2,y_1,y_2$ such that $u_1v_1x_1$ and $u_1v_1y_1$ are in $\frakW$ and suitable invertible 2-cells  $\delta_1$ and $\delta_2$.
By equations (\ref{eqrel1})-(\ref{eqrel4}) the 2-cell diagram (\ref{verticalcomp2}) can be rewritten as:
\begin{equation}\label{secondtwoA}
\xymatrix@C=5em@R=3em{
&&& \ar@/_2ex/[2,-3]_{u_1}\ar@/^2ex/[2,3]^{f_1}&&
\\
&& \ar[ur]^{v_1} \ar[d]_{v_2} \ar@{}[dr]|(.25){\stackrel{\gamma_2}{\Rightarrow}} & \ar[u]_{w_1} \ar@{}[ul]|(.25){\stackrel{\gamma_1}{\Leftarrow}} \ar@{}[ur]|(.25){\stackrel{\gamma_1}{\Rightarrow}}   \ar[dl]_{w_2} \ar[dr]^{w_2}\ar[r]|{t_1} \ar[l]|{t_1}& \ar[ul]_{v_1} \ar[d]^{v_2} \ar@{}[dl]|(.25){\stackrel{\gamma_2}{\Leftarrow}}&&
\\
\ar@{}[urr]|{\alpha_1\Downarrow} \ar@{}[drr]|{\alpha_2\Downarrow}&&\ar[ll]_{u_2}\ar@{}[r]|{\delta_2\Downarrow} & \ar[u]|{y_1}\ar@{}[r]|{\delta_2\Downarrow} \ar[d]|{y_2} & \ar[rr]^{f_2} \ar@{}[urr]|{\beta_1\Downarrow}\ar@{}[drr]|{\beta_2\Downarrow}&&
\\
&&\ar[u]^{v_3} \ar[dr]_{v_4} \ar@{}[ur]|(.25){\stackrel{\gamma_3}{\Leftarrow}}& \ar[ul]^{w_3} \ar@{}[dl]|(.25){\stackrel{\gamma_4}{\Rightarrow}} \ar@{}[dr]|(.25){\stackrel{\gamma_4}{\Leftarrow}}\ar[d]^{w_4} \ar[ur]_{w_3}\ar[r]|{t_2} \ar[l]|{t_2}  &\ar[u]_{v_3} \ar@{}[ul]|(.25){\stackrel{\gamma_3}{\Rightarrow}} \ar[dl]^{v_4} &&
\\
&&&\ar@/^2ex/[-2,-3]^{u_3} \ar@/_2ex/[-2,3]_{f_3}&&&
}
\end{equation}
We can now apply Proposition~\ref{anytwosquares} to
$$
\xymatrix@C=4em{
\ar[r]^{x_1}\ar[d]_{x_2}\ar@{}[dr]|{u_2\delta_2\Downarrow} &\ar[d]^{u_2v_2} \ar@{}[drr]|{\mbox{and}} &&\ar[r]^{y_1}\ar@{}[dr]|{\delta_2\Downarrow}\ar[d]_{y_2} & \ar[r]^{t_1}\ar@{}[ddr]|{u_2\gamma_2\Downarrow}\ar[d]^{w_2} &\ar[dd]^{u_2v_2}
\\
\ar[r]_{u_2v_3}\ar[d]_{u_2v_3} &&&\ar[d]_{t_2}\ar[r]_{w_3}\ar@{}[dr]|{u_2\gamma_3\Downarrow} & \ar[dr]_{u_2} &
\\
&&&\ar[rr]_{u_2v_3} \ar[d]_{u_2v_3}&&
\\
&&&
}
$$
This gives us invertible 2-cells $\varepsilon_1$ and $\varepsilon_2$ as in the following diagram,
$$
\xymatrix{
&&
\\
\ar@/^1.5ex/[ur]^{x_1}\ar@/_1.5ex/[dr]_{x_2} & \ar[l]|{r_1}\ar[r]|{r_2}\ar@{}[u]|{\stackrel{\varepsilon_1}{\Rightarrow}}\ar@{}[d]|{\stackrel{\varepsilon_2}{\Rightarrow}} & \ar@/_1.5ex/[ul]_{t_1y_1}\ar@/^1.5ex/[dl]^{t_2y_2}
\\
&&
}
$$
where $u_2v_3x_2r_1\in\frakW$ and such that
$$
\xymatrix@C=3em{
\ar[rr]^{r_1}\ar[d]_{r_2}\ar@{}[drr]|{\varepsilon_1\Downarrow} && \ar[d]^{x_1}
\\
\ar[d]_{y_2}\ar[r]^{y_1}\ar@{}[dr]|{\delta_2\Downarrow}&\ar[d]^{w_2}\ar[r]^{t_1}\ar@{}[ddr]|{u_2\gamma_2\Downarrow} & \ar[dd]^{u_2v_2}\ar@{}[drr]|= &&\ar[d]_{r_2}\ar[r]^{r_1}\ar@{}[dr]|{\varepsilon_2\Downarrow} &\ar[r]^{x_1} \ar[d]|{x_2}\ar@{}[dr]|{u_2\delta_2\Downarrow} & \ar[d]^{u_2v_2}
\\
\ar[d]_{t_2}\ar@{}[drr]|{u_2\gamma_3\Downarrow}\ar[r]_{w_3} & \ar[dr]|{u_2} & && \ar[r]_{t_2y_2}&\ar[r]_{u_2v_3}&
\\
\ar[rr]_{u_2v_3} &&
}
$$
Now the reader may check that the following diagram can be used to show that the 2-cells diagrams (\ref{verticalcomp1}) and (\ref{verticalcomp2}) are equivalent:
$$
\xymatrix{
&\ar@{}[d]|{\stackrel{\gamma_1^{-1}}{\Rightarrow}}
\\
\ar[ur]^{v_1} && \ar[ll]|{t_1} \ar[ul]_{w_1}
\\
\ar[u]^{x_1}\ar@{}[urr]|{\stackrel{\varepsilon_1}{\Rightarrow}}\ar[d]_{x_2} & \ar[l]|{r_1}\ar[r]|{r_2} & \ar[u]_{y_1}\ar[d]^{y_2}
\\
\ar@{}[urr]|{\stackrel{\varepsilon_2}{\Rightarrow}} \ar[dr]_{v_4}&\ar@{}[d]|{\stackrel{\gamma_4}{\Rightarrow}}& \ar[ll]|{t_2}\ar[dl]^{w_4}
\\
&&
}
$$
\end{proof}

\begin{prop}\label{wd-left-whiskering}
Left whiskering of a 2-cell diagram and an arrow in the bicategory of fractions is well-defined on 
equivalence classes of 2-cell diagrams.
\end{prop}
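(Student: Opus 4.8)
The plan is to reduce, as elsewhere in this appendix, to the generating relation $\preceq$ on $2$-cell diagrams: it suffices to prove that if $\mathcal D'\preceq\mathcal D$ are $2$-cell diagrams between the same pair of spans $(u_1,f_1),(u_2,f_2)$, with the relation witnessed by an arrow $t\colon D'\to D$ and invertible $2$-cells $\gamma_1,\gamma_2$ (in the notation used just before Proposition~\ref{vertical-well-defined}), and $(v,g)$ is any span, then the left whiskerings $\mathcal D\cdot(v,g)$ and $\mathcal D'\cdot(v,g)$ constructed in Section~\ref{left_whiskering} are equivalent. The first point I would record is that the composite $1$-cells are literally unchanged: the relation $\preceq$ fixes the outer arrows $u_1,f_1,u_2,f_2$ and the whiskering span $(v,g)$, so the chosen squares $\gamma_i=\alpha^{u_i}_{f_i,v}$ of {\bf[C4]} used to form $(u_i,f_i)\circ(v,g)=(u_i\bar v_i,g\bar f_i)$ coincide in the two constructions. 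Hence both whiskered diagrams are $2$-cells between $(u_1\bar v_1,g\bar f_1)$ and $(u_2\bar v_2,g\bar f_2)$, and only the part of the algorithm depending on the central object and on the cells $\alpha,\beta$ (resp. $\alpha',\beta'$) can differ.

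Next I would transport the remaining choices along $t$. By \cite{Matteo1} (see Remark~\ref{first-comparison}(\ref{fc1})), once the {\bf[C4]}-squares are fixed the equivalence class of the whiskered diagram is independent of the subsequent choices of squares and liftings in Section~\ref{left_whiskering}; so I may compute $\mathcal D'\cdot(v,g)$ using, at each stage, the square or lifting already selected for $\mathcal D\cdot(v,g)$, reindexed along $t$ and corrected by $\gamma_1,\gamma_2$. The isomorphisms $\gamma_1,\gamma_2$ identify the cospans to be completed in the $\mathcal D'$-construction with those of the $\mathcal D$-construction after inserting $t$, and the defining equations of $\preceq$ (the whiskering analogues of (\ref{eqrel1})--(\ref{eqrel4})) rewrite the left cell of the $\mathcal D'$-whiskering as a pasting built from the left cell of the $\mathcal D$-whiskering together with $\gamma_1,\gamma_2$ and $t$. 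After this rewriting the two whiskered diagrams differ only through choices made on $\gamma$-twisted configurations.

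Then I would run Proposition~\ref{anytwosquares} through the layers of the construction, in the order in which they occur in Section~\ref{left_whiskering}: first to the two {\bf[C5]}-squares completing $\xymatrix@1{\ar[r]^{u_is_i}&&\ar[l]_{u_i\bar v_i}}$; then one layer up, to the {\bf[C6]}-liftings $\delta_i$, invoking also the Remark after Proposition~\ref{isolifting} that two liftings of an invertible cell agree after a further precomposition; and finally to the squares $\delta_3$ completing $\xymatrix@1{\ar[r]^{\tilde v_1}&&\ar[l]_{\tilde v_2}}$. At each stage I would verify, using {\bf[WB2]}/{\bf[C1]} together with {\bf[WB5]}, that the relevant composite of arrows lies in $\frakW$, exactly as in the original construction. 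This produces invertible $2$-cells $\varepsilon_1,\varepsilon_2$ (and the needed coherence cells of $\calB$) relating the left-hand cells of the two whiskerings. For the right-hand cells, both $\tilde\beta$ and $\tilde\beta'$ are chosen {\bf[C7]}-liftings with respect to $v\in\frakW$ of pastings that, after the rewriting above, become equal up to precomposition; Lemma~\ref{Matteo1} then supplies an arrow of $\frakW$ precomposition with which makes them strictly equal. Assembling $t$, $\gamma_1$, $\gamma_2$, the $\varepsilon_i$ and the arrow from Lemma~\ref{Matteo1} gives the connecting arrows and cells required by the equivalence relation of Section~\ref{newBF2}, showing $\mathcal D\cdot(v,g)\equiv\mathcal D'\cdot(v,g)$.

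The hard part is not conceptual but organisational: the left-whiskering algorithm stacks several layers of chosen squares and liftings ({\bf[C4]}$\rightarrow${\bf[C5]}$\rightarrow${\bf[C6]}$\rightarrow\delta_3\rightarrow${\bf[C7]}), so the single connecting square witnessing the equivalence must be manufactured by iterating Proposition~\ref{anytwosquares} once through each layer while constantly tracking which composites are forced into $\frakW$ by {\bf[WB2]}. As with the analogous verifications in this appendix, I would present the skeleton above and leave the routine pasting identities to the reader.
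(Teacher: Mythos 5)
Your proposal is correct and follows essentially the same route as the paper's proof: reduce to the generating relation $\preceq$, observe that the composite spans agree, iterate Proposition~\ref{anytwosquares} through the layers of chosen squares to manufacture the connecting cells for the left-hand side, and invoke Lemma~\ref{Matteo1} (with {\bf [WB2]}/{\bf [WB5]}) to reconcile the two {\bf [C7]}-liftings on the right-hand side before assembling the witnessing data. The paper organizes the iteration as two explicit applications of Proposition~\ref{anytwosquares} followed by a pasting computation carried out for a generic cell $\delta$ and then specialized to $\alpha$ and $\beta$, rather than transporting choices along $t$ via the independence result of \cite{Matteo1}, but this is only a difference of bookkeeping.
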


\begin{proof}
We will again consider a generator of the equivalence relation:

$$
\xymatrix@C=4em{
&\ar[dl]_{u_1}\ar[dr]^{f_1} & && &&\ar[dll]_{u_1}\ar[drr]^{f_1}&
\\
\ar@{}[r]|{\alpha\Downarrow}&\ar[u]|{s_1}\ar[d]|{s_2}\ar@{}[r]|{\beta\Downarrow}&&\preceq&\ar@{}[r]|{\alpha\Downarrow}&\ar[ur]|{s_1}\ar[dr]|{s_2}&\ar[l]|{r}\ar[u]|{t_1}\ar[d]|{t_2}\ar@{}[ul]|(.25){\stackrel{\varepsilon_1}{\Leftarrow}}\ar@{}[ur]|(.25){\stackrel{\varepsilon_1}{\Rightarrow}}\ar@{}[dr]|(.25){\stackrel{\varepsilon_2}{\Leftarrow}}\ar@{}[dl]|(.25){\stackrel{\varepsilon_2}{\Rightarrow}}\ar[r]|r &\ar[ul]|{s_1}\ar[dl]|{s_2}\ar@{}[r]|{\beta\Downarrow} &
\\
&\ar[ul]^{u_2}\ar[ur]_{f_2} &&&&&\ar[ull]^{u_2}\ar[urr]_{f_2}&
}
$$
Whiskering these 2-cell diagrams with $\xymatrix@1{&\ar[l]_{v}\ar[r]^g &}$ gives us the following 2-cell diagrams:
\begin{equation}\label{two_diagrams}
\xymatrix@C=3em{
&&\ar[dl]_{\overline{v}_1}  \ar@{}[ddl]|{\Downarrow\sigma_1}  \ar[2,2]^{g\overline{f}_1} &&&&&& \ar[dl]_{\overline{v}_1}  \ar@{}[ddl]|{\Downarrow\tau_1}  \ar[2,2]^{g\overline{f}_1}
\\
&\ar[dl]_{u_1} & \ar[dl]|{\tilde{v}_1}\ar[u]|{\overline{s}_1} && & && \ar[dll]_{u_1} &\ar[u]|{\overline{t}_1}\ar[dl]|{\hat{v}_1}&&&
\\
\ar@{}[r]|{\Downarrow\alpha} & \ar[u]|{s_1} \ar[d]|{s_2} \ar@{}[r]|{\Downarrow\sigma_3\tilde{v}} & \ar[u]|{x_1\tilde{v}} \ar[d]|{x_2\tilde{v}} \ar@{}[rr]|{\Downarrow g\tilde\beta_1}&& \ar@{}[r]|{\mbox{and}}& \ar@{}[r]|{\alpha\Downarrow} & \ar[ur]|{s_1}\ar[dr]|{s_2} & \ar[l]|r \ar@{}[ul]|(.25){\stackrel{\varepsilon_1}{\Leftarrow}}\ar@{}[dl]|(.25){\stackrel{\varepsilon_2}{\Rightarrow}}\ar[u]|{t_1}\ar[d]|{t_2}\ar@{}[r]|{\tau_3\hat{v}\Downarrow} & \ar[u]_{\hat{x}_1\hat{v}}\ar[d]^{\hat{x}_2\hat{v}}\ar@{}[rr]|{\Downarrow g\tilde\beta_2} & &
\\
&\ar[ul]^{u_2} \ar@{}[r]|{\Downarrow\sigma_2} &\ar[ul]|{\tilde{v}_2}\ar[d]|{\overline{s}_2}&& &  &&\ar[ull]^{u_2}\ar@{}[r]|{\tau_2\Downarrow} & \ar[d]|{\overline{t}_2}\ar[ul]|{\hat{v}_2}&&
\\
&&\ar[ul]^{\overline{v}_2}\ar[uurr]_{g\overline{f}_2} && & &&&\ar[ul]^{\overline{v}_2}\ar[uurr]_{g\overline{f}_2} &&
}
\end{equation}
where $\tilde\beta_1$ is the lifting with respect to $v$ of
$$
\xymatrix@C=3em{
\ar[dr]|{\overline{v}_1}\ar@/^2ex/[2,3]^{\overline{f}_1} \ar@{}[ddr]|{\Downarrow\sigma_1}&&
\\
\ar[u]|{\overline{s}_1} \ar[dr]|{\tilde{v}_1} & \ar[dr]|{f_1}&&&
\\
\ar[u]|{x_1} \ar[d]|{x_2} \ar@{}[r]|{\Downarrow\sigma_3} & \ar[u]|{s_1} \ar[d]|{s_2}      \ar@{}[r]|{\Downarrow\beta} &\ar@{}[u]|{\gamma_1\Downarrow}\ar@{}[d]|{\gamma_2^{-1}\Downarrow} &\ar[l]_v
\\
\ar[d]|{\overline{s}_2}\ar[ur]|{\tilde{v}_2}\ar@{}[r]|{\Downarrow\sigma_2}&\ar[ur]|{f_2}&&
\\
\ar[ur]|{\overline{v}_2}\ar@/_2ex/[-2,3]_{\overline{f}_2}&&
}
$$
and $\tilde\beta_2$ is the lifting with respect to $v$ of:
$$
\xymatrix@C=3em{
\ar@/^2ex/[2,4]^{\overline{f}_1} \ar[dr]|{\overline{v}_1} &&&&
\\
\ar[u]|{\hat{s}_1}\ar@{}[r]|{\tau_1\Downarrow}\ar[dr]|{\hat{v}_1} & \ar@/^1ex/[drr]^{f_1} &&&
\\
\ar[u]|{\hat{x}_1}\ar[d]|{\hat{x}_2}\ar@{}[r]|{\tau_3\Downarrow} & \ar[r]|r\ar@{}[ur]|(.25){\stackrel{\varepsilon_1}{\Rightarrow}} \ar@{}[dr]|(.25){\stackrel{\varepsilon_2}{\Leftarrow}} \ar[u]|{t_1}\ar[d]|{t_2}&\ar[ul]|{s_1}\ar[dl]|{s_2} \ar@{}[r]|{\beta\Downarrow} & \ar@{}[u]|{\Downarrow\gamma_1} \ar@{}[d]|{\Downarrow\gamma_2^{-1}} &\ar[l]|v
\\
\ar[ur]|{\hat{v}_2}\ar@{}[r]|{\tau_2\Downarrow}\ar[d]|{\overline{t}_2} & \ar@/_1ex/[urr]_{f_2} &&&
\\
\ar[ur]|{\overline{v}_2}\ar@/_2ex/[-2,4]_{\overline{f}_2} &&&&
}
$$
To show that 2-cell diagrams in (\ref{two_diagrams}) are equivalent, we begin by applying Proposition~\ref{anytwosquares} to the following
two diagrams:
$$
\xymatrix{
&\ar[dl]_{x_2\tilde{v}}\ar[dr]^{x_1\tilde{v}}\ar[rr]^{\overline{s}_1x_1\tilde{v}}\ar@{}[dd]|{\stackrel{\sigma_3\tilde{v}}{\Leftarrow}} &&\ar@{}[ddl]|{\stackrel{\sigma_1 x_1\tilde{v}}{\Leftarrow}}\ar[dd]^{\overline{v}_1}&& &\ar[dl]_{\hat{x}_2\hat{v}}\ar[dr]^{\hat{x}_1\hat{v}}\ar[rr]^{\overline{t}_1\hat{x}_1\hat{v}}\ar@{}[dd]|{\stackrel{\tau_3\hat{v}}{\Leftarrow}} &&\ar@{}[ddl]|{\stackrel{\tau_1\hat{x}_1\hat{v}}{\Leftarrow}}\ar[ddd]^{\overline{v}_1}
\\
\ar[dr]_{\tilde{v}_2}&&\ar[dl]^{\tilde{v}_1} &&\mbox{and}& \ar[dr]_{\hat{v}_2}&&\ar[dl]^{\hat{v}_1} &
\\
&\ar[d]_{u_1s_1}\ar[rr]_{s_1} && &&& \ar[d]_r\ar@{}[dr]|{\stackrel{\varepsilon_1}{\Leftarrow}}\ar[drr]^{t_1}&&
\\
&&&&&&\ar[rr]_{s_1} \ar[d]_{u_1s_1}&&
\\
&&&&&&&
}
$$
This gives us arrows $y_1$ and $y_2$ and cells $\rho_1$ and $\rho_2$ as in
$$
\xymatrix@C=5em{
&\ar@/_1.5ex/[dl]_{r\hat{v}_2\hat{x}_2\hat{v}}\ar@/^1.5ex/[dr]^{\overline{t}_1\hat{x}_1\hat{v}}
\\
\ar@{}[r]|{\rho_1\Downarrow}&\ar[u]|{y_1}\ar[d]|{y_2}\ar@{}[r]|{\rho_2\Downarrow}&
\\
&\ar@/^1.5ex/[ul]^{\tilde{v}_2x_2\tilde{v}}\ar@/_1.5ex/[ur]_{\overline{s}_1x_1\tilde{v}} 
}
$$
with the property that
\begin{equation}\label{first_eqns}
\xymatrix{
\ar[ddd]_{y_2}\ar[rr]^{y_1}\ar@{}[dddr]|{\stackrel{\rho_1}{\Leftarrow}} &&\ar[dl]_{\hat{x}_2\hat{v}}\ar@{}[dd]|{\stackrel{\tau_3\hat{v}}{\Leftarrow}}\ar[dr]^{\hat{x}_1\hat{v}}\ar[rr]^{\overline{t}_1\hat{x}_1\hat{v}}&& \ar@{}[ddl]|{\stackrel{\tau_1\hat{x}_1\hat{v}}{\Leftarrow}}\ar[ddd]^{\overline{v}_1} &&&\ar[d]_{y_2}\ar[rr]^{y_1}\ar@{}[drr]|{\stackrel{\rho_2}{\Leftarrow}} && \ar[d]^{\overline{t}_1\hat{x}_1\hat{v}}
\\
&\ar[dr]_{\hat{v}_2}&&\ar[dl]^{\hat{v}_1}&\ar@{}[drr]|{{\textstyle\equiv}}&&&\ar[dl]_{x_2\tilde{v}}\ar@{}[dd]|{\stackrel{\sigma_3\tilde{v}}{\Leftarrow}}\ar[dr]^{x_1\tilde{v}}\ar[rr]^{\overline{s}_1x_1\tilde{v}} &&\ar@{}[ddl]|{\stackrel{\sigma_1 x_1\tilde{v}_1}{\Leftarrow}}\ar[dd]^{\overline{v}_1}
\\
&&\ar[d]_r\ar[drr]^{t_1}\ar@{}[dr]|{\stackrel{\varepsilon_1}{\Leftarrow}}&&&&\ar[dr]_{\tilde{v}_2}&&\ar[dl]^{\tilde{v}_1} &
\\
\ar[rr]_{\tilde{v}_2x_2\tilde{v}} && \ar[rr]_{s_1}&& &&&\ar[rr]_{s_1}&&
}
\end{equation}
Now we apply Proposition~\ref{anytwosquares} to the following two diagrams:
$$
\xymatrix@C=4em{
\ar[r]^{y_1}\ar[d]_{y_2}\ar@{}[3,1]|{\stackrel{\rho_1}{\Leftarrow}} & \ar[d]^{\hat{v}}&&\ar[r]^{y_1}\ar[d]_{y_1} & \ar[d]^{\hat{x}_2}
\\
\ar[d]_{\tilde{v}}&\ar[d]^{\hat{x}_2}&&\ar[d]_{\hat{v}}&\ar[d]^{\hat{v}_2}
\\
\ar[d]_{x_2}&\ar[d]^{r\hat{v}_2}&\mbox{and}&\ar[d]_{\hat{x}_2}\ar@{}[r]|{\stackrel{\tau_2\hat{x}_2\hat{v}y_1}{\Leftarrow}} & \ar[dd]|{t_2}\ar[dr]^{r}
\\
\ar[d]_{\overline{s}_2}\ar@{}[dr]|{\stackrel{\sigma_2}{\Leftarrow}}\ar[r]^{\tilde{v}_2}&\ar[d]^{s_2}&&\ar[d]_{\overline{t}_2}&\ar@{}[r]|{\stackrel{\varepsilon_2}{\Leftarrow}}&\ar[dl]^{s_2}
\\
\ar[r]_{\overline{v}_2}\ar[d]_{u_2\overline{v}_2}&&&\ar[d]_{u_2\overline{v}_2}\ar[r]_{\overline{v}_2}&
\\
&&&
}
$$
this gives us arrows $z_1$ and $z_2$ and cells $\omega_1$ and $\omega_2$ as  in the following diagram
$$
\xymatrix@C=5em{
&\ar@/_1.5ex/[dl]_{\overline{t}_2\hat{x}_2\hat{v}y_1}\ar@/^1.5ex/[dr]^{y_1}
\\
\ar@{}[r]|{\omega_1\Downarrow}&\ar[u]|{z_1}\ar[d]|{z_2}\ar@{}[r]|{\omega_2\Downarrow}&
\\
&\ar@/^1.5ex/[ul]^{\overline{s}_2x_2\tilde{v}y_2}\ar@/_1.5ex/[ur]_{y_1} 
}
$$
with the property that
\begin{equation}\label{omega_eqn}
\xymatrix@C=3.5em{
&&&&&\ar[d]_{z_2}\ar[r]^{z_1}\ar@{}[dr]|{\stackrel{\omega_2}{\Leftarrow}}&\ar[d]^{y_1}
\\
\ar[r]^{z_1}\ar[d]_{z_2}&\ar[d]|{y_1}\ar[r]^{y_1}&\ar[d]^{\hat{x}_2\hat{v}}&&&\ar[d]_{y_2}\ar[r]_{y_1}&\ar[d]^{\hat{v}}
\\
\ar[d]_{y_2}&\ar[d]|{\hat{v}}&\ar[d]^{\hat{v}_2}&&&\ar[d]_{\tilde{v}}&\ar[d]^{\hat{x}_2}
\\
\ar[d]_{\tilde{v}}\ar@{}[r]|{\stackrel{\omega_1}{\Leftarrow}}&\ar[d]|{\hat{x}_2}\ar@{}[r]|{\stackrel{\tau_2\hat{x}_2\hat{v}y_1}{\Leftarrow}} &\ar[dd]_{t_2}\ar[dr]^{r} &&\equiv&\ar[d]_{x_2}\ar@{}[ur]|{\stackrel{\rho_1}{\Leftarrow}}&\ar[d]^{r\hat{v}_2}
\\
\ar[d]_{x_2}&\ar[d]|{\overline{t}_2}&\ar@{}[r]|{\stackrel{\varepsilon_2}{\Leftarrow}}&\ar[dl]^{s_2}&&\ar[d]_{\overline{s}_2}\ar[r]^{\tilde{v}_2}\ar@{}[dr]|{\stackrel{\sigma_2}{\Leftarrow}}&\ar[d]^{s_2}
\\
\ar[r]_{\overline{s}_2}&\ar[r]_{\overline{v}_2}&&&&\ar[r]_{\overline{v}_2}&
}
\end{equation}

The cells we have constructed so far allow us to perform the following calculation of pasting diagrams for any cell $\delta\colon d_1s_1\Rightarrow d_2s_2$:
$$
\xymatrix@C=2.9em{
&&\ar[dr]^{\overline{v}_1}&& & &&& \ar[dr]^{\overline{v}_1} &&
\\
&&\ar[u]^{\overline{s}_1}\ar@{}[r]|{\Downarrow\sigma_1}\ar[dr]|{\tilde{v}_1} &\ar[dr]^{d_1} & & && \ar[ur]^{\overline{t}_1}\ar@{}[rr]|{\Downarrow\tau_1}\ar[dr]|{\hat{v}_1}\ar@{}[d]|{\tau_3\hat{v}\Downarrow} &\ar@{}[dr]|(.7){\varepsilon_1\Downarrow}&\ar[dr]^{d_1}&
\\
\ar@/^2.5ex/[uurr]^{\overline{t}_1\hat{x}_1\hat{v}} & \ar@/_1.5ex/[l]_{y_1z_2}\ar@/^1.5ex/[l]^{y_1z_1}\ar@{}[l]|{\Uparrow\omega_2}\ar[r]_{y_2z_2}\ar@{}[uur]^{\stackrel{\rho_2z_2}{\Rightarrow}} & \ar[u]^{x_1\tilde{v}}\ar[d]_{x_2\tilde{v}}\ar@{}[r]|{\sigma_3\tilde{v}\Downarrow}& \ar[u]_{s_1}\ar@{}[r]|{\delta\Downarrow}\ar[d]^{s_2} &\ar@{}[r]|{\equiv} & & \ar[ur]^{\hat{x}_1\hat{v}}\ar[r]_{\hat{v}} &\ar@{}[dd]|{\Downarrow\rho_1z_2}\ar[r]_{\hat{v}_2} & \ar[r]|r \ar[ur]|{t_1}&  \ar[u]_{s_1}\ar@{}[r]|{\delta\Downarrow}\ar[d]^{s_2} &\ar@{}[r]|(.7){\mbox{by (\ref{first_eqns})}}&
\\
&&\ar[d]_{\overline{s}_2}\ar@{}[r]|{\sigma_2\Downarrow}\ar[ur]|{\tilde{v}_2} & \ar[ur]_{d_2} & & \ar@/^1.5ex/[ur]^{y_1z_1} \ar@{}[ur]|{\Downarrow\omega_2}\ar@/_1.5ex/[ur]_{y_1z_2}\ar[dr]_{y_2z_2} &&&\ar[ur]|{\tilde{v}_2} \ar@{}[r]|{\sigma_2\Downarrow}\ar[d]|{\overline{s}_2} &\ar[ur]_{d_2} &
\\
&& \ar[ur]_{\overline{v}_2} && & &\ar[r]_{\tilde{v}} & \ar[ur]|{x_2} &\ar[ur]|{\overline{v}_2} && 
\\
&&&& & &&&\ar[dr]^{\overline{v}_1}&&
\\
&&&& & &&\ar[ur]^{\overline{t}_1}\ar@{}[rr]|{\Downarrow\tau_1} \ar[dr]|{\hat{v}_1} && \ar[dr]^{d_1}
\\
&&&& \ar@{}[r]|\equiv & & \ar[l]_{y_2z_2}\ar[r]^{y_1z_1}\ar@{}[d]|{\stackrel{\omega_1}{\Leftarrow}} & \ar[u]^{\hat{x}_1\hat{v}}\ar@{}[r]|{\Downarrow\tau_3\hat{v}}\ar[d]_{\hat{x}_2\hat{v}} &\ar[ur]|{t_1}\ar[dr]|{t_2}\ar[r]|r & \ar@{}[ul]|(.3){\Downarrow\varepsilon_1}\ar@{}[dl]|(.3){\Downarrow\varepsilon_2}\ar[u]_{s_2}\ar[d]^{s_2}\ar@{}[r]|{\Downarrow\delta} & \ar@{}[r]|(.7){\mbox{by (\ref{omega_eqn})}}&
\\
&&&& & &&\ar[ur]|{\hat{v}_2}\ar[dr]|{\overline{t}_2}\ar@{}[rr]|{\Downarrow\tau_2} && \ar[ur]_{d_2} & 
\\
&&&& & &&&\ar@/^2ex/[-2,-3]^{\overline{s}_2x_2\tilde{v}}\ar[ur]_{\overline{v}_2}&&
}
$$
Applying this result with $\beta$ instead of $\delta$ implies that
$$
\xymatrix@C=3em{
&&\ar@/^1.5ex/[dr]^{\overline{f}_1}&&&&&\ar@/^1.5ex/[dr]^{\overline{f}_1}&&
\\
\ar@/^3ex/[urr]^{\overline{t}_1\hat{x}_1\hat{v}}\ar@{}[urr]|{\stackrel{\rho_2z_2}{\Rightarrow}} & \ar@/_1.4ex/[l]_{y_1z_2}\ar@{}[l]|{\Uparrow\omega_2}\ar@/^1.4ex/[l]^{y_1z_1}\ar[r]_{y_2z_2}&\ar[u]|{\overline{s}_1x_1\tilde{v}}\ar[d]|{\overline{s}_2x_2\tilde{v}}\ar@{}[r]|{\Downarrow\tilde\beta_1} & \ar[r]^v &
\ar@{}[r]|\equiv & \ar@/_2ex/[drr]_{\overline{s}_2{x}_2\tilde{v}}\ar@{}[drr]|{\stackrel{\omega_1}{\Leftarrow}} & \ar[l]_{y_2z_2}\ar[r]^{y_1z_1}& \ar[u]|{\overline{t}_1\hat{x}_1\hat{v}}\ar@{}[r]|{\tilde\beta_2}\ar[d]|{\overline{t}_2\hat{x}_2\hat{v}} & \ar[r]^v&
\\
&&\ar@/_1.5ex/[ur]_{\overline{f}_2} &&&&&\ar@/_1.5ex/[ur]_{\overline{f}_2}&&
}
$$
So  by Lemma~\ref{Matteo1} and {\bf WB2} we get  an arrow $q$ such that $u_2s_2r\hat{v}_2\hat{x}_2\hat{v}y_1z_1q\in \frakW$ and 
$$
\xymatrix@C=4em{
&&\ar@/^1.5ex/[dr]^{\overline{f}_1}&&&&\ar@/^1.5ex/[dr]^{\overline{f}_1}&&
\\
\ar@/^3ex/[urr]^{\overline{t}_1\hat{x}_1\hat{v}}\ar@{}[urr]|{\stackrel{\rho_2z_2q}{\Rightarrow}} & \ar@/_1.4ex/[l]_{y_1z_2q}\ar@{}[l]|{\Uparrow\omega_2q}\ar@/^1.4ex/[l]^{y_1z_1q}\ar[r]_{y_2z_2q}&\ar[u]|{\overline{s}_1x_1\tilde{v}}\ar[d]|{\overline{s}_2x_2\tilde{v}}\ar@{}[r]|{\Downarrow\tilde\beta_1} &
\ar@{}[r]|\equiv & \ar@/_2ex/[drr]_{\overline{s}_2{x}_2\tilde{v}}\ar@{}[drr]|{\stackrel{\omega_1q}{\Leftarrow}} & \ar[l]_{y_2z_2q}\ar[r]^{y_1z_1q}& \ar[u]|{\overline{t}_1\hat{x}_1\hat{v}}\ar@{}[r]|{\tilde\beta_2}\ar[d]|{\overline{t}_2\hat{x}_2\hat{v}} & 
\\
&&\ar@/_1.5ex/[ur]_{\overline{f}_2} &&&&\ar@/_1.5ex/[ur]_{\overline{f}_2}&
}
$$
Applying the calculation above with $\alpha$ instead of $\delta$ gives us the remaining result to conclude that the arrows and cells in
$$
\xymatrix@R=4em@C=3.5em{
&&
\\
\ar@/^3ex/[ur]^{\overline{s}_1x_1\tilde{v}}\ar@/_3ex/[dr]_{\overline{s}_2x_2\tilde{v}} & \ar[l]^{y_2z_2q}\ar@{}[u]|{\stackrel{\rho_2z_2q}{\Leftarrow}}\ar[r]_{y_1z_1q}\ar@{}@<.8ex>[r]|{\Uparrow\omega_2q}\ar@/^2.3ex/[r]^{y_1z_2q}\ar@{}[d]|{\stackrel{\omega_1q}{\Leftarrow}} & \ar@/_3ex/[ul]_{\overline{t}_1\hat{x}_1\hat{v}}\ar@/^3ex/[dl]^{\overline{t}_2\hat{x}_2\hat{v}}
\\
&&
}$$
witness to the fact that the two cell diagrams in (\ref{two_diagrams}) are equivalent.
We conclude that left-whiskering is well-defined on equivalence classes of 2-cell diagrams.
\end{proof}

\begin{prop}\label{wd-right-whiskering}
Right whiskering of a 2-cell diagram and an arrow in the bicategory of fractions is well-defined on 
equivalence classes of 2-cell diagrams.
\end{prop}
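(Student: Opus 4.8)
\textbf{Plan for Proposition~\ref{wd-right-whiskering}.} The approach mirrors the proof of Proposition~\ref{wd-left-whiskering} just given, with the roles of the two sides of each span interchanged. As in that proof, it suffices to verify well-definedness against a single generator of the equivalence relation: given a 2-cell diagram with central object $D$ and an ``enlargement'' via an arrow $r\colon D'\to D$ with invertible cells $\varepsilon_1,\varepsilon_2$ relating the two (as in the non-symmetric relation $\preceq$ described at the start of this appendix), one must show that right-whiskering both diagrams with a fixed span $\xymatrix@1{&\ar[l]_v\ar[r]^g&}$ yields equivalent 2-cell diagrams. So first I would write out explicitly, using the construction of Section~\ref{right_whiskering}, the two resulting 2-cell diagrams: each is built from the chosen composition squares $\gamma_i = \alpha^v_{f,v_i}$ for the 1-cell composites (these are \emph{forced} and hence the same for both), together with chosen squares $\delta_i$ as in~(\ref{deltas}), two applications of Proposition~\ref{anytwosquares} producing cells $\varepsilon_i,\varphi_i$ and then $\tilde\alpha,\tau$, a further sequence of chosen squares $\rho'_i,\rho_i,\rho_3$ and liftings, and finally a lifting $\tilde\beta$ of the assembled pasting diagram. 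The key point, already invoked in the left-whiskering proof and attributed to \cite{Matteo1}, is that the equivalence class of the output depends only on the \emph{forced} composition squares $\gamma_i$ and on the input 2-cell diagram up to equivalence --- all other choices may be varied freely. Consequently I may replace every intermediate square and lifting in the construction for the second diagram by cells obtained from those of the first diagram by precomposition with $r$ and pasting with the $\varepsilon_i$.

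\textbf{Key steps.} The heart of the argument is then the same two-stage bootstrapping used in Proposition~\ref{wd-left-whiskering}: (1) apply Proposition~\ref{anytwosquares} to two suitably chosen pairs of squares --- one pair comparing the square governing the ``$u$-side'' composite in the first diagram against its counterpart in the second (mediated by $r$ and $\varepsilon_1$), the other pair doing the analogous comparison for the ``other side'' --- obtaining mediating arrows $y_1,y_2$ (resp.\ $z_1,z_2$) and invertible cells $\rho_1,\rho_2$ (resp.\ $\omega_1,\omega_2$) fitting into the relevant diagrams, together with the pasting equalities these cells satisfy; (2) perform a pasting-diagram calculation, parametrized over a generic right-hand cell $\delta$, showing that whiskering-then-enlarging equals enlarging-then-whiskering up to the constructed invertible cells; then specialize $\delta$ first to $\beta$ (the right-hand cell of the input) and then to $\alpha$ (the left-hand cell), and apply Lemma~\ref{Matteo1} together with {\bf [WB2]} to precompose by a final arrow $q$ making the relevant composite land in $\frakW$. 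The upshot is a diagram of mediating arrows and invertible 2-cells that witnesses the equivalence of the two output 2-cell diagrams, exactly in the shape required by the definition of the equivalence relation.

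\textbf{Main obstacle.} The conceptual content is entirely parallel to the left-whiskering case; the real difficulty --- and the reason the paper presents right-whiskering separately --- is bookkeeping. Right-whiskering as constructed in Section~\ref{right_whiskering} involves substantially more intermediate squares ($\delta_i$, $\varepsilon_i$, $\varphi_i$, $\tilde\alpha$, $\tau$, $\rho'_i$, $\rho_i$, $\rho_3$, plus several auxiliary {\bf [WB2]}-arrows) than left-whiskering, and the asymmetry in the conditions {\bf [WB1]}--{\bf [WB5]} means that ``which side is in $\frakW$'' has to be tracked carefully through each application of Proposition~\ref{anytwosquares} and each lifting via {\bf [WB4]}. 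The main work, then, is in correctly identifying \emph{which} pairs of squares to feed to Proposition~\ref{anytwosquares} so that the resulting cells can be assembled into the final witnessing diagram, and in carrying out the (lengthy but routine) pasting computation of step~(2) without sign or orientation errors. No genuinely new idea beyond those in Proposition~\ref{wd-left-whiskering} is needed; once the analogue of equations~(\ref{first_eqns}) and~(\ref{omega_eqn}) are in place, the concluding argument --- specialize to $\beta$, invoke Lemma~\ref{Matteo1} and {\bf [WB2]} to get $q$, then specialize to $\alpha$ --- goes through verbatim. I would therefore write the proof as ``the argument is analogous to that of Proposition~\ref{wd-left-whiskering}'', and then supply only the two modified pairs of squares for Proposition~\ref{anytwosquares}, the modified pasting calculation, and the final diagram of witnessing cells.
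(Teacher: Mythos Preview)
Your plan diverges from the paper's proof in a structural way. You propose to transplant the left-whiskering argument wholesale: two applications of Proposition~\ref{anytwosquares}, a pasting calculation parametrized by a generic cell $\delta$, then specialize to $\beta$ and cancel via Lemma~\ref{Matteo1}. The paper does not do this. Instead it organizes the comparison around a $2\times 2$ grid of four squares (displayed as~(\ref{four_squares}) in the paper): the top-left and top-right squares are the ``$\bar v_1$-side'' squares of the two whiskering constructions, and the bottom-left and bottom-right are the ``$\bar v_2$-side'' squares. Proposition~\ref{anytwosquares} is applied to the \emph{top} pair (yielding cells $\xi_1,\xi_2$) and to the \emph{bottom} pair (yielding $\omega_1,\omega_2$); the \emph{left} and \emph{right} pairs are compared by 2-cell diagrams built directly out of the structural cells already present in the whiskering construction. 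Composing top-then-right and left-then-bottom gives two 2-cell diagrams connecting the same pair of squares; the key step is then Proposition~\ref{unique-2-cell}, which forces these two composites to be equivalent. The witnessing cells $\chi,\hat\chi$ of that equivalence, pasted with $\xi_1$ and $\omega_1$, furnish the final equivalence witness.

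The main point you are missing is the invocation of Proposition~\ref{unique-2-cell}: this uniqueness result is what replaces the ``specialize $\delta$ to $\beta$, cancel with Lemma~\ref{Matteo1}'' step from the left-whiskering proof. In right whiskering the original cell $\beta$ appears undisturbed on the far right of the output diagram~(\ref{rightwhiskeringgeneral}) --- there is no lifting $\tilde\beta$ along $v$ to cancel --- so the Lemma~\ref{Matteo1} manoeuvre does not apply in the form you describe. Your claim that the concluding argument ``goes through verbatim'' is therefore not quite right; the asymmetry between the two whiskering constructions is exactly why the paper switches to the grid-plus-uniqueness strategy. Your instinct that the bookkeeping is the main obstacle is correct, but the paper handles it with a different organizing idea rather than by brute-force adaptation.
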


\begin{proof}
We will sketch the proof of this result as the details get rather involved and don't necessarily illuminate the idea behind the proof.
Any interested reader is welcome to contact the authors for further details.

Consider the following whiskering diagrams:
\begin{equation}\label{two_whiskerings}
\xymatrix@R=3em{
&&&\ar[dl]_{v_1}\ar[dr]^{g_1}&&&&&&\ar[dll]_{v_1}\ar[drr]^{g_1}&&
\\
&\ar[l]_{u}\ar[r]^{f}&\ar@{}[r]|{\alpha\Downarrow}&\ar[u]|{s_1}\ar[d]|{s_2}\ar@{}[r]|{\beta\Downarrow}&\ar@{}[r]|{\mbox{and}}& &\ar[l]_{u}\ar[r]^{f}&\ar@{}[r]|{\alpha\Downarrow}
&\ar[ur]|{s_1}\ar[dr]|{s_2}
&\ar[l]|a\ar@{}[ul]|(.3){\stackrel{\theta_1}{\Leftarrow}}
\ar@{}[ur]|(.3){\stackrel{\theta_1}{\Rightarrow}}\ar@{}[dl]|(.3){\stackrel{\theta_2}{\Rightarrow}}\ar@{}[dr]|(.3){\stackrel{\theta_2}{\Leftarrow}}\ar[u]|{b_1}\ar[d]|{b_2}\ar[r]|a&\ar[ul]|{s_1}\ar[dl]|{s_2}\ar@{}[r]|{\beta\Downarrow} &
\\
&&&\ar[ul]^{v_2}\ar[ur]_{g_2}&&&&&&\ar[ull]^{v_2}\ar[urr]_{g_2}&&
}
\end{equation}
We want to show that the 2-cell diagrams that result after whiskering are equivalent. These two diagrams are
\begin{equation}\label{first_whisker}
\xymatrix@C=4em{
&&&\ar@/_2ex/[3,-2]_{\overline{v}_1}
\ar@/^2ex/[2,2]^{\overline{f}_1}&&&
\\
&&&\ar@{}[l]|{\varepsilon_1^{-1}\Downarrow}\ar[dl]|{r_1}\ar[dr]|{r_1}\ar[u]|{t_1}\ar@{}[r]|{\varphi_1^{-1}\Downarrow}&&&
\\
&&\ar[dl]|{s_1'}\ar@{}[r]|{\rho_1\Downarrow}&\ar[u]|{\overline{p}}\ar@{}[r]|{\rho_1\Downarrow} \ar[dl]|{\overline{r}_1}\ar[dr]|{\overline{r}_1} & \ar[dr]|{f_1'}&\ar[dr]^{g_1}
\\
&\ar[l]_{u}\ar@{}[r]|{\tilde\alpha^{-1}\Downarrow} & \ar[u]|p\ar[d]|q\ar@{}[r]|{\rho_3^{-1}x\Downarrow} & \ar[u]|{\tilde{r}_2x}\ar[d]|{\tilde{r}_1x}\ar@{}[r]|{\rho_3^{-1}x\Downarrow} & \ar[u]|{p}\ar[d]|q \ar@{}[r]|{\tau^{-1}\Downarrow}&\ar@{}[r]|{\beta\Downarrow}\ar[u]|{s_1}\ar[d]|{s_2} &
\\
&&\ar[ul]|{s_2'}\ar@{}[r]|{\rho_2^{-1}\Downarrow} &\ar[ul]|{\overline{r}_2}\ar[ur]|{\overline{r}_2}\ar[d]|{\overline{q}}\ar@{}[r]|{\rho_2^{-1}\Downarrow} & \ar[ur]|{f_2'} & \ar[ur]_{g_2}&
\\
&&&\ar[ul]|{r_2}\ar[ur]|{r_2}\ar[d]|{t_2}\ar@{}[l]|{\varepsilon_2\Downarrow}\ar@{}[r]|{\varphi_2\Downarrow}&&&
\\
&&&\ar@/^1.5ex/[-3,-2]^{\overline{v}_2}\ar@/_1.5ex/[-2,2]_{\overline{f}_2} &&&
}
\end{equation}
and
\begin{equation}\label{second_whisker}
\xymatrix@C=4em{
&&&\ar@/_2ex/[3,-2]_{\overline{v}_1}
\ar@/^2ex/[2,3]^{\overline{f}_1}&&&
\\
&&&\ar@{}[l]|{\hat\varepsilon_1^{-1}\Downarrow}\ar[dl]|{\hat{r}_1}\ar[dr]|{\hat{r}_1}\ar[u]|{\hat{t}_1}\ar@{}[drrr]|{\hat\varphi_1^{-1}\Downarrow}&&&
\\
&&\ar[dl]|{b_1'}\ar@{}[r]|{\hat\rho_1\Downarrow}&\ar[u]|{\overline{\hat{p}}}\ar@{}[r]|{\hat\rho_1\Downarrow} \ar[dl]|{\overline{\hat{r}}_1}\ar[dr]|{\overline{\hat{r}}_1} & \ar[dr]|{\hat{f}_1'}&&\ar[dr]^{g_1}
\\
&\ar[l]_{u}\ar@{}[r]|{\hat\alpha^{-1}\Downarrow} & \ar[u]|{\hat{p}}\ar[d]|{\hat{q}}\ar@{}[r]|{\hat\rho_3^{-1}\hat{x}\Downarrow} & \ar[u]|{\tilde{\hat{r}}_2\hat{x}}\ar[d]|{\tilde{\hat{r}}_1\hat{x}}\ar@{}[r]|{\hat\rho_3^{-1}\hat{x}\Downarrow} & \ar[u]|{\hat{p}}\ar[d]|{\hat{q}} \ar@{}[r]|{\hat\tau^{-1}\Downarrow}&\ar[ur]^{b_1}\ar[dr]_{b_2}\ar[r]|{a}&\ar@{}[ul]|(.25){\stackrel{\theta_1}{\Rightarrow}}\ar@{}[dl]|(.25){\stackrel{\theta_2}{\Leftarrow}}\ar@{}[r]|{\beta\Downarrow}\ar[u]|{s_1}\ar[d]|{s_2} &
\\
&&\ar[ul]|{b_2'}\ar@{}[r]|{\hat\rho_2^{-1}\Downarrow} &\ar[ul]|{\overline{\hat{r}}_2}\ar[ur]|{\overline{\hat{r}}_2}\ar[d]|{\overline{\hat{q}}}\ar@{}[r]|{\hat\rho_2^{-1}\Downarrow} & \ar[ur]|{\hat{f}_2'} && \ar[ur]_{g_2}&
\\
&&&\ar[ul]|{\hat{r}_2}\ar[ur]|{\hat{r}_2}\ar[d]|{\hat{t}_2}\ar@{}[l]|{\hat\varepsilon_2\Downarrow}\ar@{}[urrr]|{\hat\varphi_2\Downarrow}&&&
\\
&&&\ar@/^1.5ex/[-3,-2]^{\overline{v}_2}\ar@/_1.5ex/[-2,3]_{\overline{f}_2} &&&
}
\end{equation}
respectively.

We will produce the cells that witness that these diagrams are equivalent.  To do this,  consider 2-cell diagrams comparing the following four squares:
\begin{equation}\label{four_squares}
\xymatrix{
\ar[r]^{\tilde{r}_2x}\ar[d]_{\tilde{r}_2x}\ar@{}[dr]|(.55){\rho_1^{-1}\tilde{r}_2x\Downarrow} & \ar[r]^{p\overline{r}_1} & \ar[r]^{f_1'} &\ar[dddl]|{s_1}\ar[ddd]^{s_2}&&\ar[d]_{\tilde{\hat{r}}_2\hat{x}}\ar[r]^{\tilde{\hat{r}}_2\hat{x}}\ar@{}[dr]|(.55){\hat{\rho}_1^{-1}\tilde{\hat{r}}_2\hat{x}\Downarrow} & \ar[r]^{\hat{p}\overline{\hat{r}}_1} & \ar[r]^{\hat{f}_1'} & \ar[ddd]_{b_1}\ar[r]^{a}\ar@{}[dr]|(.35){\stackrel{\theta_1^{-1}}{\Leftarrow}} &\ar[dddl]|{s_1}\ar[ddd]^{s_2}
\\
\ar[d]_{\overline{p}}&&&&&\ar[d]_{\overline{\hat{p}}} &&&&
\\
\ar[uurr]_{r_1}\ar[d]_{t_1}\ar@{}[rr]|{\varphi_1\Downarrow}&&&&\Rightarrow& \ar[d]_{\hat{t}_1}\ar[uurr]_{\hat{r}_1}\ar@{}[rrr]|{\hat\varphi_1\Downarrow} &&&
\\
\ar[d]_{\overline{v}_1}\ar[rr]^{\overline{f}_1}\ar@{}[drr]|{\gamma_1\Downarrow} &&\ar[dr]_{v_1}\ar@{}[r]|{\stackrel{\alpha^{-1}}{\Leftarrow}} & \ar[d]^{v_2}&&\ar[d]_{\overline{v}_1}\ar[rrr]^{\overline{f}_1}\ar@{}[drrr]|{\gamma_1\Downarrow} &&&\ar[dr]_{v_1}\ar@{}[r]|{\stackrel{\alpha^{-1}}{\Leftarrow}} & \ar[d]^{v_2}
\\
\ar[rrr]_{f} \ar[d]_u&&&&&\ar[rrrr]_f\ar[d]_u&&&&
\\
&&&&&&&&&
\\
&\ar@{}[r]|{\textstyle\Downarrow} &&&&&& \Downarrow &&
\\
\ar[r]^{\tilde{r}_1x}\ar[d]_{\tilde{r}_1x}\ar@{}[dr]|(.55){\rho_2^{-1}\tilde{r}_1x\Downarrow} & \ar[r]^{q\overline{r}_2} & \ar[r]^{f_2'}\ar@{}[ddd]|{\varphi_2\Downarrow} &\ar[ddd]^{s_2} && \ar[d]_{\tilde{\hat{r}}_1\hat{x}} \ar[r]^{\tilde{\hat{r}}_1\hat{x}} \ar@{}[dr]|(.55){\hat{\rho}_2^{-1}\tilde{\hat{r}}_1\hat{x}\Downarrow} & \ar[r]^{\hat{q}\overline{\hat{r}}_2} & \ar[r]^{\hat{f}_2'}\ar@{}[dddr]|{\hat{\varphi}_2\Downarrow} & \ar[r]^a\ar@{}[dr]|{\stackrel{\theta_2}{\Leftarrow}}\ar[dddr]_{b_2} & \ar[ddd]^{s_2}
\\
\ar[d]_{\overline{q}}&&&&&\ar[d]_{\overline{\hat{q}}}&&&&
\\
\ar[d]_{t_2}\ar[uurr]_{r_2} &&&&\Rightarrow & \ar[d]_{\hat{t}_2} \ar[uurr]_{\hat{r}_2} &&&&
\\
\ar[rrr]^{\overline{f}_2}\ar[d]_{\overline{v}_2}\ar@{}[drrr]|{\gamma_2\Downarrow} &&&\ar[d]^{v_2}&& \ar[rrrr]^{\overline{f}_2}\ar[d]_{\overline{v}_2}\ar@{}[1,4]|{\gamma_2\Downarrow} &&&&\ar[d]^{v_2}
\\
\ar[d]_u\ar[rrr]_f&&& && \ar[d]_u\ar[rrrr]_f&&&&
\\
&&&&&&&&&
}
\end{equation}
By composing these 2-cell diagrams vertically, we obtain two 2-cell diagrams comparing the top left and bottom right square. By Proposition~\ref{unique-2-cell} these 2-cell diagrams are equivalent. This will provide us two additional cells which paste with cells from the 2-cell diagrams to provide us the cells that witness the equivalence of (\ref{first_whisker}) and (\ref{second_whisker}).

We start with the 2-cell diagram comparing the two squares in the top row.
However, we will ignore the cells $\gamma_1$ and $\alpha^{-1}$. So by applying Proposition~\ref{anytwosquares}, we obtain arrows $c,\hat{c}$ and cells $\xi_1$ and $\xi_2$ as in 
\begin{equation}\label{top_cells}
\xymatrix@C=4em{
&&\ar@/_2ex/[dl]_{t_1\overline{p}\tilde{r}_2x} \ar@/^2ex/[dr]^{f_1'p\overline{r}_1\tilde{r}_2x}
\\
&\ar[l]_{u\overline{v}_1}\ar@{}[r]|{\xi_1\Downarrow}&\ar[u]_{c}\ar[d]^{\hat{c}}\ar@{}[r]|{\xi_2\Downarrow} &
\\
&&\ar@/^2ex/[ul]^{\hat{t}_1\overline{\hat{p}}\tilde{\hat{r}}_2\hat{x}}\ar@/_2ex/[ur]_{a\hat{f}_1'\hat{p}\overline{\hat{r}}_1\tilde{\hat{r}}_2\hat{x}}
}
\end{equation}
such that 
\begin{equation}\label{top-equation}
\xymatrix{
\ar[r]^{\hat{c}}\ar[d]_{c}\ar@{}[dddr]|{\stackrel{\xi_1^{-1}}{\Leftarrow}} & \ar[d]_{\tilde{\hat{r}}_2\hat{x}}\ar[r]^{\tilde{\hat{r}}_2\hat{x}}
\ar@{}[dr]|(.55){\hat{\rho}_1^{-1}\tilde{\hat{r}}_2\hat{x}\Downarrow} & \ar[r]^{\hat{p}\overline{\hat{r}}_1} & \ar[r]^{\hat{f}_1'} \ar@{}[dddr]|{\hat\varphi_1\Downarrow}& \ar[dddr]_{b_1} \ar[r]^a&\ar@{}[dl]|{\stackrel{\theta_1^{-1}}{\Leftarrow}}\ar[ddd]^{s_1}&& \ar[ddd]_{c}\ar[r]^{\hat{c}}\ar@{}[3,3]|{\xi_2^{-1}\Downarrow} & \ar[r]^{\tilde{\hat{r}}_2\hat{x}} & \ar[r]^{\overline{\hat{r}}_1} & \ar[d]^{\hat{p}}
\\
\ar[d]_{\tilde{r}_2x}&\ar[d]_{\overline{\hat{p}}} &&&&\ar@{}[drr]|{\textstyle\equiv} && &&&\ar[d]^{\hat{f}_1'}
\\
\ar[d]_{\overline{p}} &\ar[d]_{\hat{t}_1}\ar[uurr]_{\hat{r}_1}&&&& && &&&\ar[d]^{a}
\\
\ar[r]_{t_1}&\ar[rrrr]_{\overline{f}_1} &&&& && \ar[d]_{\tilde{r}_2x}\ar[r]^{\tilde{r}_2x}\ar@{}[dr]|(.55){\rho_1\tilde{r}_2x\Downarrow} & \ar[r]^{p\overline{r}_1} & \ar[r]^{f_1'}&\ar[ddd]^{s_1}
\\
&&&&&&&\ar[d]_{\overline{p}}&&&
\\
&&&&&&&\ar[uurr]_{r_1}\ar[d]_{t_1}\ar@{}[rrr]|{\varphi_1\Downarrow} &&&
\\
&&&&&&&\ar[rrr]_{\overline{f}_1} &&&
}
\end{equation}

The 2-cell diagram to compare the two squares on the right-hand side of (\ref{four_squares})
can be built from cells we have already. The two arrows in the middle can be taken as identity arrows, and we will omit them to avoid adding unitor cells. So the reader may verify that the following 2-cell diagram compares the two squares on the right:
\begin{equation}\label{right_cells}
\xymatrix@C=3.5em{
&&\ar@{}[d]|{\hat{\varepsilon}^{-1}_1\Downarrow} \ar[ddl]_{\overline{v}_1} & \ar[l]_{\hat{t}_1}\ar[dl]|{\hat{r}_1}
\\
&&\ar[dl]|{b_1'}\ar@{}[r]|{\hat\rho_1\Downarrow} &\ar[u]_{\overline{\hat{p}}}\ar[dl]|{\overline{\hat{r}}_1} && \ar[dr]^{\overline{\hat{r}}_1}&&\ar[dr]^{\hat{f}_1'}&&
\\
&\ar[l]_u\ar@{}[r]|{\hat\alpha^{-1}\Downarrow} & \ar[u]|{\hat{p}}\ar[d]|{\hat{q}}\ar@{}[rr]|{\hat\rho_3^{-1}\Downarrow} &&\ar[ul]_{\tilde{\hat{r}}_2\hat{x}}\ar[dl]^{\tilde{\hat{r}}_1\hat{x}}\ar[ur]^{\tilde{\hat{r}}_2\hat{x}}\ar[dr]_{\tilde{\hat{r}}_1\hat{x}}\ar@{}[rr]|{\hat\rho_3^{-1}\Downarrow} && \ar[ur]^{\hat{p}}\ar[dr]_{\hat{q}}\ar@{}[rr]|{\hat\tau^{-1}\Downarrow} && \ar[r]^a&
\\
&&\ar[ul]|{b_2'}\ar@{}[d]|{\hat\varepsilon_2\Downarrow}\ar@{}[r]|{\hat\rho_2^{-1}\Downarrow}&\ar[d]^{\overline{\hat{q}}} \ar[ul]|{\overline{\hat{r}}_2}&& \ar[ur]_{\overline{\hat{r}}_2} && \ar[ur]_{\hat{f}_2'} &&
\\
&&\ar[uul]^{\overline{v}_2} &\ar[l]^{\hat{t}_2} \ar[ul]|{\hat{r}_2}&&&&&&
}
\end{equation}
Composing (\ref{top_cells}) with (\ref{right_cells}) gives us:
\begin{equation}\label{top_right_composite}
    \xymatrix@C=2.9em{
    &&&&&\ar@/_2ex/[3,-4]_{\overline{v}_1t_1\overline{p}\tilde{r}_2x}\ar@{}[3,-2]|{\overline{v}_1\xi_1\Downarrow}\ar@{}[3,2]|{\xi_2\Downarrow}\ar@/^2ex/[3,5]^{f_1'p\overline{r}_1\tilde{r}_2x}
    \\
    &&&&&&&&&
    \\
    &&&&&&&&&
    \\
    &\ar[l]_u&\ar[l]_{\overline{v}_1}\ar@{}[d]|{\hat\varepsilon_1^{-1}\Downarrow}&&&\ar[uuu]_{c}\ar[ddd]^{\hat{c}}&&&&&
    \\
    &&\ar[ul]|{b_1'}\ar@{}[d]|{\hat\alpha^{-1}\Downarrow}&\ar[l]|{\hat{r}_1}\ar[ul]_{\hat{t}_1}\ar@{}[d]|{\hat\rho_1^{-1}\Downarrow}&&&&&\ar[r]^{\hat{f}_1'}\ar@{}[d]|{\hat{\tau}_1^{-1}\Downarrow}&\ar[ur]_a&
    \\
    &\ar[uu]^{\overline{v}_2}\ar@{}[r]|{\hat\varepsilon_2\Downarrow}&\ar[uul]|{b_2'}&\ar[l]|{\hat{q}}\ar[ul]|{\hat{p}}\ar@{}[d]|{\hat\rho_2^{-1}\Downarrow} & \ar[l]|{\overline{\hat{r}}_1}\ar[ul]_{\overline{\hat{q}}}\ar@{}[d]|{\hat\rho_3^{-1}\Downarrow} &&\ar[r]^{\overline{\hat{r}}_1}\ar@{}[d]|{\hat\rho_3^{-1}\Downarrow} & \ar[ur]^{\hat{p}}\ar[r]_{\hat{q}} & \ar[ur]_{\hat{f}_2'}
    \\
    &&&\ar[ull]^{\hat{t}_2}\ar[ul]|{\hat{r}_2} & \ar[l]^{\overline{\hat{q}}}\ar[ul]|{\overline{\hat{r}}_2} &\ar[l]^{\tilde{\hat{r}}_1\hat{x}}\ar[ul]_{\tilde{\hat{r}}_2\hat{x}}\ar[ur]^{\tilde{\hat{r}}_2\hat{x}}\ar[r]_{\tilde{\hat{r}}_1\hat{x}} & \ar[ur]_{\overline{\hat{r}}_2}
    }
\end{equation}
 
Similar to the situation for the right two squares, the 2-cell diagram comparing the two squares on the left of (\ref{four_squares}) can also be constructed from cells we have constructed already. Again collapsing all identity arrows, the following 2-cell diagram is what is needed to compare the left two squares:
\begin{equation}\label{left_cells}
\xymatrix@C=2.8em{
&&&&\ar[dll]_{t_1} \ar[dl]|{r_1}
\ar@{}[dd]|{\rho_1\Downarrow}&&&&&&
\\
&&\ar[dl]_{\overline{v}_1}\ar@{}[r]|{\varepsilon_1^{-1}\Downarrow} & \ar[dll]|{s_1'}
\ar@{}[dd]|{\tilde\alpha^{-1}\Downarrow} &&\ar[dl]|{\overline{r}_1} \ar[ul]_{\overline{p}}\ar@{}[dd]|{\rho_3^{-1}\Downarrow} && \ar@{}[dd]|{\rho_3^{-1}\Downarrow} \ar[dr]^{\overline{r}_1} &&\ar[dr]^{f_1'}\ar@{}[dd]|{\tau^{-1}\Downarrow} &
\\
&\ar[l]_u &&&\ar[ul]|p \ar[dl]|q \ar@{}[dd]|{\rho_2^{-1}\Downarrow} &&\ar[ul]|{\tilde{r}_2x} \ar[dl]|{\tilde{r}_1x} \ar[ur]|{\tilde{r}_2x} \ar[dr]|{\tilde{r}_1x} &&\ar[ur]^p\ar[dr]_q &&
\\
&&\ar[ul]^{\overline{v}_2}\ar@{}[r]|{\varepsilon_2\Downarrow} & \ar[ull]|{s_2'} &&\ar[ul]|{\overline{r}_2} \ar[dl]^{\overline{q}} &&\ar[ur]_{\overline{r}_2} &&\ar[ur]_{f_2'} &
\\
&&&&\ar[ull]^{t_2}\ar[ul]|{r_2}&&&&&&
}
\end{equation}
To compare the bottom two squares in (\ref{four_squares}), we apply Proposition~\ref{anytwosquares} to 
$$
\xymatrix{
\ar[d]_{\tilde{r}_1x}\ar[r]^{\tilde{r}_1x} \ar@{}[dr]|(.55){\rho_2\tilde{r}_1x\Downarrow}& \ar[r]^{q\overline{r}_2} & \ar[r]^{f_2'} &\ar[ddd]^{s_2} &&\ar[d]_{\tilde{\hat{r}}_1\hat{x}}\ar[r]^{\tilde{\hat{r}}_1\hat{x}}\ar@{}[dr]|(.55){\hat\rho_2\tilde{\hat{r}}_1\hat{x}\Downarrow} &\ar[r]^{\hat{q}\overline{\hat{r}}_2} & \ar[r]^{\hat{f}_2'} & \ar[r]^a\ar[dddr]_{b_2} &\ar@{}[dl]|{\stackrel{\theta_2}{\Leftarrow}}\ar[ddd]^{s_2}
\\
\ar[d]_{\overline{q}}&&&&&\ar[d]_{\overline{\hat{q}}}&&&&
\\
\ar[d]_{t_2}\ar[uurr]_{r_2}\ar@{}[rrr]|{\varphi_2\Downarrow} &&&&\mbox{and} & \ar[d]_{\hat{t}_2}\ar[uurr]_{\hat{r}_2}\ar@{}[rrr]|{\hat\varphi_2\Downarrow}&&&&
\\
\ar[d]_{u\overline{v}_2}\ar[rrr]_{\overline{f}_2} &&& && \ar[d]_{u\overline{v}_2}\ar[rrrr]_{\overline{f}_2} &&&&
\\
&&&&&&&&&}
$$
This gives us cells as in
\begin{equation}\label{bottom_cells}
\xymatrix@C=4em{
&&\ar@/_2ex/[dl]_{t_2\overline{q}\tilde{r}_1x}\ar@/^2ex/[dr]^{f_2'q\overline{r}_2\tilde{r}_1x}
\\
&\ar[l]_{u\overline{v}_2} \ar@{}[r]|{\omega_1\Downarrow}&\ar[u]_d\ar[d]^{\hat{d}} \ar@{}[r]|{\omega_2\Downarrow}&
\\
&&\ar@/^2ex/[ul]^{\hat{t}_2\overline{\hat{q}}\tilde{\hat{r}}_1\hat{x}}\ar@/_2ex/[ur]_{a\hat{f}_2'\hat{q}\overline{\hat{r}}_2\tilde{\hat{r}}_1\hat{x}}
}
\end{equation}
such that
\begin{equation}\label{bottom_equation}
\xymatrix{
\ar[r]^d \ar[d]_{\hat{d}}\ar@{}[dddr]|{\omega_1} & \ar[d]_{\tilde{r}_1x}\ar[r]^{\tilde{r}_1x}\ar@{}[dr]|{\rho_2\tilde{r}_1x\Downarrow} & \ar[r]^{q\overline{r}_2} &\ar[r]^{f_2'}\ar@{}[ddd]|{\varphi_2\Downarrow} & \ar[ddd]^{s_2} && \ar[r]^d \ar[d]_{\hat{d}} \ar@{}[1,4]|{\omega_2\Downarrow}& \ar[r]^{\tilde{r}_1x} & \ar[r]^{\overline{r}_2} & \ar[r]^q &\ar[d]^{f_2'}
\\
\ar[d]_{\tilde{\hat{r}}_1\hat{x}} & \ar[d]_{\overline{q}}&&& \ar@{}[drr]|{\textstyle\equiv} && \ar[d]_{\tilde{\hat{r}}_1\hat{x}} \ar[r]^{\tilde{\hat{r}}_1\hat{x}} \ar@{}[dr]|{\hat\rho_2\tilde{\hat{r}}_1\hat{x}\Downarrow }&\ar[r]^{\hat{q}\overline{\hat{r}}_2}&\ar[r]^{\hat{f}_2'}\ar@{}[dddr]|{\hat\varphi_2\Downarrow}&\ar[dddr]_{b_2}\ar[r]^a&\ar@{}[dl]|{\stackrel{\theta_2}{\Leftarrow}}\ar[ddd]^{s_2}
\\
\ar[d]_{\overline{\hat{q}}}&\ar[d]_{t_2}\ar[uurr]_{r_2}&&& &&\ar[d]_{\overline{\hat{q}}} &&&&
\\
\ar[r]_{\hat{t}_2} & \ar[rrr]_{\overline{f}_2}&&& &&\ar[d]_{\hat{t}_2}\ar[uurr]_{\hat{r}_2} &&&&
\\
&&&& && \ar[0,4]_{\overline{f}_2} &&&&
}
\end{equation}
Composing (\ref{left_cells}) with (\ref{bottom_cells}) gives us:
\begin{equation}\label{left_bottom_composite}
    \xymatrix@C=3em{
    &&&&\ar@{}[d]|{\rho_1\Downarrow}\ar[dll]_{t_1}\ar[dl]|{r_1} & \ar[l]_{\overline{p}}\ar[dl]|{\overline{r}_1}\ar@{}[d]|{\rho_3^{-1}\Downarrow} & \ar[l]_{\tilde{r}_2x}\ar[dl]|{\tilde{r}_1x}\ar[r]^{\tilde{r}_2x}\ar[dr]|{\tilde{r}_1x} & \ar@{}[d]|{\rho_3^{-1}\Downarrow}\ar[dr]^{\overline{r}_1}
\\
&&\ar[dl]_{\overline{v}_1}\ar@{}[r]|{\varepsilon_1^{-1}\Downarrow}&\ar[dll]|{s_1'}\ar@{}[d]|{\tilde\alpha^{-1}\Downarrow} & \ar[l]|p \ar[dl]|q \ar@{}[d]|{\overline\rho^{-1}_2} &\ar[l]|{\overline{r}_2}\ar[dl]|{\overline{q}}&&\ar[r]_{\overline{r}_2} & \ar[r]^p\ar[dr]_q &\ar@{}[d]|{\tau^{-1}\Downarrow}\ar[dr]^{f_1'} &
\\
&\ar[l]_u&\ar@{}[d]|{\varepsilon_2\Downarrow}&\ar[ll]|{s_2'} &\ar[l]|{r_2}\ar[dll]^{t_2}\ar@{}[ddrr]|{\omega_1\Downarrow} && \ar[uu]|{d}\ar[dd]|{\hat{d}}\ar@{}[ddrr]|{\omega_2\Downarrow} &&&\ar[r]_{f_2'} &
\\
&&\ar[ul]^{\overline{v}_2} &&&&&&&&
\\
&&&&&&\ar@/^2ex/[-1,-4]^{\hat{t}_2\overline{\hat{q}}\tilde{\hat{r}}_1\hat{x}}\ar@/_2ex/[-2,4]_{a\hat{f}_2'\hat{q}\overline{\hat{r}}_2\tilde{\hat{r}}_1\hat{x}}&&&&}
\end{equation}

As we noted at the beginning, the 2-cell diagrams (\ref{top_right_composite}) and (\ref{left_bottom_composite}) are equivalent, so there are arrows and 2-cells as in 
$$
\xymatrix{
&\ar@{}[d]|{\stackrel{\chi}{\Rightarrow}}
\\
\ar@/^1.2ex/[ur]^{c}\ar@/_1.2ex/[dr]_{\hat{c}}&\ar[l]|{e_1}\ar[r]|{e_2}\ar@{}[d]|{\stackrel{\hat{\chi}}{\Rightarrow}} & \ar@/_1.2ex/[ul]_{d}\ar@/^1.2ex/[dl]^{\hat{d}}
\\
&
}
$$
to witness this equivalence; i.e., such that 
\begin{equation}\label{equiv_eqn1}
\xymatrix{
&&\ar[r]^{\tilde{r}_2x}\ar@/_1.5ex/[dr]|{\tilde{r}_1x} &\ar@{}[d]|{\rho_3^{-1}\Downarrow}\ar[dr]^{\overline{r}_1} &&& & &&\ar@/^2.5ex/[1,5]^{f_1'p\overline{r}_1\tilde{r}_2x} &\ar@{}[ddd]|{\xi_2\Downarrow}
\\
&&&\ar[r]\ar@{}[ddd]|{\omega_2\Downarrow} & \ar[dr]_q\ar[r]^p & \ar@{}[d]|{\tau^{-1}\Downarrow}\ar[dr]^{f_1'}& & && &&&&&
\\
\ar@/^2ex/[uurr]^c & \ar[l]^{e_1}\ar[r]_{e_2}\ar@{}[u]|{\stackrel{\chi}{\Rightarrow}} &\ar[uu]|{d}\ar[dd]|{\hat{d}} &&&\ar[r]_{f_2'} & \ar@{}[r]|{\textstyle\equiv} & \ar@/_2ex/[ddrr]_{\hat{d}}&\ar[l]_{e_2}\ar[r]^{e_1}\ar@{}[d]|{\stackrel{\hat{\chi}}{\Leftarrow}} & \ar[dd]|{\hat{c}}\ar[uu]^{c} &&&\ar[r]^{\hat{f}_1'}\ar@{}[d]|{\hat\tau^{-1}\Downarrow} & \ar[ur]_a
\\
&&&&&& & &&&\ar[r]^{\overline{\hat{r}}_1}\ar@{}[d]|{\hat\rho_3^{-1}\Downarrow}&\ar[ur]^{\hat{p}}\ar[r]_{\hat{q}} & \ar[ur]_{\hat{f}_2'}
\\
&&\ar@/_2.5ex/[-2,4]_{a\hat{f}_2'\hat{q}\overline{\hat{r}}_2\tilde{\hat{r}}_1\hat{x}}&&&& & &&\ar@/^1.5ex/[ur]^(.7){\tilde{\hat{r}}_2\hat{x}}\ar[r]_{\tilde{\hat{r}}_2\hat{x}}&\ar[ur]_{\overline{\hat{r}}_2}&&&&
}
\end{equation}
and
\begin{equation}\label{equiv_eqn2}
\xymatrix@C=3.1em{
&&&&\ar@{}[d]|{\rho_1\Downarrow}\ar[dll]_{t_1}\ar[dl]|{r_1} & \ar[l]_{\overline{p}}\ar[dl]|{\overline{r}_1}\ar@{}[d]|{\rho_3^{-1}\Downarrow} & \ar[l]_{\tilde{r}_2x}\ar@/^1.5ex/[dl]|{\tilde{r}_1x}&&
\\
&&\ar[dl]_{\overline{v}_1}\ar@{}[r]|{\varepsilon_1^{-1}\Downarrow}&\ar[dll]|{s_1'}\ar@{}[d]|{\tilde\alpha^{-1}\Downarrow} & \ar[l]|p \ar[dl]|q \ar@{}[d]|{\overline\rho^{-1}_2} &\ar[l]|{\overline{r}_2}\ar[dl]|{\overline{q}}&&&
\\
&\ar[l]_u&\ar@{}[d]|{\varepsilon_2\Downarrow}&\ar[ll]|{s_2'} &\ar[l]|{r_2}\ar[dll]^{t_2}\ar@{}[ddrr]|{\omega_1\Downarrow} && \ar[uu]|{d}\ar[dd]|{\hat{d}} &\ar[l]^{e_2}\ar[r]_{e_1}\ar@{}[u]|{\stackrel{\chi}{\Leftarrow}}&\ar@/_2.5ex/[uull]_{c}\ar@{}[r]|{\textstyle\equiv}&
\\
&&\ar[ul]^{\overline{v}_2} &&&&
\\
&&&&&&\ar@/^2ex/[-1,-4]^{\hat{t}_2\overline{\hat{q}}\tilde{\hat{r}}_1\hat{x}}
\\
&&& &&&&&\ar@/_2ex/[3,-4]_{\overline{v}_1t_1\overline{p}\tilde{r}_2x}\ar@{}[3,-2]|{\overline{v}_1\xi_1\Downarrow}
\\
&&& &&&&&
\\
&&& &&&&&
\\
&&& &\ar[l]_u&\ar[l]_{\overline{v}_1}\ar@{}[d]|{\hat\varepsilon_1^{-1}\Downarrow}&&&\ar[uuu]_{c}\ar[ddd]^{\hat{c}}&\ar[l]_{e_1}\ar[r]^{e_2}\ar@{}[dd]|{\stackrel{\hat\chi}{\Rightarrow}} & \ar@/^2.5ex/[3,-2]^{\hat{d}}
\\
&&& &&\ar[ul]|{b_1'}\ar@{}[d]|{\hat\alpha^{-1}\Downarrow}&\ar[l]|{\hat{r}_1}\ar[ul]_{\hat{t}_1}\ar@{}[d]|{\hat\rho_1^{-1}\Downarrow}&& &&
\\
&&& &\ar[uu]^{\overline{v}_2}\ar@{}[r]|{\hat\varepsilon_2\Downarrow}&\ar[uul]|{b_2'}&\ar[l]|{\hat{q}}\ar[ul]|{\hat{p}}\ar@{}[d]|{\hat\rho_2^{-1}\Downarrow} & \ar[l]|{\overline{\hat{r}}_1}\ar[ul]_{\overline{\hat{q}}}\ar@{}[d]|{\hat\rho_3^{-1}\Downarrow} & &&
\\
&&&  &&&\ar[ull]^{\hat{t}_2}\ar[ul]|{\hat{r}_2} & \ar[l]^{\overline{\hat{q}}}\ar[ul]|{\overline{\hat{r}}_2} &\ar[l]^{\tilde{\hat{r}}_1\hat{x}}\ar[ul]_{\tilde{\hat{r}}_2\hat{x}}
}
\end{equation}
It can be checked by a long but straightforward calculation using all the equations set up in this proof that the following cells witness the equivalence of (\ref{first_whisker}) and (\ref{second_whisker}):
$$
\xymatrix{
&&\ar@{}[ddd]|{\stackrel{\xi_1}{\Rightarrow}}&&
\\
&\ar[ur]^{t_1}&&\ar[ul]_{\hat{t}_1}
\\
&&&&
\\
\ar[uur]^{\overline{p}}&&&&\ar[uul]_{\overline{\hat{p}}}
\\
\ar[u]^{\tilde{r}_2x}\ar[d]_{\tilde{r}_1x} & \ar[l]|c\ar@/^5ex/[rrr]^{\hat{c}}\ar@{}@<2.5ex>[rrr]|{\hat\chi\Downarrow}&\ar[l]|{e_1}\ar[r]|{e_2} & \ar[r]|{\hat{d}}\ar@/^5ex/[lll]^d \ar@{}@<2.5ex>[lll]|{\stackrel{\chi}{\Rightarrow}} & \ar[u]_{\tilde{\hat{r}}_2\hat{x}}\ar[d]^{\tilde{\hat{r}}_2\hat{x}}
\\
\ar[ddr]_{\overline{q}}&&\ar@{}[ddd]|{\stackrel{\omega_1}{\Rightarrow}} && \ar[ddl]^{\overline{\hat{q}}}
\\
&&&&
\\
&\ar[dr]_{t_2} &&\ar[dl]^{\hat{t}_2} &
\\
&&&&
}
$$
\end{proof}
\section{Horizontal Composition of 2-Cell Diagrams}\label{horcomp}

In this appendix we provide a proof for the following result, described in Section \ref{horcomppb}:

\begin{prop} Let $\calB$ be a bicategory and let
$\frakW$ be  a class of arrows in $\calB$ that is pullback-closed, satisfies the fractions axioms and is full and co-full. If the cell $\beta$ in the following diagram of composable 2-cell diagrams  is invertible,
\begin{equation}\label{composable-2-cells2}
\xymatrix@C=3.5em{
&A'\ar[dl]_{u_1}\ar[dr]^{f_1}&&B'\ar[dl]_{v_1}\ar[dr]^{g_1}
\\
A\ar@{}[r]|{\rho_{u_1,u_2}\Downarrow}&P_{u_1,u_2}\ar[u]_{\pi_{A'}}\ar[d]^{\pi_{A''}}\ar@{}[r]|{\beta\Downarrow}&B\ar@{}[r]|{\rho_{v_1,v_2}\Downarrow} &P_{v_1,v_2}\ar[u]_{\pi_{B'}}\ar[d]^{\pi_{B''}}\ar@{}[r]|{\gamma\Downarrow}&C
\\
&A''\ar[ul]^{u_2}\ar[ur]_{f_2} && B''\ar[ul]^{v_2}\ar[ur]_{g_2}
}
\end{equation}  then the horizontal composition of these 2-cells in $\calB[\frakW^{-1}]$ can be represented by the 2-cell diagram 

\begin{equation}\label{result}\xymatrix@C=5em{
&&D\ar[dll]_{u_1\overline{v}_1}\ar[r]^{\overline{f}_1} & B'\ar[dr]^{g_1}
\\
A\ar@{}[rr]|{\rho_{u_1\overline{v}_1,u_2\overline{v}_2}\Downarrow} && P_{u_1\overline{v}_1,u_2\overline{v}_2}\ar@{}[ur]|=\ar@{}[dr]|=\ar[u]_{\pi_{D}}\ar[d]^{\pi_{D'}} \ar[r]^{w_{v_1,v_2}} & P_{v_1,v_2}\ar[u]_{\pi_{B'}}\ar[d]^{\pi_{B''}}\ar@{}[r]|{\gamma\Downarrow} & C\\
&&D'\ar[ull]^{u_2\overline{v}_2}\ar[r]_{\overline{f}_2} & B''\ar[ur]_{g_2}
} \end{equation}

as described in Section \ref{horcomppb}.
\end{prop}

\begin{proof}
We construct the horizontal composition of the 2-cell diagrams of (\ref{composable-2-cells2})     using whiskering and vertical composition:
$$([\rho_{v_1,v_2},\gamma](f_2,u_2))\cdot((v_1,g_1)[\rho_{u_1,u_2},\beta])$$
We start by considering the whiskering $(v_1,g_1)[\rho_{u_1,u_2},\beta]$.  To construct this, we need the chosen square:  
$$
\xymatrix@R=2.5em@C=2.5em{
\ar[r]^{v_1^*}\ar[d]_{f_2^*} \ar@{}[dr]|{\Downarrow\varepsilon_{1,2}}& \ar[d]^{f_2}
\\
\ar[r]_{v_1}&
}
$$
This lets us construct the composition of the spans of arrows as in the following diagram (which is not a pasting diagram):
\[\begin{tikzcd}[column sep=small, row sep = small]
	&& {A'} && D \\
	&& {} && {\Downarrow\delta_1} \\
	A & {\rho_{u_1,u_2}\Downarrow} & {P_{u_1,u_2}} & \beta\Downarrow & B && {B'} && C \\
	&& {} && {\Downarrow\varepsilon_{1,2}^{-1}} \\
	&& {A''} && {D^*}
	\arrow["{v_1}"{description}, from=3-7, to=3-5]
	\arrow["{g_1}", from=3-7, to=3-9]
	\arrow["{\pi_{A''}}"{description}, from=3-3, to=5-3]
	\arrow["{\pi_{A'}}"{description}, from=3-3, to=1-3]
	\arrow["{u_1}"', from=1-3, to=3-1]
	\arrow["{u_2}", from=5-3, to=3-1]
	\arrow["{v_1^*}", from=5-5, to=5-3]
	\arrow["{f_2^*}"{description}, from=5-5, to=3-7]
	\arrow["{\overline{f}_1}"{description}, from=1-5, to=3-7]
	\arrow["{\overline{v}_1}"', from=1-5, to=1-3]
	\arrow["{f_1}"{description}, from=1-3, to=3-5]
	\arrow["{f_2}"{description}, from=5-3, to=3-5]
\end{tikzcd}\]
The left-hand 2-cell for the 2-cell diagram representing the  whiskering $(v_1,g_1) [\rho_{u_1,u_2},\beta]$ is the pseudo-pullback square
$$
\xymatrix@C=4.5em{
P_{u_1\overline{v}_1,u_2v_1^*} \ar[d]_{\pi'_{D^*}} \ar[r]^{\pi'_D} \ar@{}[dr]|{\wr\Downarrow\rho_{u_1\overline{v}_1,u_2v_1^*}} & D \ar[d]^{u_1\overline{v}_1}
\\
D^*\ar[r]_{u_2v_1^*} &A}
$$
Let $w^*_{u_1,u_2}\colon P_{u_1\overline{v}_1,u_2v_1^*} \rightarrow P_{u_1,u_2}$ be the unique arrow such that
$\rho_{u_1,u_2}w^*_{u_1,u_2}=\rho_{u_1\overline{v}_1,u_2v_1^*}$.
Then the right-hand 2-cell in the diagram representing the whiskering of $[\rho_{u_1,u_2},\beta]$ with $(v_1,g_1)$ can be obtained by considering diagram below and then taking a lifting with respect to $v_1$ for the right-hand pasting diagram (using fullness of $\frakW$):
\begin{equation}\label{first-2-cell}\begin{tikzcd}[ampersand replacement=\&]
	\&\& D \\
	\&\&\& {A'} \\
	A \&\& {P_{u_1\overline{v}_1,u_2v_1^*}} \& {P_{u_1,u_2}} \& B \& {B'} \& C \\
	\&\&\& {A''} \\
	\&\& {D^*}
	\arrow["{\pi_{A'}}", from=3-4, to=2-4]
	\arrow["{\pi_{A''}}"', from=3-4, to=4-4]
	\arrow[""{name=0, anchor=center, inner sep=0}, "{v_1^*}"{description}, from=5-3, to=4-4]
	\arrow["{\pi'_{D^*}}"', from=3-3, to=5-3]
	\arrow[""{name=1, anchor=center, inner sep=0}, "{f_1}"{description}, from=2-4, to=3-5]
	\arrow[""{name=2, anchor=center, inner sep=0}, "{f_2}"{description}, from=4-4, to=3-5]
	\arrow["{v_1}", from=3-6, to=3-5]
	\arrow[""{name=3, anchor=center, inner sep=0}, "{f_2^*}"', curve={height=30pt}, from=5-3, to=3-6]
	\arrow[""{name=4, anchor=center, inner sep=0}, "{u_2v_1^*}", curve={height=-6pt}, from=5-3, to=3-1]
	\arrow["{g_1}"', from=3-6, to=3-7]
	\arrow["{\pi'_D}", from=3-3, to=1-3]
	\arrow[""{name=5, anchor=center, inner sep=0}, "{\overline{v}_1}"{description}, from=1-3, to=2-4]
	\arrow[""{name=6, anchor=center, inner sep=0}, "{\overline{f}_1}", curve={height=-30pt}, from=1-3, to=3-6]
	\arrow[""{name=7, anchor=center, inner sep=0}, "{u_1\overline{v}_1}"', curve={height=6pt}, from=1-3, to=3-1]
	\arrow[""{name=8, anchor=center, inner sep=0}, "{w^*_{u_1,u_2}}", from=3-3, to=3-4]
	\arrow["{\varepsilon_{12}^{-1}}", shorten <=2pt, shorten >=4pt, Rightarrow, from=4-4, to=3]
	\arrow["{\rho_{u_1\overline{v}_1,u_2v_1^*}}"{description}, shorten <=22pt, shorten >=22pt, Rightarrow, from=7, to=4]
	\arrow["\beta", shorten <=12pt, shorten >=12pt, Rightarrow, from=1, to=2]
	\arrow["{\delta_1}", shorten <=4pt, shorten >=2pt, Rightarrow, from=6, to=2-4]
	\arrow[shorten <=22pt, shorten >=22pt, Rightarrow, no head, from=8, to=0]
	\arrow[shorten <=22pt, shorten >=22pt, Rightarrow, no head, from=5, to=8]
\end{tikzcd}\end{equation}
We write  $\tilde\beta\colon \overline{f}_1\pi_D'\Rightarrow f_2^*\pi'_{D^*}$ for the lifted cell.  We obtain then the following 2-cell diagram in the bicategory of fractions:
\[\begin{tikzcd}[ampersand replacement=\&]
	\&\& D \\
	A \&\& {P_{u_1\overline{v}_1,u_2v_1^*}} \&\& {B'} \&\& C \\
	\&\& {D^*}
	\arrow[""{name=0, anchor=center, inner sep=0}, "{u_2v_1^*}", from=3-3, to=2-1]
	\arrow[""{name=1, anchor=center, inner sep=0}, "{u_1\overline{v}_1}"', from=1-3, to=2-1]
	\arrow["{\pi'_D}", from=2-3, to=1-3]
	\arrow["{\pi'_{D^*}}"', from=2-3, to=3-3]
	\arrow["{g_1}", from=2-5, to=2-7]
	\arrow[""{name=2, anchor=center, inner sep=0}, "{\overline{f}_1}", from=1-3, to=2-5]
	\arrow[""{name=3, anchor=center, inner sep=0}, "{f_2^*}"', from=3-3, to=2-5]
	\arrow["{\rho_{u_1\overline{v}_1,u_2v_1^*}}"{description}, shorten <=4pt, shorten >=4pt, Rightarrow, from=1, to=0]
	\arrow["\tilde\beta"', shorten <=8pt, shorten >=8pt, Rightarrow, from=2, to=3]
\end{tikzcd}\]
Now we consider the other half of the composition, the whiskering
$[\rho_{v_1,v_2},\gamma](u_2,f_2)$.
The domain and codomain spans of arrows for the whiskering are constructed in the following diagram (not a pasting diagram):
\[\begin{tikzcd}[ampersand replacement=\&]
	\&\&\&\& {D^*} \&\& {B'} \\
	A \&\& {A''} \&\& B \&\& {P_{v_1,v_2}} \&\& C \\
	\&\&\&\& {D'} \&\& {B''}
	\arrow["{\pi_{B'}}"', from=2-7, to=1-7]
	\arrow["{\pi_{B''}}", from=2-7, to=3-7]
	\arrow[""{name=0, anchor=center, inner sep=0}, "{v_1}"{description}, from=1-7, to=2-5]
	\arrow[""{name=1, anchor=center, inner sep=0}, "{v_2}"{description}, from=3-7, to=2-5]
	\arrow[""{name=2, anchor=center, inner sep=0}, "{g_1}", from=1-7, to=2-9]
	\arrow[""{name=3, anchor=center, inner sep=0}, "{g_2}"', from=3-7, to=2-9]
	\arrow[""{name=4, anchor=center, inner sep=0}, "{f_2}"{description}, from=2-3, to=2-5]
	\arrow["{u_2}"{description}, from=2-3, to=2-1]
	\arrow[""{name=5, anchor=center, inner sep=0}, "{f_2^*}"{description}, from=1-5, to=1-7]
	\arrow["{v_1^*}"{description}, from=1-5, to=2-3]
	\arrow[""{name=6, anchor=center, inner sep=0}, "{\overline{f}_2}"{description}, from=3-5, to=3-7]
	\arrow["{\overline{v}_2}"{description}, from=3-5, to=2-3]
	\arrow["{\rho_{v_1,v_2}}", shorten <=10pt, shorten >=10pt, Rightarrow, from=0, to=1]
	\arrow["\gamma"', shorten <=10pt, shorten >=10pt, Rightarrow, from=2, to=3]
	\arrow["{\varepsilon_{1,2}}"{description}, shorten <=28pt, shorten >=28pt, Rightarrow, from=4, to=5]
	\arrow["{\delta_2}"{description}, shorten <=25pt, shorten >=25pt, Rightarrow, from=6, to=4]
\end{tikzcd}\]
To find a 2-cell diagram representing this whiskering,  we start with the pseudo pullback,
$$
\xymatrix{P_{u_2v_1^*,u_2\overline{v}_2}\ar[rr]^{\pi_{D^*}''}\ar[dd]_{\pi_{D'}''}\ar@{}[ddrr]|{\wr\Downarrow\rho_{u_2v_1^*,u_2\overline{v}_2}} &&D^*\ar[d]^{v_1^*}
\\
&&A''\ar[d]^{u_2}
\\
D'\ar[r]_{\overline{v}_2}&A''\ar[r]_{u_2} &A
}
$$
Using fullness of $\frakW$, let 
$$\xymatrix@C=3.5em@R=3.5em{P_{u_2v_1^*,u_2\overline{v}_2} \ar[r]^{\pi_{D^*}''}\ar[d]_{\pi_{D'}''}\ar@{}[dr]|{\wr\Downarrow\tilde\rho_{u_2v_1^*,u_2\overline{v}_2}} &D^*\ar[d]^{v_1^*}
\\
D'\ar[r]_{\overline{v}_2}&A''
}
$$
be the lifting of this diagram with respect to $u_2$, and let $x_{v_1,v_2}\colon P_{u_2v_1^*,u_2\overline{v}_2}\to P_{v_1,v_2}$ be the unique arrow such that the following equality of  pasting diagrams holds:
\[\begin{tikzcd}[ampersand replacement=\&, column sep = small]
	\&\& {B'} \&\&\&\& {D^*} \&\& {B'} \\
	\\
	{P_{u_2v_1^*,u_2\overline{v}_2}} \&\& {P_{v_1,v_2}} \&\& B \& {P_{u_2v_1^*,u_2\overline{v}_2}} \&\& {A''} \&\& B \\
	\\
	\&\& {B''} \&\&\&\& {D'} \&\& {B''}
	\arrow[""{name=0, anchor=center, inner sep=0}, "{x_{v_1,v_2}}", from=3-1, to=3-3]
	\arrow["{\pi_{B'}}", from=3-3, to=1-3]
	\arrow[""{name=1, anchor=center, inner sep=0}, "{f_2^*\pi_{D^*}''}", curve={height=-12pt}, from=3-1, to=1-3]
	\arrow["{\pi_{B''}}"', from=3-3, to=5-3]
	\arrow[""{name=2, anchor=center, inner sep=0}, "{\overline{f}_2\pi_{D'}''}"', curve={height=12pt}, from=3-1, to=5-3]
	\arrow[""{name=3, anchor=center, inner sep=0}, "{v_1}", curve={height=-12pt}, from=1-3, to=3-5]
	\arrow[""{name=4, anchor=center, inner sep=0}, "{v_2}"', curve={height=12pt}, from=5-3, to=3-5]
	\arrow[""{name=5, anchor=center, inner sep=0}, "{\pi_{D^*}''}", curve={height=-6pt}, from=3-6, to=1-7]
	\arrow[""{name=6, anchor=center, inner sep=0}, "{\pi_{D'}''}"', curve={height=6pt}, from=3-6, to=5-7]
	\arrow["{v_1^*}"', from=1-7, to=3-8]
	\arrow["{\overline{v}_2}", from=5-7, to=3-8]
	\arrow[""{name=7, anchor=center, inner sep=0}, "{f_2}", from=3-8, to=3-10]
	\arrow[""{name=8, anchor=center, inner sep=0}, "{f_2^*}", from=1-7, to=1-9]
	\arrow[""{name=9, anchor=center, inner sep=0}, "{\overline{f}_2}"', from=5-7, to=5-9]
	\arrow["{v_2}"', from=5-9, to=3-10]
	\arrow["{v_1}", from=1-9, to=3-10]
	\arrow["\equiv"{description}, draw=none, from=3-5, to=3-6]
	\arrow[shorten <=12pt, shorten >=15pt, Rightarrow, no head, from=1, to=0]
	\arrow[shorten <=12pt, shorten >=15pt, Rightarrow, no head, from=2, to=0]
	\arrow["{\rho_{v_1,v_2}}"{description}, shorten <=18pt, shorten >=18pt, Rightarrow, from=3, to=4]
	\arrow["{\tilde{\rho}_{u_2v_1^*,u_2\overline{v}_2}}", shift left=5, shorten <=18pt, shorten >=18pt, Rightarrow, from=5, to=6]
	\arrow["{\varepsilon_{1,2}}", shorten <=20pt, shorten >=20pt, Rightarrow, from=8, to=7]
	\arrow["{\delta_2^{-1}}", shorten <=20pt, shorten >=20pt, Rightarrow, from=7, to=9]
\end{tikzcd}\]
The whiskering $[\rho_{v_1,v_2},\gamma] (u_2,f_2)$ can now be represented by the diagram
\begin{equation}\label{sec-2-cell}
\xymatrix@C=6em{&D^*\ar[r]^{f_2^*}\ar@{}[dr]|= \ar[dl]_{u_2v_1^*}& B'\ar[dr]^{g_1}
\\
A\ar@{}[r]|{\Downarrow\rho_{u_2v_1^*,u_2\overline{v}_2}}&P_{u_2v_1^*,u_2\overline{v}_2} \ar[u]_{\pi''_{D^*}}\ar[d]^{\pi''_{D'}}\ar[r]^{x_{v_1,v_2}}\ar@{}[dr]|= & P_{v_1,v_2}\ar[u]^{\pi_{B'}}\ar[d]_{\pi_{B''}}\ar@{}[r]|{\Downarrow\gamma}&C
\\
&D'\ar[ul]^{u_2\overline{v}_2}\ar[r]_{\overline{f}_2} & B''\ar[ur]_{g_2}
}
\end{equation}

We now want to construct the vertical composition of the whiskerings  $[\rho_{v_1,v_2},\gamma](f_2,u_2)$ and $(v_1,g_1)[\rho_{u_1,u_2},\beta]$ as presented in  (\ref{first-2-cell}) and (\ref{sec-2-cell}). For this we need the following pseudo pullback (or any square that commutes up to an invertible 2-cell):
$$
\xymatrix{R\ar[r]^{\pi_1}\ar[d]_{\pi_2}\ar@{}[dr]|{\rho\Downarrow\wr} &P_{u_1\overline{v}_1,\overline{v}_2v_1^*}\ar[d]^{\pi'_{D^*}}\\P_{u_2v_1^*,u_2\overline{v}_2}\ar[r]_{\pi''_{D^*}} & D^*}
$$
Furthermore, let $r\colon R\to P_{u_1\overline{v}_1,u_2\overline{v}_2}$ be the unique arrow such that the following equality of pasting diagrams holds,
\[\begin{tikzcd}[ampersand replacement=\&,column sep = 2em]
	\&\& D \&\&\&\& D \\
	\&\&\&\&\&\& {P_{u_1\overline{v}_1,\overline{v}_2v_1^*}} \\
	R \&\& {P_{u_1\overline{v_1},u_2\overline{v}_2}} \&\& A \& R \&\& {D^*} \&\& A \\
	\&\&\&\&\&\& {P_{u_2v_1^*,u_2\overline{v}_2}} \\
	\&\& {D'} \&\&\&\& {D'}
	\arrow[""{name=0, anchor=center, inner sep=0}, "r"{description}, from=3-1, to=3-3]
	\arrow["{\pi_D}", from=3-3, to=1-3]
	\arrow["{\pi_{D'}}"', from=3-3, to=5-3]
	\arrow[""{name=1, anchor=center, inner sep=0}, "{\pi_{D'}r}"', curve={height=6pt}, from=3-1, to=5-3]
	\arrow[""{name=2, anchor=center, inner sep=0}, "{\pi_Dr}", curve={height=-6pt}, from=3-1, to=1-3]
	\arrow[""{name=3, anchor=center, inner sep=0}, "{u_1\overline{v}_1}", curve={height=-15pt}, from=1-3, to=3-5]
	\arrow[""{name=4, anchor=center, inner sep=0}, "{u_2\overline{v}_2}"', curve={height=15pt}, from=5-3, to=3-5]
	\arrow["\equiv"{description}, draw=none, from=3-5, to=3-6]
	\arrow[""{name=5, anchor=center, inner sep=0}, "{\pi_1}", from=3-6, to=2-7]
	\arrow[""{name=6, anchor=center, inner sep=0}, "{\pi_2}"', from=3-6, to=4-7]
	\arrow["{\pi_D'}", from=2-7, to=1-7]
	\arrow["{\pi_{D^*}'}"', from=2-7, to=3-8]
	\arrow["{\pi''_{D^*}}", from=4-7, to=3-8]
	\arrow[""{name=7, anchor=center, inner sep=0}, "{u_2v_1^*}"{description}, from=3-8, to=3-10]
	\arrow[""{name=8, anchor=center, inner sep=0}, "{u_1\overline{v}_1}", curve={height=-15pt}, from=1-7, to=3-10]
	\arrow["{\pi''_{D'}}"', from=4-7, to=5-7]
	\arrow[""{name=9, anchor=center, inner sep=0}, "{u_2\overline{v}_2}"', curve={height=15pt}, from=5-7, to=3-10]
	\arrow[shorten <=12pt, shorten >=12pt, Rightarrow, no head, from=2, to=0]
	\arrow[shorten <=12pt, shorten >=12pt, Rightarrow, no head, from=0, to=1]
	\arrow["{\rho_{u_1\overline{v}_1,u_2\overline{v}_2}}"{description}, shorten <=25pt, shorten >=25pt, Rightarrow, from=3, to=4]
	\arrow["\rho", shift left=5, shorten <=8pt, shorten >=8pt, Rightarrow, from=5, to=6]
	\arrow["{\rho_{u_1\overline{v}_1,\overline{v}_2v_1^*}}"{description}, shift right=4,shorten <=12pt, shorten >=12pt, Rightarrow, from=8, to=7]
	\arrow["{\rho_{u_2v_1^*,u_2,\overline{v}_2}}"{description},shift right=4, shorten <=12pt, shorten >=12pt, Rightarrow, from=7, to=9]
\end{tikzcd}\]
Now let 
$$
\xymatrix{
&D\ar[dr]^{g_1\overline{f}_1}\ar@{}[dd]|{\widetilde{\beta\gamma}\Downarrow}
\\
P_{u_1\overline{v}_1,u_2\overline{v}_2} \ar[ur]^{\pi_D} \ar[dr]_{\pi_{D'}} && C
\\
&D'\ar[ur]_{g_2\overline{f}_2}
}
$$
be a lifting with respect to $r$ of the pasting of the following diagram,
\begin{equation}\begin{tikzcd}[ampersand replacement=\&]\label{initial_diagram}
	{P_{u_1\overline{v}_1,u_2\overline{v}_2}} \&\& D \\
	\& {P_{u_1\overline{v}_1,u_2v_1^*}} \\
	R \&\& {D^*} \&\& {B'} \\
	\& {P_{u_2v_1^*,u_2\overline{v}_2}} \&\& {P_{v_1,v_2}} \&\& C \\
	{P_{u_1\overline{v}_1,u_2\overline{v}_2}} \&\& {D'} \&\& {B''}
	\arrow[""{name=0, anchor=center, inner sep=0}, "r"', from=3-1, to=1-1]
	\arrow[""{name=1, anchor=center, inner sep=0}, "{\pi_1}", from=3-1, to=2-2]
	\arrow[""{name=2, anchor=center, inner sep=0}, "{\pi_2}"', from=3-1, to=4-2]
	\arrow["{\pi'_{D^*}}", from=2-2, to=3-3]
	\arrow["{\pi''_{D^*}}"', from=4-2, to=3-3]
	\arrow[""{name=3, anchor=center, inner sep=0}, "{\pi'_D}", from=2-2, to=1-3]
	\arrow[""{name=4, anchor=center, inner sep=0}, "{\pi''_{D'}}"', from=4-2, to=5-3]
	\arrow[""{name=5, anchor=center, inner sep=0}, "{f_2^*}", from=3-3, to=3-5]
	\arrow[""{name=6, anchor=center, inner sep=0}, "{\overline{f}_1}", from=1-3, to=3-5]
	\arrow["{x_{v_1,v_2}}"{description}, from=4-2, to=4-4]
	\arrow[""{name=7, anchor=center, inner sep=0}, "{\pi_{B'}}"{description}, from=4-4, to=3-5]
	\arrow[""{name=8, anchor=center, inner sep=0}, "{\pi_{B''}}"{description}, from=4-4, to=5-5]
	\arrow[""{name=9, anchor=center, inner sep=0}, "{\overline{f}_2}"', from=5-3, to=5-5]
	\arrow["{g_1}", from=3-5, to=4-6]
	\arrow["{g_2}"', from=5-5, to=4-6]
	\arrow["{\pi_D}", from=1-1, to=1-3]
	\arrow[""{name=10, anchor=center, inner sep=0}, "r"', from=3-1, to=5-1]
	\arrow["{\pi_{D'}}"', from=5-1, to=5-3]
	\arrow["\rho", shift left=5, shorten <=4pt, shorten >=4pt, Rightarrow, from=1, to=2]
	\arrow[shorten <=10pt, shorten >=8pt, Rightarrow, no head, from=5, to=7]
	\arrow[shift right=5, shorten <=10pt, shorten >=8pt, Rightarrow, no head, from=8, to=9]
	\arrow["\gamma", shift left=5, shorten <=7pt, shorten >=7pt, Rightarrow, from=7, to=8]
	\arrow["\tilde\beta"', shift right=5, shorten <=4pt, shorten >=6pt, Rightarrow, from=6, to=5]
	\arrow[shorten <=26pt, shorten >=26pt, Rightarrow, no head, from=0, to=3]
	\arrow[shorten <=26pt, shorten >=26pt, Rightarrow, no head, from=10, to=4]
\end{tikzcd}\end{equation}

We need to show that the 2-cell diagram (\ref{result}) that we constructed in Section  \ref{horcomppb}, 
$$\xymatrix@C=5em{
&&D\ar[dll]_{u_1\overline{v}_1}\ar[r]^{\overline{f}_1} & B'\ar[dr]^{g_1}
\\
A\ar@{}[rr]|{\rho_{u_1\overline{v}_1,u_2\overline{v}_2}\Downarrow} && P_{u_1\overline{v}_1,u_2\overline{v}_2}\ar@{}[ur]|=\ar@{}[dr]|=\ar[u]_{\pi_{D}}\ar[d]^{\pi_{D'}} \ar[r]^{w_{v_1,v_2}} & P_{v_1,v_2}\ar[u]_{\pi_{B'}}\ar[d]^{\pi_{B''}}\ar@{}[r]|{\gamma\Downarrow} & C\\
&&D'\ar[ull]^{u_2\overline{v}_2}\ar[r]_{\overline{f}_2} & B''\ar[ur]_{g_2}
}
$$
is equivalent to the following diagram, whose right side is the lift of (\ref{initial_diagram}) with respect to $r$:  
\begin{equation}\label{firstrepr}
\xymatrix@C=5em{&&D\ar[dll]_{u_1\overline{v}_1}\ar[drr]^{g_1\overline{f}_1} &&
\\
A\ar@{}[rr]|{\rho_{u_1\overline{v}_1,u_2\overline{v}_2}\Downarrow} &&
P_{u_1\overline{v}_1,u_2\overline{v}_2}\ar[u]_{\pi_{D}}\ar[d]^{\pi_{D'}}\ar@{}[rr]|{\widetilde{\beta\gamma}}&& C
\\
&&D'\ar[ull]^{u_2\overline{v}_2}\ar[urr]_{g_2\overline{f}_2} &&
}
\end{equation}

To do this, we precompose $\widetilde{\beta\gamma}$ by $r$, which allows us to expand $\widetilde{\beta\gamma}$, replacing it with (\ref{initial_diagram}).
Let diagram (\ref{diagram_A}) be the following sub-diagram of the result:  
\begin{equation}\label{diagram_A} \tag{\bf I}
\begin{tikzcd}[ampersand replacement=\&]
	\&\& {P_{u_1\overline{v}_1,u_2\overline{v}_2}} \&\& D \\
	R \&\& {P_{u_1\overline{v}_1,u_2v_1^*}} \&\& {D^*} \&\& {B'} \\
	\&\& {P_{u_2v_1^*,u_2\overline{v}_2}} \&\& {P_{v_1,v_2}}
	\arrow["{\pi_1}"', from=2-1, to=2-3]
	\arrow["r", from=2-1, to=1-3]
	\arrow["{\pi_2}"', from=2-1, to=3-3]
	\arrow[""{name=0, anchor=center, inner sep=0}, "{\pi'_{D^*}}"{description}, from=2-3, to=2-5]
	\arrow[""{name=1, anchor=center, inner sep=0}, "{\pi''_{D^*}}"', from=3-3, to=2-5]
	\arrow[""{name=2, anchor=center, inner sep=0}, "{\pi_D}", from=1-3, to=1-5]
	\arrow[""{name=3, anchor=center, inner sep=0}, "{\pi_{D'}}"{description}, from=2-3, to=1-5]
	\arrow[""{name=4, anchor=center, inner sep=0}, "{\overline{f}_1}", from=1-5, to=2-7]
	\arrow[""{name=5, anchor=center, inner sep=0}, "{f_2^*}"{description}, from=2-5, to=2-7]
	\arrow["{x_{v_1,v_2}}"', from=3-3, to=3-5]
	\arrow[""{name=6, anchor=center, inner sep=0}, "{\pi_{B'}}"{description}, from=3-5, to=2-7]
	\arrow[shorten <=5pt, shorten >=5pt, Rightarrow, no head, from=1-3, to=2-3]
	\arrow["\rho"', shorten <=4pt, shorten >=4pt, Rightarrow, from=2-3, to=3-3]
	\arrow["\tilde\beta"', shorten <=5pt, shorten >=5pt, Rightarrow, from=1-5, to=2-5]
	\arrow[shorten <=6pt,shorten >=6pt, Rightarrow, no head, from=2-5, to=3-5]
\end{tikzcd}
\end{equation}
So diagram  (\ref{initial_diagram}) is obtained from  diagram (\ref{diagram_A}) by postcomposing it with $g_1$ and then with $\gamma$. 
We now take diagram (\ref{diagram_A}) and postcompose with $v_1$. 
\[\begin{tikzcd}[ampersand replacement=\&]
	\&\& {P_{u_1\overline{v}_1,u_2\overline{v}_2}} \&\& D \\
	R \&\& {P_{u_1\overline{v}_1,u_2v_1^*}} \&\& {D^*} \&\& {B'} \& B \\
	\&\& {P_{u_2v_1^*,u_2\overline{v}_2}} \&\& {P_{v_1,v_2}}
	\arrow["{\pi_1}"', from=2-1, to=2-3]
	\arrow["r", from=2-1, to=1-3]
	\arrow["{\pi_2}"', from=2-1, to=3-3]
	\arrow[""{name=0, anchor=center, inner sep=0}, "{\pi'_{D^*}}"{description}, from=2-3, to=2-5]
	\arrow[""{name=1, anchor=center, inner sep=0}, "{\pi''_{D^*}}"', from=3-3, to=2-5]
	\arrow[""{name=2, anchor=center, inner sep=0}, "{\pi_D}", from=1-3, to=1-5]
	\arrow[""{name=3, anchor=center, inner sep=0}, "{\pi_{D'}}"{description}, from=2-3, to=1-5]
	\arrow[""{name=4, anchor=center, inner sep=0}, "{\overline{f}_1}", from=1-5, to=2-7]
	\arrow[""{name=5, anchor=center, inner sep=0}, "{f_2^*}"{description}, from=2-5, to=2-7]
	\arrow["{x_{v_1,v_2}}"', from=3-3, to=3-5]
	\arrow[""{name=6, anchor=center, inner sep=0}, "{\pi_{B'}}"{description}, from=3-5, to=2-7]
	\arrow["{v_1}", from=2-7, to=2-8]
	\arrow[shorten <=4pt, shorten >=4pt, Rightarrow, no head, from=1-3, to=2-3]
	\arrow["\rho"', shorten <=4pt, shorten >=4pt, Rightarrow, from=2-3, to=3-3]
	\arrow["\tilde\beta"', shorten <=5pt, shorten >=5pt, Rightarrow, from=1-5, to=2-5]
	\arrow[shorten <=5pt, shorten >=5pt, Rightarrow, no head, from=2-5, to=3-5]
\end{tikzcd}\]
Since $\tilde{\beta}$ was originally defined as a lift with respect to $v_1$, this  allows us to expand $\tilde\beta$:
\[\begin{tikzcd}[ampersand replacement=\&]
	{P_{u_1\overline{v}_1,u_2\overline{v}_2}} \& D \&\& {B'} \\
	\&\& {A'} \\
	R \& {P_{u_1\overline{v}_1,u_2v_1^*}} \& {P_{u_1,u_2}} \&\& B \\
	\&\& {A''} \\
	{P_{u_2v_1^*,u_2\overline{v}_2}} \& {D^*} \&\& {B'}
	\arrow["r", from=3-1, to=1-1]
	\arrow["{\pi_2}"', from=3-1, to=5-1]
	\arrow[""{name=0, anchor=center, inner sep=0}, "{\pi_1}"', from=3-1, to=3-2]
	\arrow[""{name=1, anchor=center, inner sep=0}, "{\pi''_{D^*}}"', from=5-1, to=5-2]
	\arrow["{\pi'_{D^*}}"{description}, from=3-2, to=5-2]
	\arrow["{\pi'_D}"{description}, from=3-2, to=1-2]
	\arrow[""{name=2, anchor=center, inner sep=0}, "{\pi_D}", from=1-1, to=1-2]
	\arrow[""{name=3, anchor=center, inner sep=0}, "{\overline{v}_1}"{description}, from=1-2, to=2-3]
	\arrow[""{name=4, anchor=center, inner sep=0}, "{f_1}"{description}, from=2-3, to=3-5]
	\arrow[""{name=5, anchor=center, inner sep=0}, "{f_2}"{description}, from=4-3, to=3-5]
	\arrow["{\pi_{A'}}"', from=3-3, to=2-3]
	\arrow["{\pi_{A''}}", from=3-3, to=4-3]
	\arrow[""{name=6, anchor=center, inner sep=0}, "{\overline{f}_1}", from=1-2, to=1-4]
	\arrow["{v_1}", from=1-4, to=3-5]
	\arrow[""{name=7, anchor=center, inner sep=0}, "{w^*_{u_1,u_2}}"', from=3-2, to=3-3]
	\arrow[""{name=8, anchor=center, inner sep=0}, "{v_1^*}"{description}, from=5-2, to=4-3]
	\arrow[""{name=9, anchor=center, inner sep=0}, "{f_2^*}"', from=5-2, to=5-4]
	\arrow["{v_1}"', from=5-4, to=3-5]
	\arrow["\rho"', shorten <=36pt, shorten >=36pt, Rightarrow, from=0, to=1]
	\arrow[shorten <=36pt, shorten >=36pt, Rightarrow, no head, from=2, to=0]
	\arrow["\beta",shift right=4, shorten <=12pt, shorten >=12pt, Rightarrow, from=4, to=5]
	\arrow["{\delta_1}", shorten <=35pt, shorten >=35pt, Rightarrow, from=6, to=4]
	\arrow[shorten <=25pt, shorten >=25pt, Rightarrow, no head, from=3, to=7]
	\arrow[shorten <=25pt, shorten >=25pt, Rightarrow, no head, from=8, to=7]
	\arrow["{\varepsilon_{1,2}^{-1}}", shorten <=35pt, shorten >=35pt, Rightarrow, from=5, to=9]
\end{tikzcd}\]
We now postcompose by $\rho_{v_1,v_2}$.
\[\begin{tikzcd}[ampersand replacement=\&]
	{P_{u_1\overline{v}_1,u_2\overline{v}_2}} \& D \\
	\&\& {A'} \\
	R \& {P_{u_1\overline{v}_1,u_2v_1^*}} \& {P_{u_1,u_2}} \&\&\& {B'} \\
	\&\& {A''} \\
	\& {D^*} \&\& {B'} \&\& B \\
	{P_{u_2v_1^*,u_2\overline{v}_2}} \& {P_{v_1,v_2}} \&\& {B''} \\
	\& {D'}
	\arrow["r", from=3-1, to=1-1]
	\arrow["{\pi_2}"', from=3-1, to=6-1]
	\arrow[""{name=0, anchor=center, inner sep=0}, "{\pi_1}"', from=3-1, to=3-2]
	\arrow[""{name=1, anchor=center, inner sep=0}, "{\pi''_{D^*}}"', from=6-1, to=5-2]
	\arrow["{\pi'_{D^*}}"{description}, from=3-2, to=5-2]
	\arrow["{\pi'_D}"{description}, from=3-2, to=1-2]
	\arrow[""{name=2, anchor=center, inner sep=0}, "{\pi_D}", from=1-1, to=1-2]
	\arrow[""{name=3, anchor=center, inner sep=0}, "{\overline{v}_1}"{description}, from=1-2, to=2-3]
	\arrow[""{name=4, anchor=center, inner sep=0}, "{f_1}"{description}, from=2-3, to=5-6]
	\arrow[""{name=5, anchor=center, inner sep=0}, "{f_2}"{description}, from=4-3, to=5-6]
	\arrow["{\pi_{A'}}"', from=3-3, to=2-3]
	\arrow["{\pi_{A''}}", from=3-3, to=4-3]
	\arrow[""{name=6, anchor=center, inner sep=0}, "{\overline{f}_1}", from=1-2, to=3-6]
	\arrow["{v_1}", from=3-6, to=5-6]
	\arrow[""{name=7, anchor=center, inner sep=0}, "{w^*_{u_1,u_2}}"', from=3-2, to=3-3]
	\arrow[""{name=8, anchor=center, inner sep=0}, "{v_1^*}"{description}, from=5-2, to=4-3]
	\arrow[""{name=9, anchor=center, inner sep=0}, "{f_2^*}"{description}, from=5-2, to=5-4]
	\arrow[""{name=10, anchor=center, inner sep=0}, "{v_1}"{description}, from=5-4, to=5-6]
	\arrow["{x_{v_1,v_2}}"', from=6-1, to=6-2]
	\arrow[""{name=11, anchor=center, inner sep=0}, "{\pi_{B'}}"{description}, from=6-2, to=5-4]
	\arrow[""{name=12, anchor=center, inner sep=0}, "{\pi_{B''}}"{description}, from=6-2, to=6-4]
	\arrow[""{name=13, anchor=center, inner sep=0}, "{v_2}"', from=6-4, to=5-6]
	\arrow["{\pi''_{D'}}"', from=6-1, to=7-2]
	\arrow[""{name=14, anchor=center, inner sep=0}, "{\overline{f}_2}"', from=7-2, to=6-4]
	\arrow["\rho"', shorten <=44pt, shorten >=44pt, Rightarrow, from=0, to=1]
	\arrow[shorten <=35pt, shorten >=35pt, Rightarrow, no head, from=2, to=0]
	\arrow["\beta"', shorten <=10pt, shorten >=10pt, Rightarrow, from=4, to=5]
	\arrow["{\delta_1}", shorten <=25pt, shorten >=25pt, Rightarrow, from=6, to=4]
	\arrow[shorten <=35pt, shorten >=35pt, Rightarrow, no head, from=3, to=7]
	\arrow[shorten <=35pt, shorten >=35pt, Rightarrow, no head, from=8, to=7]
	\arrow[shorten <=2pt, shorten >=2pt, Rightarrow, no head, from=5-2, to=6-2]
	\arrow["{\rho_{v_1,v_2}}"', shorten <=2pt, shorten >=2pt, Rightarrow, from=5-4, to=6-4]
	\arrow["{\varepsilon_{1,2}^{-1}}"', shorten <=10pt, shorten >=10pt, Rightarrow, from=4-3, to=5-4]
	\arrow[shorten <=5pt, shorten >=2pt, Rightarrow, no head, from=6-2, to=7-2]
\end{tikzcd}\]
By the definition of $x_{v_1,v_2}$ this is equal to
\[\begin{tikzcd}[ampersand replacement=\&]
	\&\&\& D \&\& {B'} \\
	\& {P_{u_1\overline{v}_1,u_2\overline{v}_2}} \&\&\& {A'} \\
	\& {P_{u_1\overline{v}_1,u_2v_1^*}} \&\& {P_{u_1,u_2}} \& {A''} \&\& B \\
	R \&\&\& {D^*} \&\& {B'} \\
	\& {P_{u_2v_1^*,u_2\overline{v}_2}} \&\&\&\& {A''} \& {B''} \\
	\&\&\&\&\& {D'}
	\arrow["{v_1}", from=1-6, to=3-7]
	\arrow[""{name=0, anchor=center, inner sep=0}, "{f_1}"{description}, from=2-5, to=3-7]
	\arrow[""{name=1, anchor=center, inner sep=0}, "{f_2}"{description}, from=3-5, to=3-7]
	\arrow["{v_1}"{description}, from=4-6, to=3-7]
	\arrow[""{name=2, anchor=center, inner sep=0}, "{f_2}"{description}, from=5-6, to=3-7]
	\arrow["{v_2}"{description}, from=5-7, to=3-7]
	\arrow["{\overline{v}_1}"', from=1-4, to=2-5]
	\arrow[""{name=3, anchor=center, inner sep=0}, "{\overline{f}_1}", from=1-4, to=1-6]
	\arrow[""{name=4, anchor=center, inner sep=0}, "{\pi_{A'}}"{description}, from=3-4, to=2-5]
	\arrow["{\pi_{A''}}"', from=3-4, to=3-5]
	\arrow[""{name=5, anchor=center, inner sep=0}, "{w^*_{u_1,u_2}}"{description}, from=3-2, to=3-4]
	\arrow[""{name=6, anchor=center, inner sep=0}, "{\pi_D'}"{description}, from=3-2, to=1-4]
	\arrow["{\pi_D}", from=2-2, to=1-4]
	\arrow[""{name=7, anchor=center, inner sep=0}, "r", from=4-1, to=2-2]
	\arrow["{\pi_1}"{description}, from=4-1, to=3-2]
	\arrow[""{name=8, anchor=center, inner sep=0}, "{f_2^*}"{description}, from=4-4, to=4-6]
	\arrow["{v_1^*}"{description}, from=4-4, to=3-5]
	\arrow[""{name=9, anchor=center, inner sep=0}, "{v_1^*}"{description}, from=4-4, to=5-6]
	\arrow[""{name=10, anchor=center, inner sep=0}, "{\overline{f}_2}"', from=6-6, to=5-7]
	\arrow["{\overline{v}_2}"', from=6-6, to=5-6]
	\arrow["{\pi''_{D^*}}"{description}, from=5-2, to=4-4]
	\arrow[""{name=11, anchor=center, inner sep=0}, "{\pi''_{D'}}"', from=5-2, to=6-6]
	\arrow[""{name=12, anchor=center, inner sep=0}, "{\pi'_{D^*}}"', from=3-2, to=4-4]
	\arrow[""{name=13, anchor=center, inner sep=0}, "{\pi_2}"', from=4-1, to=5-2]
	\arrow["{\delta_1}", shorten <=12pt, shorten >=12pt, Rightarrow, from=1-6, to=2-5]
	\arrow["\beta", shorten <=6pt, shorten >=6pt, Rightarrow, from=2-5, to=3-5]
	\arrow[shorten <=25pt, shorten >=25pt, Rightarrow, no head, from=1-4, to=3-4]
	\arrow[shorten <=15pt, shorten >=15pt, Rightarrow, no head, from=2-2, to=3-2]
	\arrow["{\varepsilon_{1,2}^{-1}}"', shorten <=15pt, shorten >=15pt, Rightarrow, from=3-5, to=4-6]
	\arrow["{\delta_2^{-1}}", shorten <=23pt, shorten >=20pt, Rightarrow, from=2, to=10]
	\arrow[shorten <=8pt, shorten >=8pt, Rightarrow, no head, from=3-4, to=4-4]
	\arrow["\rho", shorten <=25pt, shorten >=20pt, Rightarrow, from=3-2, to=5-2]
	\arrow["{\varepsilon_{1,2}}"', shorten <=2pt, shorten >=2pt, Rightarrow, from=4-6, to=5-6]
	\arrow["{\tilde{\rho}_{u_2v_1^*,u_2\overline{v}_2}}"',shorten <=30pt, shorten >=40pt, Rightarrow, from=4-4, to=6-6]
\end{tikzcd}\]
We can now cancel $\varepsilon_{1,2}^{-1}$ and $\varepsilon_{1,2}$:
\begin{equation}\label{number7}
\begin{tikzcd}[ampersand replacement=\&,column sep = small]
	\& {P_{u_1\overline{v}_1,u_2\overline{v}_2}} \&\&\& D \&\& {B'} \\
	R \&\& {P_{u_1\overline{v}_1,u_2\overline{v}_2}} \&\& {P_{u_1,u_2}} \&\& {A'} \&\& B \\
	\& {P_{u_2v_1^*,u_2\overline{v}_2}} \&\& {D^*} \&\&\& {A''} \\
	\&\& {D'} \&\&\&\& {B''}
	\arrow[""{name=0, anchor=center, inner sep=0}, "{\pi_{A'}}"{description}, from=2-5, to=2-7]
	\arrow[""{name=1, anchor=center, inner sep=0}, "{\pi_{A''}}"{description}, from=2-5, to=3-7]
	\arrow["{w^*_{u_1,u_2}}"', from=2-3, to=2-5]
	\arrow[""{name=2, anchor=center, inner sep=0}, "{\pi_1}"', from=2-1, to=2-3]
	\arrow["{\pi'_D}"{description}, from=2-3, to=1-5]
	\arrow[""{name=3, anchor=center, inner sep=0}, "{\overline{v}_1}", from=1-5, to=2-7]
	\arrow[""{name=4, anchor=center, inner sep=0}, "{\overline{f}_1}", from=1-5, to=1-7]
	\arrow[""{name=5, anchor=center, inner sep=0}, "{\pi_D}", from=1-2, to=1-5]
	\arrow["r", from=2-1, to=1-2]
	\arrow["{v_1}", from=1-7, to=2-9]
	\arrow[""{name=6, anchor=center, inner sep=0}, "{f_1}"{description}, from=2-7, to=2-9]
	\arrow["{f_2}"{description}, from=3-7, to=2-9]
	\arrow[""{name=7, anchor=center, inner sep=0}, "{\pi_2}"', from=2-1, to=3-2]
	\arrow[""{name=8, anchor=center, inner sep=0}, "{\pi''_{D^*}}"', from=3-2, to=3-4]
	\arrow[""{name=9, anchor=center, inner sep=0}, "{\pi'_{D^*}}"{description}, from=2-3, to=3-4]
	\arrow["{v_1^*}"{description}, from=3-4, to=3-7]
	\arrow["{\pi''_{D'}}"', from=3-2, to=4-3]
	\arrow[""{name=10, anchor=center, inner sep=0}, "{\overline{v}_2}"{description}, from=4-3, to=3-7]
	\arrow[""{name=11, anchor=center, inner sep=0}, "{\overline{f}_2}"', from=4-3, to=4-7]
	\arrow["{v_2}"', from=4-7, to=2-9]
	\arrow[shorten <=20pt, shorten >=20pt, Rightarrow, no head, from=1-2, to=2-3]
	\arrow["{\delta_1}", shorten <=7pt, shorten >=7pt, Rightarrow, from=1-7, to=2-7]
	\arrow[shorten <=5pt, shorten >=5pt, Rightarrow, no head, from=1-5, to=2-5]
	\arrow[shorten <=10pt, shorten >=10pt, Rightarrow, no head, from=2-5, to=3-4]
	\arrow["\rho", shorten <=12pt, shorten >=12pt, Rightarrow, from=2-3, to=3-2]
	\arrow["{\tilde\rho_{u_2v^*_1,u_2\overline{v}_2}}"', shorten <=30pt, shorten >=20pt, Rightarrow, from=8, to=10]
	\arrow["{\delta_2^{-1}}", shift left=8, shorten <=2pt, shorten >=2pt, Rightarrow, from=10, to=11]
	\arrow["\beta", shorten <=5pt, shorten >=5pt, Rightarrow, from=2-7, to=3-7]
\end{tikzcd}\end{equation}
We again decide to focus on just a part of this diagram -- we call this part ({\bf II}).
\[\begin{tikzcd}[ampersand replacement=\&]
	\&\&\& {P_{u_q\overline{v}_1,u_2v_1^*}} \&\& {P_{u_1,u_2}} \\
	{\bf (II)} \& \equiv \& R \&\& {D^*} \&\& {A''} \\
	\&\&\& {P_{u_2v_1^*,u_2\overline{v}_2}} \&\& {D'}
	\arrow[""{name=0, anchor=center, inner sep=0}, "{\pi_1}", from=2-3, to=1-4]
	\arrow[""{name=1, anchor=center, inner sep=0}, "{\pi_2}"', from=2-3, to=3-4]
	\arrow["{\pi_{D^*}'}", from=1-4, to=2-5]
	\arrow["{\pi''_{D^*}}"', from=3-4, to=2-5]
	\arrow[""{name=2, anchor=center, inner sep=0}, "{v_1^*}", from=2-5, to=2-7]
	\arrow[""{name=3, anchor=center, inner sep=0}, "{w^*_{u_1,u_2}}", from=1-4, to=1-6]
	\arrow["{\pi_{A''}}", from=1-6, to=2-7]
	\arrow[""{name=4, anchor=center, inner sep=0}, "{\pi_{D'}''}"', from=3-4, to=3-6]
	\arrow["{\overline{v}_2}"', from=3-6, to=2-7]
	\arrow["\rho", shift left=10, shorten <=10pt, shorten >=10pt, Rightarrow, from=0, to=1]
	\arrow["{\tilde\rho_{u_2v_1^*,u_2\overline{v}_2}}"{description}, shorten <=10pt, shorten >=10pt, Rightarrow, from=2, to=4]
	\arrow[shorten <=20pt, shorten >=20pt, Rightarrow, no head, from=3, to=2]
\end{tikzcd}\]
We will now show that if we post-compose diagram {\bf (II)} with $u_2$, we get   an identity 2-cell. To show this, we post-compose diagram {\bf (II)} with $u_2$ and then pre-compose with the invertible 2-cell $\rho_{u_1,u_2}$:
\[\begin{tikzcd}[ampersand replacement=\&, column sep = small]
	\&\& D \&\&\&\&\&\& D \& {A'} \\
	\& {P_{u_1\overline{v}_1,u_2\overline{v}_2}} \& {P_{u_1,u_2}} \& {A'} \&\&\&\& {P_{u_1\overline{v}_1,u_2\overline{v}_2}} \\
	R \&\& {D^*} \& {A''} \& A \& \equiv \& R \&\& {D^*} \& {A''} \& A \\
	\& {P_{u_2v_1^*,u_2\overline{v}_2}} \& {D'} \&\&\&\&\& {P_{u_2v_1^*,u_2\overline{v}_2}} \& {D'} \& {A''} \\
	\&\&\&\&\&\&\& D \\
	\&\&\&\&\& \equiv \& R \& {P_{u_1\overline{v}_1,u_2\overline{v}_2}} \&\& A \\
	\&\&\&\&\&\&\& {D'}
	\arrow[""{name=0, anchor=center, inner sep=0}, "{\pi_2}"', from=3-1, to=4-2]
	\arrow[""{name=1, anchor=center, inner sep=0}, "{\pi_1}", from=3-1, to=2-2]
	\arrow["{\pi'_{D^*}}"', from=2-2, to=3-3]
	\arrow["{\pi''_{D^*}}", from=4-2, to=3-3]
	\arrow[""{name=2, anchor=center, inner sep=0}, "{v_1^*}", from=3-3, to=3-4]
	\arrow[""{name=3, anchor=center, inner sep=0}, "{w_{u_1,u_2}^*}", from=2-2, to=2-3]
	\arrow["{\pi_{A''}}"{description}, from=2-3, to=3-4]
	\arrow[""{name=4, anchor=center, inner sep=0}, "{u_2}", from=3-4, to=3-5]
	\arrow[""{name=5, anchor=center, inner sep=0}, "{\pi_{A'}}", from=2-3, to=2-4]
	\arrow["{u_1}", from=2-4, to=3-5]
	\arrow["{\pi'_D}", from=2-2, to=1-3]
	\arrow[""{name=6, anchor=center, inner sep=0}, "{\overline{v}_1}", from=1-3, to=2-4]
	\arrow["{\overline{v}_2}"', from=4-3, to=3-4]
	\arrow[""{name=7, anchor=center, inner sep=0}, "{\pi''_{D'}}"', from=4-2, to=4-3]
	\arrow[""{name=8, anchor=center, inner sep=0}, "{\pi_1}", from=3-7, to=2-8]
	\arrow[""{name=9, anchor=center, inner sep=0}, "{\pi_2}"', from=3-7, to=4-8]
	\arrow["{\pi''_{D^*}}", from=4-8, to=3-9]
	\arrow["{\pi'_{D^*}}"', from=2-8, to=3-9]
	\arrow["{\pi'_D}", from=2-8, to=1-9]
	\arrow[""{name=10, anchor=center, inner sep=0}, "{v_1^*}", from=3-9, to=3-10]
	\arrow[""{name=11, anchor=center, inner sep=0}, "{\overline{v}_1}", from=1-9, to=1-10]
	\arrow["{u_2}", from=3-10, to=3-11]
	\arrow["{u_1}", from=1-10, to=3-11]
	\arrow["{u_2}"', from=4-10, to=3-11]
	\arrow[""{name=12, anchor=center, inner sep=0}, "{\overline{v}_2}"', from=4-9, to=4-10]
	\arrow["{\pi''_{D'}}"', from=4-8, to=4-9]
	\arrow[""{name=13, anchor=center, inner sep=0}, "r", from=6-7, to=6-8]
	\arrow["{\pi_D}", from=6-8, to=5-8]
	\arrow["{\pi_{D'}}", from=6-8, to=7-8]
	\arrow[""{name=14, anchor=center, inner sep=0}, "{\pi'_{D'}\pi_2}"', curve={height=6pt}, from=6-7, to=7-8]
	\arrow[""{name=15, anchor=center, inner sep=0}, "{\pi'_D\pi_1}", curve={height=-6pt}, from=6-7, to=5-8]
	\arrow[""{name=16, anchor=center, inner sep=0}, "{u_1\overline{v}_1}", curve={height=-12pt}, from=5-8, to=6-10]
	\arrow[""{name=17, anchor=center, inner sep=0}, "{u_2\overline{v}_2}"', curve={height=12pt}, from=7-8, to=6-10]
	\arrow["\rho", shorten <=19pt, shorten >=21pt, Rightarrow, from=2-2, to=4-2]
	\arrow[shorten <=5pt, shorten >=5pt, Rightarrow, no head, from=1-3, to=2-3]	
	\arrow[shorten <=5pt, shorten >=5pt, Rightarrow, no head, from=2-3, to=3-3]
	\arrow["{\rho_{u_1,u_2}}"{description}, shorten <=2pt, shorten >=2pt, Rightarrow, from=2-4, to=3-4]
	\arrow["{\tilde\rho_{u_2v_1^*,u_2\overline{v}_2}}"{description}, shorten <=7pt, shorten >=7pt, Rightarrow, from=3-3, to=4-3]
	\arrow["\rho", shorten <=19pt, shorten >=21pt, Rightarrow, from=2-8, to=4-8]
	\arrow["{\rho_{u_1\overline{v}_1,u_2v_1^*}}",shift right=4, shorten <=27pt, shorten >=27pt, Rightarrow, from=11, to=10]
	\arrow["{\rho_{u_2v_1^*,u_2\overline{v}_2}}",shift right=4, shorten <=10pt, shorten >=10pt, Rightarrow, from=10, to=12]
	\arrow[shorten <=3pt, shorten >=8pt, Rightarrow, no head, from=15, to=6-8]
	\arrow[shorten <=8pt, shorten >=3pt, Rightarrow, no head, from=6-8, to=14]
	\arrow["{\rho_{u_1\overline{v}_1,u_2\overline{v}_2}}"{description}, shorten <=15pt, shorten >=15pt, Rightarrow, from=16, to=17]
\end{tikzcd}\]
The first equality above follows from the universal property of the arrow $w^*_{u_1,u_2}$, and the second equality follows from the definition of the arrow $r$. The definition of $w_{u_1,u_2}$ now implies that this pasting is equal to
\[\begin{tikzcd}[ampersand replacement=\&]
	{P_{u_1\overline{v}_1,u_2v_1^*}} \& D \& {A'} \\
	R \& {P_{u_1,\overline{v}_1,u_2\overline{v}_2}} \& {P_{u_1,u_2}} \& {A''} \& A \\
	{P_{u_2v_1^*,u_2\overline{v}_2}} \& {D'}
	\arrow["{u_2}"{description}, from=2-4, to=2-5]
	\arrow["{\pi_{A''}}", from=2-3, to=2-4]
	\arrow[""{name=0, anchor=center, inner sep=0}, "{w_{u_1,u_2}}", from=2-2, to=2-3]
	\arrow["{\pi_{A'}}"', from=2-3, to=1-3]
	\arrow[""{name=1, anchor=center, inner sep=0}, "{u_1}", curve={height=-12pt}, from=1-3, to=2-5]
	\arrow[""{name=2, anchor=center, inner sep=0}, "{\overline{v}_2}"', curve={height=12pt}, from=3-2, to=2-4]
	\arrow["{\pi_{D'}}", from=2-2, to=3-2]
	\arrow["{\pi_D}"', from=2-2, to=1-2]
	\arrow[""{name=3, anchor=center, inner sep=0}, "{\overline{v}_1}", from=1-2, to=1-3]
	\arrow[""{name=4, anchor=center, inner sep=0}, "r"', from=2-1, to=2-2]
	\arrow["{\pi_1}", from=2-1, to=1-1]
	\arrow["{\pi_2}"', from=2-1, to=3-1]
	\arrow[""{name=5, anchor=center, inner sep=0}, "{\pi''_{D'}}"', from=3-1, to=3-2]
	\arrow[""{name=6, anchor=center, inner sep=0}, "{\pi'_D}", from=1-1, to=1-2]
	\arrow["{\rho_{u_1,u_2}}"', shorten <=3pt, shorten >=4pt, Rightarrow, from=1, to=2-4]
	\arrow[shorten <=13pt, shorten >=16pt, Rightarrow, no head, from=1-1, to=2-2]
	\arrow[shorten <=14pt, shorten >=17pt, Rightarrow, no head, from=3-1, to=2-2]
	\arrow[shorten <=15pt, shorten >=18pt, Rightarrow, no head, from=1-2, to=2-3]
	\arrow[shorten <=14pt, shorten >=19pt, Rightarrow, no head, from=2-3, to=3-2]
\end{tikzcd}\]
Now composing with an appropriate whiskering of $\rho_{u_1,u_2}^{-1}$ gives the promised identity 2-cell.
We conclude that there is an arrow $\tilde{u}_2\colon R'\to R$ in $\frakW$ such that 
diagram {\bf (II)} pre-composed with $\tilde{u}_2$ is an identity 2-cell as claimed.   When we substitute this into (\ref{number7}), we get the following pasting diagram:
\[\begin{tikzcd}[ampersand replacement=\&]
	\&\&\&\& {B'} \\
	\&\&\& D \& {A'} \\
	{R'} \& R \& {P_{u_1\overline{v}_1,u_2\overline{v}_2}} \& {P_{u_1,u_2}} \&\&\& B \\
	\&\& {P_{u_2v_1^*,u_2\overline{v}_2}} \& {D'} \& {A''} \\
	\&\&\&\& {B''}
	\arrow["{\tilde{u}_2}", from=3-1, to=3-2]
	\arrow["r", from=3-2, to=3-3]
	\arrow["{\pi_D}", from=3-3, to=2-4]
	\arrow[""{name=0, anchor=center, inner sep=0}, "{\pi_2}"{description}, from=3-2, to=4-3]
	\arrow["{\pi''_{D'}}"', from=4-3, to=4-4]
	\arrow[""{name=1, anchor=center, inner sep=0}, "{\pi_{D'}}"{description}, from=3-3, to=4-4]
	\arrow[""{name=2, anchor=center, inner sep=0}, "{\overline{v}_1}", from=2-4, to=2-5]
	\arrow[""{name=3, anchor=center, inner sep=0}, "{w_{u_1,u_2}}", from=3-3, to=3-4]
	\arrow["{\pi_{A'}}"{description}, from=3-4, to=2-5]
	\arrow[""{name=4, anchor=center, inner sep=0}, "{\pi_{A''}}"{description}, from=3-4, to=4-5]
	\arrow[""{name=5, anchor=center, inner sep=0}, "{f_1}"{description}, from=2-5, to=3-7]
	\arrow["{\overline{f}_1}", from=2-4, to=1-5]
	\arrow[""{name=6, anchor=center, inner sep=0}, "{v_1}", curve={height=-6pt}, from=1-5, to=3-7]
	\arrow["{\overline{f}_2}"', from=4-4, to=5-5]
	\arrow[""{name=7, anchor=center, inner sep=0}, "{\overline{v}_2}"{description}, from=4-4, to=4-5]
	\arrow[""{name=8, anchor=center, inner sep=0}, "{f_2}"{description}, from=4-5, to=3-7]
	\arrow[""{name=9, anchor=center, inner sep=0}, "{v_2}"', curve={height=6pt}, from=5-5, to=3-7]
	\arrow[shorten <=15pt, shorten >=15pt, Rightarrow, no head, from=2-4, to=3-4]
	\arrow[shorten <=15pt, shorten >=15pt, Rightarrow, no head, from=3-3, to=4-3]
	\arrow[shorten <=15pt, shorten >=15pt, Rightarrow, no head, from=3-4, to=4-4]
	\arrow["{\delta_1}", shorten <=5 pt, shorten >=5pt, Rightarrow, from=1-5, to=2-5]
	\arrow["\beta", shorten <=24pt, shorten >=24pt, Rightarrow, from=2-5, to=4-5]
	\arrow["{\delta_2^{-1}}", shorten <=5pt, shorten >=5pt, Rightarrow, from=4-5, to=5-5]
\end{tikzcd}\]
We can rewrite this as
\[\begin{tikzcd}[ampersand replacement=\&]
	\&\& D \&\& {B'} \\
	\&\&\& {A'} \\
	{R'} \& {P_{u_1\overline{v}_1,u_2\overline{v}_2}} \& {P_{u_1,u_2}} \&\& B \\
	\&\&\& {A''} \\
	\&\& {D'} \&\& {B''}
	\arrow["{r\tilde{u}_2}", from=3-1, to=3-2]
	\arrow[""{name=0, anchor=center, inner sep=0}, "{\pi_D}", from=3-2, to=1-3]
	\arrow[""{name=1, anchor=center, inner sep=0}, "{\pi_{D'}}"', from=3-2, to=5-3]
	\arrow["{w_{u_1,u_2}}"', from=3-2, to=3-3]
	\arrow["{\pi_{A'}}"{description}, from=3-3, to=2-4]
	\arrow["{\pi_{A''}}"{description}, from=3-3, to=4-4]
	\arrow[""{name=2, anchor=center, inner sep=0}, "{\overline{v}_1}"{description}, from=1-3, to=2-4]
	\arrow[""{name=3, anchor=center, inner sep=0}, "{\overline{v}_2}"{description}, from=5-3, to=4-4]
	\arrow[""{name=4, anchor=center, inner sep=0}, "{f_2}"{description}, from=4-4, to=3-5]
	\arrow[""{name=5, anchor=center, inner sep=0}, "{f_1}"{description}, from=2-4, to=3-5]
	\arrow[""{name=6, anchor=center, inner sep=0}, "{\overline{f}_2}"', from=5-3, to=5-5]
	\arrow[""{name=7, anchor=center, inner sep=0}, "{\overline{f}_1}", from=1-3, to=1-5]
	\arrow["{v_1}", from=1-5, to=3-5]
	\arrow["{v_2}"', from=5-5, to=3-5]
	\arrow["\beta"', shorten <=20pt, shorten >=20pt, Rightarrow, from=2-4, to=4-4]
	\arrow[shorten <=21pt, shorten >=21pt, Rightarrow, no head, from=1-3, to=3-3]
	\arrow[shorten <=21pt, shorten >=21pt, Rightarrow, no head, from=3-3, to=5-3]
	\arrow["{\delta_1}"{description}, shorten <=17pt, shorten >=17pt, Rightarrow, from=7, to=5]
	\arrow["{\delta_2^{-1}}"{description}, shorten <=17pt, shorten >=17pt, Rightarrow, from=4, to=6]
\end{tikzcd}\]
If we assume that  $\beta$ is invertible, this pasting is equal to the following by definition of $w_{v_1,v_2}$ given in Section \ref{horcomppb}:
\[\begin{tikzcd}[ampersand replacement=\&]
	\& D \& {B'} \\
	{R'} \& {P_{u_1\overline{v}_1,u_2\overline{v}_2}} \& {P_{v_1,v_2}} \& B \\
	\& {D'} \& {B''}
	\arrow["{r\tilde{u}_2}", from=2-1, to=2-2]
	\arrow[""{name=0, anchor=center, inner sep=0}, "{\pi_D}", from=2-2, to=1-2]
	\arrow[""{name=1, anchor=center, inner sep=0}, "{\pi_{D'}}"', from=2-2, to=3-2]
	\arrow["{\overline{f}_1}", from=1-2, to=1-3]
	\arrow["{\overline{f}_2}"', from=3-2, to=3-3]
	\arrow["{w_{v_1,v_2}}"', from=2-2, to=2-3]
	\arrow[""{name=2, anchor=center, inner sep=0}, "{\pi_{B'}}"', from=2-3, to=1-3]
	\arrow[""{name=3, anchor=center, inner sep=0}, "{\pi_{B''}}", from=2-3, to=3-3]
	\arrow[""{name=4, anchor=center, inner sep=0}, "{v_1}", curve={height=-6pt}, from=1-3, to=2-4]
	\arrow[""{name=5, anchor=center, inner sep=0}, "{v_2}"', curve={height=6pt}, from=3-3, to=2-4]
	\arrow[shorten <=18pt, shorten >=18pt, Rightarrow, no head, from=1-2, to=2-3]
	\arrow[shorten <=18pt, shorten >=18pt, Rightarrow, no head, from=2-3, to=3-2]
	\arrow["{\rho_{v_1,v_2}}"{description}, shorten <=8pt, shorten >=8pt, Rightarrow, from=4, to=5]
\end{tikzcd}\]
When we post-compose this with $\rho_{v_1,v_2}^{-1}$ we find that  when the pasting of diagram {\bf (I)} is pre-composed with $\tilde{u}_2$ and post-composed with $v_1$, the result is equal to the identity 2-cell on $v_1\pi_{B'}w_{v_1,v_2}r\tilde{u}_2$. 
So there is an arrow $\tilde{v}_1\colon R''\to R$ in $\frakW$ such that pasting (\ref{diagram_A}) pre-composed by $r\tilde{u}_2\tilde{v}_1$ is the identity 2-cell on $\pi_{B'}w_{v_1,v_2}r\tilde{u}_2\tilde{v}_1$.
We finally post-compose with $\gamma$ to find that $\widetilde{\gamma\beta}$ is precomposed with $r\tilde{u}_2\tilde{v}_1$ is equal to
\[\begin{tikzcd}[ampersand replacement=\&]
	\&\&\& D \& {B'} \\
	{R''} \&\& {P_{u_1\overline{v}_1,u_2\overline{v}_2}} \& {P_{u_1,u_2}} \&\& C \\
	\&\&\& {D'} \& {B''}
	\arrow["{r\tilde{u}_2\tilde{v}_1}", from=2-1, to=2-3]
	\arrow[""{name=0, anchor=center, inner sep=0}, "{\pi_D}", from=2-3, to=1-4]
	\arrow[""{name=1, anchor=center, inner sep=0}, "{\pi_{D'}}"', from=2-3, to=3-4]
	\arrow["{w_{u_1,u_2}}"', from=2-3, to=2-4]
	\arrow[""{name=2, anchor=center, inner sep=0}, "{\pi_{B'}}"{description}, from=2-4, to=1-5]
	\arrow[""{name=3, anchor=center, inner sep=0}, "{\pi_{B''}}"{description}, from=2-4, to=3-5]
	\arrow["{\overline{f}_1}"{description}, from=1-4, to=1-5]
	\arrow["{\overline{f}_2}"{description}, from=3-4, to=3-5]
	\arrow[""{name=4, anchor=center, inner sep=0}, "{g_1}", from=1-5, to=2-6]
	\arrow[""{name=5, anchor=center, inner sep=0}, "{g_2}"', from=3-5, to=2-6]
	\arrow["\gamma", shorten <=25pt, shorten >=25pt, Rightarrow, from=1-5, to=3-5]
	\arrow[shorten <=16pt, shorten >=16pt, Rightarrow, no head, from=1-4, to=2-4]
	\arrow[shorten <=16pt, shorten >=16pt, Rightarrow, no head, from=2-4, to=3-4]
\end{tikzcd}\]
We conclude that diagram (\ref{horcomposition}) given in Section \ref{horcomppb} and the diagram constructed from the vertical composition of whiskerings are equivalent as claimed. 
\end{proof}

\begin{rmk}
If $\beta$ is not invertible, the 2-cell diagram (\ref{firstrepr}) within the proof above gives a representation of the horizontal composition. Unfortunately, without further assumptions there is no obvious way to simplify this representation.
\end{rmk}
\end{appendices}
\end{document}